\title[Decay estimates for discrete bi-Schr\"{o}dinger operators with resonant thresholds]{Decay estimates for discrete bi-Laplace operators with potentials on the lattice $\Z$ }
\author{ Sisi Huang and Xiaohua Yao }
\address{Sisi Huang, Department of Mathematics, Central China Normal University, Wuhan, 430079, P.R. China}
\email{hss@mails.ccnu.edu.cn}
\address{Xiaohua Yao, Department of Mathematics, Key Laboratory of Nonlinear Analysis and Applications(Ministry of Education), and Hubei Key Laboratory of Mathematical Sciences, Central China Normal University, Wuhan, 430079, P.R. China}
\email{yaoxiaohua@ccnu.edu.cn}
\thanks{The work is partially supported by NSFC No.12171182 and the Fundamental Research Funds for the Central Universities}
\date{\today}
\keywords{Decay estimates, Discrete bi-Laplace operators, Classification of resonances, Asymptotic expansion, Beam equation}
\newtheorem{definition}{Definition}[section]
\newtheorem{theorem}[definition]{Theorem}
\newtheorem{lemma}[definition]{Lemma}
\newtheorem{remark}[definition]{Remark}
\newtheorem{proposition}[definition]{Proposition}
\newtheorem{corollary}[definition]{Corollary}
\newcommand\R{\mathbb{R}}
\newcommand\Z{\mathbb{Z}}
\newcommand\B{\mathbb{B}}
\newcommand\C{\mathbb{C}}
\newcommand\N{\mathbb{N}}
\newcommand\T{\mathbb{T}}
\newcommand\mscH{\mathcal{H}}
\newcommand\mcaF{\mathcal{F}}
\newcommand\mcaI{\mathcal{I}}
\newcommand\mcaJ{\mathcal{J}}
\newcommand\mcaA{\mathcal{A}}
\newcommand\mcaB{\mathcal{B}}
\newcommand\mcaC{\mathcal{C}}
\newcommand\mcaD{\mathcal{D}}
\newcommand\mcaK{\mathcal{K}}
\newcommand\mcaN{\mathcal{N}}
\newcommand\mcaP{\mathcal{P}}
\newcommand\mcaE{\mathcal{E}}
\numberwithin{equation}{section}
\begin{document}

\maketitle

%ÕªÒª
\begin{abstract}
%\baselineskip=13pt
\begin{comment}
This paper is devoted to investigating the time decay estimates for the discrete sch\"{o}dinger operators of the form on the lattice $\ell^2(\Z)$:
$$H=\Delta^2+V,\quad |V(n)|\lesssim (1+|n|)^{-\beta},\quad\beta>0,$$
where $\Delta$ is the discrete Laplacian and $V(n)$ is a real-valued potential on $\Z$.
Under suitable assumption on $V$, we establish
\end{comment}
It is known that the discrete Laplace operator $\Delta$ on the lattice $\mathbb{Z}$ satisfies the following sharp time decay estimate:
$$\big\|e^{it\Delta}\big\|_{\ell^1\rightarrow\ell^{\infty}}\lesssim|t|^{-\frac{1}{3}},\quad t\neq0,$$
which is slower than the usual $ O(|t|^{-\frac{1}{2}})$ decay in the continuous case on $\mathbb{R}$. However, this paper shows that the discrete bi-Laplacian $\Delta^2$ on $\mathbb{Z}$ actually exhibits the same sharp decay estimate $|t|^{-\frac{1}{4}}$ as its continuous counterpart.

In view of the free decay estimate,  we further investigate the discrete bi-Schr\"{o}dinger operators of the form $H=\Delta^2+V$ on the lattice space $\ell^2(\mathbb{Z})$, where $V$ is a class of real-valued decaying potentials on $\Z$. %satisfying $|V(n)|\lesssim \left<n\right>^{-\beta}$ for some $\beta>0$, with $\left<n\right>=(1+|n|^2)^{\frac{1}{2}}$.
%Our main results are threefold. First, we establish the limiting absorption principle for $H$. Second, we present a complete classification of resonance types of $H$ at the thresholds $0$ and $16$ and derive the corresponding asymptotic expansions of resolvent of $H$.
%Finally, under suitable decay conditions on $V$, we prove the following $\ell^1-\ell^{\infty}$ dispersive estimates for all these resonances types:
First, we establish the limiting absorption principle for $H$, and then derive the full asymptotic expansions of the resolvent of $H$ near the thresholds $0$ and $16$, including resonance cases. In particular, we provide a complete characterizations of the different resonance types in $\ell^2$-weighted spaces.

Based on these results above, we establish the following sharp $\ell^1-\ell^{\infty}$ decay estimates for all different resonances types of $H$ under suitable decay conditions on $V$:
$$\big\|e^{-itH}P_{ac}(H)\big\|_{\ell^1\rightarrow\ell^{\infty}}\lesssim|t|^{-\frac{1}{4}},\quad t\neq0,$$
where $P_{ac}(H)$ denotes the spectral projection onto the absolutely continuous spectrum space of $H$.
Additionally, the  decay estimates for the evolution flow of discrete beam equation are also derived:
$$\|{\cos}(t\sqrt H)P_{ac}(H)\|_{\ell^1\rightarrow\ell^{\infty}}+\Big\|\frac{{\sin}(t\sqrt H)}{t\sqrt H}P_{ac}(H)\Big\|_{\ell^1\rightarrow\ell^{\infty}}\lesssim|t|^{-\frac{1}{3}},\quad t\neq0.$$
\end{abstract}

%Éú³ÉÄ¿Â¼
\tableofcontents

%ÉèÖÃ¼ä¾à
%\baselineskip=14pt
\section{Introduction and main results}
\subsection{Introduction}
The analysis of time decay estimates for Schr\"{o}dinger operators has long been a central theme in mathematical physics, originating from the fundamental studies of quantum dynamics in Euclidean space (see e.g., \cite{JK79,JSS91}). Following these pioneering works, significant advances has been made across various dimensions and potentials (cf. \cite{ES04,GG15,GG17,RS04} and references therein), with recent breakthroughs extending to higher-order Schr\"{o}dinger operators~(cf.\cite{FSY18,FSWY20,EGT21,CHHZ24a,CHHZ24b}). These estimates, deeply connected to scattering theory and spectral analysis, have become indispensable tools in quantum dynamics and nonlinear analysis~(cf.\cite{Tao06, Sch07, RS78,Mar06}).

A natural extension of this theory concerning discrete Schr\"{o}dinger operators on the lattice space $\Z^d$, has emerged as an equally important but mathematically distinct research area. A basic fact  is that the discrete Laplacian $\Delta_{\Z^d}$ exhibits the sharp decay estimates (cf.\cite{SK05})
\begin{equation}\label{decay est for LaplaceZd}
\big\|e^{-it\Delta_{\Z^d}}\big\|_{\ell^1(\Z^d)\rightarrow\ell^{\infty}(\Z^d)}\lesssim |t|^{-\frac{d}{3}},\quad t\neq0,
\end{equation}
where $\Delta_{\Z^d}$ is defined by
\begin{equation}\label{definition of Laplacian}
(\Delta_{\Z^d}\phi)(n):=\sum_{j=1}^d(\phi(n+e_j)+\phi(n-e_j)-2\phi(n)), \quad\forall\ \phi\in\ell^2(\Z^d),
\end{equation}
with $$e_j=(0,0,\cdots,0,\underset{j}{1},0,\cdots,0),\quad j=1,2\cdots,d.$$ Notably, this slower decay estimate \eqref{decay est for LaplaceZd} (compared with the $|t|^{-\frac{d}{2}}$ in $\R^d$) reflects the distinct dispersion properties of lattice systems. Subsequent research has extended this result to perturbed operators on $\Z$ (cf.\cite{PS08,CT09,EKT15,MZ22}), yet the analysis of higher-order discrete Schr\"{o}dinger operators remains largely unexplored.

Motivated by the purpose, in this paper, we are devoted to investigating the time decay estimates of the solutions for the following fourth order Schr\"{o}dinger equation on the lattice $\Z$:
\begin{equation}\label{Bi-Schrodinger equation}
\left\{\begin{aligned}&i(\partial_tu)(t,n)-(\Delta^2u+Vu)(t,n)=0,\ \ (t,n)\in\R\times\Z,\\
&u(0,n)=\varphi_0(n),\end{aligned}\right.
\end{equation}
and the discrete beam equation on the lattice $\Z$:
\begin{equation}\label{Beam equation}
\left\{\begin{aligned}&(\partial_{tt}v)(t,n)+(\Delta^2v+Vv)(t,n)=0,\ \ (t,n)\in\R\times\Z,\\
&v(0,n)=\varphi_1(n),\ \left(\partial_{t}v\right)(0,n)=\varphi_2(n),\end{aligned}\right.
\end{equation}
where $(\Delta\phi)(n):=\phi(n+1)+\phi(n-1)-2\phi(n)$ for $\phi\in\ell^2(\Z)$, and $V$ is a real-valued potential satisfying $|V(n)|\lesssim \left<n\right>^{-\beta}$ for some $\beta>0$ with $\left<n\right>=(1+|n|^2)^{\frac{1}{2}}$.

%Let $\ell^2(\Z)$ denote the complex Hilbert space consisting of square summable sequences $\{\phi(n)\}_{n\in\Z}$. The non-negative discrete Laplacian $-\Delta$ is defined as

 The discrete bi-Laplace operator $\Delta^2$ on the lattice $\Z$ is the discrete analogue of the fourth-order differential operator $\frac{d^4}{dx^4}$ on the real line. The equations \eqref{Bi-Schrodinger equation} and \eqref{Beam equation} are discretizations of classical continuous models studied in \cite{SWY22} and \cite{CLSY25}, respectively. These discretizations not only serve as numerical tools in computational mathematics but also hold profound significance in mathematics physics, particularly in quantum physics. For instance, discrete Schr\"{o}dinger equations are standard
models for random media dynamics, as discussed in Aizenman-Warzel \cite{AW15}, while the discrete beam equation describes the deformation of elastic beam under  certain force (cf. \"{O}chsner \cite{Och21}).

Denote $H:=\Delta^2+V$. Then both $\Delta^2$ and $H$ are bounded self-adjoint operators on $\ell^2(\Z)$, generating the associated unitary groups $e^{-it\Delta^2}$ and $e^{-itH}$, respectively. The solutions to equations \eqref{Bi-Schrodinger equation} and \eqref{Beam equation} are given as follows:
\begin{equation}\label{solutions for Bi-Schrodinger}
u(t,n)=e^{-itH}\varphi_0(n),
\end{equation}
\begin{equation}\label{solutions for Beam-equation}
v(t,n)={\rm cos}(t\sqrt H)\varphi_1(n)+\frac{{\rm sin}(t\sqrt H)}{\sqrt H}\varphi_2(n).
\end{equation}
The expression \eqref{solutions for Beam-equation} above depends on the branch chosen of $\sqrt z$ with $\Im z\geq0$, so the solution $v(t,n)$ is well-defined even if $H$ is not positive. In the sequel, we are devoted to establishing the time decay estimates of the propagator operators $e^{-itH}$, ${\rm cos}(t\sqrt H)$ and $\frac{{\rm sin}(t\sqrt H)}{\sqrt H}$.

\begin{comment}
one obtains
\begin{equation}\label{unitary equivalent}
(\mcaF\Delta^2\phi)(x)=(2-2{\rm cos}x)^2(\mcaF\phi)(x):=M(x)(\mcaF\phi)(x),\quad  x\in\T=[-\pi,\pi],
\end{equation}
which implies that the spectrum $\sigma(\Delta^2)$ of $\Delta^2$ is $[0,16]$. And by Wely's theorem, the essential spectrum $\sigma_{ess}(H)$ of $H$ is
$$\sigma_{ess}(H)=\sigma_{ess}(\Delta^2)=\sigma(\Delta^2)=[0,16].$$
\end{comment}

For the free case,~i.e., $V\equiv0$, then $\sqrt{H}=-\Delta$. It follows from \eqref{decay est for LaplaceZd} that
\begin{equation}\label{eitlaplacian}
\left\|e^{it\Delta}\right\|_{\ell^1\rightarrow\ell^{\infty}}\lesssim|t|^{-\frac{1}{3}},\quad t\neq0.
\end{equation}
As a consequence of \eqref{eitlaplacian}, one has
\begin{equation}\label{cos-sin lapacian}
\|{\rm cos}(t\Delta )\|_{\ell^1\rightarrow\ell^{\infty}}+\Big\|\frac{{\rm sin}(t\Delta )}{t\Delta }\Big\|_{\ell^1\rightarrow\ell^{\infty}}\lesssim|t|^{-\frac{1}{3}}.
\end{equation}
As stated before, the decay estimates \eqref{eitlaplacian} and \eqref{cos-sin lapacian} are slower than the usual $|t|^{-\frac{1}{2}}$ in the continuous case. However, interestingly, for the discrete bi-Laplacian on $\Z$, we can prove that
\begin{equation}\label{eitlap2}
\big\|e^{-it\Delta^2}\big\|_{\ell^1\rightarrow\ell^{\infty}}\lesssim|t|^{-\frac{1}{4}},
\end{equation}
which is sharp and the same as the continuous case on the line \cite{SWY22}, for details see Section \ref{Sec of decay for free}.

When $V\not\equiv0$, the decay estimates for the solution operators of equation \eqref{Beam equation} are affected by the spectrum of $H$, which in turn depends on the conditions of potential $V$.
\begin{comment}
Under the aforementioned decay condition on potential $V$, one can obtain that the essential spectrum $\sigma_{ess}(H)$ of $H$ is
$$\sigma_{ess}(H)=\sigma_{ess}(\Delta^2)=\sigma(\Delta^2)=[0,16],$$
which follows from Weyl's theorem and the fact
\begin{equation}\label{unitary equivalent}
(\mcaF\Delta^2\phi)(x)=(2-2{\rm cos}x)^2(\mcaF\phi)(x):=M(x)(\mcaF\phi)(x),\quad  x\in\T=[-\pi,\pi].
\end{equation}
Furthermore, if $\beta>1$, we will prove in Theorem \ref{LAP-theorem} that the singular spectrum $\sigma_{sc}(H)\cap(0,16)=\varnothing$ and the point spectrum $\sigma_p(H)\cap(0,16)$ is discrete with each eigenvalue having finite multiplicities.
\end{comment}
In this paper, we assume that the potential $V$ has fast decay and $H$ has no embedded positive eigenvalues in the continuous spectrum interval $(0,16)$. Under such assumptions, let $\lambda_j$ be the discrete eigenvalues of $H$ and $H\phi_j=\lambda_j\phi_j$ for $\phi_j\in\ell^2(\Z)$, $P_{ac}(H)$ denote the spectral projection onto the absolutely continuous spectrum of $H$ and $P_j$ be the projection on the eigenspace corresponding to the discrete eigenvalue $\lambda_j$. Then the solutions of the equations \eqref{Bi-Schrodinger equation} and \eqref{Beam equation} can be respectively further written as
\begin{equation}\label{decomp of eitH}
u(t,n)=\sum\limits_{j}e^{-it\lambda_j}P_j\varphi_{0}(n)+e^{-itH}P_{ac}(H)\varphi_0(n):=u_{d}(t,n)+u_{c}(t,n),
\end{equation}
\begin{equation}\label{decom of v}
v(t,n)=v_{d}(t,n)+v_{c}(t,n),
\end{equation}
where
\begin{align*}
v_{d}(t,n)&=\sum\limits_{j}^{}{\rm cosh}(t\sqrt{-\lambda_j})\left<\varphi_1,\phi_j\right>\phi_j(n)+\frac{{\rm sinh}(t\sqrt{-\lambda_j})}{\sqrt{-\lambda_j}}\left<\varphi_2,\phi_j\right>\phi_j(n),\\%\label{expre of u2d}\\
v_{c}(t,n)&={\rm cos}(t\sqrt{H})P_{ac}(H)\varphi_1(n)+\frac{{\rm sin}(t\sqrt H)}{\sqrt H}P_{ac}(H)\varphi_2(n).%\label{expre of u2c}
\end{align*}
Observe that the discrete part $u_{d}(t,n)$ of $u$ has no any time decay estimates. Similarly,  the existence of discrete negative/positive eigenvalues of $H$ will lead to the exponential growth/dissipation of $v_{d}(t,n)$ as $t$ becomes large. Therefore, the main goal of this paper is to investigate the time decay estimates for the continuous components $u_c(t,n)$ and $v_c(t,n)$ in \eqref{decomp of eitH} and \eqref{decom of v} under suitable decay conditions on $V$ for all resonance types of $H$ (see Definition \ref{defin of resonance types 0 and 16} below).

To achieve this, our starting point is Stone's formula. We first establish the limiting absorption principle for the operator $H$, and then study the asymptotic expansions of $R^{\pm}_V(\lambda)$ near thresholds $0$ and $16$ for all resonance types. Finally, we employ the Van der Corput Lemma to derive the desired estimates.

\subsection{Main results}
In this subsection, we are devoted to illustrating our main results. To this end, we first give several notations and definitions. For $a,b\in\R^{+}$, $a\lesssim b$ means $a\leq cb$ with some constant $c>0$. Let $\sigma\in\R$, denote by $W_{\sigma}(\Z)$:=$\underset{s>\sigma}{\bigcap}\ell^{2,-s}(\Z)$ the intersection space, where
$$\ell^{2,s}(\Z)=\Big\{\phi=\left\{\phi(n)\right\}_{n\in\Z}:\|\phi\|^2_{\ell^{2,s}}=\sum_{n\in\Z}^{}\left<n\right>^{2s}|\phi(n)|^2<\infty\Big\}.$$
Note that $W_{\sigma_2}(\Z)\subseteq W_{\sigma_1}(\Z)$ if $\sigma_2<\sigma_1$ and $\ell^2(\Z)\subseteq W_0(\Z)$.
\begin{definition}\label{defin of resonance types 0 and 16}
 Let $H=\Delta^2+V$ be defined on the lattice $\Z$ and $|V(n)|\lesssim \left<n\right>^{-\beta}$ for some $\beta>0$. %Then we say that
\vskip0.2cm
{\bf{(I)~Classification of resonances at threshold 0}}
\vskip0.2cm
\begin{itemize}
\item[{\rm(i)}] $0$ is a regular point of $H$ if the difference equation $H\phi=0$ has only zero solution in $W_{3/2}(\Z)$.
    \vskip0.1cm
\item [{\rm(ii)}] $0$ is a first kind resonance of $H$ if  $H\phi=0$ has nonzero solution in $W_{3/2}(\Z)$ but no nonzero solution in $W_{1/2}(\Z)$.
\vskip0.1cm
\item [{\rm(iii)}] $0$ is a second kind resonance of $H$ if $H\phi=0$ has nonzero solution in $W_{1/2}(\Z)$ but no nonzero solution in $\ell^2(\Z)$.
\vskip0.1cm
\item [{\rm(iv)}] $0$ is an eigenvalue of $H$ if  $H\phi=0$ has nonzero solution in $\ell^2(\Z)$.
\end{itemize}
\vskip0.2cm
\ \  {\bf{(II)~Classification of resonances at threshold 16}}
\vskip0.2cm
\begin{itemize}
\item [{\rm(i)}]  $16$ is a regular point of $H$ if the difference equation $H\phi=16 \phi$ has only zero solution in $W_{1/2}(\Z)$.
\vskip0.1cm
\item [{\rm(ii)}]  $16$ is a resonance of $H$ if  $H\phi=16\phi$ has nonzero solution in $W_{1/2}(\Z)$ but no nonzero solution in $\ell^2(\Z)$.
\vskip0.1cm
\item [{\rm(iii)}] $16$ is an eigenvalue of $H$ if  $H\phi=16\phi$ has nonzero solution in $\ell^2(\Z)$.
%\vskip0.1cm
%\item [(iv)] {\bf{$16$ is an eigenvalue of $H$}} if the discrete equation $H\phi=16\phi$ has nonzero solution in $\ell^2(\Z)$.
%\vskip0.1cm
%\item [(vi)] {\bf{$0$~($16$) is a regular point of $H$}} if $0$~(16) is neither a resonance nor an eigenvalue of $H$.
%\vskip0.1cm
\end{itemize}
\end{definition}
\vskip0.2cm
Obviously, when $V\equiv0$, both 0 and 16 are the resonances of $H$. This can be verified by taking $\phi_1(n)=cn+d$ and $\phi_2(n)=(-1)^{n}c$ with $c\neq0$, which satisfy $\Delta^2\phi_1=0$ and $\Delta^2\phi_2=16\phi_2$. Beyond this special case, there are some other non-trivial zero/sixteen resonance examples.
\vskip0.2cm
{\bf{\underline{ 1. Example of resonance.}}} Consider the function $\phi(n)=2$ for $n=0$ and $\phi(n)=1$ for $n\neq0$ and define the potentials $V_1(n)=-\frac{(\Delta^2\phi)(n)}{\phi(n)}$ and $V_2(n)=16+V_1(n)$. Then
$$(\Delta^2+V_1)\phi=0,\quad (\Delta^2+V_2)\phi=16\phi.$$
In this case, we have
$$V_1(n)=\left\{\begin{aligned}-3,&\quad n=0,\\4,&\quad n=\pm1,\\-1,&\quad n=\pm2,\\
0,&\quad {\rm else},\end{aligned}\right.,\quad V_2(n)=\left\{\begin{aligned}13,&\quad n=0,\\20,&\quad n=\pm1,\\15,&\quad n=\pm2,\\
16,&\quad {\rm else}.\end{aligned}\right.$$
 It indicates that $0$ persists a resonance even for such compactly supported potential, and by shifting the potential by $16$ one can turns the resonance at $0$ into a resonance at $16$.
\vskip0.2cm
{\bf{\underline{2. Example of eigenvalue. }}}Take $\phi(n)=(1+n^2)^{-s}$ with $s>\frac{1}{4}$, $V_1(n)=-\frac{(\Delta^2\phi)(n)}{\phi(n)}$ and $V_2(n)=16+V_1(n)$. At this time,
$$V_1(n)=O(\left<n\right>^{-4}),\quad |n|\rightarrow\infty.$$
This implies that $0$ becomes an eigenvalue of $H$ under such slowly decaying potential. However, as demonstrated later in Lemma \ref{lemma of kernel and operaor}, the zero eigenvalue case is precluded for potentials with faster decay. Consequently, we do not necessarily discuss zero eigenvalue case in Theorem \ref{main-theorem}.

Notably, the combinations of resonance types of $H$ at these two thresholds exhibit considerable diversity for general potentials. This proves to be more intricate than both its continuous counterpart \cite{SWY22} and the second-order discrete Schr\"{o}dinger operators on the lattice $\Z$. In the continuous setting, we have only a single threshold $0$ with the same resonance classification as described above. For the discrete Schr\"{o}dinger operators $-\Delta+V$, both thresholds $0$ and $4$ are non-degenerate critical values. Furthermore, as established by Cuccagana and Tarulli in \cite{CT09}, neither $0$ nor $4$ can be eigenvalues when the potential satisfies $\{\left<n\right>V(n)\}_{n\in\Z}\in\ell^{1}(\Z)$.
%\begin{remark}
%{\rm We remark that the classification at degenerate threshold 0 is same as the continuous counterpart \cite{SWY22}, however, the additional threshold 16 complicates the whole  the classification of resonances of $H$. Moreover, in the respect for the discrete Schr\"{o}dinger operator $-\Delta+V$ on $\Z$, Cuccagana gives that $\lambda=0$ or $\lambda=4$ is a resonance exactly if $(-\Delta+V)\phi=\lambda\phi$ admits a nonzero solution in $\ell^{\infty}(\Z)$ for real-valued potential $V$ satisfying $\{\left<n\right>V(n)\}_{n\in\Z}\in\ell^1(\Z)$~(cf. \cite{Cu09,CT09}). It is worth mentioning that Ito and Jensen in \cite{IJ15} considered the classification of thresholds for the potential with a large class of interactions.}
%\end{remark}
\vskip0.3cm
The main results of this paper are summarized as follows.
\begin{theorem}\label{main-theorem}
Let $H=\Delta^2+V$ with $|V(n)|\lesssim \left<n\right>^{-\beta}$ for some $\beta>0$. Suppose that $H$ has no positive eigenvalues in the interval $\mcaI=(0,16)$, and let $P_{ac}(H)$ denote the spectral projection onto the absolutely continuous spectrum space of $H$. If
 \begin{align*}%\label{cases and conditions}
 \beta>\left\{\begin{aligned}&15,\ 0\ is\ the\ regular\ point,\\%\ and\ 16\ is\ the\ regular\ point/resonance/eigenvalue ,\\
&19,\ 0\ is\ the\ first\ kind\ resonance,\\%\ and\ 16\ is\ the\ regular\ point/resonance/eigenvalue ,\\
&27,\ 0\ is\ the\ second\ kind\ resonance,\\%\ and\ 16\ is\ the\ regular\ point/resonance/eigenvalue ,
\end{aligned}\right.
\end{align*}
then the following decay estimates hold:
\begin{equation}\label{eitH decay-estimate}
\|e^{-itH}P_{ac}(H)\|_{\ell^1\rightarrow\ell^{\infty}}\lesssim|t|^{-\frac{1}{4}},\quad t\neq0,
\end{equation}
and
\begin{equation}\label{cos-sin decay-estimate}
\|{\rm cos}(t\sqrt H)P_{ac}(H)\|_{\ell^1\rightarrow\ell^{\infty}}+\left\|\frac{{\rm sin}(t\sqrt H)}{t\sqrt H}P_{ac}(H)\right\|_{\ell^1\rightarrow\ell^{\infty}}\lesssim|t|^{-\frac{1}{3}},\quad t\neq0.
\end{equation}
\end{theorem}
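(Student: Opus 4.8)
The plan is to follow the classical Jensen–Kato strategy adapted to the discrete fourth‑order setting, reducing everything to oscillatory‑integral estimates via Stone's formula. First I would write, for a smooth cutoff $\chi$ supported near the spectrum $[0,16]$ and away from a neighborhood of the thresholds,
\begin{equation*}
e^{-itH}P_{ac}(H)=\frac{1}{2\pi i}\int_0^{16}e^{-it\lambda}\big(R_V^+(\lambda)-R_V^-(\lambda)\big)\,d\lambda,
\end{equation*}
and split the $\lambda$‑integral into a \emph{high‑energy} piece (bounded away from $0$ and $16$) and two \emph{low‑energy} pieces near $0$ and near $16$. On the interior region the limiting absorption principle (established earlier) gives uniform bounds on $R_V^\pm(\lambda)$ as operators between weighted spaces together with enough regularity in $\lambda$, so the kernel $\big[R_V^+(\lambda)-R_V^-(\lambda)\big](n,m)$ is smooth in $\lambda$ with derivatives controlled by $\langle n\rangle^{N}\langle m\rangle^{N}$ for a fixed $N$; a change of variables $\lambda=(2-2\cos\theta)^2$ turns this into an oscillatory integral $\int e^{-it\,\phi(\theta)}a(\theta)\,d\theta$ with phase $\phi(\theta)=(2-2\cos\theta)^2$, and the desired $|t|^{-1/4}$ rate comes from the degeneracy of $\phi$: its first three derivatives can vanish but the fourth does not, so the Van der Corput lemma yields $|t|^{-1/4}$, matching the free estimate \eqref{eitlap2}. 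The polynomial weights $\langle n\rangle^N$ are absorbed using the decay of $V$ to pass back from weighted $\ell^2$ to $\ell^1\to\ell^\infty$, which is where the large powers of $\beta$ enter.

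Next I would treat the threshold regions, which is the technical heart of the argument. Near $\lambda=0$ I would use the asymptotic expansions of $R_V^\pm(\lambda)$ near $0$ obtained earlier (the full resolvent expansions including each resonance case), and near $\lambda=16$ the analogous expansions there. Substituting these expansions into Stone's formula produces, term by term, oscillatory integrals whose amplitudes have expansions in powers of a small parameter $z$ (a local uniformizer, e.g.\ $z^4\sim\lambda$ near $0$ and $z^4\sim 16-\lambda$ near $16$, up to the right fractional power dictated by the quartic vanishing of $\phi$). For each resonance type one collects the worst singular term of the amplitude; the point is that, although resonances make the resolvent more singular in $\lambda$, the extra singularity is always milder than the gain coming from integrating the oscillation, so after applying Van der Corput (or a stationary‑phase / scaling argument of the type used for $e^{-it\Delta^2}$ on $\R$) every term still decays at the rate $|t|^{-1/4}$. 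The number of terms in the expansion that must be kept, and hence the required decay $\beta$, grows with the resonance order — this is exactly why the hypotheses read $\beta>15$, $\beta>19$, $\beta>27$ for the regular, first‑kind, and second‑kind cases. The $0$‑eigenvalue case is excluded by Lemma \ref{lemma of kernel and operaor} under these decay rates, so it need not be analyzed, and at $16$ the resonance/eigenvalue possibilities are handled the same way (they require no stronger decay than the $\beta$ already imposed by the behavior at $0$).

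For the beam‑equation estimate \eqref{cos-sin decay-estimate}, I would repeat the same scheme with the two new multipliers. Writing $\cos(t\sqrt H)P_{ac}(H)$ and $\frac{\sin(t\sqrt H)}{t\sqrt H}P_{ac}(H)$ through Stone's formula replaces the phase $e^{-it\lambda}$ by $e^{\pm it\sqrt\lambda}$, i.e.\ in the $\theta$‑variable the phase becomes $\psi(\theta)=\pm\sqrt{\phi(\theta)}=\pm(2-2\cos\theta)=\mp(\text{the symbol of }\Delta)$. This phase is only quadratically degenerate — $\psi''$ vanishes where $\cos\theta=0$ but $\psi'''\neq0$ there — so Van der Corput now gives only $|t|^{-1/3}$, which is the rate already visible in the free estimate \eqref{cos-sin lapacian}; the amplitude $1/\sqrt\lambda=\frac{1}{2-2\cos\theta}$ in the sine term is singular at $\theta=0$ but the extra factor $1/t$ and the $\sin$ vanishing make the integrand integrable, and the threshold/resonance amplitudes are folded in exactly as in the Schr\"odinger case. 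Combining the high‑energy bound with the threshold bounds gives the stated estimates on all of $\R\setminus\{0\}$.

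I expect the main obstacle to be the threshold analysis near $0$ in the second‑kind resonance case: there the resolvent expansion has the most singular leading behavior, so one must carefully track several orders of the expansion, verify the cancellations between the $R_V^+$ and $R_V^-$ contributions, and check that each surviving oscillatory integral — after the quartic change of variables — still obeys Van der Corput with the $|t|^{-1/4}$ gain despite amplitude singularities of the form $z^{-k}$. Keeping the bookkeeping of weights $\langle n\rangle$, $\langle m\rangle$ against the decay rate $\beta$ consistent through all these terms (so that the final bound is genuinely $\ell^1\to\ell^\infty$ and not merely weighted‑$\ell^2$) is the other delicate point, and it is what pins down the explicit thresholds on $\beta$.
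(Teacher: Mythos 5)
Your overall architecture --- Stone's formula, a splitting into two threshold regions and an interior region, the limiting absorption principle in the interior, resolvent expansions at the thresholds, the substitution $\lambda=(2-2\cos\theta)^2$ followed by Van der Corput, and the reduction of the beam propagators to $e^{\pm it\sqrt H}$ with the quadratically degenerate phase $2-2\cos\theta$ --- coincides with the paper's. However, two load-bearing mechanisms are missing, and without them the argument does not close.

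First, in the interior region you claim that the kernel of $R^{+}_V(\lambda)-R^{-}_V(\lambda)$ is controlled by $\left<n\right>^{N}\left<m\right>^{N}$ and that these weights are ``absorbed using the decay of $V$''. The outer variables $n,m$ are never summed against $V$, so nothing absorbs them; a kernel bound that grows polynomially in $n,m$ cannot produce an $\ell^1\rightarrow\ell^{\infty}$ estimate, which requires a bound uniform in $n,m$. What actually works (Subsection \ref{subse of K2}) is to insert the Born expansion \eqref{reso identity 2} so that the entire $n,m$ dependence sits in the oscillating exponentials $e^{\mp i\theta_{+}|n-m_1|}$, $e^{\mp i\theta_{+}|m-m_2|}$ of the two outer free resolvents; these combine with $e^{-it\mu^4}$ into the phase $(2-2\cos\theta_{+})^2-s\theta_{+}$ with $s$ depending on $n,m$, and the uniformity in $s$ of the Van der Corput bound (Lemma \ref{Von der}, Corollary \ref{corollary}) is precisely what yields uniformity in $n,m$. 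Only the inner factor $VR^{\pm}_VV$ is estimated in weighted $\ell^2$ via the LAP, and it is there --- not on the outer variables --- that $V$ eats the weights.

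Second, near $\mu=0$ you assert that the resonance-induced singularities are ``always milder than the gain coming from integrating the oscillation'' and that Van der Corput applies ``despite amplitude singularities of the form $z^{-k}$''. This is false as stated: Van der Corput requires the amplitude to be of bounded variation, and in the second-kind resonance case the naive size of $\mu^3R^{\pm}_0(\mu^4)v\left(M^{\pm}(\mu)\right)^{-1}vR^{\pm}_0(\mu^4)$ is $\mu^{3}\cdot\mu^{-3}\cdot\mu^{-3}\cdot\mu^{-3}=\mu^{-6}$, a non-integrable singularity that no amount of oscillation can rescue. The essential missing ingredient is the cancelation mechanism of Lemma \ref{cancelation lemma}: the vanishing moments $\left<f,v_k\right>=0$ built into the projections $Q,S_0,S_1,S_2$ that flank each term of the expansion of $\left(M^{\pm}(\mu)\right)^{-1}$ force $R^{\pm}_0(\mu^4)vQ=O(\mu^{-2})$, $R^{\pm}_0(\mu^4)vS_0=O(\mu^{-1})$ and $R^{\pm}_0(\mu^4)vS_2=O(1)$, which is exactly what renders every term bounded with integrable $\mu$-derivative so that Van der Corput applies. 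The cancelation you invoke between the $R^{+}_V$ and $R^{-}_V$ contributions is a genuinely different device and is used in the paper only for the eigenvalue case at $16$ (Proposition \ref{propo of eigenvalue 2}); it is not the mechanism at $\mu=0$. An analogous argument at $\mu=2$ requires the conjugation by $J$ and the moment conditions defining $\widetilde{S}_0,\widetilde{S}_1$ (Lemma \ref{cancelation lemma16}), which your sketch also omits.
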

\begin{remark}\label{Rem of M}{\rm Some remarks on Theorem \ref{main-theorem} are given as follows.}
{\rm \begin{itemize}
\begin{comment}
    \end{comment}
\item When $V\equiv0$, the estimates \eqref{eitH decay-estimate} and  \eqref{cos-sin decay-estimate}  are sharp, as shown in Theorems \ref{D-E for free case} and \ref{theorem of strichartz estimate}. Moreover, the decay rate remains unchanged in the presence of resonances.
  \vskip0.1cm
\item Although  two resonance types of $H$ may coexist, we emphasize that the decay rate of the potential $V$ above is fundamentally determined by the types of zero energy. The required rate $\beta$ of $V$ in Theorem \ref{main-theorem}, derived from the asymptotical expansion of $R^\pm_V(\mu^4)$ at $\mu=0$ (see Theorem \ref{Asymptotic expansion theorem} below), might not be optimal. Hence, it would be interesting to investigate if there exists a minimal decay condition on $V$ that satisfies the decay estimates above.
\vskip0.1cm
\item We note that the absence of positive eigenvalues has been an indispensable assumption in deriving all kinds of dispersive estimates. For $H=\Delta^2+V$ on the lattice $\Z$, Horishima and L\H{o}rinczi have demonstrated in \cite{HL14} that $H$ has no eigenvalues in the interval $\mcaI$ for $V(n)=c\delta_0(n)$~$(c\neq 0)$, the $\delta$-potential with mass $c$ concentrated on $n=0$. On the other hand, in contrast to the extensive results on the eigenvalue problems for discrete Schr\"{o}dinger operators $-\Delta+V$, cf.\cite{HMO11,HSSS12,Kru12,IM14,BS12,HHNO16,Man19,Liu19,HMK22,Liu22,LMT24}, more studies are needed to establish the absence of positive eigenvalue for higher order cases.
\end{itemize}
}
\end{remark}

\begin{comment}
\begin{remark}\label{remark of asyp expan theorem}
{\rm %(1) Since the asymptotic expansions of $\left(M^{\pm}(\mu)\right)^{-1}$ for the cases where $0$ and $16$ are regular points of $H$ have already been established in \cite[Theorem 2.7]{HY25}, this paper focuses on the resonant cases. The detailed proof will provided in Section \ref{proof of Asy theorem}.

{\rm(1)} %In applications, further information about the operators $A^{\pm,j}_{3}(j=0,1,2)$ and $B^{\pm,\ell}_{1}(\ell=0,1)$ in the expansions of $\left(M^{\pm}(\mu)\right)^{-1}$ may be required, such as the $\ell^p$ boundedness of wave operators.
We point out that the operators $A^{\pm,j}_{3}(j=0,1,2)$ and $B^{\pm,\ell}_{1}(\ell=0,1)$ actually have the following more concrete forms, respectively (which will be useful in discussing the $\ell^p$ boundedness of wave operators):
\begin{align*}
 A^{\pm,j}_{3}&=P^{\pm}_1+QA^{\pm,j}_{31}+A^{\pm,j}_{32}Q+A^{\pm,j}_{33},\quad P^{\pm}_1=-\frac{2(1\pm i)}{\|V\|_{\ell^1}}P,\quad j=0,1,2,\\
B^{\pm,\ell}_1&=\widetilde{P}^{\pm}_1+\widetilde{Q}B^{\pm,\ell}_{11}+B^{\pm,\ell}_{12}\widetilde{Q}, \quad\widetilde{P}^{\pm}_1=\frac{\mp 32i}{\|V\|_{\ell^1}}\widetilde{P},\quad\ell=0,1,
\end{align*}
where $A^{\pm,0}_{33}=0$, $A^{\pm,j}_{31},A^{\pm,j}_{32},A^{\pm,j}_{33},B^{\pm,\ell}_{11}$ and $B^{\pm,\ell}_{12}$ are $\mu$-independent bounded operators on $\ell^2(\Z)$.
\end{remark}
\end{comment}

\subsection{The idea of proof}\label{subsec of idea of proof}
In this subsection, we outline the main ideas behind the proof of Theorem \ref{main-theorem}.
Throughout this paper, we denote by $K$ the operator with kernel $K(n,m)$, i.e.,
 $$(Kf)(n):=\sum\limits_{m\in\Z}^{}K(n,m)f(m).$$

To derive Theorem \ref{main-theorem}, based on the following two formulas:
 \begin{equation*}
{\rm cos}(t\sqrt H)=\frac{e^{-it\sqrt H}+e^{it\sqrt H}}{2},\quad \frac{{\rm sin}(t\sqrt H)}{t\sqrt H}=\frac{1}{2t}\int_{-t}^{t}{\rm cos}\left(s\sqrt H\right)ds,
\end{equation*}
 it suffices to show that the estimates \eqref{eitH decay-estimate} and \eqref{cos-sin decay-estimate} hold for $e^{-itH}P_{ac}(H)$ and $e^{-it\sqrt{H}}P_{ac}(H)$, respectively. Using Stone's formula, their kernels are expressed as follows:
\begin{align}
(e^{-itH}P_{ac}(H))(n,m)&=\frac{2}{\pi i}\int_{0}^{2}e^{-it\mu^4}\mu^3\big[R^{+}_V(\mu^4)-R^{-}_V(\mu^4)\big](n,m)d\mu,\label{kernel of eitHPacH}\\
%\end{equation}
%\begin{equation}
\big(e^{-it\sqrt H}P_{ac}(H)\big)(n,m)&=\frac{2}{\pi i}\int_{0}^{2}e^{-it\mu^2}\mu^3\big[R^{+}_V(\mu^4)-R^{-}_V(\mu^4)\big](n,m)d\mu.\label{kernel of eitsqrtHPacH}
\end{align}
Notice that the  difference between \eqref{kernel of eitHPacH} and \eqref{kernel of eitsqrtHPacH} lies in the power of $\mu$ in the exponent. This change affects the decay rate, which shifts from $\frac{1}{4}$ to $\frac{1}{3}$.
%And the change in decay rate is demonstrated in the proof of Theorem \ref{D-E for free case} dealing with the estimates for free case, with more details provided in Section \ref{Sec of decay for free}.
 In the following discussion, due to the similarity, we only address three fundamental problems that arise in the estimate of \eqref{kernel of eitHPacH}.
\subsubsection{Limiting absorption principle}\label{subsubsection of LAP}
According to \eqref{kernel of eitHPacH}, the first difficulty is to show the existence of boundary value $R^{\pm}_{V}(\mu^4)$ for any $\mu\in(0,2)$.

It is well-known that the limiting absorption principle (LAP) generally states that the resolvent may converge in a suitable way as $z$ approaches spectrum points, which plays a fundamental role in spectral and scattering theory. For instance, see Agmon's work \cite{Agm75} for the Schr\"{o}dinger operator $-\Delta_{\R^d}+V$ in $\R^{d}$. In the discrete setting, the LAP for discrete Schr\"{o}dinger operators $-\Delta_{\Z^d}+V$ on $\Z^d$ has received much attention (cf.\cite{Esk67,SV01,BS98,BS99,KKK06,KKV08,IK12,Man17,PS08} and references therein).

However, to the best of our knowledge, it seems that LAP is  open for higher-order Schr\"{o}dinger operators on the lattice $\Z^d$. Hence, based on the commutator estimates \cite{JMP84} and Mourre theory~(cf. \cite{ABG96,Mou81,Mou83}),  we will first demonstrate that under appropriate conditions on $V$, $R^{\pm}_{V}(\mu^4)$ for $H=\Delta^2+V$ exist as bounded operators from $\ell^{2,s}(\Z)$ to $\ell^{2,-s}(\Z)$ for some $s$~(see Theorem \ref{LAP-theorem}).

\subsubsection{Asymptotic expansions of $R^{\pm}_{V}(\mu^4)$}\label{subsubsection of asy}
As indicated in Theorem \ref{LAP-theorem}, the second challenge lies in deriving the asymptotic behaviors of $R^{\pm}_{V}(\mu^4)$ near $\mu=0$ and $\mu=2$.

To this end, let $R^{\pm}_{0}(\mu^4)$ be the boundary value of the free resolvent $R_{0}(z):=(\Delta^2-z)^{-1}$, and define
\begin{equation}\label{Mpmmu}
M^{\pm}(\mu)=U+vR^{\pm}_{0}(\mu^4)v,\quad\mu\in(0,2),\quad U={\rm sign}\left(V(n)\right),\quad v(n)=\sqrt{|V(n)|},
\end{equation}
which is invertible on $\ell^2(\Z)$ by the assumption of absence of positive eigenvalues in $\mcaI$ and Theorem \ref{LAP-theorem}. Then
 \begin{equation}\label{reso identity 1}
R^{\pm}_V(\mu^4)=R^{\pm}_{0}(\mu^4)-R^{\pm}_0(\mu^4)v\left(M^{\pm}(\mu\right))^{-1}vR^{\pm}_0(\mu^4),
\end{equation}
from which we turn to study the asymptotic expansions of $\left(M^{\pm}(\mu)\right)^{-1}$ near $\mu=0$ and $\mu=2$.

The basic idea behind the expansions of $\left(M^{\pm}(\mu)\right)^{-1}$ is the Neumann  expansion, which in turn depends on the expansion of $R^{\pm}_{0}(\mu^4)$. In this respect, Jensen and Kato initiated their seminal work in \cite{JK79} for Schr\"odinger operator $-\Delta_{\R^3}+V$ on $\R^3$. Since then, the method has been widely applied (cf. \cite{JN01, SWY22}).
When considering the discrete bi-Laplacian $\Delta^2$ on the lattice $\Z$, we will face two distinct difficulties.
Firstly, compared with Laplacian $-\Delta$ on $\Z$, the threshold $0$ now is a {\bf degenerate critical value}~(i.e., $M(0)=M^{'}(0)=M^{''}(0)=0$, where the symbol $M(x)=(2-2{\rm cos}x)^2$ is defined in \eqref{unitary equivalent}). This degeneracy leads to additional steps to expand the $\left(M^{\pm}(\mu)\right)^{-1}$. Secondly, in contrast to the continuous analogue \cite{SWY22}, we encounter another threshold 16~(i.e., corresponding to $\mu=2$).

More specifically, the kernels of boundary values $R^{\pm}_0(\mu^4)$, as presented in \eqref{kernel of R0 boundary}, are given by
 \begin{align*}
  \ R^{\pm}_0(\mu^4,n,m)=\frac{1}{4\mu^3}\left(\frac{\pm ie^{\mp i\theta_{+}|n-m|}}{\sqrt{1-\frac{\mu^2}{4}}}-\frac{e^{b(\mu)|n-m|}}{\sqrt{1+\frac{\mu^2}{4}}}\right),
  \end{align*}
  where $\theta_{+}\in(-\pi,0)$ satisfies ${\rm cos}\theta_{+}=1-\frac{\mu^2}{2}$ and $b(\mu)={\rm ln} \big(1+\frac{\mu^2}{2}-\mu(1+\frac{\mu^2}{4})^{\frac{1}{2}}\big)$. This can be formally expanded near $\mu=0$ and $\mu=2$ respectively as follows:
   \begin{align*}
  R^{\pm}_0(\mu^4,n,m)\thicksim \sum\limits_{j=-3}^{+\infty}\mu^{j}G^{\pm}_j(n,m),\quad
(JR^{\pm}_0(2-\mu)^4J)(n,m)\thicksim\sum\limits_{j=-1}^{+\infty}\mu^{\frac{j}{2}}\widetilde{G}^{\pm}_j(n,m),\quad \mu\rightarrow0,
 \end{align*}
where $G^{\pm}_j(n,m)$, $\widetilde{G}^{\pm}_j(n,m)$ are specific kernels defined in \eqref{expan coeffie of Gpm} and \eqref{expan coeffie of Gtutapm}, respectively, and $J$ is a unitary operator on $\ell^2(\Z)$ given by
\begin{equation}\label{J}
(J\phi)(n)=(-1)^{n}\phi(n), \quad n\in\Z,\ \phi\in\ell^2(\Z).
\end{equation}

 Given these expansions of $R_0^\pm(\mu^4)$ above, the asymptotic expansions of $(M^{\pm}(\mu))^{-1}$ can be derived near $\mu=0$ and $\mu=2$ for all resonances types described in Definition \ref{defin of resonance types 0 and 16}. Prior to giving these expansions, we introduce some involved spaces and operators. Let
$$\left<f,g\right>:=\sum\limits_{m\in\Z}^{}f(m)\overline{g(m)},\quad f,g\in\ell^2(\Z),$$
and
\begin{equation}\label{P}
P:=\left\|V\right\|^{-1}_{\ell^1}\left<\cdot,v\right>v,\quad \widetilde{P}:=JPJ^{-1}=\left\|V\right\|^{-1}_{\ell^1}\left<\cdot,\tilde{v}\right>\tilde{v},
\end{equation}
  be the orthogonal projections onto the span of $v$, $\tilde{v}$ in $\ell^2(\Z)$, respectively, where $\tilde{v}=Jv$.
Denote
\begin{align}\label{T0,T0tuta}
T_0=U+vG_0v,\quad \widetilde{T}_0=U+\tilde{v}\widetilde{G}_0\tilde{v},
\end{align}
where $G_0$ and $\widetilde{G}_0$ are integral operators with kernels defined in \eqref{expan coeffie of Gpm}, \eqref{expan coeffie of Gtutapm}, respectively.
\begin{definition}\label{definition of Sj}
{ Let $Q=I-P$~{\rm(}where $I$ is the identity operator{\rm)} and $v_k(n)=n^kv(n),k\in\N$. For any $j\in\{0,1,2,3\}$, define $S_j$ as the orthogonal projection onto the space $S_j\ell^2(\Z)$, where}
\begin{itemize}
%\item[(i)]$Q\ell^2(\Z):=\left\{f\in\ell^2(\Z):\left<f,v\right>=0\right\}=\left({\rm span}\{v\}\right)^{\bot}$,
\item[{\rm(i)}]$S_0\ell^2(\Z):=\big\{f\in\ell^2(\Z):\left<f,v_{k}\right>=0,\ k=0,1\big\}=\left({\rm span}\{v,v_1\}\right)^{\bot}$,
\item[{\rm(ii)}]$S_1\ell^2(\Z):=\big\{f\in\ell^2(\Z):\left<f,v_{k}\right>=0,\ k=0,1,\ S_0T_0f=0\big\}$,
\item[{\rm(iii)}] $S_2\ell^2(\Z):=\big\{f\in\ell^2(\Z):\left<f,v_{k}\right>=0,\ k=0,1,2,\ QT_0f=0\big\}$,
\item[{\rm(iv)}] $S_3\ell^2(\Z):=\big\{f\in\ell^2(\Z):\left<f,v_{k}\right>=0,\ k=0,1,2,3,\ T_0f=0\big\}$.
%\item[(vi)] $\widetilde{Q}\ell^2(\Z):=\big\{f\in\ell^2(\Z):\left<f,\tilde{v}\right>=0\big\}=\left({\rm span}\{\tilde{v}\}\right)^{\bot}$,
%\item[(vii)]$\widetilde{S}_0\ell^2(\Z):=\big\{f\in\ell^2(\Z):\left<f,\tilde{v}\right>=0,\ \widetilde{Q}\widetilde{T}_0f=0\big\}$,
%\item[(viii)]$\widetilde{S}_1\ell^2(\Z):=\big\{f\in\ell^2(\Z):\left<f,\tilde{v}_k\right>=0,\ \tilde{v}_k=Jv_k,\ k=0,1,\ \widetilde{T}_0f=0\big\}$,
\end{itemize}
\end{definition}
\begin{definition}\label{definition of Sjtilde}
Let $\widetilde{Q}=I-\widetilde{P}$, $\tilde{v}_k=Jv_k,k\in\N$ and $\widetilde{G}_2$ be the integral operator with kernel defined in \eqref{expan coeffie of Gtutapm}. For any $j\in\{0,1,2\}$, define $\widetilde{S}_j$ as the orthogonal projection onto the space $\widetilde{S}_j\ell^2(\Z)$, where
\begin{itemize}
%\item[(i)] $\widetilde{Q}\ell^2(\Z):=\big\{f\in\ell^2(\Z):\left<f,\tilde{v}\right>=0\big\}=\left({\rm span}\{\tilde{v}\}\right)^{\bot}$,
\item[{\rm(i)}]$\widetilde{S}_0\ell^2(\Z):=\big\{f\in\ell^2(\Z):\left<f,\tilde{v}\right>=0,\ \widetilde{Q}\widetilde{T}_0f=0\big\}$,
\item[{\rm(ii)}]$\widetilde{S}_1\ell^2(\Z):=\big\{f\in\ell^2(\Z):\left<f,\tilde{v}_k\right>=0,\ k=0,1,\ \widetilde{T}_0f=0\big\}$,
\item[{\rm(iii)}]$\widetilde{S}_2\ell^2(\Z):=\big\{f\in\ell^2(\Z):\left<f,\tilde{v}_k\right>=0,\ k=0,1,\ \widetilde{S}_1\tilde{v}\widetilde{G}_2\tilde{v}\widetilde{S}_1f=0\big\}$.
\end{itemize}
\end{definition}

\begin{remark}\mbox\\
\begin{itemize}
\item {\rm  Notice that
$$Q\ell^2(\Z)=\left({\rm span}\{v\}\right)^{\bot},\quad \widetilde{Q}\ell^2(\Z)=\left({\rm span}\{\tilde{v}\}\right)^{\bot},$$
and the spaces defined above have the following inclusion relations:
\begin{align*}
&S_3\ell^2(\Z)\subseteq S_2\ell^2(\Z)\subseteq S_1\ell^2(\Z)\subseteq S_0\ell^2(\Z)\subseteq Q\ell^2(\Z),\\
&\widetilde{S}_2\ell^2(\Z)\subseteq\widetilde{S}_1\ell^2(\Z)\subseteq \widetilde{S}_0\ell^2(\Z) \subseteq \widetilde{Q}\ell^2(\Z).
\end{align*}}

\item {\rm From Lemma \ref{lemma of kernel and operaor} later, we will further establish that $S_3\ell^2(\Z)=\{0\}=\widetilde{S}_2\ell^2(\Z)$, which concludes the entire inversion procedure for $(M^{\pm}(\mu))^{-1}$. For more details, see Section \ref{proof of Asy theorem}.
}
\end{itemize}
\end{remark}
We point out that these projection spaces in Definitions \ref{definition of Sj} and \ref{definition of Sjtilde} will be used to characterize the zero/sixteen resonance types of $H$, as detailed in Theorem \ref{relation space and resonance types} below.

 \begin{theorem}\label{relation space and resonance types}
 Let $H=\Delta^2+V$ with $|V(n)|\lesssim \left<n\right>^{-\beta}$ for $\beta>9$. Then we have
\begin{itemize}
\item[{\rm(i)}] $0$ is a regular point of $H$ if and only if $S_1\ell^2(\Z)=\{0\}$.
\item[{\rm(ii)}] $0$ is a first kind resonance of $H$ if and only if $S_1\ell^2(\Z)\neq\{0\}$ and $S_2\ell^2(\Z)=\{0\}$.
\item[{\rm(iii)}] $0$ is a second kind resonance of $H$ if and only if $S_2\ell^2(\Z)\neq\{0\}$.

\vskip0.1cm
\item[{\rm(iv)}] $16$ is a regular point of $H$ if and only if $\widetilde{S}_0\ell^2(\Z)=\{0\}$.
\item[{\rm(v)}] $16$ is a resonance of $H$ if and only if $\widetilde{S}_0\ell^2(\Z)\neq\{0\}$ and $\widetilde{S}_{1}\ell^2(\Z)=\{0\}$.
\item[{\rm(vi)}] $16$ is an eigenvalue of $H$ if and only if $\widetilde{S}_{1}\ell^2(\Z)\neq\{0\}$.
\end{itemize}
\end{theorem}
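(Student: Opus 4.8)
The plan is to build, at each threshold, a Birman--Schwinger dictionary between resonance functions of $H$ and the projection spaces of Definitions \ref{definition of Sj} and \ref{definition of Sjtilde}, the bridge being the substitution $\phi\longleftrightarrow f:=Uv\phi$. At threshold $0$: if $H\phi=0$ then $V\phi=vf$, hence $\Delta^2\phi=-vf$, which on the Fourier side is $M(x)\hat\phi(x)=-\widehat{vf}(x)$ with $M(x)=(2-2\cos x)^2$ vanishing to order exactly $4$ at $x=0$ and nowhere else on $\T$. Reading this identity off, I would show that $\phi\in W_{3/2}(\Z)$, $W_{1/2}(\Z)$, $\ell^2(\Z)$ forces $\widehat{vf}$ to vanish at $x=0$ to order $2$, $3$, $4$ respectively, equivalently $\langle f,v_k\rangle=0$ for $k\le1$, $k\le2$, $k\le3$; these are exactly the orthogonality conditions opening the definitions of $S_0,S_1,S_2,S_3$. (The compactly supported example given above already exhibits this: the associated $f$ is orthogonal to $v$, $v_1$, $v_2$.)

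The converse, solvability half is the heart of the matter. Using the expansion $R_0^\pm(\mu^4)\sim\sum_{j\ge-3}\mu^jG_j$ and the relation $\Delta^2G_0=I$ (read off by matching powers of $\mu$ in $(\Delta^2-\mu^4)R_0(\mu^4)=I$), for $f$ satisfying the relevant orthogonality relations the candidate $\phi_0:=-G_0vf$ solves $\Delta^2\phi_0=-vf$ and, crucially, lies in the asserted weighted space: the kernel growth $G_0(n,m)\sim|n-m|^3$ contributes only the polynomial $n^3\langle f,v\rangle-3n^2\langle f,v_1\rangle+3n\langle f,v_2\rangle-\langle f,v_3\rangle$ at leading order, which the orthogonality conditions cut down to a degree-$\le1$ polynomial, a constant, or nothing, the remaining non-polynomial ``tails'' being absorbed by the decay hypothesis on $V$. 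The solution set in that space is $\phi_0$ plus an element of $\ker\Delta^2$ of admissible growth (a polynomial of degree $\le1$, a constant, or only $0$), and requiring the genuine solution to also satisfy $Uv\phi=f$ (so that $H\phi=0$) becomes, after a short computation with $T_0=U+vG_0v$, the condition $T_0f\in\operatorname{span}\{v,v_1\}$, $T_0f\in\operatorname{span}\{v\}$, or $T_0f=0$, i.e. $S_0T_0f=0$, $QT_0f=0$, or $T_0f=0$. Combining the two halves: a nonzero solution of $H\phi=0$ in $W_{3/2}(\Z)$ exists iff $S_1\ell^2(\Z)\neq\{0\}$, in $W_{1/2}(\Z)$ iff $S_2\ell^2(\Z)\neq\{0\}$, in $\ell^2(\Z)$ iff $S_3\ell^2(\Z)\neq\{0\}$. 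Since Lemma \ref{lemma of kernel and operaor} gives $S_3\ell^2(\Z)=\{0\}$ for $\beta>9$ and ``regular / first kind / second kind'' is precisely the partition induced by the chain $S_2\ell^2(\Z)\subseteq S_1\ell^2(\Z)$, parts (i)--(iii) follow; a minor loose end is to check that a nonzero $f$ produces a nonzero $\phi$ and conversely, which is immediate unless $V$ is supported on at most one site, a case treated by hand.

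For threshold $16$ I would conjugate by the unitary $J$ of \eqref{J}: since $J\Delta^2J$ has symbol $(2+2\cos x)^2$, which equals $16$ to order exactly $2$ at $x=0$, the equation $H\phi=16\phi$ becomes a threshold-$0$ problem whose critical value is now non-degenerate, with free resolvent expansion $JR_0^\pm((2-\mu)^4)J\sim\mu^{-1/2}\widetilde G_{-1}+\widetilde G_0+\mu^{1/2}\widetilde G_1+\mu\,\widetilde G_2+\cdots$. Running the same argument with $\tilde v=Jv$ and $\widetilde T_0=U+\tilde v\widetilde G_0\tilde v$ in place of $v$ and $T_0$ --- there is now a single singular term to clear, so the hierarchy is one level shorter --- yields a nonzero solution of $H\phi=16\phi$ in $W_{1/2}(\Z)$ iff $\widetilde S_0\ell^2(\Z)\neq\{0\}$ and in $\ell^2(\Z)$ iff $\widetilde S_1\ell^2(\Z)\neq\{0\}$, the auxiliary projection $\widetilde S_2$ (built from $\widetilde G_2$) entering only to close the inversion and vanishing by Lemma \ref{lemma of kernel and operaor}; parts (iv)--(vi) follow.

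The step I expect to be the main obstacle is this solvability half, specifically verifying that the naive candidate $-G_0vf$ (resp. $-\widetilde G_0\tilde vf$) genuinely lands in the asserted weighted space and not merely a larger one. This requires (a) the precise polynomial-in-$|n-m|$ structure of the kernels $G_0$ and $\widetilde G_0$, extracted from the explicit formula for $R_0^\pm(\mu^4,n,m)$ in \eqref{kernel of R0 boundary}, so that the leading growth is visibly annihilated by the orthogonality relations, and (b) quantitative control of the non-polynomial remainders, which is where the decay threshold $\beta>9$ is consumed. A secondary bookkeeping point is keeping track of which degrees of $\ker\Delta^2$ are admissible in each of $W_{3/2}$, $W_{1/2}$, $\ell^2$, since this is exactly what turns the bare consistency relation into the correctly projected statements $S_0T_0f=0$ versus $QT_0f=0$ versus $T_0f=0$.
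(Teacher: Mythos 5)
Your proposal is correct and follows essentially the same route as the paper's proof (Lemma \ref{charac of solution and space}): the dictionary $f=Uv\phi$, the representation $\phi=-G_0vf+(\text{affine / constant / zero element of }\ker\Delta^2)$, the consistency conditions $S_0T_0f=0$, $QT_0f=0$, $T_0f=0$ read off from which elements of $\ker\Delta^2$ are admissible in $W_{3/2}$, $W_{1/2}$, $\ell^2$, the kernel estimates on $G_0vf$ driven by the moment conditions, conjugation by $J$ at threshold $16$, and the appeal to $S_3\ell^2(\Z)=\{0\}$ from Lemma \ref{lemma of kernel and operaor}. The one place you genuinely diverge is the necessity of the moment conditions $\langle f,v_k\rangle=0$: you divide $M(x)\hat\phi=-\widehat{vf}$ on the Fourier side and count vanishing orders at $x=0$, whereas the paper multiplies $V\phi=-\Delta^2\phi$ by $n^k\eta(\delta n)$ and sums by parts before letting $\delta\to0$. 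Your route works but requires justifying the distributional division by the degenerate symbol $M(x)\sim x^4$ --- one must note that the homogeneous solutions $\delta_0^{(j)}$ correspond exactly to the admissible polynomials in each weighted class, so they cannot cancel an $x^{-3}$- or $x^{-4}$-type singularity of the particular solution; the paper's cutoff argument avoids this and is more elementary. You are also right, and in fact more careful than the paper, that injectivity of $\phi\mapsto Uv\phi$ on $\ker H\cap W_{3/2}(\Z)$ can only fail when $\operatorname{supp}V$ lies in a single site, since $Uv\phi=0$ together with $H\phi=0$ forces $\phi$ to be an affine function vanishing on $\operatorname{supp}V$.
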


With these characterizations, the expansions of $\left(M^{\pm}(\mu)\right)^{-1}$ can be obtained as follows.
\begin{theorem}\label{Asymptotic expansion theorem}
 Let $H=\Delta^2+V$ with $|V(n)|\lesssim \left<n\right>^{-\beta}$ for some $\beta>0$. %and assume that $H$ has no eigenvalues in $\mcaI$,
Then for any $0<\mu_0\ll1$, we have the following asymptotic expansions on $\ell^2(\Z)$ for $0<\mu<\mu_0$:
\begin{itemize}
\item[{\rm(i)}] if $0$ is a regular point of $H$ and $\beta>15$, then
\begin{align}\label{asy expan of regular 0}
\left(M^{\pm}(\mu)\right)^{-1}=S_0A_{01}S_0+\mu QA^{\pm,0}_{11}Q+\mu^2(QA^{\pm,0}_{21}Q+S_0A^{\pm,0}_{22}+A^{\pm,0}_{23}S_0)+\mu^3A^{\pm,0}_3+\Gamma^{0}_{4}(\mu),
\end{align}
\item [{\rm(ii)}] if $0$ is a first kind resonance of $H$ and $\beta>19$, then
\begin{align}\label{asy expan of 1st 0}
\begin{split}
\left(M^{\pm}(\mu)\right)^{-1}&=\mu^{-1}S_{1}A^{\pm}_{-1}S_{1}+\big(S_0A^{\pm,1}_{01}Q+QA^{\pm,1}_{02}S_0\big)+\mu\big(S_0A^{\pm,1}_{11}+A^{\pm,1}_{12}S_0+QA^{\pm,1}_{13}Q\big)\\
&\quad+\mu^2\big(QA^{\pm,1}_{21}+A^{\pm,1}_{22}Q\big)+\mu^3A^{\pm,1}_{3}+\Gamma^{1}_{4}(\mu),
\end{split}
\end{align}
%\vskip0.2cm
\item [{\rm(iii)}] if $0$ is a second kind resonance of $H$ and $\beta>27$, then
\begin{align}\label{asy expan 2nd 0}
\left(M^{\pm}(\mu)\right)^{-1}&=\frac{S_{2}A^{\pm}_{-3}S_{2}}{\mu^3}+\frac{S_2A^{\pm}_{-2,1}S_0+S_0A^{\pm}_{-2,2}S_2}{\mu^2}
+\frac{S_2A^{\pm}_{-1,1}Q+QA^{\pm}_{-1,2}S_2+S_0A^{\pm}_{-1,3}S_0}{\mu}\notag\\
 &\quad+\big(S_2A^{\pm,2}_{01}+A^{\pm,2}_{02}S_2+QA^{\pm,2}_{03}S_0+S_0A^{\pm,2}_{04}Q\big)+\mu\big(S_0A^{\pm,2}_{11}+A^{\pm,2}_{12}S_0+QA^{\pm,2}_{13}Q\big)\notag\\
&\quad+\mu^2\big(QA^{\pm,2}_{21}+A^{\pm,2}_{22}Q\big)+\mu^3A^{\pm,2}_{3}+\Gamma^{2}_{4}(\mu),
\end{align}
\item[{\rm(iv)}] if $16$ is a regular point of $H$ and $\beta>7$, then
\begin{align}\label{asy expan regular 2}
\left(M^{\pm}(2-\mu)\right)^{-1}=\widetilde{Q}B_{01}\widetilde{Q}+
\mu^{\frac{1}{2}}B^{\pm,0}_1 +\Gamma^{0}_{1}(\mu),
\end{align}
\item [{\rm(v)}] if $16$ is a resonance of $H$ and $\beta>11$, then
\begin{align}\label{asy expan resonance 2}
\left(M^{\pm}(2-\mu)\right)^{-1}=\mu^{-\frac{1}{2}}\widetilde{S}_0B^{\pm}_{-1}\widetilde{S}_0+\big(\widetilde{S}_0B^{\pm,1}_{01}+B^{\pm,1}_{02}\widetilde{S}_0+\widetilde{Q}B^{\pm,1}_{03}\widetilde{Q}\big)+
\mu^{\frac{1}{2}}B^{\pm,1}_1 +\Gamma^{1}_{1}(\mu),
\end{align}
\item [{\rm(vi)}] if $16$ is an eigenvalue of $H$ and $\beta>15$, then
\begin{align}\label{asy expan eigenvalue 2}
\left(M^{\pm}(2-\mu)\right)^{-1}&=\mu^{-1}\widetilde{S}_1B_{-2}\widetilde{S}_1+\mu^{-\frac{1}{2}}\big(\widetilde{S}_0B^{\pm}_{-1,1}\widetilde{Q}+\widetilde{Q}B^{\pm}_{-1,2}\widetilde{S}_0\big)\notag\\
&\quad +\big(\widetilde{Q}B^{\pm,2}_{01}+B^{\pm,2}_{02}\widetilde{Q}\big)+
\mu^{\frac{1}{2}}B^{\pm,2}_1 +\Gamma^{2}_{1}(\mu),
\end{align}
\end{itemize}
where
  $A_{01},A^{\pm}_{-1},A^{\pm}_{-3},B_{01},B^{\pm}_{-1},B_{-2},A^{\pm,i}_{jk},A^{\pm}_{j,k},A^{\pm,i}_3,B^{\pm,i}_{jk},B^{\pm}_{j,k},B^{\pm,i}_{1}$ are $\mu$-independent bounded operators on $\ell^2(\Z)$ defined in \eqref{asy expan of regular 0}$\sim$\eqref{asy expan eigenvalue 2} and $\Gamma^{i}_\ell(\mu)$ are $\mu$-dependent operators satisfying the following estimates:
   \begin{equation}\label{estimate of Gamma}
\|\Gamma^{i}_{\ell}(\mu)\|_{\ell^2\rightarrow\ell^2}+\mu\big\|\partial_{\mu}(\Gamma^{i}_{\ell}(\mu))\big\|_{\ell^2\rightarrow\ell^2}\lesssim\mu^{\ell}.
\end{equation}
\end{theorem}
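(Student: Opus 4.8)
The plan is to read off all six expansions from the symmetric resolvent identity \eqref{reso identity 1}--\eqref{Mpmmu} by substituting the expansions of the free resolvent near the two thresholds into $M^{\pm}(\mu)$ and then inverting $M^{\pm}(\mu)$ through an iterated application of the Jensen--Nenciu inversion lemma \cite{JN01} (equivalently, repeated Feshbach--Schur reduction), in the spirit of the continuous fourth-order analysis \cite{SWY22}. Since the behaviour near $\mu=0$ and near $\mu=2$ is completely decoupled, I would treat the two thresholds in turn. The first step is purely algebraic: inserting the expansions $R^{\pm}_0(\mu^4)\sim\sum_{j\ge-3}\mu^{j}G^{\pm}_j$ and $JR^{\pm}_0((2-\mu)^4)J\sim\sum_{j\ge-1}\mu^{j/2}\widetilde{G}^{\pm}_j$ (with coefficients \eqref{expan coeffie of Gpm} and \eqref{expan coeffie of Gtutapm}) into \eqref{Mpmmu} gives, for any finite $N$ permitted by the decay of $V$,
\begin{align*}
M^{\pm}(\mu)&=\sum_{j=-3}^{N}\mu^{j}\,vG^{\pm}_jv+\Gamma(\mu),\qquad \mu\to0^{+},\\
JM^{\pm}(2-\mu)J&=\sum_{j=-1}^{N}\mu^{j/2}\,\tilde{v}\,\widetilde{G}^{\pm}_j\,\tilde{v}+\widetilde{\Gamma}(\mu),\qquad \mu\to0^{+},
\end{align*}
where $\Gamma,\widetilde{\Gamma}$ obey bounds of the type \eqref{estimate of Gamma} once $\beta$ is large enough. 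The structural fact that drives everything is that the \emph{singular} coefficients, once flanked by $v$ (resp.\ $\tilde{v}$), are of finite rank, because $G^{\pm}_{-3},G^{\pm}_{-2},G^{\pm}_{-1}$ (resp.\ $\widetilde{G}^{\pm}_{-1}$) are low-degree polynomials in $n-m$: in particular $vG^{\pm}_{-3}v$ is a nonzero scalar multiple of $P$, the ranges of $vG^{\pm}_{-2}v$ and $vG^{\pm}_{-1}v$ lie in ${\rm span}\{v,v_1,v_2\}$, and the range of $\tilde{v}\widetilde{G}^{\pm}_{-1}\tilde{v}$ is ${\rm span}\{\tilde{v}\}$. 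This is exactly what produces the projections $P,Q,S_0,S_1,S_2,S_3$ and $\widetilde{P},\widetilde{Q},\widetilde{S}_0,\widetilde{S}_1,\widetilde{S}_2$ of Definitions \ref{definition of Sj}--\ref{definition of Sjtilde}.

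For the threshold $0$, since $M^{\pm}(\mu)$ is invertible on $\ell^2(\Z)$ for every fixed $\mu\in(0,2)$ by Theorem \ref{LAP-theorem} and the absence of embedded eigenvalues, it suffices to expand the inverse, and I would do this by running the inversion lemma through the nested chain
\[
P\ \longrightarrow\ Q\ \longrightarrow\ S_0\ \longrightarrow\ S_1\ \longrightarrow\ S_2\ \longrightarrow\ S_3=\{0\}.
\]
One first peels off the rank-one $\mu^{-3}$ singularity using $Q$ as complementary projection: the piece on $P\ell^2(\Z)$ is a nonzero multiple of $P$ up to $O(\mu)$, hence invertible, and the problem is transferred to a $\mu$-dependent operator on $Q\ell^2(\Z)$ whose leading part (after normalising by the relevant power of $\mu$) is governed by $QT_0Q$ together with the contributions of $vG^{\pm}_{-2}v$ and $vG^{\pm}_{-1}v$. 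Iterating, one descends to $S_0$, then $S_1$, then $S_2$, applying the inversion lemma at each stage and collecting the finite-rank pieces; the procedure closes because $S_3\ell^2(\Z)=\{0\}$ by Lemma \ref{lemma of kernel and operaor}. By Theorem \ref{relation space and resonance types} the cascade terminates precisely where the resonance type dictates: if $0$ is a regular point the reduction is already nonsingular at the $S_0$ stage, giving \eqref{asy expan of regular 0}; if $0$ is a first kind resonance a genuine inversion on $S_1\ell^2(\Z)$ appears, producing the term $\mu^{-1}S_1A^{\pm}_{-1}S_1$ and \eqref{asy expan of 1st 0}; if $0$ is a second kind resonance the cascade runs all the way to $S_2\ell^2(\Z)$, producing the $\mu^{-3},\mu^{-2},\mu^{-1}$ terms and \eqref{asy expan 2nd 0}. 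The explicit operators $A^{\pm}_{-3},A^{\pm}_{-1},A^{\pm,i}_{jk},\dots$ are read off by tracking all coefficients through each Feshbach step, each of which is routine Neumann-series bookkeeping. The threshold $16$ is handled in the same way after the substitution $\mu\mapsto 2-\mu$ and conjugation by $J$, but the cascade $\widetilde{P}\to\widetilde{Q}\to\widetilde{S}_0\to\widetilde{S}_1\to\widetilde{S}_2=\{0\}$ is shorter because the free resolvent is only $\mu^{-1/2}$-singular at $\mu=2$; all powers become half-integers and Theorem \ref{relation space and resonance types} again pins down the termination, yielding \eqref{asy expan regular 2}, \eqref{asy expan resonance 2} and \eqref{asy expan eigenvalue 2}.

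It then remains to justify the remainder estimates \eqref{estimate of Gamma}. I would obtain these by combining the Taylor-remainder bounds for the expansions of $R^{\pm}_0$ near the thresholds --- each successive term carries an extra factor $|n-m|\lesssim\langle n\rangle\langle m\rangle$, which is absorbed by $v(n)v(m)\lesssim\langle n\rangle^{-\beta/2}\langle m\rangle^{-\beta/2}$, so that a fixed finite number of $\ell^{2,s}(\Z)\to\ell^{2,-s}(\Z)$ bounds suffices --- with the Neumann-series tails generated at each stage of the cascade; counting how deep the cascade must run (hence how many weighted bounds are consumed) yields the stated decay requirements $\beta>15,19,27$ at $0$ and $\beta>7,11,15$ at $16$. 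The $\mu\partial_{\mu}$-part of \eqref{estimate of Gamma} is dealt with identically, since every expansion above may be differentiated term by term at the cost of one more power of $|n-m|$.

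The hard part will be the cascade at the threshold $0$: the degeneracy $M(0)=M'(0)=M''(0)=0$ of the symbol forces a four-stage Feshbach reduction beyond $Q$ with a strong $\mu^{-3}$ singularity, and the delicate point is to verify at every level the algebraic identities --- culminating in $S_3\ell^2(\Z)=\{0\}$ and in the invertibility of $A^{\pm}_{-1}$ on $S_1\ell^2(\Z)$, of $A^{\pm}_{-3}$ on $S_2\ell^2(\Z)$, etc. --- that make the successively reduced operators invertible on exactly the right subspaces and reproduce the resonance classification of Theorem \ref{relation space and resonance types}. Keeping the bookkeeping of the finite-rank coefficients consistent through the whole chain, and tracking precisely how much decay of $V$ each descent consumes, is what makes the complete argument long.
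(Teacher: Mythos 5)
Your proposal follows essentially the same route as the paper: insert the Puiseux-type expansions of $R^{\pm}_0(\mu^4)$ and $JR^{\pm}_0((2-\mu)^4)J$ into $M^{\pm}(\mu)$, then invert by iterating the Jensen--Nenciu lemma through the nested chain $P\to Q\to S_0\to S_1\to S_2\to S_3=\{0\}$ at $\mu=0$ (and the shorter chain $\widetilde{P}\to\widetilde{Q}\to\widetilde{S}_0\to\widetilde{S}_1$ at $\mu=2$), with the resonance classification of Theorem \ref{relation space and resonance types} and the vanishing of $S_3\ell^2(\Z)$ and ${\rm Ker}\,\widetilde{T}_2$ from Lemma \ref{lemma of kernel and operaor} determining where each cascade terminates, and the depth of the truncation $N$ in Lemma \ref{Puiseux expan of R0} accounting for the decay thresholds on $\beta$. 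This is exactly the structure of the paper's Steps 1--4, so the approach matches.
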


The detailed proofs of Theorem \ref{Asymptotic expansion theorem} and Theorem \ref{relation space and resonance types} will be presented in Section \ref{proof of Asy theorem} and Section \ref{proof of relation space and resonance types}, respectively.

\subsubsection{Treatment of oscillatory integral}
Equipped with the two tools mentioned above, the final step is to handle the oscillatory integral \eqref{kernel of eitHPacH} by Van der Corput Lemma \cite[P. ${332-334}$]{Ste93}.
Specifically,  we decompose \eqref{kernel of eitHPacH} into three parts:
\begin{equation}\label{kernel of eitHPacH(3 sec)}
(e^{-itH}P_{ac}(H))=\frac{2}{\pi i}\Big(\int_{0}^{\mu_0}+\int_{\mu_0}^{2-\mu_0}+\int_{2-\mu_0}^{2}\Big)e^{-it\mu^4}\mu^3[R^{+}_V(\mu^4)-R^{-}_V(\mu^4)]d\mu,
\end{equation}
where $\mu_0$ is a sufficient small fixed positive constant.
%Therefore, the final problem is how to deal with the oscillatory integral \eqref{kernel of eitHPacH(3 sec)}. We point out that, different from the continuous case, the basic tool here we will adopt is the Van der Corput Lemma \cite[P$_{332-334}$]{Ste93}. Moreover, in many cases, the oscillatory integral will be handled on the frequency space through suitable variable substitution.
 Substituting \eqref{reso identity 1} into the first and third integrals,  and the following \eqref{reso identity 2} into the second integral,
%For the integral \eqref{kernel of eitHPacH(3 sec)}, we replace its first and third terms with the \eqref{reso identity 1} and its second term with the following \eqref{reso identity 2}
\begin{align}\label{reso identity 2}
R^{\pm}_V(\mu^4)&=R^{\pm}_{0}(\mu^4)-R^{\pm}_{0}(\mu^4)VR^{\pm}_{0}(\mu^4)+R^{\pm}_{0}(\mu^4)VR^{\pm}_V(\mu^4)VR^{\pm}_{0}(\mu^4),
\end{align}
then we obtain %we can further express \eqref{kernel of eitHPacH(3 sec)} as the sum of the following four integral kernels:
\begin{equation}\label{kernel of eitHPacH(4 section)}
(e^{-itH}P_{ac}(H))(n,m)=-\frac{2}{\pi i}\sum\limits_{j=0}^{3}(K^{+}_{j}-K^{-}_{j})(t,n,m),
\end{equation}
where
\begin{align}
K^{\pm}_{0}(t,n,m)&=\int_{0}^{2}e^{-it\mu^4}\mu^3R^{\mp}_0(\mu^4,n,m)d\mu,\notag\\
K^{\pm}_{1}(t,n,m)&=\int_{0}^{\mu_0}e^{-it\mu^4}\mu^3\big[R^{\pm}_0(\mu^4)v\big(M^{\pm}(\mu)\big)^{-1}vR^{\pm}_0(\mu^4)\big](n,m)d\mu,\notag\\
K^{\pm}_{2}(t,n,m)&=\int_{\mu_0}^{2-\mu_0}e^{-it\mu^4}\mu^3\left[R^{\pm}_0(\mu^4)VR^{\pm}_0(\mu^4)-R^{\pm}_0(\mu^4)VR^{\pm}_V(\mu^4)VR^{\pm}_0(\mu^4)\right](n,m)d\mu,\notag\\
K^{\pm}_{3}(t,n,m)&=\int_{2-\mu_0}^{2}e^{-it\mu^4}\mu^3\big[R^{\pm}_0(\mu^4)v\big(M^{\pm}(\mu)\big)^{-1}vR^{\pm}_0(\mu^4)\big](n,m)d\mu.\label{kernels of Ki}
\end{align}
Thus, it suffices to show the decay estimate \eqref{eitH decay-estimate} holds for each component $K^{+}_{j}-K^{-}_j$, which will be dealt with in Section \ref{Sec of decay for free} and Section \ref{sec of proof}.
\subsection{Organizations of the paper}
\vskip0.3cm
The remainder of this paper is organized as follows. Section \ref{sec of LAP} provides some preliminaries, including the basics about free resolvent and the limiting absorption principle (Theorem \ref{LAP-theorem}). In section \ref{Sec of decay for free}, we prove the decay estimate for the free case and demonstrate its sharpness. Section \ref{sec of proof} focuses on estimating the kernels $(K^+_j-K^-_j)(t,n,m)$ defined in \eqref{kernels of Ki} for $j=1,2,3$.
Section \ref{proof of Asy theorem} and Section \ref{proof of relation space and resonance types} are devoted to presenting the proofs of Theorem \ref{Asymptotic expansion theorem} and Theorem \ref{relation space and resonance types}, respectively. Finally, we give a short review of commutators estimates and Mourre theory in Appendix \ref{section of Appendix}.
\section{Limiting absorption principle}\label{sec of LAP}
%\section{Asymptotic expansions of $(M^{\pm}(\mu))^{-1}$}\label{Sec of Asy}
\subsection{Free resolvent}\label{Subsec of free resol}
This subsection provides some basics about the discrete bi-Laplacian $\Delta^2$ on $\Z$. %For any $s,s'\in\R$, denote by $\B(s,s')$  the space of the bounded linear operators from $\ell^{2,s}(\Z)$ to $\ell^{2,s'}(\Z)$.
%To begin with, using the following conventions for the Fourier transform $\mcaF:\ell^2(\Z)\rightarrow L^2(\T), \T=\R/2\pi\Z$,
Recalling the definition of $\Delta$ on $\Z$ in \eqref{definition of Laplacian}, the bi-Laplacian $\Delta^2$ on $\Z$ is given by
\begin{equation*}
(\Delta^2\phi)(n)=(\Delta(\Delta\phi))(n)=\phi(n+2)-4\phi(n+1)+6\phi(n)-4\phi(n-1)+\phi(n-2).
\end{equation*}
Consider the Fourier transform $\mcaF$: $\ell^2(\Z)\rightarrow L^2(\T), \T=\R/2\pi\Z$, defined by
\begin{equation}\label{fourier transform}
(\mcaF\phi)(x):=\sum_{n\in\Z}^{}(2\pi)^{-{\frac{1}{2}}}e^{-inx}\phi(n), \quad\forall\ \phi\in\ell^2(\Z).
\end{equation}
Under this Fourier transform, we have
\begin{equation}\label{unitary equivalent}
(\mcaF\Delta^2\phi)(x)=(2-2{\rm cos}x)^2(\mcaF\phi)(x):=M(x)(\mcaF\phi)(x),\quad  x\in\T=[-\pi,\pi],
\end{equation}
which implies that the spectrum of $\Delta^2$ is purely absolutely continuous and equals $[0,16]$.
Let
  $$R_0(z):=(\Delta^2-z)^{-1},\quad z\in\C\setminus[0,16],$$
  be the resolvent of $\Delta^2$ and denote by $R^{\pm}_0(\lambda)$ its boundary value on $(0,16)$, namely,
  \begin{align*}
  R^{\pm}_{0}(\lambda)=\lim\limits_{\varepsilon\downarrow0}R_{0}(\lambda\pm i\varepsilon ),\quad\lambda\in(0,16).
  \end{align*}
   Denote by $\B(s,s')$ the space of all bounded linear operators from $\ell^{2,s}(\Z)$ to $\ell^{2,s'}(\Z)$. The existence of $R^{\pm}_0(\lambda)$ as an element of $\B(s,-s)$ for $s>\frac{1}{2}$ follows from the resolvent decomposition
   \begin{equation}\label{unity partition}
 R_0(z)=\frac{1}{2\sqrt z}\left(R_{-\Delta}(\sqrt z)-R_{-\Delta}(-\sqrt z)\right),\quad\sqrt{z}=\sqrt{|z|}e^{i\frac{arg z}{2}},\  0<argz<2\pi,
 \end{equation}
   and the limiting absorption principle for $-\Delta$~(cf.\cite{KKK06})
  $$R^{\pm}_{-\Delta}(\mu):=\lim\limits_{\varepsilon\downarrow0}R_{-\Delta}(\mu\pm i\varepsilon )\quad {\rm in\ the\ operator\ norm\ of }\ \B(s,-s)\ {\rm for}\ s>\frac{1}{2},\ \mu\in(0,4),$$
where $R_{-\Delta}(\omega)=(-\Delta-\omega)^{-1}$ is the resolvent of $-\Delta$.
\vskip0.2cm
Furthermore, to derive the kernel of $R^{\pm}_0(\lambda)$, we recall the following fundamental fact for $-\Delta$.
\begin{lemma}\label{kernel of lapla}
{\rm(\cite[Lemma 2.1]{KKK06})} For $\omega\in\C\setminus[0,4]$, the kernel of resolvent $R_{-\Delta}(\omega)$ is given by
\begin{equation}\label{kernel of lapl resolvent}
R_{-\Delta}(\omega,n,m)=\frac{-ie^{-i\theta(\omega)|n-m|}}{2{\rm sin}\theta(\omega)},\quad n,m\in\Z,
\end{equation}
where $\theta(\omega)$ is the solution of the equation
\begin{equation}\label{map}
2-2{\rm cos}\theta=\omega
\end{equation}
in the domain $\mcaD:=\left\{\theta(\omega)=a+ib:-\pi\leq a\leq\pi,b<0\right\}$.
\end{lemma}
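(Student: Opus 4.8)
The plan is to compute the kernel explicitly via the Fourier transform \eqref{fourier transform} followed by a residue calculation on the unit circle. Since $-\Delta$ is carried by $\mcaF$ to multiplication by $2-2\cos x$ on $L^2(\T)$, and the symbol $2-2\cos x-\omega$ stays bounded away from $0$ exactly when $\omega\in\C\setminus[0,4]$, the resolvent $R_{-\Delta}(\omega)$ is multiplication by $(2-2\cos x-\omega)^{-1}$; transforming back, its kernel is
\begin{equation*}
R_{-\Delta}(\omega,n,m)=\frac{1}{2\pi}\int_{-\pi}^{\pi}\frac{e^{i(n-m)x}}{2-2\cos x-\omega}\,dx.
\end{equation*}
The symbol is even in $x$, so this quantity depends only on $|n-m|$, and it suffices to treat $k:=n-m\ge0$.

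Next I would substitute $z=e^{ix}$ (so $dx=dz/(iz)$ and $2\cos x=z+z^{-1}$), which turns the integral into a contour integral over the positively oriented unit circle:
\begin{equation*}
R_{-\Delta}(\omega,n,m)=\frac{1}{2\pi i}\oint_{|z|=1}\frac{z^{k}}{z\left(2-z-z^{-1}-\omega\right)}\,dz=\frac{-1}{2\pi i}\oint_{|z|=1}\frac{z^{k}}{z^{2}-(2-\omega)z+1}\,dz.
\end{equation*}
By Vieta's formulas the denominator factors as $(z-z_+)(z-z_-)$ with $z_+z_-=1$, so the two roots are reciprocal. If either root had $|z|=1$, then $z+z^{-1}=2-\omega$ would lie in $[-2,2]$, forcing $\omega\in[0,4]$; since $\omega\notin[0,4]$, no root lies on the circle, hence exactly one root, say $z_-$, sits strictly inside the disk and $z_+=z_-^{-1}$ strictly outside.

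The identification with $\theta(\omega)$ comes from $2\cos\theta=e^{i\theta}+e^{-i\theta}=2-\omega$, which says $\{z_+,z_-\}=\{e^{i\theta(\omega)},e^{-i\theta(\omega)}\}$; writing $\theta(\omega)=a+ib$ with $b<0$ gives $|e^{i\theta(\omega)}|=e^{-b}>1$ and $|e^{-i\theta(\omega)}|=e^{b}<1$, so $z_-=e^{-i\theta(\omega)}$ and $z_+=e^{i\theta(\omega)}$. Applying the residue theorem to the last contour integral — only the simple pole at $z=z_-$ lies inside the circle — then yields
\begin{equation*}
R_{-\Delta}(\omega,n,m)=-\frac{z_-^{k}}{z_--z_+}=\frac{z_-^{k}}{z_+-z_-}=\frac{e^{-i\theta(\omega)k}}{e^{i\theta(\omega)}-e^{-i\theta(\omega)}}=\frac{e^{-i\theta(\omega)|n-m|}}{2i\sin\theta(\omega)}=\frac{-ie^{-i\theta(\omega)|n-m|}}{2\sin\theta(\omega)},
\end{equation*}
which is the asserted formula. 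The only genuinely delicate point is the branch bookkeeping: one must confirm that $2-2\cos\theta=\omega$ has a unique solution $\theta(\omega)$ in the half-strip $\mcaD$ (a short direct check, or one may quote \cite{KKK06}) and that the root selected inside the unit disk is precisely $e^{-i\theta(\omega)}$ for that solution. The hypothesis $b<0$ is exactly what makes this selection unambiguous and, simultaneously, rules out $\theta(\omega)\in\pi\Z$, so that $\sin\theta(\omega)\neq0$ and the formula is well-defined.
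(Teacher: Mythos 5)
Your derivation is correct and complete. The paper itself offers no proof of this lemma --- it is quoted verbatim from \cite[Lemma 2.1]{KKK06} --- so there is nothing internal to compare against; your Fourier-multiplier plus residue computation is the standard and essentially the only natural argument here. All the steps check out: the contour integral $\frac{-1}{2\pi i}\oint_{|z|=1} z^{k}\,(z^{2}-(2-\omega)z+1)^{-1}dz$ is set up correctly, the reciprocal-root structure $z_+z_-=1$ together with $\omega\notin[0,4]$ does force exactly one root strictly inside the circle, and the sign bookkeeping $z_+-z_-=2i\sin\theta(\omega)$ lands on the stated kernel. Your closing remarks on the branch selection are also the right ones: with $\theta=a+ib$, $b<0$, one has $|e^{-i\theta}|=e^{b}<1$, so the interior root is $e^{-i\theta(\omega)}$, and since $\sin$ vanishes only at real multiples of $\pi$ while $\theta(\omega)$ is non-real, $\sin\theta(\omega)\neq0$. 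The only cosmetic caveat is that uniqueness of $\theta(\omega)$ in the closed strip $-\pi\le a\le\pi$ fails at the boundary $a=\pm\pi$ (both $\pm\pi+ib$ solve the equation), but the two choices give the same value of the kernel, so the formula is unaffected.
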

\begin{remark}
{\rm Precisely, let
$C^{\pm}=\{\omega=x\pm iy:y>0\}$, $\mcaD_{\mp}=\{\theta(\omega)=a+ ib\in\mcaD:\pm a<0\}$ and define directed lines and line segments $\ell_{i},\ell'_{i},\tilde{\ell}_{i}$
 \begin{align*}
 &\ell_1=\{x:x\in(-\infty,0)\},\quad\ell_2=\{x:x\in(0,4)\},\quad\ell_3=\{x:x\in(4,+\infty)\},\\
 &\ell'_1=\{ib:-\infty<b<0\}, \quad\ell'_2=\{a:a\in(0,\pi)\}, \quad\tilde{\ell}_2=\{a:a\in(-\pi,0)\},\\ &\ell'_{3}=\{\pi+ib:b\in(0,-\infty)\},\quad\tilde{\ell}_3=\{-\pi+ib:b\in(-\infty,0)\}.
 \end{align*}
Denote by $\ell^{-}_{i}$ the line with opposite direction of $\ell_{i}$, then the map $\theta(\omega)$ (defined in \eqref{map}) between $\C\setminus[0,4]\longrightarrow \mcaD~(\omega\mapsto\theta(\omega))$ has the following corresponding relation~(see Figure \ref{myplot} below).
%$\ell_2=\{x:0<x<4\}$ $\ell'_2=\{\}$
\begin{figure}[htbp!]\label{figure}
\centering
 \captionsetup{labelformat=empty}
\begin{minipage}{0.4\textwidth}
\centering
\begin{tikzpicture}[>=stealth]
\draw[->] (-2.5,0)--(2.5,0)node[below left]{$x$};%\draw ¡À¨ª¨º??-¨°?¨¬?????¡ê?¡Á?¡À¨º?-¦Ì?¨º?(0,0),?a¨¬??¡¤??¡À¨ª¨º?¡ä¨®(-4,0) ?a??¦Ì?¦Ì?(4,0) ?a??¦Ì?D?3¨¦¦Ì?????, ??¨¨?¦Ì£¤??3¡è¨º?1cm.
\draw[->]  (0,-2.5)--(0,2.5)node[below right]{$y$};
\draw[green,thick, fill=none] (0,0)circle (1.5pt)node[below left, black]{0};
\draw[blue, fill=none] (1,0)circle (1.5pt)node[below, black]{4};
%\definecolor{darkerred}{rgb}{0.8,0,0};
\draw[red,thick](0.06,0)--(0.94,0);
\draw[->,red,thick](0.06,0)--(0.6,0);
%\draw[-{Stealth[length=3pt,width=3pt]}]
%\draw[red,very thick,dashed,line width=0.5pt](0.06,0)--(0.94,0);
\draw[green,thick] (-2.5,0)--(-0.06,0);
\draw[->,green,thick] (-2.5,0)--(-1,0);
\draw[blue,thick] (1.06,0)--(2.5,0);
\draw[->,blue,thick] (1.06,0)--(1.7,0);
%\fill[green](-2.5,0)rectangle(2.5,2.5);
\fill[pattern=north west lines, pattern color=magenta, opacity=0.5] (-2.5,0.06) rectangle (2.5,2.5);
%\node at (-0.2,1.3){$C^{+}=\{x+iy:y>0\}$};
\node at (-1.25,1.25){$C^{+}$};
\fill[pattern=north west lines, pattern color=cyan, opacity=0.9] (-2.5,-2.5) rectangle (2.5,-0.06);
\node at (-1.25,-1.25){$C^{-}$};
\node at (-1.3,0.2){$\textcolor{green}{\ell_1}$};
\node at (0.5,-0.3){$\textcolor{red}{\ell_2}$};
\node at (1.8,-0.3){$\textcolor{blue}{\ell_3}$};
\node[circle, draw, inner sep=1pt] at (2,2) {$\omega$};
\end{tikzpicture}
%\caption{$\omega$\ {\rm plane}}
\end{minipage}
\begin{tikzpicture}[overlay, remember picture]
        \draw[->, thick] (0.2,1)--(2,1); % ?????y¨ª¡¤¡ê??¨´?YD¨¨¨°a¦Ì¡Â??3¡è?¨¨o¨ª¡¤??¨°
        \node[below, font=\bfseries] at (1,0.9){$\theta(C^{\pm})=\mcaD_{\mp}$};
        \node[below, font=\bfseries] at (1,0.4){$\theta(\ell_i)=\ell'_{i}$};
        \node[below, font=\bfseries] at (1,-0.1){$\theta(\ell^{-}_j)=\tilde{\ell}_{j}$};
        \node[below, font=\bfseries] at (1,-0.7){($i=1,2,3,\ j=2,3$)};
        \node[above, font=\bfseries] at (1,1.06) {${\rm \ \ cos}\theta(\omega)=1-\frac{\omega}{2}$}; % ?¨²?y¨ª¡¤¨¦?¡¤?¡À¨º¡Á¡é¨ºy?¡ì1??¦Ì
    \end{tikzpicture}
\hspace{0.1\textwidth}
\begin{minipage}{0.4\textwidth}
\centering
\begin{tikzpicture}[>=stealth]
\draw[->] (-2.5,0)--(2.5,0) node[below left]{$a$};%\draw ¡À¨ª¨º??-¨°?¨¬?????¡ê?¡Á?¡À¨º?-¦Ì?¨º?(0,0),?a¨¬??¡¤??¡À¨ª¨º?¡ä¨®(-4,0) ?a??¦Ì?¦Ì?(4,0) ?a??¦Ì?D?3¨¦¦Ì?????, ??¨¨?¦Ì£¤??3¡è¨º?1cm.
\draw[->]  (0,-2.5)--(0,2.5)node[below right]{$b$};
%\node at (2,2){\textcircled{$\theta$}};
\draw[green,thick, fill=none] (0,0)circle (1.5pt)node[below left, black]{0};
\draw[blue, thick,fill=none] (1,0)circle (1.5pt)node[below right, black]{$\pi$};
\draw[blue, thick,fill=none] (-1,0)circle (1.5pt)node[below left, black]{$-\pi$};
\draw[red,thick](0.06,0)--(0.94,0);
\draw[->,red,thick](0.06,0)--(0.56,0);
%\draw[red,thick,dashed,line width=0.5pt](0,0)--(1,0);
%\draw[red,thick,dashed,line width=0.5pt](-1,0)--(0,0);
\draw[red,thick](-0.94,0)--(-0.06,0);
\draw[->,red,thick](-0.94,0)--(-0.4,0);
\draw[green,thick](0,-2.5)--(0,-0.06);
\draw[->,green,thick](0,-2.5)--(0,-1);
\draw[blue,thick](-1,-2.5)--(-1,-0.06);
\draw[->,blue,thick](-1,-2.5)--(-1,-1);
\draw[blue,thick](1,-2.5)--(1,-0.06);
\draw[-<,blue,thick](1,-2.5)--(1,-1);
\fill[pattern=north west lines, pattern color=magenta, opacity=0.5] (-0.95,-2.5) rectangle (-0.06,-0.06);
\node at (-0.5,-1.25){$\mcaD_-$};
%\fill[pattern=north west lines, pattern color=green, opacity=0.5] (-0.95,-2.5) rectangle (-0.06,-0.06);
%\fill[pattern=north west lines, pattern color=fuchsia, opacity=0.5] (-0.95,-2.5) rectangle (-0.06,-0.06);
\fill[pattern=north west lines, pattern color=cyan, opacity=0.9] (0.05,-2.5) rectangle (0.95,-0.06);
\node at (0.5,-1.25){$\mcaD_+$};
\node at (0,-2.75){$\textcolor{green}{\ell'_1}$};
\node at (-1,-2.75){$\textcolor{blue}{\tilde{\ell}_{3}}$};
\node at (-0.6,0.3){$\textcolor{red}{\tilde{\ell}_{2}}$};
\node at (0.6,0.3){$\textcolor{red}{\ell'_{2}}$};
\node at (1,-2.75){$\textcolor{blue}{\ell'_{3}}$};
\node[circle, draw, inner sep=1pt] at (2,2) {$\theta$};
\end{tikzpicture}
%\caption{$\theta$\ {\rm plane}}
\end{minipage}
\caption{Figure 1: The map $\theta(\omega)$ from $\C\setminus[0,4]$ to $\mcaD$.}
\label{myplot}
\end{figure}
}
\end{remark}
Consequently, from Lemma \ref{kernel of lapla}, the following conclusions hold.
\begin{itemize}
\item [(i)] If $\lambda\in (0,4)$, the kernel of $R^{\pm}_{-\Delta}(\lambda)$ is given by
\begin{equation}\label{kernel of lapa boundary}
R^{\pm}_{-\Delta}(\lambda,n,m)=\frac{-ie^{-i\theta_\pm(\lambda)|n-m|}}{2{\rm sin}\theta_\pm(\lambda)},
\end{equation}
\end{itemize}
where $\theta_{\pm}(\lambda)$ satisfies the equation $2-2{\rm cos}\theta=\lambda$ with $\theta_{+}(\lambda)\in(-\pi,0)$ and $\theta_{-}(\lambda)=-\theta_{+}(\lambda)$.
\begin{itemize}
\item [(ii)] If $\lambda\in(-\infty,0)$, then% $\theta(\lambda)=i\alpha(\lambda)$ with $\alpha(\lambda)<0$, and
    \begin{equation}\label{expre of sin theta}
    {\rm sin}\theta(\lambda)=-i\sqrt{-\lambda+\frac{\lambda^2}{4}}=i\frac{e^{-i\theta(\lambda)}-e^{i\theta (\lambda)}}{2}.
    \end{equation}
\end{itemize}
\begin{lemma}
{ For $\mu\in(0,2)$, the kernel of $R^{\pm}_0(\mu^4)$ is given by
\begin{align}\label{kernel of R0 boundary}
R^{\pm}_{0}(\mu^4,n,m)%&=\frac{-i}{4\mu^2}\left(\frac{e^{-i\theta_\pm|n-m|}}{{\rm sin}\theta_\pm}-\frac{e^{-i\theta|n-m|}}{{\rm sin}\theta}\right)\\
=\frac{1}{4\mu^3}\Big(\pm ia_1(\mu)e^{\mp i\theta_{+}|n-m|}+a_2(\mu)e^{b(\mu)|n-m|}\Big):=\frac{1}{4\mu^3}\mcaA^{\pm}(\mu,n,m),
\end{align}
where $\theta_{+}:=\theta_{+}(\mu^2)$ satisfies $2-2{\rm cos}\theta_{+}=\mu^2$ with $\theta_{+}\in(-\pi,0)$ and
\begin{equation}\label{exp of a1 a2 bu}
a_1(\mu)=\frac{1}{\sqrt{1-\frac{\mu^2}{4}}},\quad a_2(\mu)=\frac{-1}{\sqrt{1+\frac{\mu^2}{4}}},\quad b(\mu)={\rm ln} \big(1+\frac{\mu^2}{2}-\mu(1+\frac{\mu^2}{4})^{\frac{1}{2}}\big).
\end{equation}
%where $\theta_{\pm}:=\theta_\pm(\mu^2)$, $\theta:=\theta(-\mu^2)$ and $b(\mu)={\rm ln} \big(1+\frac{\mu^2}{2}-\mu(1+\frac{\mu^2}{4})^{\frac{1}{2}}\big)$.
}
\end{lemma}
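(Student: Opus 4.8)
The final statement to prove is the computation of the kernel of $R_0^\pm(\mu^4)$ in equation \eqref{kernel of R0 boundary}. Here is my plan.

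\textbf{Overall strategy.} The plan is to start from the factorization \eqref{unity partition}, namely
$$R_0(z)=\frac{1}{2\sqrt z}\big(R_{-\Delta}(\sqrt z)-R_{-\Delta}(-\sqrt z)\big),$$
specialize it to $z=\mu^4$ (so $\sqrt z=\mu^2$), pass to the boundary values, and then substitute the explicit kernel of the discrete free Laplacian resolvent from Lemma \ref{kernel of lapla}. Concretely, with $z=\mu^4+i0$ and $\mu\in(0,2)$ we have $\sqrt z=\mu^2\in(0,4)$, which sits on the spectrum of $-\Delta$, while $-\sqrt z=-\mu^2\in(-4,0)$, which lies off the spectrum of $-\Delta$. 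Thus the first term of \eqref{unity partition} genuinely requires the limiting absorption boundary value $R^\pm_{-\Delta}(\mu^2)$, whereas the second term $R_{-\Delta}(-\mu^2)$ is already well-defined with no boundary value needed.

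\textbf{Key steps in order.} First I would treat the on-spectrum term: apply \eqref{kernel of lapa boundary}, which gives
$$R^{\pm}_{-\Delta}(\mu^2,n,m)=\frac{-ie^{-i\theta_{\pm}(\mu^2)|n-m|}}{2\sin\theta_{\pm}(\mu^2)},$$
where $\theta_+(\mu^2)\in(-\pi,0)$ solves $2-2\cos\theta=\mu^2$ and $\theta_-=-\theta_+$. I would compute $\sin\theta_{+}(\mu^2)$ explicitly: from $\cos\theta_+=1-\tfrac{\mu^2}{2}$ and $\theta_+\in(-\pi,0)$ one gets $\sin\theta_+=-\sqrt{1-(1-\mu^2/2)^2}=-\mu\sqrt{1-\mu^2/4}$, so the term becomes $\tfrac{\pm i e^{\mp i\theta_+|n-m|}}{2\mu\sqrt{1-\mu^2/4}}$. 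Second, I would treat the off-spectrum term $R_{-\Delta}(-\mu^2)$: apply Lemma \ref{kernel of lapla} with $\omega=-\mu^2$ together with \eqref{expre of sin theta}. Writing $\theta(-\mu^2)$ in the domain $\mathcal D$, one has $\cos\theta=1+\tfrac{\mu^2}{2}$, and $e^{i\theta(-\mu^2)}$ should be identified with the root $1+\tfrac{\mu^2}{2}-\mu(1+\tfrac{\mu^2}{4})^{1/2}\in(0,1)$, i.e. $i\theta(-\mu^2)=b(\mu)$ with $b(\mu)$ as in \eqref{exp of a1 a2 bu}; then \eqref{expre of sin theta} gives $\sin\theta(-\mu^2)=i\,\tfrac{e^{-b(\mu)}-e^{b(\mu)}}{2}$, and one checks $e^{-b(\mu)}-e^{b(\mu)}=2\mu\sqrt{1+\mu^2/4}$, so $R_{-\Delta}(-\mu^2,n,m)=\tfrac{-e^{b(\mu)|n-m|}}{2\mu\sqrt{1+\mu^2/4}}$. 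Third, I would substitute both pieces into \eqref{unity partition} with the prefactor $\tfrac{1}{2\sqrt z}=\tfrac{1}{2\mu^2}$, giving
$$R^{\pm}_0(\mu^4,n,m)=\frac{1}{2\mu^2}\left(\frac{\pm i e^{\mp i\theta_+|n-m|}}{2\mu\sqrt{1-\mu^2/4}}+\frac{e^{b(\mu)|n-m|}}{2\mu\sqrt{1+\mu^2/4}}\right),$$
and collect the $\tfrac{1}{4\mu^3}$ factor to land on \eqref{kernel of R0 boundary} with $a_1,a_2,b$ exactly as claimed; note the sign of $a_2$ comes out negative because the $-R_{-\Delta}(-\mu^2)$ term contributes $+\tfrac{e^{b(\mu)|n-m|}}{2\mu\sqrt{1+\mu^2/4}}$ but the stated form writes it as $a_2(\mu)e^{b(\mu)|n-m|}$ with an overall sign convention — I would double-check this against \eqref{unity partition} carefully.

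\textbf{Main obstacle.} The routine algebra (half-angle identities, simplifying $\sin\theta$) is straightforward; the genuinely delicate point is the \emph{correct branch selection} in two places: (a) verifying that the boundary value $R^\pm$ of $R_0(\mu^4)$ corresponds to $\theta_\pm(\mu^2)=\mp|\theta_+|$ (i.e. that $+i0$ in $z$ maps, through $\sqrt z$ and then through the map $\theta(\omega)$ with its domain $\mathcal D$, to the lower-half-plane-approaching branch with $\theta_+\in(-\pi,0)$), and (b) identifying which of the two roots of $e^{i\theta}+e^{-i\theta}=2+\mu^2$ is the one lying in $\mathcal D$ for the off-spectrum term, so that $b(\mu)<0$ and the kernel decays in $|n-m|$. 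Both of these are resolved by carefully tracking the correspondence diagram in Figure \ref{myplot} and the condition $b<0$ defining $\mathcal D$; once the branches are pinned down, everything reduces to elementary identities. I would devote the bulk of the written proof to making (a) and (b) airtight, and only sketch the final arithmetic.
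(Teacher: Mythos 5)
Your proposal follows exactly the paper's route: specialize \eqref{unity partition} to $z=\mu^4\pm i\varepsilon$, let $\varepsilon\downarrow 0$ to get $R^{\pm}_{0}(\mu^4)=\frac{1}{2\mu^2}\big(R^{\pm}_{-\Delta}(\mu^2)-R_{-\Delta}(-\mu^2)\big)$, and substitute \eqref{kernel of lapl resolvent}, \eqref{kernel of lapa boundary} and \eqref{expre of sin theta}; the on-spectrum computation is correct as written. The one point you flagged for double-checking does hide a sign slip: for $\omega=-\mu^2\in\ell_1$ the branch in $\mathcal D$ is $\theta=ib_0$ with $b_0<0$, so it is $e^{-i\theta(-\mu^2)}$ (not $e^{i\theta(-\mu^2)}$) that equals the root $e^{b(\mu)}\in(0,1)$; then \eqref{expre of sin theta} gives $\sin\theta(-\mu^2)=-i\mu\sqrt{1+\mu^2/4}$ and hence $R_{-\Delta}(-\mu^2,n,m)=+\frac{e^{b(\mu)|n-m|}}{2\mu\sqrt{1+\mu^2/4}}$, so the minus sign in \eqref{unity partition} yields $a_2(\mu)=-\big(1+\tfrac{\mu^2}{4}\big)^{-1/2}$ directly, with no residual "sign convention" to reconcile.
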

  \begin{proof}
 For any $\mu\in(0,2)$ and $\varepsilon>0$, let $z=\mu^4\pm i\varepsilon$ in \eqref{unity partition} and take limit $\varepsilon\rightarrow0$, one obtains that
 \begin{equation}\label{R0mu4 and Rdeltamu2}
R^{\pm}_{0}(\mu^4)=\frac{1}{2\mu^2}\left(R^{\pm}_{-\Delta}(\mu^2)-R_{-\Delta}(-\mu^2)\right).
 \end{equation}
 Then based on \eqref{kernel of lapl resolvent}, \eqref{kernel of lapa boundary} and \eqref{expre of sin theta}, the desired \eqref{kernel of R0 boundary} is proved.
  \end{proof}

As shown above, $R^{\pm}_0(\mu^4)$ exhibits singularity of order $\mu^{-3}$ near $\mu=0$ and $(2-\mu)^{-\frac{1}{2}}$ near $\mu=2$. More precisely, they admit the following formal asymptotic expansions:
$$R^{\pm}_0(\mu^4,n,m)\thicksim \sum\limits_{j=-3}^{+\infty}\mu^{j}G^{\pm}_j(n,m),\quad (JR^{\pm}_0(2-\mu)^4J)(n,m)\thicksim\sum\limits_{j=-1}^{+\infty}\mu^{\frac{j}{2}}\widetilde{G}^{\pm}_j(n,m),\quad \mu\rightarrow0,$$
where $G^{\pm}_j(n,m)$ and $\widetilde{G}^{\pm}_j(n,m)$ are defined in \eqref{expan coeffie of Gpm} and \eqref{expan coeffie of Gtutapm}, respectively. These expansions converge in the operator topology of suitable spaces $\B(s,-s)$. For more details, see Lemma \ref{lemm of expa}.
\subsection{Perturbed case}\label{subsection of asympotic}
As demonstrated in Subsection \ref{Subsec of free resol}, the limiting absorption principle (LAP) has been established for the free operator. In this subsection, we turn to the perturbed operators.
\begin{theorem}\label{LAP-theorem}
{ Let $H=\Delta^2+V$ with $|V(n)|\lesssim \left<n\right>^{-\beta}$ for $\beta>1$ and $\mcaI=(0,16)$. Denote by $[\beta]$ the biggest integer no more than $\beta$. Then the following statements hold.}
\begin{itemize}
{
\item [(i)] The point spectrum $\sigma_p(H)\cap\mcaI$ is discrete, each eigenvalue has a finite multiplicity and the singular continuous spectrum $\sigma_{sc}(H)=\varnothing$.
\vskip0.2cm
\item [(ii)] Let $j\in\left\{0,\cdots,[\beta]-1\right\}$ and $j+\frac{1}{2}<s\leq[\beta]$, then %for any $\lambda\in\mcaI\setminus\sigma_{p}(H)$,
the following norm limits
    \begin{equation*}
   \frac{d^{j}}{d\lambda^j}(R^{\pm}_V(\lambda))=\lim\limits_{\varepsilon\downarrow0}R^{(j)}_V(\lambda\pm i\varepsilon) \quad {\rm in} \quad \B(s,-s)
    \end{equation*}
are norm continuous from $\mcaI\setminus\sigma_p(H)$ to $\B(s,-s)$,
    }
\end{itemize}
{ where $R_{V}(z)=(H-z)^{-1}$ is the resolvent of $H$ and $R^{(j)}_{V}(z)$ denotes the jth derivative of $R_{V}(z)$.}
\end{theorem}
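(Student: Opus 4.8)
The plan is to prove Theorem \ref{LAP-theorem} by reducing the limiting absorption principle for $H=\Delta^2+V$ to Mourre theory, using the unitary equivalence \eqref{unitary equivalent} to build a suitable conjugate operator on the Fourier side. Under $\mcaF$, the operator $\Delta^2$ becomes multiplication by $M(x)=(2-2\cos x)^2$ on $L^2(\T)$, so the spectral analysis is governed by the critical points of the smooth symbol $M$. The key observation is that $M'(x)=2(2-2\cos x)\cdot 2\sin x=8(1-\cos x)\sin x$ vanishes only at $x=0$ (the threshold $0$) and $x=\pm\pi$ (the threshold $16$), and is otherwise nonzero on $(-\pi,\pi)\setminus\{0\}$. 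Thus on any compact subinterval of $\mcaI=(0,16)$ the symbol has no critical points, which is precisely what makes a Mourre estimate available away from the thresholds.

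First I would introduce the conjugate operator. A natural choice is $A=\frac{1}{2}(a(x)D_x+D_x a(x))$ on the Fourier side, i.e. the self-adjoint generator of the flow associated to the vector field $a(x)=M'(x)$ (or a smooth bounded modification of it), pulled back to $\ell^2(\Z)$; equivalently one can take the symmetrized "position $\times$ velocity" operator $A=\frac{i}{2}\big((\Delta^2)'\cdot \Lambda + \Lambda\cdot(\Delta^2)'\big)$ where $\Lambda$ is multiplication by $n$ on $\ell^2(\Z)$ and $(\Delta^2)'$ denotes the operator whose symbol is $M'(x)$. With this choice one computes the commutator $i[\Delta^2,A]$, whose symbol is $M'(x)^2$ up to lower-order terms; since $M'(x)^2>0$ on any compact $K\Subset\mcaI$, one gets the strict Mourre estimate $E_K(\Delta^2)\,i[\Delta^2,A]\,E_K(\Delta^2)\geq c\,E_K(\Delta^2)$ for some $c=c(K)>0$. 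The decay hypothesis $|V(n)|\lesssim\langle n\rangle^{-\beta}$ with $\beta>1$ guarantees that $V$ is relatively compact with respect to $\Delta^2$ and that the iterated commutators $\mathrm{ad}_A^{(k)}(V)$ are controlled for $k\le [\beta]$ (this is the point of counting derivatives of $V$ in terms of powers of $\langle n\rangle$, since each commutator with $A$ costs roughly one power of $n$), so that $H=\Delta^2+V$ is of class $C^{[\beta]}(A)$ and still satisfies a Mourre estimate on $K$ with the same $A$, up to a compact remainder. Standard Mourre theory (see the references \cite{Mou81,Mou83,ABG96} cited in the Appendix, together with \cite{JMP84} for the higher-order regularity) then yields part (i): local finiteness and finite multiplicity of eigenvalues in $\mcaI$ and absence of singular continuous spectrum there.

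For part (ii), I would invoke the higher-order / differentiated form of the limiting absorption principle from Mourre theory with $C^{[\beta]}(A)$ regularity: when $H\in C^{s+1/2+\varepsilon}(A)$-type regularity holds, the boundary values $R_V^{\pm}(\lambda)$ and their derivatives up to order $j$ extend to $\mathcal{B}(\mathcal{H}_s,\mathcal{H}_{-s})$-valued norm-continuous functions on compact subsets of $\mcaI\setminus\sigma_p(H)$, for $j+\frac12<s\le[\beta]$; here the weighted spaces $\mathcal{H}_{\pm s}=\ell^{2,\pm s}(\Z)$ are exactly the scale associated to the conjugate operator $A$ (powers of $\langle n\rangle$ match powers of $\langle A\rangle$ because $A$ is first order in $\Lambda$). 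One then patches the local-in-$\lambda$ statements over a covering of any compact subset of $\mcaI\setminus\sigma_p(H)$; away from a neighborhood of $\sigma_p(H)$ the Mourre constant is uniform, and near an eigenvalue one excises it, using the discreteness from part (i). I would present the details of the commutator computations and the verification of the regularity class in an appendix, citing \cite{JMP84,ABG96} for the abstract machinery and only carrying out the explicit symbol calculation $M'(x)^2\ge c>0$ on $K$ here.

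The main obstacle I anticipate is the bookkeeping of the iterated commutators $\mathrm{ad}_A^{(k)}(H)$ and the precise matching between the decay rate $\beta$, the number of derivatives $j$, and the weight $s$ — i.e. showing cleanly that $|V(n)|\lesssim\langle n\rangle^{-\beta}$ (a decay condition, not a smoothness condition, which is automatic on the lattice) translates into $H\in C^{[\beta]}(A)$ with the correct quantitative bounds so that the differentiated LAP holds for all $j\le[\beta]-1$ and all $s$ in the stated range. A secondary technical point is that $A$ as defined is unbounded and one must check it is essentially self-adjoint on a suitable core and that $\Delta^2$ (being bounded) is of class $C^1(A)$ with the stated commutator, which is straightforward on the Fourier side but requires care in transferring back to $\ell^2(\Z)$; the degeneracy of $M$ at the thresholds $0$ and $16$ is harmless here since we only need the Mourre estimate on compacts strictly inside $(0,16)$, and indeed is the reason the statement is confined to $\mcaI$ rather than its closure.
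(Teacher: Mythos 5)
Your overall strategy is the same as the paper's: establish a Mourre estimate for $\Delta^2$ on compact subsets of $\mcaI$, verify that $H\in C^{[\beta]}(A)$ using the decay of $V$, and then invoke the multiple-commutator machinery of \cite{JMP84} to get the differentiated limiting absorption principle, with part (i) following from the standard consequences of the Mourre estimate. The one substantive difference in the setup is the choice of conjugate operator: you take $A$ with symbol $a(x)=M'(x)$ so that the commutator symbol is $M'(x)^2$, whereas the paper uses the standard dilation-type generator $iA=\mcaN\mcaP-\mcaP^{*}\mcaN$, for which functional calculus gives the clean identity $[\Delta^2,iA]=2\Delta^2(4-\sqrt{\Delta^2})$ and hence positivity on $(0,16)$ directly from $g(x)=2x(4-\sqrt{x})>0$. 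Both choices work; the paper's has the advantage that the $C^{k}(A)$ regularity of $V$ can be read off from \cite[Proposition 5.1]{BS99} without redoing the commutator calculus for a nonstandard $A$.

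The point you pass over too quickly is the conversion from the $\langle A\rangle^{-s}$-weighted estimates that Mourre theory delivers to the $\langle n\rangle^{-s}$-weighted estimates asserted in the theorem. Your claim that ``powers of $\langle n\rangle$ match powers of $\langle A\rangle$ because $A$ is first order in $\Lambda$'' is not a proof: $\langle A\rangle^{s}$ and $\langle\mcaN\rangle^{s}$ are not equivalent weights, and what one actually needs is boundedness of operators like $\langle A\rangle^{s}R_V(\pm i)\langle\mcaN\rangle^{-s}$ for $0\le s\le[\beta]$. This is where the paper's proof of Proposition \ref{LAP1} does its real work: it sandwiches $R_V^{(j)}(z)$ between two copies of $R_V(-i)$ via the resolvent identity, and then proves $A^{k}R_V(\pm i)\langle\mcaN\rangle^{-[\beta]}\in\B(0,0)$ for $1\le k\le[\beta]$ by an inductive expansion using $AR_V(\pm i)=R_V(\pm i)(ad^{1}_{A}(H)R_V(\pm i)+A)$ and $A\,ad^{j}_A(H)=ad^{j}_A(H)A-ad^{j+1}_A(H)$, followed by complex interpolation. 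Without some version of this argument (or a direct proof that $\langle A\rangle^{s}\langle\mcaN\rangle^{-s}$ is bounded for all $s$ up to $[\beta]$, which again requires iterated commutator bookkeeping), part (ii) in the stated weighted spaces does not follow from the abstract theorem. This is a fixable but genuine gap in your outline.
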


We remark that the derivation of this LAP is based on the commutator estimates and Mourre theory~(refer to Appendix \ref{section of Appendix}). The upper bound of $s$ is closely related to the regularity of $H$ (as defined in Definition \ref{def of regularity}).

 Throughout this paper, we always assume that $H$ has no positive eigenvalues in $\mcaI$. As a consequence of Theorem \ref{LAP-theorem}, $R^{\pm}_V(\mu^4)$ exists in $\B(s,-s)$ with $\frac{1}{2}<s\leq[\beta]$ for any $\mu\in(0,2)$. Moreover, as a corollary, we can establish the invertibility of $M^{\pm}(\mu)$~(defined in \eqref{Mpmmu}), which is critical to study the asymptotic behaviors of $R^{\pm}_V(\mu^4)$ near $\mu=0$ and $\mu=2$.
\begin{corollary}\label{lemma of inverse of M pm mu}
{ Let $H,V,\mcaI$ be as in Theorem \ref{LAP-theorem}. %and assume that $H$ has no eigenvalues in $\mcaI$.
Then for any $\mu\in(0,2)$, $M^{\pm}(\mu)$ is invertible on $\ell^2(\Z)$ and satisfies the relation below in $\B(s,-s)$ with $\frac{1}{2}<s\leq\frac{\beta}{2}$,
\begin{equation}\label{inverti relation}
R^{\pm}_V(\mu^4)=R^{\pm}_{0}(\mu^4)-R^{\pm}_0(\mu^4)v\left(M^{\pm}(\mu)\right)^{-1}vR^{\pm}_0(\mu^4).
\end{equation}
}
\end{corollary}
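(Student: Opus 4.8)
\textbf{Proof proposal for Corollary \ref{lemma of inverse of M pm mu}.}

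The plan is to derive the factorization \eqref{inverti relation} from the symmetrized resolvent identity and then establish invertibility of $M^{\pm}(\mu)$ by combining the limiting absorption principle (Theorem \ref{LAP-theorem}) with the assumed absence of positive eigenvalues of $H$ in $\mcaI$. First I would write $V=v U v$ with $U={\rm sign}(V)$ and $v=|V|^{1/2}$, and recall that for $z\in\C\setminus[0,16]$ one has the standard identity $R_V(z)=R_0(z)-R_0(z)v\,(U+vR_0(z)v)^{-1}vR_0(z)$, which holds since $U+vR_0(z)v$ is boundedly invertible for nonreal $z$ (its inverse is $U-Uv R_V(z) v U$). Passing to the boundary values $z=\mu^4\pm i\varepsilon$, $\varepsilon\downarrow0$, and using that $R^{\pm}_0(\mu^4)$ exists in $\B(s,-s)$ for $s>\tfrac12$ (Subsection \ref{Subsec of free resol}) together with $R^{\pm}_V(\mu^4)$ existing in $\B(s,-s)$ for $\tfrac12<s\le[\beta]$ (Theorem \ref{LAP-theorem}), one gets \eqref{inverti relation} as an identity in $\B(s,-s)$ provided $M^{\pm}(\mu)=U+vR^{\pm}_0(\mu^4)v$ is invertible on $\ell^2(\Z)$; here one should check that $v\,\cdot:\ell^{2,-s}\to\ell^2$ and $\,\cdot\,v:\ell^2\to\ell^{2,s}$ are bounded for $s\le\beta/2$, which uses $|V(n)|\lesssim\langle n\rangle^{-\beta}$ and explains the constraint $\tfrac12<s\le\tfrac{\beta}{2}$.

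The core of the argument is the invertibility of $M^{\pm}(\mu)$ on $\ell^2(\Z)$. Since $vR^{\pm}_0(\mu^4)v$ is compact on $\ell^2(\Z)$ — the free resolvent boundary value maps $\ell^{2,s}\to\ell^{2,-s}$ and the decaying weights $v$ on both sides produce a compact (indeed Hilbert–Schmidt, given the polynomial decay of $V$) operator on $\ell^2$ — and $U$ is a unitary multiplication operator, $M^{\pm}(\mu)=U(I+Uv R^{\pm}_0(\mu^4)v)$ is a compact perturbation of an invertible operator. By the Fredholm alternative, invertibility of $M^{\pm}(\mu)$ is equivalent to injectivity, so it suffices to show that $M^{\pm}(\mu)f=0$ with $f\in\ell^2(\Z)$ forces $f=0$. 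The hard part will be precisely this injectivity step: one sets $\psi:=R^{\pm}_0(\mu^4)vf\in\ell^{2,-s}(\Z)$ and checks, using $Uf=-vR^{\pm}_0(\mu^4)vf$ hence $Vf\in\ell^2$ and $\psi=-R^{\pm}_0(\mu^4)V\psi$ (so $v\psi = -vR^\pm_0(\mu^4)vf = Uf$, i.e.\ $f=Uv\psi$), that $\psi$ is a distributional solution of $(\Delta^2+V-\mu^4)\psi=0$. One must then argue $\psi\in\ell^2(\Z)$: this is a threshold-free point $\mu^4\in(0,16)$, so a standard limiting-absorption/bootstrap argument (the outgoing/incoming nature of $R^{\pm}_0$ at a non-threshold energy, together with the decay of $V$) upgrades $\psi$ from $\ell^{2,-s}$ to $\ell^2$, making $\mu^4$ an eigenvalue of $H$; the hypothesis that $H$ has no positive eigenvalues in $\mcaI$ then yields $\psi=0$, whence $f=Uv\psi=0$.

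I would organize the write-up as: (1) the algebraic resolvent identity for nonreal $z$ and the formula for $(U+vR_0(z)v)^{-1}$; (2) boundedness of multiplication by $v$ between the relevant weighted spaces and compactness of $vR^{\pm}_0(\mu^4)v$ on $\ell^2$; (3) the Fredholm reduction to injectivity; (4) the eigenfunction extraction and $\ell^2$-regularity argument, invoking the no-embedded-eigenvalue assumption to conclude injectivity; (5) passage to the limit $\varepsilon\downarrow0$ in the resolvent identity using Theorem \ref{LAP-theorem} to obtain \eqref{inverti relation} in $\B(s,-s)$. The main obstacle is step (4) — showing the putative resonance function $\psi$ actually lies in $\ell^2(\Z)$ so that the absence-of-eigenvalues hypothesis applies — and this is where the explicit kernel \eqref{kernel of R0 boundary} of $R^{\pm}_0(\mu^4)$, which has an oscillatory part $e^{\mp i\theta_+|n-m|}$ and an exponentially decaying part $e^{b(\mu)|n-m|}$ with $b(\mu)<0$, together with the polynomial decay of $V$, does the work in a Jensen–Kato style iteration.
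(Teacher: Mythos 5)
Your proposal is correct in outline, but it takes a genuinely different — and harder — route to the invertibility of $M^{\pm}(\mu)$ than the paper does, and the step you yourself flag as the main obstacle is precisely the one the paper's argument avoids. The paper's proof is a one-liner: since Theorem \ref{LAP-theorem} already provides the boundary values $R^{\pm}_V(\mu^4)$ in $\B(s,-s)$ for every $\mu^4\in\mcaI\setminus\sigma_p(H)=\mcaI$ (the set equality being exactly where the no-positive-eigenvalue hypothesis enters), one can pass to the limit $\varepsilon\downarrow 0$ in the algebraic identity $(U+vR_0(z)v)^{-1}=U-UvR_V(z)vU$ and simply exhibit $U-UvR^{\pm}_V(\mu^4)vU$ as a bounded inverse of $M^{\pm}(\mu)$ on $\ell^2(\Z)$, the boundedness using $v(n)\left<n\right>^{s}\in\ell^{\infty}$ for $s\leq\beta/2$. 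Your route instead proves invertibility from scratch via compactness of $vR^{\pm}_0(\mu^4)v$, the Fredholm alternative, and an injectivity argument that extracts from a kernel element $f$ a solution $\psi=R^{\pm}_0(\mu^4)vf$ of $(H-\mu^4)\psi=0$ and then must upgrade $\psi$ from $\ell^{2,-s}$ to $\ell^2$. That upgrade is not automatic: the kernel of $R^{\pm}_0(\mu^4)$ contains the non-decaying oscillatory piece $e^{\mp i\theta_+|n-m|}$, so one must first show that the Fourier transform of $vf$ vanishes on the energy surface (e.g.\ via $\Im\left<M^{\pm}(\mu)f,f\right>=0$) before the Jensen--Kato iteration can produce $\ell^2$ decay. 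If you want to keep your route you must actually carry this out; otherwise it is cleaner to lean on the Mourre-theory LAP for the perturbed operator, which has already done the analytic work. Two smaller points: your factorization $M^{\pm}(\mu)=U(I+UvR^{\pm}_0(\mu^4)v)$ presumes $U$ is unitary, which fails at sites where $V$ vanishes (a convention issue the paper shares, usually handled by restricting to $\mathrm{supp}\,V$ or redefining $U=1$ off the support); and your boundedness check for $v$ between the weighted spaces, giving the constraint $\frac12<s\leq\frac{\beta}{2}$, matches the paper's.
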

\begin{proof}
For any $\mu\in(0,2)$, the invertibility of $M^{\pm}(\mu)$ follows from the absence of eigenvalues in $\mcaI$ and Theorem \ref{LAP-theorem}. Then the resolvent identity
\begin{equation}
R_{V}(z)=R_0(z)-R_0(z)v\left(U+vR_0(z)v\right)^{-1}vR_0(z),
\end{equation}
 combined with Theorem \ref{LAP-theorem} and the fact $\left\{v(n)\left<n\right>^{-s}\right\}_{n\in\Z}\in\ell^{\infty}$ for $\frac{1}{2}<s\leq\frac{\beta}{2}$ gives the desired \eqref{inverti relation}.
\end{proof}

\vskip0.2cm
In what follows, we are devoted to presenting the proof of Theroem \ref{LAP-theorem}. To this end, it suffices to prove the following Proposition \ref{LAP1}.
\begin{proposition}\label{LAP1}
{ Let $H=\Delta^2+V$ with $|V(n)|\lesssim \left<n\right>^{-\beta}$ for some $\beta>1$ and $\mcaI=(0,16)$. Given $\lambda\in \mcaI$, let $\mcaJ$ be the neighborhood of $\lambda$ defined in \eqref{mourre estim} below. For any relatively compact interval $I\subseteq\mcaJ\setminus\sigma_{p}(H)$, define $\widetilde{I}=\{z:\Re z\in I,\ 0<|\Im z|\leq1\}$. Then, for any $j\in\left\{0,\cdots,[\beta]-1\right\}$ and $j+\frac{1}{2}<s\leq[\beta]$, we have}
\begin{itemize}
{
\item [{\rm (i)}] \begin{equation}\label{jth reso estim}
    \sup\limits_{z\in\widetilde{I}}\left\|R^{(j)}_V(z)\right\|_{\B(s,-s)}<\infty.
    \end{equation}
    \vskip0.1cm
\item[{\rm (ii)}] $R^{(j)}_V(z)$ is uniformly continuous on $\widetilde{I}$ in the norm topology of $\B(s,-s)$.
\vskip0.1cm
\item[{\rm (iii)}] For $\mu\in I$, the norm limits
\begin{equation*}
     \frac{d^{j}}{d\mu^j}(R^{\pm}_V(\mu))=\lim\limits_{\varepsilon\downarrow0}R^{(j)}_V(\mu\pm i\varepsilon)
    \end{equation*}
     exist in $\B(s,-s)$ and are uniformly norm continuous on $I$.
}
\end{itemize}
\end{proposition}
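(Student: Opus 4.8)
The plan is to establish Proposition~\ref{LAP1} via the Mourre method applied to $H = \Delta^2 + V$, using a conjugate operator $A$ adapted to the symbol $M(x) = (2-2\cos x)^2$. First I would recall the abstract machinery (stated in Appendix~\ref{section of Appendix}): given a self-adjoint $A$ such that $H\in C^{k}(A)$ for suitable $k$ and a strict Mourre estimate
\begin{equation*}
E_{\mcaJ}(H)\,i[H,A]\,E_{\mcaJ}(H)\geq c\,E_{\mcaJ}(H) + K
\end{equation*}
holds on a neighborhood $\mcaJ$ of a given $\lambda\in\mcaI$ with $c>0$ and $K$ compact, one obtains the limiting absorption principle with the stated number of derivatives and the weight range tied to the regularity order. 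So the substantive work is (a) to exhibit the conjugate operator $A$, (b) to verify the Mourre estimate on all of $\mcaI=(0,16)$, and (c) to check that the $C^k(A)$ regularity of $H$ is controlled by the decay rate $\beta$ of $V$, yielding the precise relation $j+\tfrac12<s\le[\beta]$.

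For step (a), since $\Delta^2$ is a Fourier multiplier by $M(x)$, the natural conjugate operator is the ``generator of dilations in frequency'' $A = \tfrac{i}{2}\big(M'(x)\,\tfrac{d}{dx} + \tfrac{d}{dx}\,M'(x)\big)$ on $L^2(\T)$, transported back to $\ell^2(\Z)$ by $\mcaF^{-1}$; concretely $A$ acts as a symmetric first-order difference operator with polynomial-in-$n$ coefficients. With this choice $i[\Delta^2,A] = \mcaF^{-1}\,(M'(x))^2\,\mcaF$ as a multiplier, and the key point is that $(M'(x))^2 = 4\sin^2 x\,(2-2\cos x)^2$ vanishes exactly at $x\in\{0,\pm\pi\}$, i.e.\ precisely at the thresholds $\lambda=0$ (both $x=0$) and — note $M(\pm\pi)=16$ — at $\lambda=16$. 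Hence on any compact $\mcaJ\Subset(0,16)$ we have $(M'(x))^2\ge c_\mcaJ>0$ on the corresponding frequency set, giving the free Mourre estimate $E_{\mcaJ}(\Delta^2)\,i[\Delta^2,A]\,E_{\mcaJ}(\Delta^2)\ge c_\mcaJ E_{\mcaJ}(\Delta^2)$ with no compact remainder. For step (b) on the perturbed operator, $i[V,A]$ is, up to lower-order terms, the operator with kernel roughly $(n-m)(V(n)-V(m))$-type expressions times first differences; the decay $|V(n)|\lesssim\langle n\rangle^{-\beta}$ with $\beta>1$ makes $\langle n\rangle^{s}[V,A]\langle n\rangle^{s}$ and the localized commutator $E_{\mcaJ}(H)(i[V,A])E_{\mcaJ}(H)$ compact on $\ell^2(\Z)$ (Rellich-type argument: a difference operator with decaying-in-$n$ coefficients sandwiched by spectral projections of $H$, which are bounded, composed with the compact multiplication by the decaying weight), absorbing it into $K$; combined with the free estimate this yields the full Mourre estimate on every compact subinterval of $\mcaI$, and by a covering argument on $\mcaI$ itself.

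For step (c), I would track how many times one may commute $H$ with $A$ while keeping boundedness: each commutator with $A$ raises the power of $n$ in the coefficient by one, so $\mathrm{ad}_A^{k}(V)$ involves weights $\langle n\rangle^{k}$ against $|V|\lesssim\langle n\rangle^{-\beta}$, which is bounded precisely when $k\le[\beta]$ (and the ``half'' in $j+\tfrac12<s$ is the usual Mourre loss of one half-power of the weight per resolvent, i.e.\ per derivative). This gives $H\in C^{[\beta]}(A)$ and hence, by the higher-order Mourre LAP (Jensen–Mourre–Perry \cite{JMP84}, and \cite{ABG96}), the estimates \eqref{jth reso estim}, the uniform continuity on $\widetilde{I}$, and the existence of boundary values $\tfrac{d^j}{d\mu^j}R^\pm_V(\mu)$ in $\B(s,-s)$ with uniform norm continuity on $I$, for all $j\le[\beta]-1$ and $j+\tfrac12<s\le[\beta]$ — which is exactly the claim of Proposition~\ref{LAP1}. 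The main obstacle, I expect, is not the abstract scheme but the bookkeeping in step (a)–(b): one must choose $A$ so that $H$ genuinely lies in the right regularity class (the coefficients of $A$ grow in $n$, so one must confirm the commutators are honestly bounded operators, not merely formal), and one must verify the Mourre estimate is uniform up to — but of course not including — the two thresholds $0$ and $16$, since the vanishing of $(M'(x))^2$ there is what forces $\mcaI$ to be open and what ultimately produces the threshold resonances analyzed in the rest of the paper.
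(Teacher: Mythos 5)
Your overall strategy (Mourre estimate away from the thresholds $0$ and $16$, $C^{k}(A)$ regularity of $H$ controlled by $[\beta]$, then the higher-order commutator LAP of Jensen--Mourre--Perry) is the same as the paper's, and your steps (a)--(c) correspond to Lemma \ref{mourre esti lemma} and Lemma \ref{regularity of H}. Your conjugate operator $\tfrac{i}{2}(M'\partial+\partial M')$ differs from the paper's choice, the standard discrete dilation generator $iA=\mcaN\mcaP-\mcaP^{*}\mcaN$, for which $[\Delta^2,iA]=2\Delta^2(4-\sqrt{\Delta^2})$ by functional calculus; but both commutator multipliers vanish exactly at $x\in\{0,\pm\pi\}$, so either choice yields the strict Mourre estimate on compact subintervals of $(0,16)$, and your treatment of the perturbation terms as compact matches the paper's.

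The genuine gap is that you stop at the abstract output of \cite{JMP84}: that theorem delivers bounds, H\"older continuity and boundary values for $\left<A\right>^{-s}R^{(j)}_V(z)\left<A\right>^{-s}$, i.e.\ with weights in the \emph{conjugate operator}, whereas Proposition \ref{LAP1} is stated in $\B(s,-s)$, i.e.\ with the position weights $\left<n\right>^{\pm s}$. Passing from one to the other is precisely the content of the paper's proof of this proposition and is also where the upper bound $s\le[\beta]$ actually comes from (the abstract theorem imposes no upper bound on $s$, only $s>n-\tfrac12$). Concretely, one differentiates the resolvent identity $R_V(z)=R_V(-i)+(z+i)R_V(-i)^2+(z+i)^2R_V(-i)R_V(z)R_V(-i)$ $j$ times so as to sandwich each $R^{(j_2)}_V(z)$ between two copies of $R_V(-i)$, and then must prove $\left<A\right>^{s}R_V(\pm i)\left<\mcaN\right>^{-s}\in\B(0,0)$ for $0\le s\le[\beta]$. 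Since $A$ has coefficients growing linearly in $n$, this is done by repeatedly commuting powers of $A$ through $R_V(\pm i)$ via $AR_V(\pm i)=R_V(\pm i)\big(ad^{1}_{A}(H)R_V(\pm i)+A\big)$ and $A\,ad^{q}_{A}(H)=ad^{q}_{A}(H)A-ad^{q+1}_{A}(H)$, reducing everything to the boundedness of $ad^{q}_{A}(H)$ for $q\le[\beta]$ and of $A^{\ell}\left<\mcaN\right>^{-[\beta]}$ for $\ell\le[\beta]$, followed by complex interpolation for non-integer $s$. Without this step your argument establishes the statement only in the wrong weighted spaces, and the restriction $s\le[\beta]$ is left unexplained.
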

Before presenting the proof, we outline our main steps. Firstly, based on the theory developed by Jensen, Mourre and Perry in \cite{JMP84}~(see also Theorem \ref{Resol-Smoo}), we aim to identify a suitable conjugate operator $A$. This operator will enable us to establish estimates for the derivatives of the resolvent $R_V(z)$ in the space $\mscH^{A}_{s}$ (the Besov space associated with $A$, as defined in \cite[Section 3.1]{BS99}). Subsequently, we will attempt to replace the space $\mscH^{A}_{s}$ with $\ell^{2,s}$, thereby obtaining the desired results.

We now introduce the conjugate operator $A$ considered here. Define the position operator $\mcaN$:
$$(\mcaN\phi)(n):=n\phi(n),\quad n\in\Z,\quad \forall\ \phi\in\mcaD(\mcaN)=\Big\{\phi\in\ell^2(\Z):\sum\limits_{n\in\Z}|n|^2|\phi(n)|^2<\infty\Big\},$$
and the difference operator $\mcaP$ on $\ell^2(\Z)$:
$$(\mcaP\phi)(n):=\phi(n+1)-\phi(n),\quad\forall\ \phi\in\ell^2(\Z).$$
It immediately follows that the dual operator $\mcaP^{*}$ of $\mcaP$ is given by
$$(\mcaP^{*}\phi)(n):=\phi(n-1)-\phi(n),\quad\forall\ \phi\in\ell^2(\Z).$$
Let us consider the self-adjoint operator $A$ on $\ell^2(\Z)$ satisfying
\begin{equation}\label{A}
iA=\mcaN\mcaP-\mcaP^{*}\mcaN.
\end{equation}

To apply Theorem \ref{Resol-Smoo} to our specific case, it suffices to verify two conditions: the regularity of $H$ with respect to $A$~and the  Mourre estimate of the form \eqref{Mourre}. The first condition is verified in Lemma \ref{regularity of H}, while for the second, we derive the following estimate.
\begin{lemma}\label{mourre esti lemma}
{ Let $H=\Delta^2+V$ with $|V(n)|\lesssim \left<n\right>^{-\beta}$ for $\beta>1$ and let $A$ be defined as in \eqref{A}. Then, for any $\lambda\in\mcaI$, there exist constants $\alpha>0$, $\delta>0$ and a compact operator $K$ on $\ell^2(\Z)$, such that
\begin{equation}\label{mourre estim}
E_{H}(\mcaJ)iad^{1}_{A}(H)E_{H}(\mcaJ)\geq\alpha E_{H}(\mcaJ)+K,\quad \mcaJ=(\lambda-\delta,\lambda+\delta),
\end{equation}
where $E_{H}(\mcaJ)$ represents the spectral projection of $H$ onto the interval $\mcaJ$ and $ad^{1}_{A}(H)$ is defined in \eqref{adk}.}
\end{lemma}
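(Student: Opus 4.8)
The plan is to compute the commutator $ad^1_A(H) = [H,iA]$ explicitly in Fourier variables and show that its symbol is strictly positive on any compact subset of $\mcaI = (0,16)$, up to a compact error coming from the potential $V$. First I would note that under the Fourier transform $\mcaF$, the multiplication operator $\Delta^2$ becomes multiplication by $M(x) = (2-2\cos x)^2$, while the conjugate operator $A$ defined by $iA = \mcaN\mcaP - \mcaP^*\mcaN$ becomes (up to bounded terms) a first-order differential operator in $x$; indeed $\mcaN$ corresponds to $i\frac{d}{dx}$ and $\mcaP,\mcaP^*$ to multiplication by $e^{ix}-1$ and $e^{-ix}-1$. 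A direct computation then gives that $\mcaF\, ad^1_A(\Delta^2)\,\mcaF^{-1}$ is multiplication by a function proportional to $M'(x)\sin x$ — more precisely one finds it equals $2\sin x\, M'(x)$ up to a harmless factor — which, writing $M'(x) = 2(2-2\cos x)\cdot 2\sin x = 4(2-2\cos x)\sin x$, is a nonnegative multiple of $(2-2\cos x)\sin^2 x$. The key point is that this symbol vanishes only at $x \in \{0, \pm\pi\}$, i.e.\ exactly at the two thresholds $M(x) \in \{0,16\}$, so on the spectral window $\mcaJ = (\lambda-\delta,\lambda+\delta)$ with $\overline{\mcaJ}\subset\mcaI$ it is bounded below by a positive constant $\alpha$.

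Next I would deal with the potential term. We have $ad^1_A(H) = ad^1_A(\Delta^2) + ad^1_A(V)$, and $ad^1_A(V) = [V,iA]$. Since $|V(n)|\lesssim\langle n\rangle^{-\beta}$ with $\beta>1$ and $A$ is a first-order operator with coefficients growing linearly in $n$, the commutator $[V,iA]$ has coefficients decaying like $\langle n\rangle^{-\beta+1}\to 0$, hence $[V,iA]$ is a compact operator on $\ell^2(\Z)$ (it is a norm limit of finite-rank truncations, or one can factor it through $\langle n\rangle^{-\epsilon}$ which is compact). This requires the regularity of $H$ with respect to $A$ established in Lemma \ref{regularity of H}, which guarantees these commutators are well-defined as the relevant operators. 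Sandwiching by $E_H(\mcaJ)$ and using the spectral theorem,
\[
E_H(\mcaJ)\, ad^1_A(\Delta^2)\, E_H(\mcaJ) \geq \alpha\, E_H(\mcaJ),
\]
since on the range of $E_H(\mcaJ)$ the symbol $M(x)$ lies in $\mcaJ$ where the commutator symbol exceeds $\alpha$; more carefully one argues via $\mcaF E_H(\mcaJ)\mcaF^{-1} = \mathbf{1}_{\{M(x)\in\mcaJ\}}$ and pointwise positivity of the product of symbols on that set. Adding the compact term $K := E_H(\mcaJ)[V,iA]E_H(\mcaJ)$ yields \eqref{mourre estim}.

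The main obstacle I anticipate is not the positivity computation itself but the careful bookkeeping of the unbounded operators: $A$ is unbounded, so $[H,iA]$ must be interpreted as a form on a suitable core, and one must check that the Fourier-side computation is legitimate — i.e.\ that $ad^1_A(\Delta^2)$ genuinely extends to the bounded multiplication operator with the stated symbol rather than merely agreeing with it on a dense set with a defect. This is where invoking Lemma \ref{regularity of H} (the $C^k(A)$ regularity of $H$) is essential, as it licenses the manipulations $ad^1_A(H) = ad^1_A(\Delta^2) + ad^1_A(V)$ and the boundedness/compactness claims. A secondary subtlety is confirming that $\delta$ can be chosen uniformly for $\lambda$ ranging over a compact subinterval — this follows from the uniform lower bound of the commutator symbol away from $\{0,\pm\pi\}$ together with continuity, so it is routine once the pointwise picture is in place. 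I would also double-check the exact constant in $iA = \mcaN\mcaP - \mcaP^*\mcaN$ so that the sign of the resulting symbol is correct (positive, not negative) on $\mcaI$; a sign error here would be fatal, so the computation of $\mcaF(iA)\mcaF^{-1}$ and then of the commutator with $M(x)$ should be carried out with care.
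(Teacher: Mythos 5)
Your computation of the free commutator is consistent with the paper: in Fourier variables $ad^{1}_{A}(\Delta^2)$ is multiplication by $2M(x)\big(4-(2-2\cos x)\big)=8(2-2\cos x)\sin^2x$, i.e.\ $[H_0,iA]=2H_0(4-\sqrt{H_0})$ with $H_0=\Delta^2$, and this symbol is bounded below by a positive constant on $\{x:M(x)\in\overline{\mcaJ}\}$ whenever $\overline{\mcaJ}\subset(0,16)$. The compactness of $[V,iA]$ for $\beta>1$ is also fine.

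The gap is in the sandwiching step. You assert $E_H(\mcaJ)\,ad^{1}_{A}(\Delta^2)\,E_H(\mcaJ)\geq\alpha E_H(\mcaJ)$ and justify it via ``$\mcaF E_H(\mcaJ)\mcaF^{-1}=\mathbf{1}_{\{M(x)\in\mcaJ\}}$''. That identity holds only for the \emph{free} projection $E_{H_0}(\mcaJ)$; for $V\not\equiv0$ the operator $H$ is not diagonalized by $\mcaF$, its spectral projection is not a Fourier multiplier, and the range of $E_H(\mcaJ)$ is not localized in the region where the symbol of $ad^{1}_{A}(\Delta^2)$ is large. Since that symbol vanishes at $x=0,\pm\pi$, the inequality you wrote is in general false, not merely unjustified. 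The correct route --- the one the paper takes --- is to first prove the strict estimate $E_{H_0}(\mcaJ)[H_0,iA]E_{H_0}(\mcaJ)\geq\alpha E_{H_0}(\mcaJ)$ by functional calculus applied to $g(x)=2x(4-\sqrt x)$ on $[0,16]$, and then exchange $E_{H_0}(\mcaJ)$ for $E_H(\mcaJ)$, absorbing into $K$ not only $E_H(\mcaJ)[V,iA]E_H(\mcaJ)$ but also the cross terms involving $E_H(\mcaJ)-E_{H_0}(\mcaJ)$ and the term $\alpha\big(E_H(\mcaJ)-E_{H_0}(\mcaJ)\big)$. This requires the compactness of $E_H(\mcaJ)-E_{H_0}(\mcaJ)$ (a consequence of the compactness of $V$), a point your proposal does not address; it is precisely why the perturbed Mourre estimate carries a compact remainder beyond $[V,iA]$ alone.
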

We delay the proof of this lemma to the end of this section. Now, combining this lemma and Lemma \ref{regularity of H}, one can apply Theorem \ref{Resol-Smoo} to $H$ to obtain the following estimates.
\begin{lemma}\label{LAP lemma}
{Let $H=\Delta^2+V$ with $|V(n)|\lesssim \left<n\right>^{-\beta}$ for some $\beta>1$ and let $A$ be defined as in \eqref{A}. Given $\lambda\in\mcaI$ and $\mcaJ$ is defined in \eqref{mourre estim}. Then, for any relatively compact interval $I\subseteq\mcaJ\setminus\sigma_{p}(H)$, any $j\in\left\{0,\cdots,[\beta]-1\right\}$ and $s>j+\frac{1}{2}$, one has
\begin{itemize}
\item [{\rm (i)}]\begin{equation}\label{jth esti of Lem}
\sup\limits_{{\Re}z\in I,{\Im}z\neq0}\left\|\left<A\right>^{-s}R^{(j)}_V(z)\left<A\right>^{-s}\right\|<\infty.
\end{equation}
\item[{\rm (i)}] Denote $\widetilde{I}=\{z:\Re z\in I,\ 0<|\Im z|\leq1\}$, then $\left<A\right>^{-s}R^{(j)}_V(z)\left<A\right>^{-s}$ is H\"{o}lder continuous on $\widetilde{I}$ with the exponent $\delta(s,j+1)$ defined in \eqref{delta}.
    \vskip0.1cm
\item [{\rm (iii)}] Let $\mu\in I$. The norm limits
$$\lim\limits_{\varepsilon\downarrow0}\left<A\right>^{-s}R^{(j)}_V(\lambda\pm i\varepsilon)\left<A\right>^{-s}$$
exist and equal
$$\frac{d^{j}}{d\mu^j}(\left<A\right>^{-s}R^{\pm}_V(\mu)\left<A\right>^{-s}),$$
where
$$\left<A\right>^{-s}R^{\pm}_V(\mu)\left<A\right>^{-s}:=\lim\limits_{\varepsilon\downarrow0}\left<A\right>^{-s}R_V(\mu\pm i\varepsilon)\left<A\right>^{-s}.$$
The norm limits are H\"{o}lder continuous with exponent $\delta(s,j+1)$ given by \eqref{delta}.
\end{itemize}
}
\end{lemma}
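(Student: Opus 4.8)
The plan is to derive Lemma \ref{LAP lemma} by applying the abstract resolvent-smoothing machinery of Jensen--Mourre--Perry (recorded as Theorem \ref{Resol-Smoo} in the appendix) to the pair $(H,A)$, with $H=\Delta^2+V$ and $A$ the conjugate operator defined in \eqref{A}. The abstract theorem requires two inputs: sufficient regularity of $H$ with respect to $A$ (i.e.\ $H\in C^{[\beta]}(A)$ or the appropriate class, so that the iterated commutators $ad^k_A(H)$ are well-defined and bounded for $k\le[\beta]$), and a Mourre estimate on a neighbourhood $\mcaJ$ of each $\lambda\in\mcaI$. The first is supplied by Lemma \ref{regularity of H}; the second is supplied by Lemma \ref{mourre esti lemma}, equation \eqref{mourre estim}. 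Granting these, Theorem \ref{Resol-Smoo} directly yields, for any relatively compact $I\subseteq\mcaJ\setminus\sigma_p(H)$, the uniform bound \eqref{jth esti of Lem} on $\langle A\rangle^{-s}R^{(j)}_V(z)\langle A\rangle^{-s}$ for $s>j+\tfrac12$ and $j\le[\beta]-1$, together with the H\"older continuity up to the real axis with exponent $\delta(s,j+1)$, and the existence of the boundary values $\langle A\rangle^{-s}R^{\pm}_V(\mu)\langle A\rangle^{-s}$ as the $\varepsilon\downarrow0$ limits. Thus Lemma \ref{LAP lemma} is essentially a packaging statement: one checks the hypotheses of the cited abstract theorem for our concrete $H$ and $A$, and reads off the three conclusions.

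In detail, the steps I would carry out are: (1) recall the statement of Theorem \ref{Resol-Smoo}, noting precisely which regularity class and which form of Mourre estimate it demands, and how its conclusions are phrased (uniform bounds, H\"older exponent $\delta(s,j+1)$, boundary limits); (2) invoke Lemma \ref{regularity of H} to get that $H$ has the required $A$-regularity on $\mcaI$ — here one uses that $\Delta^2$ is a polynomial in the shift operators, so that commutators with $A=\tfrac1i(\mcaN\mcaP-\mcaP^*\mcaN)$ stay bounded, while the iterated commutators with $V$ are controlled by the decay $|V(n)|\lesssim\langle n\rangle^{-\beta}$, which is exactly where the ceiling $j\le[\beta]-1$, $s\le[\beta]$ enters; (3) invoke Lemma \ref{mourre esti lemma} to get the Mourre estimate \eqref{mourre estim} with positive constant $\alpha$ on $\mcaJ=(\lambda-\delta,\lambda+\delta)$ modulo a compact remainder; (4) feed (2) and (3) into Theorem \ref{Resol-Smoo} and extract conclusion (i), the uniform bound \eqref{jth esti of Lem}; (5) extract conclusion (ii), H\"older continuity on $\widetilde I$ with exponent $\delta(s,j+1)$; (6) extract conclusion (iii), existence and H\"older continuity of the one-sided limits $\langle A\rangle^{-s}R^{\pm}_V(\mu)\langle A\rangle^{-s}$, and the identification of the $j$-th derivative of the limit with the limit of $\langle A\rangle^{-s}R^{(j)}_V\langle A\rangle^{-s}$.

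I should also address a small but genuine point: the abstract theorem is stated for $H$ self-adjoint and $z$ in a complex neighbourhood of $I$ with $\Im z\ne0$, and it is convenient to split $\widetilde I$ into its upper and lower halves to talk about the two one-sided limits; the absence of eigenvalues of $H$ in $I$ (guaranteed by restricting to $I\subseteq\mcaJ\setminus\sigma_p(H)$, and by Theorem \ref{LAP-theorem}(i) which says $\sigma_p(H)\cap\mcaI$ is discrete so such $I$ are plentiful) ensures there is no obstruction from embedded eigenvalues to the limiting absorption principle on $I$. One further routine remark: $\delta(s,j+1)$ is the H\"older exponent produced by the abstract theorem as a function of how much more than $j+\tfrac12$ the weight $s$ is, so the statement is uniform in $I$ but the exponent degrades as $s\downarrow j+\tfrac12$.

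The main obstacle — and the place where actual work, rather than citation, is needed — is step (2)/(3): verifying that our explicit $A$ is an admissible conjugate operator, namely that $\Delta^2$ (equivalently, multiplication by $M(x)=(2-2\cos x)^2$ on the Fourier side) satisfies the Mourre estimate away from the thresholds $0$ and $16$, with the commutator $ad^1_A(\Delta^2)$ corresponding on the Fourier side to a multiple of $M'(x)$, which vanishes precisely at $x=0,\pm\pi$ (i.e.\ at energies $0$ and $16$); on any compact subinterval of $(0,16)$ one has $M'(x)\ne0$, giving strict positivity $\alpha>0$, and the $V$-contribution is relatively compact under the decay hypothesis, yielding \eqref{mourre estim}. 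This is exactly the content of Lemma \ref{mourre esti lemma} (whose proof is deferred in the paper), and of Lemma \ref{regularity of H}; once those are in hand, Lemma \ref{LAP lemma} follows by a direct application of Theorem \ref{Resol-Smoo} with no additional difficulty.
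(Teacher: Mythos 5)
Your proposal is correct and matches the paper's own treatment: Lemma \ref{LAP lemma} is obtained there precisely by combining the regularity statement of Lemma \ref{regularity of H} with the Mourre estimate of Lemma \ref{mourre esti lemma} and then invoking Theorem \ref{Resol-Smoo} directly, exactly as you describe. Your additional remarks on where the ceiling $j\le[\beta]-1$ comes from and on the vanishing of the commutator symbol at the thresholds $0$ and $16$ are consistent with the paper's proofs of those auxiliary lemmas.
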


With Lemma \ref{LAP lemma} established, we proceed to prove Proposition \ref{LAP1}.
\begin{proof}[Proof of Proposition \ref{LAP1}]

Given $\lambda\in\mcaI$, any relatively compact interval $I\subseteq\mcaJ\setminus\sigma_{p}(H)$ and $z\in\widetilde{I}$. Let $j\in\left\{0,\cdots,[\beta]-1\right\}$, differentiating both sides of the following resolvent identity $j$ times
 \begin{equation*}
 R_V(z)=R_V(-i)+(z+i)(R_V(-i))^2+(z+i)^2R_V(-i)R_V(z)R_V(-i)
 \end{equation*}
 yields
 \begin{equation*}
 R^{(j)}_V(z)=(R_V(-i))^{(j)}+(z+i)^{(j)}(R_V(-i))^2+\sum\limits_{j_1+j_2=j}C^{j_1}_j((z+i)^2)^{(j_1)}R_V(-i)R^{(j_2)}_V(z)R_V(-i).
 \end{equation*}
 \vskip0.2cm
 \underline{$\bm{(i)}$}~To obtain the \eqref{jth reso estim}, it suffices to show that for any $0\leq j_2\leq j$,
 \begin{equation}\label{R(-i)R(z)R(-i)}
\sup\limits_{z\in\widetilde{I}}\left\|R_V(-i)R^{(j_2)}_V(z)R_V(-i)\right\|_{\B(s,-s)}<\infty, \quad j_2+\frac{1}{2}<s\leq [\beta].
 \end{equation}
For this purpose, we decompose
 \begin{equation*}
 \left<\mcaN\right>^{-s}R_V(-i)R^{(j_2)}_V(z)R_V(-i)\left<\mcaN\right>^{-s}=\underbrace{\left<\mcaN\right>^{-s}R_V(-i)\left<A\right>^{s}}\underbrace{\left<A\right>^{-s}R^{(j_2)}_V(z)\left<A\right>^{-s}}\underbrace{\left<A\right>^{s}R_V(-i)\left<\mcaN\right>^{-s}},
 \end{equation*}
and establish the fact that
 \begin{equation}\label{ARVN}
 \left<A\right>^{s}R_V(\pm i)\left<\mcaN\right>^{-s}\in\B(0,0),\quad 0\leq s\leq[\beta].
 \end{equation}
 Then the desired \eqref{R(-i)R(z)R(-i)} is obtained by combining the estimate \eqref{jth esti of Lem}.
 To prove \eqref{ARVN}, obviously, it holds for $s=0$. Therefore, once we can show that
 \begin{equation*}%\label{beta}
 \left<A\right>^{[\beta]}R_V(\pm i)\left<\mcaN\right>^{-[\beta]}\in\B(0,0),
 \end{equation*}
 the desired \eqref{ARVN} follows from complex interpolation. To see this, it further reduces to verify
 \begin{equation}\label{ell}
 A^{k}R_V(\pm i)\left<\mcaN\right>^{-[\beta]}\in\B(0,0),\quad\forall\ 1\leq k\leq[\beta],\quad k\in \N^+.
 \end{equation}
 Indeed, using the formula
 \begin{equation}\label{ARV}
AR_V(\pm i)=-ad^1_{A}(R_V(\pm i))+R_V(\pm i)A= R_V(\pm i)\big(ad^{1}_{A}(H)R_V(\pm i)+A\big),
 \end{equation}
and by induction, we obtain that for any $1\leq k\leq[\beta]$,
\begin{align}
\begin{split}
A^{k}R_V(\pm i)&=R_V(\pm i)\big(ad^{1}_{A}(H)R_V(\pm i)+A\big)^{k}\\
&=R_V(\pm i)\sum\limits_{\ell_j\in\{0,1\},\ 1\leq j\leq k}\Big(\prod\limits_{j=1}^{k}\big(ad^{1}_{A}(H)R_V(\pm i)\big)^{\ell_j}A^{1-\ell_j}\Big).
\end{split}
\end{align}
With the goal of combining the powers of $A$ and $\left<\mcaN\right>^{-[\beta]}$, by utilizing the formula
$$Aad^j_A(H)=ad^j_A(H)A-ad^{j+1}_A(H),$$
 and \eqref{ARV} repeatedly, one can express $A^{k}R_V(\pm i)\left<\mcaN\right>^{-[\beta]}$ as a finite sum of operators with the form
 $$T^{\pm}_{\ell}A^{\ell}\left<\mcaN\right>^{-[\beta]},\quad 0\leq \ell\leq k,$$
 where $T^{\pm}_{\ell}\in\B(0,0)$, and if it contains term $ad^{q}_{A}(H)$, then $q$ is at most $k$. Since $A^{\ell}\left<\mcaN\right>^{-[\beta]}\in\B(0,0)$ for any $0\leq\ell\leq k$ in view of $k\leq[\beta]$, which gives the \eqref{ell}. Hence, the desired (i) is established.
\vskip0.2cm
 \underline{$\bm{(ii)}$ and $\bm{(iii)}$ } can be followed by combining the uniform boundedness of \eqref{R(-i)R(z)R(-i)} with (ii) and (iii) in Lemma \ref{LAP lemma}, respectively. This completes the proof.
\end{proof}
\vskip0.3cm
Finally, we end this section with the proof of Lemma \ref{mourre esti lemma}.
\begin{proof}[Proof of Lemma \ref{mourre esti lemma}]
For convenience, in this proof, we denote $H_0:=\Delta^2$ and replace the notation $ad^{1}_{A}(\cdot)$ with $[\cdot,A]$. For any $\lambda\in\mcaI=(0,16)$, to obtain \eqref{mourre estim}, the key step is to prove that it holds for $H_0$ with $K=0$. Specifically, we need to show that there exist constants
$\alpha>0$ and $\delta>0$, such that
\begin{equation}\label{H0}
E_{H_0}(\mcaJ)[H_0,iA]E_{H_0}(\mcaJ)\geq\alpha E_{H_0}(\mcaJ),\quad \mcaJ=(\lambda-\delta,\lambda+\delta).
\end{equation}
Once \eqref{H0} is established, after some deformation treatment, we have
\begin{align*}
&E_{H}(\mcaJ)[H,iA]E_{H}(\mcaJ)=E_{H_0}(\mcaJ)[H_0,iA]E_{H_0}(\mcaJ)+\\
&\underbrace{E_{H_0}(\mcaJ)[H_0,iA](E_{H}(\mcaJ)-E_{H_0}(\mcaJ))+(E_{H}(\mcaJ)-E_{H_0}(\mcaJ))[H_0,iA]E_{H}(\mcaJ)+E_{H}(\mcaJ)[V,iA]E_{H}(\mcaJ)}_{K_1}\\
&\geq \alpha E_{H_0}(\mcaJ)+K_1=\alpha E_{H}(\mcaJ)+\underbrace{\alpha( E_{H}(\mcaJ)-E_{H_0}(\mcaJ))+K_1}_{K},
\end{align*}
where the compactness $K$ follows from the fact that both $V$ and $[V,iA]$ are bounded compact operators under the assumption $|V(n)|\lesssim\left<n\right>^{-\beta}$ with $\beta>1$. This establishes \eqref{mourre estim}.

To establish \eqref{H0}, first by Lemma \ref{regularity of H},
$$[H_0,iA]=2H_0(4-\sqrt {H_{0}}),\quad 0\leq H_0\leq16.$$
Define $g(x)=2x(4-\sqrt x)$ for $x\in[0,16]$ and let $\delta_0:=\delta_0(\lambda)=\frac{1}{2}\min(\lambda, 16-\lambda)$. Using functional calculus, we obtain
 \begin{equation}\label{J1}
 E_{H_0}(\mcaJ_1)[H_0,iA]E_{H_0}(\mcaJ_1)\geq C(\lambda) E_{H_0}(\mcaJ_1):=\alpha E_{H_0}(\mcaJ_1),\quad \mcaJ_1=[\lambda-\delta_0,\lambda+\delta_0],
 \end{equation}
where $C(\lambda)=\min\limits_{x\in \mcaJ_1}g(x)$. Taking $\mcaJ=(\lambda-\delta,\lambda+\delta)\subseteq\mcaJ_1$ with $0<\delta<\delta_0$, and multiplying both sides of \eqref{J1} by $E_{H_0}(\mcaJ)$, the desired inequality \eqref{H0} is derived.
\end{proof}

\section{Decay estimates for the free case and sharpness}\label{Sec of decay for free}

When $V\equiv0$, the decay estimate \eqref{cos-sin decay-estimate} follows directly from the estimate $|t|^{-\frac{1}{3}}$ of $e^{it\Delta}$. Hence in this section, we will establish the decay estimate \eqref{eitH decay-estimate} for the free group $e^{-it\Delta^2}$.
\begin{theorem}\label{D-E for free case}
{ For $t\neq0$, one has the following decay estimate:
\begin{equation}\label{eitH0 decay estimate}
\big\|e^{-it\Delta^2}\big\|_{\ell^1(\Z )\rightarrow\ell^{\infty}(\Z )}\lesssim|t|^{-\frac{1}{4}}.
\end{equation}
%\begin{equation}\label{cos-sin H0 decay-estimate}
%\|{\rm cos}(\Delta t)\|_{\ell^1\rightarrow\ell^{\infty}}+\left\|\frac{{\rm sin}(\Delta t)}{\Delta t}\right\|_{\ell^1\rightarrow\ell^{\infty}}\lesssim|t|^{-\frac{1}{3}}.
%\end{equation}
}
\end{theorem}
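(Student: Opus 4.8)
The plan is to pass to the Fourier side, represent the kernel of $e^{-it\Delta^2}$ as a scalar oscillatory integral over the torus, and then bound it uniformly in the frequency by the van der Corput lemma after a finite partition adapted to the critical points of the symbol. By \eqref{unitary equivalent} and the definition of $\mcaF$ in \eqref{fourier transform}, $e^{-it\Delta^2}=\mcaF^{-1}\big(e^{-itM(x)}\,\mcaF\,\cdot\,\big)$ with $M(x)=(2-2\cos x)^2=16\sin^4(x/2)$, so $e^{-it\Delta^2}$ is the convolution operator on $\Z$ with kernel
$$K_t(n,m)=\frac{1}{2\pi}\int_{-\pi}^{\pi}e^{-itM(x)}\,e^{i(n-m)x}\,dx,\qquad n,m\in\Z.$$
Since $\big\|e^{-it\Delta^2}\big\|_{\ell^1\rightarrow\ell^{\infty}}=\sup_{n,m\in\Z}|K_t(n,m)|$, it suffices to prove
$$\sup_{k\in\Z}\Big|\int_{-\pi}^{\pi}e^{i\phi_{t,k}(x)}\,dx\Big|\lesssim|t|^{-1/4},\qquad \phi_{t,k}(x):=kx-tM(x),$$
and as the left-hand side is trivially $\le 2\pi$, one may restrict to $|t|\ge 1$.

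The core of the argument is a van der Corput estimate on each piece of a fixed finite partition of $[-\pi,\pi]$ dictated by the derivatives of $M$. A direct computation gives $M'(x)=8(1-\cos x)\sin x$, $M''(x)=-8(2\cos x+1)(\cos x-1)$, $M'''(x)=8\sin x(4\cos x-1)$ and $M^{(4)}(x)=32\cos 2x-8\cos x$; hence $M$ has critical points only at $x=0$, where it vanishes to fourth order ($M(x)=x^4+O(x^6)$, $M^{(4)}(0)=24$), and at $x=\pm\pi$, where $M''(\pm\pi)=-16$. The key observation is that $M''$ and $M^{(4)}$ have no common zero on $[-\pi,\pi]$: the zeros of $M''$ are exactly $\{0,\pm2\pi/3\}$, and $M^{(4)}(0)=24$, $M^{(4)}(\pm2\pi/3)=-12$. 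Therefore every $x_0\in[-\pi,\pi]$ lies in a closed subinterval on which $|M^{(m)}|$ is bounded below by a positive constant for some $m=m(x_0)\in\{2,4\}$, and by compactness one obtains a finite partition $[-\pi,\pi]=\bigcup_{j=1}^{N}I_j$, indices $m_j\in\{2,4\}$ and a constant $c_0>0$, all independent of $t$ and $k$, with $|M^{(m_j)}|\ge c_0$ on $I_j$. On each $I_j$ one has $\phi_{t,k}^{(m_j)}=-tM^{(m_j)}$ (since $m_j\ge 2$, the linear term $kx$ drops out), hence $|\phi_{t,k}^{(m_j)}|\ge c_0|t|$ there; the van der Corput lemma \cite[pp.~332--334]{Ste93}, which for $m_j\ge 2$ requires no monotonicity hypothesis, then gives $\big|\int_{I_j}e^{i\phi_{t,k}}\,dx\big|\lesssim(c_0|t|)^{-1/m_j}\lesssim|t|^{-1/4}$ for $|t|\ge 1$, and summing over the finitely many $j$ yields the claim.

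The only genuine obstacle is the threshold $x=0$: there $M$ is a degenerate critical value — $M(0)=M'(0)=M''(0)=M'''(0)=0$ while $M^{(4)}(0)\neq 0$ — so only the fourth-derivative test is available, and this is precisely what produces the rate $|t|^{-1/4}$ rather than the $|t|^{-1/2}$ of a nondegenerate symbol (this degeneracy is also what renders the exponent sharp, through a stationary-phase lower bound localized at $x=0$). Away from $0$ one always does strictly better: $m=2$ near $x=\pm\pi$ gives rate $|t|^{-1/2}$, and near $x=\pm2\pi/3$, where $M''$ vanishes but $M'''(\pm2\pi/3)=\mp12\sqrt3\neq 0$, one may even use $m=3$ for rate $|t|^{-1/3}$; all of these are dominated by $|t|^{-1/4}$ for $|t|\ge 1$. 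Finally, uniformity in the frequency $k$ is automatic, since $k$ enters $\phi_{t,k}$ only linearly and thus disappears from every derivative of order $\ge 2$, hence from the partition, the indices $m_j$, and the constant $c_0$.
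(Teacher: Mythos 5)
Your proof is correct, and it follows the Fourier-multiplier route that the paper explicitly acknowledges (in the sentence preceding Lemma \ref{Von der}) but deliberately sets aside in favour of Stone's formula. You diagonalize $\Delta^2$ via \eqref{unitary equivalent}, write the kernel as $\frac{1}{2\pi}\int_{-\pi}^{\pi}e^{i(kx-tM(x))}\,dx$ with $k=n-m$, and exploit the fact that $M''$ and $M^{(4)}$ have no common zero on the torus (the zeros of $M''$ are $0,\pm2\pi/3$, where $M^{(4)}$ equals $24$ and $-12$) to build a fixed finite partition on which some derivative of order $m_j\in\{2,4\}$ of the phase is bounded below by $c_0|t|$; since $k$ enters only linearly it vanishes from every derivative of order $\geq2$, so van der Corput applies on each piece with constants independent of $k$ and $t$, the worst exponent $1/4$ coming from the fourth-order degeneracy of $M$ at $x=0$. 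The paper instead uses Stone's formula, splits the resolvent kernel $R^{\pm}_0(\mu^4,n,m)$ from \eqref{kernel of R0 boundary} into an oscillatory piece and an exponentially decaying piece, changes variables via \eqref{varible substi1}, and reduces to Lemma \ref{Von der}, a bound on $\sup_{s\in\R}\big|\int_{-\pi}^{0}e^{-it[(2-2\cos x)^2-sx]}dx\big|$ proved by tracking the stationary points of $\Phi_s$ as $s$ varies (worst cases $s=0$ and $s=-6\sqrt3$). Your argument is more elementary and makes uniformity in the frequency completely transparent, whereas the paper's per-$s$ case analysis tacitly requires the van der Corput constants to remain uniform as $s$ approaches the degenerate values --- an issue your higher-derivative partition sidesteps (and which could equally be used to streamline the paper's Lemma \ref{Von der}, since $\Phi_s''$ and $\Phi_s^{(4)}$ do not depend on $s$). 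What the paper's route buys is the infrastructure --- the resolvent kernel, the substitution, and Corollary \ref{corollary} with a variable amplitude of bounded variation --- that is reused verbatim for the perturbed kernels $K^{\pm}_j$ in Section \ref{sec of proof}, which a pure multiplier argument does not provide.
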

\begin{remark}
{\rm It is well-known that decay estimate for $e^{it\Delta}$ is derived using the Fourier
 transform, whose kernel is given by
  \begin{equation*}
\left(e^{it\Delta}\right)(n,m)=(2\pi)^{-\frac{1}{2}}\int_{-\pi}^{\pi}e^{-it(2-2{\rm cos}x)-i(n-m)x}dx.
 \end{equation*}
Hence,
$$\|e^{it\Delta}\|_{\ell^1(\Z )\rightarrow\ell^{\infty}(\Z )}\lesssim \sup\limits_{s\in\R}\Big|\int_{-\pi}^{\pi}e^{-it\left(2-2{\rm cos}x-sx\right)}dx\Big|\lesssim |t|^{-\frac{1}{3}}.$$}
\end{remark}

Note  that  Fourier method may establish the decay estimate for $e^{-it\Delta^2}$.
However, we would like to use Stone's formula to derive the free estimate \eqref{eitH0 decay estimate} in Theorem \ref{D-E for free case} since it offers key insights for studying the perturbation case.
To this end, we first establish the following lemma.
\begin{lemma}\label{Von der}
{ For $t\neq0$, the following estimate holds:}
\begin{equation}\label{von der esti}
\sup\limits_{s\in\R}\left|\int_{-\pi}^{0}e^{-it\left[(2\pm2{\rm cos}x)^2-sx\right]}dx\right|\lesssim |t|^{-\frac{1}{4}}.
\end{equation}
\begin{comment}
\begin{itemize}
\item[(1)]
%with $\alpha_1=\frac{1}{3}$ and $\alpha_2=\frac{1}{4}$.
\item[(2)] For any interval $[a,b]\subseteq[-\pi,0]$, the estimates \eqref{von der esti} still hold on $[a,b]$.
\item [(3)] Let $\phi(x)$ be a complex-valued function on $[a,b]\subseteq[-\pi,0]$, differentiable on $(a,b)$ with $\phi'(x)\in L^{1}([a,b])$. Then
\begin{equation}\label{von der esti complex}
\sup\limits_{s\in\R}\left|\int_{a}^{b}e^{-it\left[(2\pm2{\rm cos}x)^2-sx\right]}\phi(x)dx\right|\lesssim |t|^{-\frac{1}{4}}\left(|\phi(b)|+\int_{a}^{b}|\phi'(x)|dx\right).
\end{equation}
%with the same $\alpha_k$ in (1).
\end{itemize}
\end{comment}
\end{lemma}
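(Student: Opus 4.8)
The plan is to deduce \eqref{von der esti} from the van der Corput lemma, after splitting $[-\pi,0]$ into finitely many pieces adapted to the vanishing of the derivatives of the phase. Write $\psi_{\pm}(x):=(2\pm2{\rm cos}\,x)^2=6\pm8{\rm cos}\,x+2{\rm cos}\,2x$, so the integrand in \eqref{von der esti} is $e^{-it(\psi_{\pm}(x)-sx)}$. The key algebraic fact is that $\psi_{\pm}''$, $\psi_{\pm}'''$ and $\psi_{\pm}^{(4)}$ have no common zero on $[-\pi,0]$: indeed $\psi_{\pm}''(x)=\mp8{\rm cos}\,x-8{\rm cos}\,2x$ is a trigonometric polynomial whose only zeros in $[-\pi,0]$ are two points (for the minus sign $x=0$ and $x=-\tfrac{2\pi}{3}$; for the plus sign $x=-\pi$ and $x=-\tfrac{\pi}{3}$), and at each such point one checks by direct computation that $\psi_{\pm}'''$ or $\psi_{\pm}^{(4)}$ is nonzero (e.g.\ $\psi_{-}^{(4)}(0)=24$ and $\psi_{-}'''(-\tfrac{2\pi}{3})=12\sqrt3$). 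By continuity and compactness there are an absolute constant $c_0>0$ and a partition $[-\pi,0]=\bigcup_{i=1}^{N}I_i$ into finitely many closed subintervals such that on each $I_i$ one has $|\psi_{\pm}^{(k_i)}(x)|\ge c_0$ for some $k_i\in\{2,3,4\}$, with $N$ and $c_0$ independent of $s$ and $t$.

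On a given piece $I_i$ I would apply the van der Corput lemma \cite[pp.~$332$--$334$]{Ste93} to the phase $\Phi_s(x):=t\big(\psi_{\pm}(x)-sx\big)$. Since $k_i\ge2$, the linear term $-tsx$ does not contribute to $\Phi_s^{(k_i)}$, so $|\Phi_s^{(k_i)}(x)|=|t|\,|\psi_{\pm}^{(k_i)}(x)|\ge c_0|t|$ uniformly on $I_i$, and the van der Corput lemma gives
\begin{equation*}
\Big|\int_{I_i}e^{i\Phi_s(x)}\,dx\Big|\le c_{k_i}\,(c_0|t|)^{-1/k_i}\lesssim |t|^{-1/4},\qquad |t|\ge1,
\end{equation*}
with a constant depending only on $k_i$ (in particular not on the endpoints of $I_i$, nor on $s$). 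Summing the $N$ contributions yields \eqref{von der esti} for $|t|\ge1$; for $0<|t|\le1$ the integral is trivially bounded by $\pi\le\pi|t|^{-1/4}$, and the lemma follows. Note the whole estimate is uniform in $s$ precisely because only $k_i\ge2$ occur, so the $s$-dependence drops out of $\Phi_s^{(k_i)}$.

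The step I expect to be the crux is ensuring we never need the first-order ($k=1$) version of van der Corput: the derivative $\Phi_s'(x)=t\big(\psi_{\pm}'(x)-s\big)$ can be made small, and need not be monotone, for suitable $s$, so a $k=1$ estimate could never be uniform in $s$. The common-zero check above is exactly what rules this out, forcing every subinterval into the range $k\in\{2,3,4\}$. The places where only $k=4$ is available are the degenerate critical points --- $x=0$ for the minus sign, $x=-\pi$ for the plus sign, where $\psi_{\pm}$ vanishes to fourth order --- and this is what pins the decay rate at $|t|^{-1/4}$ (and, as Theorem \ref{D-E for free case} asserts, makes it sharp).
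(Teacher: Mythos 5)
Your proposal is correct and takes essentially the same route as the paper: both apply the Van der Corput Lemma after locating the degenerate critical points of the phase (for the ``$-$'' sign, $x=0$ where the fourth derivative first survives and $x=-\tfrac{2\pi}{3}$ where the third does), which is what produces the $|t|^{-1/4}$ rate. Your version is in fact slightly cleaner on the uniformity in $s$: by partitioning $[-\pi,0]$ so that only the $k\ge2$ estimates are ever invoked (where the linear term $-sx$ drops out of $\Phi_s^{(k)}$), you avoid the paper's case analysis over $s$, whose $k=1$ bounds would not be uniform as $s$ approaches the critical values $0$ and $-6\sqrt3$.
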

%\begin{remark}
%{\rm %Some remarks on Lemma \ref{Von der} are given as follows:
%\begin{itemize}
%\item [(i)]
%Note that the estimate above also holds on the interval $[0,\pi]$ by the variable substitution $x\rightarrow-x$. %Consequently, based on the kernel with the form \eqref{another proof method for eitH0 decay estimate}, the decay estimate \eqref{D-E for free case} is immediately obtained.
%\item [(ii)] For $-\Delta$, it was shown in \cite{SK05} that
%\end{itemize}
   % }%Furthermore, we remark that all the estimates for kernels \eqref{kernels of Ki} can ultimately be reduced to the form presented in this lemma.}
%\end{remark}
\begin{proof}
This estimate is a concrete application of the Van der Corput Lemma (see e.g. \cite[P. ${332-334}$]{Ste93}). Observing that by the change of variable $x\mapsto-\pi-y$, we obtain
$$\int_{-\pi}^{0}e^{-it[(2+2{\rm cos}x)^2-sx]}dx=e^{-its\pi}\int_{-\pi}^{0}e^{-it[(2-2{\rm cos}y)^2+sy]}dy.$$
Hence it suffices to prove that \eqref{von der esti} holds for the ``$-$" case.
%\begin{equation}\label{Oci int 1}
%\sup\limits_{s\in\R}\left|\int_{-\pi}^{0}e^{-it\Phi_s(x)}dx\right|\lesssim |t|^{-\frac{1}{4}},
%\end{equation}
For any $s\in\R$, let $$\Phi_{s}(x):=(2-2{\rm cos}x)^2-sx,\quad x\in[-\pi,0].$$
%then it suffices to prove that (1) holds for the case with the integrand being $e^{-it\Phi^{-}_{k,s}}$.
%In view that the estimate \eqref{von der esti} for $k=1$ has been proved in \cite[Section 4.1]{SK05}, so in what follows we focus on the case of $k=2$, and for simplicity, we drop the subscripts $``-"$ and $``~2~"$, simply denote by $\Phi_{s}$.

First, a direct calculation yields that
$$\Phi'_{s}(x)=8(1-{\rm cos}x){\rm sin}x-s,\quad \Phi''_{s}(x)=8(1-{\rm cos}x)(2{\rm cos}x+1).$$
Note that $\Phi'_s(0)=\Phi'_s(-\pi)=-s$ and
$$\Phi''_{s}(x)=0,\ x\in[-\pi,0]\Longleftrightarrow x=-\frac{2\pi}{3}\ {\rm or}\ x=0,$$
it follows that $\Phi'_{s}(x)$ is monotonically increasing on $[-\frac{2\pi}{3},0]$ and decreasing on $[-\pi,-\frac{2\pi}{3})$. Consequently, for any $s\in\R$, the equation $\Phi'_{s}(x)=0$ has at most two solutions on $[-\pi,0]$. By the Van der Corput Lemma, slower decay rates of the oscillatory integral \eqref{von der esti} for the ``$-$" case occur in the cases of $s=0$ and $s=-6\sqrt 3$. For the other values of $s$, the rate is either $|t|^{-1}$ or $|t|^{-\frac{1}{2}}$.

If $s=0$, then $$\Phi'_{0}(x)=0,\ x\in[-\pi,0]\Longleftrightarrow x=0\ {\rm or}\ x=-\pi.$$
Moreover, we can compute
$$\Phi''_{0}(-\pi)=-16\neq0\ {\rm and}\ \Phi''_{0}(0)=\Phi^{(3)}_{0}(0)=0, \Phi^{(4)}_{0}(0)=24\neq0,$$
Thus, by the Van der Corput Lemma, \eqref{von der esti} for the ``$-$" case is controlled by $|t|^{-\frac{1}{4}}$.

If $s=-6\sqrt 3$, then
$$\Phi'_{-6\sqrt 3}(x)=0,\ x\in[-\pi,0]\Longleftrightarrow x=-\frac{2\pi}{3},$$
and
$$\Phi''_{-6\sqrt 3}\left(-\frac{2\pi}{3}\right)=0\ {\rm but}\ \Phi^{(3)}_{-6\sqrt 3}\left(-\frac{2\pi}{3}\right)\neq0,$$
This implies that the decay rate is $|t|^{-\frac{1}{3}}$. In summary, we derive the desired estimate of $|t|^{-\frac{1}{4}}$.
\end{proof}
From the proof above, we  can  immediately deduce the following corollary, which plays a key role in the estimates for the kernels $K^{\pm}_{j}(t,n,m)$ defined in \eqref{kernels of Ki}.
\begin{corollary}\label{corollary} %The following conclusions hold:

 {\rm(i)}~For any interval $[a,b]\subseteq[-\pi,0]$, the estimate \eqref{von der esti} still holds on $[a,b]$.

 {\rm(ii)}~Let $[a,b]\subseteq[-\pi,0]$. Suppose that $\phi(x)$ is a continuously differentiable function on $(a,b)$ with $\phi'(x)\in L^{1}((a,b))$. Moreover, $\lim\limits_{x\rightarrow a^+}\phi(x)$ and $\lim\limits_{x\rightarrow b^-}\phi(x)$ exist. Then
\begin{equation}\label{von der esti complex}
\sup\limits_{s\in\R}\left|\int_{a}^{b}e^{-it\left[(2\pm2{\rm cos}x)^2-sx\right]}\phi(x)dx\right|\lesssim |t|^{-\frac{1}{4}}\left(\big|\lim\limits_{x\rightarrow b^-}\phi(x)\big|+\int_{a}^{b}|\phi'(x)|dx\right).
\end{equation}
\end{corollary}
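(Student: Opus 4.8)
\textbf{Proof proposal for Corollary \ref{corollary}.}

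The plan is to obtain both statements as direct by-products of the proof of Lemma \ref{Von der}, not as new computations. For part (i), I would first reduce to the ``$-$'' case exactly as in the lemma, via the substitution $x\mapsto-\pi-y$, which maps a subinterval $[a,b]\subseteq[-\pi,0]$ to another subinterval of $[-\pi,0]$ and only contributes a unimodular factor $e^{-its\pi}$. Then I would revisit the case analysis for $\Phi_s(x)=(2-2\cos x)^2-sx$ on $[a,b]$: the point is that the second derivative $\Phi_s''(x)=8(1-\cos x)(2\cos x+1)$ vanishes on $[-\pi,0]$ only at $x=-2\pi/3$ and $x=0$, so on \emph{any} subinterval $\Phi_s'$ is still piecewise monotone with at most two sign changes of $\Phi_s'$, at most one degeneracy of order $3$ (near $-2\pi/3$) and at most one degeneracy of order $4$ (near $0$, only when $s=0$). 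The Van der Corput Lemma with its standard endpoint version (which allows the stationary point to be an endpoint, at the cost of an absolute constant) then gives the bound $|t|^{-1/4}$ uniformly in $s$ and uniformly over the choice of $[a,b]$, since the worst exponent is still governed by the fourth-order vanishing at $x=0$.

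For part (ii), I would invoke the version of the Van der Corput Lemma with an amplitude: if $\big|\int_a^c e^{-it\Phi_s(x)}dx\big|\lesssim|t|^{-1/4}$ uniformly for all $c\in[a,b]$ (which is precisely part (i)), then integration by parts in the Stieltjes sense, or equivalently the standard lemma
\[
\Big|\int_a^b e^{-it\Phi_s(x)}\phi(x)\,dx\Big|\le \Big(\sup_{c\in[a,b]}\Big|\int_a^c e^{-it\Phi_s(x)}dx\Big|\Big)\Big(|\phi(b^-)|+\int_a^b|\phi'(x)|\,dx\Big),
\]
yields \eqref{von der esti complex}. The hypotheses that $\phi$ is continuously differentiable on $(a,b)$ with $\phi'\in L^1$ and that the one-sided limits at $a$ and $b$ exist are exactly what is needed to justify this integration by parts (the limit at $a$ drops out because the antiderivative $\int_a^c$ vanishes at $c=a$). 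I would carry out the ``$\pm$'' reduction first so that only the ``$-$'' phase appears, then state the amplitude estimate.

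The only genuinely delicate point — and the one I would be most careful about — is making sure the Van der Corput bound is \emph{uniform in the interval} $[a,b]$ and in $s$ simultaneously. This requires checking that the constant in the Van der Corput Lemma depends only on the order of vanishing and a lower bound on the relevant derivative on the interval, not on the interval's length or location; since $\Phi_s^{(4)}(0)=24$ and $\Phi_s^{(3)}(-2\pi/3)$ are fixed nonzero numbers independent of $s$, and the only $s$-dependence is in $\Phi_s'=8(1-\cos x)\sin x-s$ which does not affect higher derivatives, this uniformity holds. Everything else is a routine application of the two standard forms of the Van der Corput Lemma already cited from \cite[P.~$332$--$334$]{Ste93}.
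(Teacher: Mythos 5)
Your proposal is correct and follows essentially the same route the paper intends: the paper derives the corollary directly from the case analysis of $\Phi_s$ in the proof of Lemma \ref{Von der} (whose stationary-point structure is unchanged on subintervals), combined with the standard amplitude form of the Van der Corput Lemma obtained by integration by parts against the partial integrals $\int_a^c e^{-it\Phi_s}dx$. Your extra attention to the uniformity of the constants in $s$ and in the subinterval is a welcome elaboration of what the paper leaves implicit, but it is not a different method.
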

Now we return to the proof of Theorem \ref{D-E for free case}.
\begin{proof}[Proof of Theorem \ref{D-E for free case}]
%\underline{\textbf{(1)}~Proof of \eqref{eitH0 decay estimate}.}
First, using Stone's formula, the kernel of $e^{-it\Delta^2}$ is given by
\begin{equation*}%\label{kernel of 4order}
\big(e^{-it\Delta^2}\big)(n,m)=\frac{2}{\pi i}\int_{0}^{2}e^{-it\mu^4}\mu^3(R^{+}_0-R^{-}_0)(\mu^4,n,m)d\mu=-\frac{2}{\pi i}(K^{+}_{0}-K^{-}_{0})(t,n,m),
\end{equation*}
where
\begin{equation*}
K^{\pm}_{0}(t,n,m)=\int_{0}^{2}e^{-it\mu^4}\mu^3R^{\mp}_0(\mu^4,n,m)d\mu.
\end{equation*}
 Then the desired estimate can be obtained by proving
 \begin{equation*}%\label{esti for Kpm0}
 |K^{\pm}_{0}(t,n,m)|\lesssim |t|^{-\frac{1}{4}},\quad t\neq0,\quad {\rm uniformly\ in}\ n,m\in\Z.
 \end{equation*}
 From formula \eqref{kernel of R0 boundary}, we further express
\begin{align}\label{decompose of K0+}
\begin{split}
K^{\pm}_0(t,n,m)&=\frac{1}{4}\Big(\mp i\int_{0}^{2}e^{-it\mu^4} e^{\pm i\theta_+|n-m|}a_1(\mu)d\mu+\int_{0}^{2}e^{-it\mu^4} e^{b(\mu)|n-m|}a_2(\mu)d\mu\Big)\\
&:=\frac{1}{4}(K^{\pm}_{0,1}+K_{0,2})(t,n,m).
\end{split}
\end{align}

{\bf{For}} ${\bm{K^{\pm}_{0,1}(t,n,m)}}$, we consider the following variable substitution:
 \begin{equation}\label{varible substi1}
 {\rm cos}\theta_{+}=1-\frac{\mu^2}{2} \Longrightarrow\  \frac{d\mu}{d\theta_+}=\frac{{\rm sin}\theta_+}{\mu}=-\sqrt{1-\frac{\mu^2}{4}},
\end{equation}
where $\theta_+\rightarrow0$ as $\mu\rightarrow0$ and $\theta_+\rightarrow-\pi$ as $\mu\rightarrow2$.
 Then $K^{\pm}_{0,1}(t,n,m)$ can be rewritten as
 \begin{equation*}
 K^{\pm}_{0,1}(t,n,m)=\mp i\int_{-\pi}^{0}e^{-it\big[(2-2{\rm cos}\theta_+)^2\mp\theta_{+}\frac{|n-m|}{t}\big]}d\theta_+,\quad t\neq0.
\end{equation*}
Thus, it follows from Lemma \ref{Von der} that
\begin{equation}\label{esti for K0,1}
\sup\limits_{n,m\in\Z}|K^{\pm}_{0,1}(t,n,m)|\leq\left|\sup\limits_{s\in\R}\int_{-\pi}^{0}e^{-it\left[(2-2{\rm cos}\theta_+)^2-s\theta_{+}\right]}d\theta_+\right|\lesssim |t|^{-\frac{1}{4}},\quad t\neq0.
\end{equation}

{\bf{For}} ${\bm{K_{0,2}(t,n,m)}}$, denote $F_0(\mu,n,m)=e^{b(\mu)|n-m|}a_2(\mu)$, using the Van der Corput Lemma directly, for $t\neq0$, we have
\begin{equation}\label{esti of K0,2}
|K_{0,2}(t,n,m)|\lesssim|t|^{-\frac{1}{4}}\Big(\big|\lim\limits_{\mu\rightarrow2^{-}}F_0(\mu,n,m)\big|+\big\|(\partial_{\mu}F_0)(\mu,n,m)\big\|_{L^{1}((0,2))}\Big)\lesssim|t|^{-\frac{1}{4}},
\end{equation}
 where in the last inequality we used the following two key facts:
$$\big|\lim\limits_{\mu\rightarrow2^{-}}F_{0}(\mu,n,m)|=\big|\frac{-1}{\sqrt2}(3-2\sqrt2)^{|n-m|}\big|\lesssim 1,\ {\rm uniformly\ in}\ n,m,$$
and
\begin{equation}\label{integral ebnm}
b(\mu),b'(\mu)<0\Longrightarrow\big\|\partial_{\mu}\big(e^{b(\mu)|n-m|}\big)\big\|_{L^{1}((0,2))}= \int_{0}^{2}-b'(\mu)|n-m|e^{b(\mu)|n-m|}d\mu\leq2.
\end{equation}

Therefore, combining \eqref{esti for K0,1}, \eqref{esti of K0,2} and \eqref{decompose of K0+}, the desired estimate \eqref{eitH0 decay estimate} is established.
\end{proof}
\begin{comment}
\begin{remark}\label{remark of Theorem for free case}
{\rm From the proof process above, we can observe that the only difference between $e^{-it\Delta^2}$ and $e^{-it\Delta}$ lies in the power of $\mu$ in the exponential part, and this difference affects nothing else but the decay rates, ranging from $\frac{1}{4}$ to $\frac{1}{3}$. Therefore, based on this point, in Section \ref{Sec of proof}, it is sufficient to deal with the decay estimates for $e^{-itH}P_{ac}(H)$.}
\end{remark}
\vskip0.3cm
\end{comment}
Finally, we demonstrate that the decay rate $\frac{1}{4}$ in \eqref{eitH0 decay estimate} is sharp, that is, $\frac{1}{4}$ is the supremum of all $\alpha$ for which there exists a constant $C_{\alpha}$ such that
\begin{equation}\label{sharness meaning}
\left\|e^{-it\Delta^2}\phi\right\|_{\ell^{\infty}}\leq C_{\alpha}|t|^{-\alpha}\|\phi\|_{\ell^{1}},\quad t\neq0,
\end{equation}
holds for every sequence $\{\phi(n)\}_{n\in\Z}\in\ell^1(\Z)$.
\begin{theorem}\label{theorem of strichartz estimate}
{ Consider the free discrete bi-Schr\"{o}dinger inhomogeneous equation on the lattice $\Z$:
$$i(\partial_tu)(t,n)-(\Delta^2u)(t,n)+F(t,n)=0,$$
with the initial value $\{u(0,n)\}_{n\in\Z}\in\ell^2(\Z)$. Then
\begin{itemize}
 \item [{\rm (i)}] The following Strichartz estimates hold:
\begin{equation}\label{strichartz estimate}
\|\{u(t,n)\}\|_{L^{q}_t\ell^{r}}\leq C\big(\|\{u(0,n)\}\|_{\ell^2}+\|\{F(t,n)\}\|_{L^{\tilde{q}^{'}}_t\ell^{\tilde{r}^{'}}}\big),
\end{equation}
\end{itemize}
where $(\tilde{q},\tilde{r}),(q,r)\in\big\{(x,y)\neq(2,\infty):\frac{1}{x}+\frac{1}{4y}\leq\frac{1}{8},\ x,y\geq2\big\}$, $\tilde{q}^{'},\tilde{r}^{'}$ denote the dual exponents of $\tilde{q}$ and $\tilde{r}$, respectively and
$$\|\{u(t,n)\}\|_{L^{q}_t\ell^{r}}=\Big(\int_{\R}^{}\Big(\sum\limits_{n\in\Z}|u(t,n)|^{r}\Big)^{\frac{q}{r}}dt\Big)^{\frac{1}{q}}.$$
 \begin{itemize}
 \item [{\rm (ii)}] The decay estimate \eqref{eitH0 decay estimate} is sharp.
 \end{itemize}}
\end{theorem}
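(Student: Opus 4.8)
The plan is to deduce both parts from the free decay estimate \eqref{eitH0 decay estimate} together with the trivial $\ell^2$-conservation $\|e^{-it\Delta^2}\|_{\ell^2\to\ell^2}=1$. For part (i), I would apply the abstract Keel--Tao $TT^*$ machinery: the pair $(U(t))_{t\in\R}=(e^{-it\Delta^2})_{t\in\R}$ satisfies the energy bound $\|U(t)\|_{\ell^2\to\ell^2}\lesssim 1$ and the dispersive bound $\|U(t)U(s)^*\|_{\ell^1\to\ell^\infty}=\|e^{-i(t-s)\Delta^2}\|_{\ell^1\to\ell^\infty}\lesssim|t-s|^{-1/4}$, i.e. the Keel--Tao hypotheses hold with $\sigma=\tfrac14$. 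The abstract theorem then yields $\|U(t)f\|_{L^q_t\ell^r}\lesssim\|f\|_{\ell^2}$ and the inhomogeneous (Christ--Kiselev) bound for all $(q,r)$ that are ``$\sigma$-admissible'', namely $\tfrac1q+\tfrac{1}{4r}\le\tfrac18$ with $q,r\ge2$ and $(q,r)\ne(2,\infty)$; combining the homogeneous estimate with the dual inhomogeneous estimate and the Duhamel representation $u(t)=e^{-it\Delta^2}u(0)+i\int_0^t e^{-i(t-s)\Delta^2}F(s)\,ds$ gives exactly \eqref{strichartz estimate}. The endpoint $(2,\infty)$ is excluded precisely as in the continuous one-dimensional fourth-order case, since there $\sigma=\tfrac14<1$ lies outside the admissible endpoint range. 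None of this is hard once \eqref{eitH0 decay estimate} is in hand — it is a black-box application of \cite{KT98}-type results — so the only thing to check carefully is the bookkeeping of the admissibility exponents induced by $\sigma=\tfrac14$ (scaling: $\tfrac1q+\tfrac{1}{4r}\le\tfrac{1-1/\sigma}{\cdots}$, but here one simply sets the exponent relation so that $\tfrac1q+\tfrac\sigma{r}$-type condition reads $\tfrac1q+\tfrac1{4r}\le\tfrac18$).

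For part (ii), the sharpness, I would exhibit a single test function and track the pointwise size of $e^{-it\Delta^2}\phi$ at a well-chosen lattice point. Take $\phi=\delta_0$, so $\|\phi\|_{\ell^1}=1$ and, by Stone's formula exactly as in the proof of Theorem \ref{D-E for free case},
\begin{equation*}
\big(e^{-it\Delta^2}\delta_0\big)(0)=\frac{1}{2\pi}\int_{-\pi}^{\pi}e^{-it(2-2\cos x)^2}\,dx .
\end{equation*}
The phase $\psi(x)=(2-2\cos x)^2$ vanishes to fourth order at $x=0$: indeed $\psi(x)=x^4+O(x^6)$ near $0$, since $2-2\cos x=x^2-\tfrac{x^4}{12}+\cdots$. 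Hence the stationary point at $x=0$ is degenerate of order $4$, and a standard stationary-phase lower bound (or the change of variable $y=t^{1/4}x$ on a neighbourhood of $0$, together with a non-vanishing check of the resulting Fresnel-type integral $\int e^{-iy^4}dy\ne0$, and an $O(|t|^{-1/2})$ control of the contribution away from $x=0$ via non-degenerate stationary phase at $x=\pm\pi$ plus integration by parts elsewhere) gives
\begin{equation*}
\big|\big(e^{-it\Delta^2}\delta_0\big)(0)\big|\gtrsim |t|^{-1/4},\qquad |t|\ge 1 .
\end{equation*}
If \eqref{sharness meaning} held for some $\alpha>\tfrac14$ with $\phi=\delta_0$, we would get $|t|^{-1/4}\lesssim\|e^{-it\Delta^2}\delta_0\|_{\ell^\infty}\le C_\alpha|t|^{-\alpha}$ for all large $t$, forcing $\alpha\le\tfrac14$, a contradiction. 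Thus $\tfrac14$ is the supremum of admissible decay exponents, which is the assertion.

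The main obstacle is the sharpness direction: one must produce a genuine \emph{lower} bound on an oscillatory integral with a degenerate phase, rather than the usual upper bound, so care is needed to show the leading Fresnel integral $\int_{\R}e^{-iy^4}\,dy$ is nonzero (it equals $2\Gamma(5/4)e^{-i\pi/8}$, hence nonvanishing) and that the error terms — the tail $|x|\gtrsim t^{-1/4}$ inside a fixed neighbourhood of $0$, and the entire contribution of $|x|$ away from $0$ where $\psi'$ only vanishes (nondegenerately) at $\pm\tfrac{2\pi}{3}$ and $\pm\pi$ — are all $o(|t|^{-1/4})$, in fact $O(|t|^{-1/3})$, as already analysed in the proof of Lemma \ref{Von der}. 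Everything else is routine.
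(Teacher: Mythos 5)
Your part (i) is exactly the paper's argument: the energy identity plus the dispersive bound \eqref{eitH0 decay estimate} fed into the Keel--Tao theorem \cite[Theorem 1.2]{KT98}, with the admissibility condition $\frac{1}{q}+\frac{1}{4r}\le\frac{1}{8}$ coming from $\sigma=\frac14$. Your part (ii), however, takes a genuinely different route. The paper first proves sharpness of the \emph{Strichartz} range by a Knapp-type counterexample in the dual formulation (choosing $\hat f_{time}(s,x)=\chi(\varepsilon^{-4}s)\chi(\varepsilon^{-1}x)$ and forcing $\frac{1}{2}\ge\frac{1}{r}+\frac{4}{q}$), and then deduces sharpness of the decay rate by contradiction: a decay exponent $\alpha>\frac14$ would, via Keel--Tao again, produce Strichartz estimates outside the sharp range. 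You instead prove a direct pointwise lower bound $\bigl|\bigl(e^{-it\Delta^2}\delta_0\bigr)(0)\bigr|\gtrsim|t|^{-1/4}$ by stationary phase at the degenerate critical point $x=0$ of $\psi(x)=(2-2\cos x)^2=x^4+O(x^6)$, using the nonvanishing of $\int_{\R}e^{-iy^4}dy=2\Gamma(5/4)e^{-i\pi/8}$ and the fact that the remaining critical points contribute $O(|t|^{-1/2})$. This is correct and arguably more elementary, and it exhibits an explicit extremizer ($\delta_0$); what it does not give you, and what the paper's route provides as a byproduct, is the sharpness of the Strichartz exponent range itself. One small slip: on $[-\pi,0]$ the derivative $\psi'(x)=8(1-\cos x)\sin x$ vanishes only at $x=0$ and $x=-\pi$; the point $-\frac{2\pi}{3}$ is a zero of $\psi''$, not of $\psi'$ (it only becomes a stationary point of the shifted phase $\psi(x)-sx$ for $s=-6\sqrt3$, which is irrelevant here since you evaluate the kernel at $n=m$). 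This does not affect your conclusion, since the relevant error terms are in fact $O(|t|^{-1/2})=o(|t|^{-1/4})$.
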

\begin{proof}
\textbf{(i)}~From the energy identity $\|\{u(t,n)\}\|_{\ell^2}=\|\{u(0,n)\}\|_{\ell^2}$ and the decay estimate \eqref{eitH0 decay estimate}, the Strichartz estimates \eqref{strichartz estimate} follow directly from \cite[Theorem 1.2]{KT98} by Keel and Tao.

\textbf{(ii)} To prove the sharpness of the decay estimate, we first establish the sharpness of the Strichartz estimates by constructing a Knapp counter-example. By duality, we have
\begin{equation}\label{equivalent relation}
\|\{e^{-it\Delta^2}u(0,n)\}\|_{L^{q}_{t}\ell^{r}}\leq C\|u(0,\cdot)\|_{\ell^2}\Leftrightarrow\left\|\phi(\cdot)\right\|_{\ell^2}\leq C\|\{F(t,n)\}\|_{L^{q'}_{t}\ell^{r'}},
\end{equation}
where
$$\phi(n)=\int_{\R}^{}e^{it\Delta^2}F(t,n)dt.$$
Based on the Fourier transform defined in  \eqref{fourier transform}, one obtains that
$$\mcaF\phi(x)=(2\pi)^{\frac{1}{2}}\hat{f}_{time}(-(2-2{\rm cos}x)^2,x),$$
where $\hat{f}_{time}(s,x)$ is the time Fourier transform of $f$, defined by $$\hat{f}_{time}(s,x)=(2\pi)^{-\frac{1}{2}}\int_{\R}^{}f(t,x)e^{-ist}dt,$$
and $f(t,x)=(\mcaF F(t,\cdot))(x)$. Therefore, by Plancherel's theorem, the right side inequality of \eqref{equivalent relation} can be further expressed as follows:
\begin{equation}\label{equivent}
\Big(\int_{-\pi}^{\pi}\big|\hat{f}_{time}(-(2-2{\rm cos}x)^2,x)\big|^2dx\Big)^{\frac{1}{2}}\leq C'\|\{F(t,n)\}\|_{L^{q'}_{t}\ell^{r'}}.
\end{equation}
%where $\hat{f}_{time}(\tau,x)=\int_{0}^{\infty}f(t,x)e^{-i\tau t}dt$ is the time Fourier transform of $f$ and $f(t,x)=\mcaF F(t,n)$ and $\mcaF$ is the discrete Fourier transform defined in \eqref{fourier transform}.

For any $0<\varepsilon\ll1$, we choose
$$\hat{f}_{time}(s,x)=\chi(\varepsilon^{-4}s)\chi(\varepsilon^{-1}x),$$
where $\chi$ is the characteristic function of the interval $(-1,1)$. This yields that
$$F(t,n)=C''\frac{{\rm sin}\left(\varepsilon^{4}t\right)}{t}\frac{{\rm sin}(\varepsilon n)}{n}.$$
%Then noticing that the
On one hand, using Taylor's expansion $(2-2{\rm cos}x)^2=O(x^4),\  x\rightarrow0,$ we find that
$$\Big(\int_{-\pi}^{\pi}\big|\hat{f}_{time}(-(2-2{\rm cos}x)^2,x)\big|^2dx\Big)^{\frac{1}{2}}\gtrsim\varepsilon^{\frac{1}{2}}.$$
On the other hand, observe that
\begin{align*}
\sum\limits_{n\in\Z}^{}\frac{|{\rm sin}(\varepsilon n)|^{r'}}{|n|^{r'}}&=\big(\sum\limits_{|n|\leq\frac{1}{\varepsilon}}^{}+\sum\limits_{|n|>\frac{1}{\varepsilon}}^{}\big)\frac{|{\rm sin}(\varepsilon(n))|^{r'}}{|n|^{r'}}\leq C'''\varepsilon^{r'-1}+\sum\limits_{|n|>\frac{1}{\varepsilon}}^{}\frac{1}{|n|^{r'}}\lesssim \varepsilon^{r'-1},
\end{align*}
then it follows that $$\|\{F(t,n)\}\|_{L^{q'}_{t}\ell^{r'}}\lesssim\varepsilon^{\frac{1}{r}+\frac{4}{q}}.$$
Since $\varepsilon$ is arbitrary small, then $\frac{1}{2}\geq\frac{1}{r}+\frac{4}{q}$.

Then the decay rate in \eqref{eitH0 decay estimate} is also sharp. Indeed, if not, i.e., there exists an estimate of the form \eqref{eitH0 decay estimate} with $\alpha>\frac{1}{4}$. By \cite[Theorem 1.2]{KT98}, then this would imply Strichartz estimates in the range $\frac{1}{q}+\frac{\alpha}{r}\leq\frac{\alpha}{2}$. Since $\alpha>\frac{1}{4}$, then there exists $q,r\geq2$ satisfying
\begin{equation*}
\frac{1}{q}+\frac{\alpha}{r}\leq\frac{\alpha}{2}\ \ {\rm and}\ \ \frac{1}{q}+\frac{1}{4r}>\frac{1}{8}.
\end{equation*}
This contradicts the sharpness of the Strichartz estimates established above.
\end{proof}
\section{Proof of Theorem \ref{main-theorem}}\label{sec of proof}
This section is devoted to presenting a detailed proof of \eqref{eitH decay-estimate} for $e^{-itH}P_{ac}(H)$, from which \eqref{cos-sin decay-estimate} follows similarly.
To begin with, we recall the decomposition
\begin{equation}\label{kernel of eitHPacH(4 section)1}
(e^{-itH}P_{ac}(H))(n,m)=-\frac{2}{\pi i}\sum\limits_{j=0}^{3}(K^{+}_{j}-K^{-}_{j})(t,n,m),
\end{equation}
where $K^{\pm}_{j}(t,n,m)(j=0,1,2,3)$ are defined in \eqref{kernels of Ki}.
As demonstrated in Section \ref{Sec of decay for free}, the estimate for $K^{\pm}_0(t,n,m)$ has already been established. In what follows, we will focus on deriving the corresponding estimates for the kernels $(K^{+}_{j}-K^{-}_{j})(t,n,m)$ with $j=1,2,3$. %as detailed in Theorem \ref{theorem of estimate of K1,K3} below:
\begin{theorem}\label{theorem of estimate of K1,K3}
 Let $H=\Delta^2+V$ with $|V(n)|\lesssim \left<n\right>^{-\beta}$ for some $\beta>0$ and $K^{\pm}_j(t,n,m)(j=1,2,3)$ be defined as in \eqref{kernels of Ki}.
\begin{itemize}
\item[{\rm(i)}] If
\begin{align}\label{cases and conditions 0}
\beta>\left\{\begin{aligned}&15,\ 0\ is\ the\ regular\ point\ of\ H,\\
&19,\ 0\ is\ the\ first\ kind\ resonance\ of\ H,\\
&27,\ 0\ is\ the\ second\ kind\ resonance\ of\ H,\end{aligned}\right.
\end{align} then
\begin{equation}\label{esti for Kpm1}
\left|K^{\pm}_{1}(t,n,m)\right|\lesssim |t|^{-\frac{1}{4}}, \quad t\neq0,\  \ uniformly\ in\ n,m\in\Z.
\end{equation}
\item[{\rm(ii)}] If $\beta>2$, then
\begin{equation}\label{esti for Kpm2}
\left|K^{\pm}_{2}(t,n,m)\right|\lesssim |t|^{-\frac{1}{4}}, \quad t\neq0,\  \ uniformly\ in\ n,m\in\Z.
\end{equation}
\item[{\rm(iii)}] If
\begin{align}\label{cases and conditions 16}
\beta>\left\{\begin{aligned}&7,\ 16\ is\ the\ regular\ point\ of\ H,\\
&11,\ 0\ is\ the\ resonance\ of\ H,\\
&15,\ 0\ is\ the\ eigenvalue\ of\ H,\end{aligned}\right.
\end{align}
then
\begin{equation}\label{esti for Kpm3}
\left|(K^{+}_{3}-K^{-}_3)(t,n,m)\right|\lesssim |t|^{-\frac{1}{4}}, \quad t\neq0,\  \ uniformly\ in\ n,m\in\Z.
\end{equation}
\end{itemize}
\end{theorem}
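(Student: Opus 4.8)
The plan is to estimate each of the three kernels $K^{\pm}_1$, $K^{\pm}_2$, $K^{\pm}_3$ separately, in each case reducing the oscillatory integral in $\mu$ to one (or finitely many) integrals over $[-\pi,0]$ in the variable $\theta_+$ to which the Van der Corput machinery of Lemma \ref{Von der} and Corollary \ref{corollary} applies. The unifying principle, already visible in the free case (Theorem \ref{D-E for free case}), is that after the substitution ${\rm cos}\theta_+ = 1 - \tfrac{\mu^2}{2}$ the phase becomes exactly $\Phi_s(\theta_+) = (2-2{\rm cos}\theta_+)^2 - s\theta_+$ with $s = \pm|n-m|/t$, so uniformity in $n,m$ is automatic once the amplitude is shown to have an $L^1$ derivative in $\mu$ (or $\theta_+$) with a bound independent of $n,m$, plus a controlled boundary term at the endpoints. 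The exponentially decaying factor $e^{b(\mu)|n-m|}$ in the free resolvent kernel \eqref{kernel of R0 boundary}, together with $b(\mu), b'(\mu) < 0$, handles the ``$a_2$ part'' uniformly as in \eqref{integral ebnm}; the oscillatory ``$a_1$ part'' $e^{\mp i\theta_+|n-m|}$ contributes only to the linear term of the phase.

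For $K^{\pm}_2$ (the middle region $[\mu_0, 2-\mu_0]$, away from both thresholds) I would use that on this compact interval $R^{\pm}_0(\mu^4)$, $R^{\pm}_V(\mu^4)$ and their first $\mu$-derivatives are bounded in $\B(s,-s)$ for suitable $s$ by the limiting absorption principle (Theorem \ref{LAP-theorem}), and that the weights $\langle n\rangle^{-s}$, $\langle m\rangle^{-s}$ appearing there are absorbed into the free-resolvent kernels because $\theta_+$ stays in a compact subset of $(-\pi,0)$, so $|\sin\theta_+|$ is bounded below. Writing the $\mu$-integral as an oscillatory integral with phase $\mu^4$ (or rather the $\theta_+$-phase after substitution) and a $C^1$, $n,m$-uniformly-$L^1$ amplitude, Corollary \ref{corollary}(ii) gives the $|t|^{-1/4}$ bound; this needs only $\beta > 2$ so that $V$ and the composition $vR^\pm_V v$ make sense and the kernel bounds from LAP hold. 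This is the easiest of the three.

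For $K^{\pm}_1$ (near $\mu=0$) I would substitute the asymptotic expansions of $\left(M^{\pm}(\mu)\right)^{-1}$ from Theorem \ref{Asymptotic expansion theorem} — the relevant branch among \eqref{asy expan of regular 0}, \eqref{asy expan of 1st 0}, \eqref{asy expan 2nd 0} depending on the resonance type at $0$ — together with the expansion $R^{\pm}_0(\mu^4) \sim \sum_{j\ge -3}\mu^j G^{\pm}_j$. The product $R^{\pm}_0(\mu^4) v (M^{\pm}(\mu))^{-1} v R^{\pm}_0(\mu^4)$ then becomes a finite sum of terms $\mu^{\ell} (\text{bounded operator with kernel})$ plus a remainder controlled via \eqref{estimate of Gamma}; the key point is that the most singular negative powers of $\mu$ coming from $G^\pm_{-3}, G^\pm_{-2}, G^\pm_{-1}$ on both sides are cancelled by the projections $S_j$, $Q$ sitting in the expansion of $(M^\pm(\mu))^{-1}$ (this is precisely the role of the orthogonality conditions $\langle f, v_k\rangle = 0$ in Definition \ref{definition of Sj}), so that after multiplying by the prefactor $\mu^3$ from Stone's formula one is left with an integrand of the form $e^{-it\mu^4}\mu^{a}(\text{amplitude})$, $a \ge 0$, with amplitude having an $n,m$-uniform $L^1$ derivative. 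The extra powers of $\mu$ (from $a \ge 0$ or from the $\mu^{1/4}$-type gains) are what let one integrate by parts or apply the refined Van der Corput estimate to still get $|t|^{-1/4}$. The decay thresholds $\beta > 15, 19, 27$ enter because more singular resonances require expanding $R^\pm_0$ to higher order, and each order of the expansion of $R^\pm_0$ and of $M^\pm(\mu)^{-1}$ costs additional weighted-$\ell^2$ / $\ell^1$ boundedness of $v$, i.e. additional decay of $V$. The analogous argument near $\mu = 2$ gives $K^{\pm}_3$, now using the $J$-conjugated expansions \eqref{asy expan regular 2}–\eqref{asy expan eigenvalue 2}, the half-integer expansion $(JR^\pm_0(2-\mu)^4 J) \sim \sum_{j\ge -1}\mu^{j/2}\widetilde G^\pm_j$, and the endpoint behaviour $\theta_+ \to -\pi$; here I would exploit the cancellation in the difference $K^+_3 - K^-_3$, since the leading $\mu^{-1/2}$ singularities of $R^\pm_0$ near $\mu=2$ are real-type and drop out of $R^+_V - R^-_V$, which is why \eqref{esti for Kpm3} is stated only for the difference.

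The main obstacle I anticipate is the bookkeeping near $\mu = 0$: one must track, term by term in the product of three expansions, exactly which combinations survive after the projections act, verify that every surviving term carries a nonnegative net power of $\mu$ after including the $\mu^3$ factor, and check that each corresponding amplitude — which involves kernels like $n^k v(n)$, powers of $|n-m|$, and the smooth functions $a_1(\mu), a_2(\mu), b(\mu)$ — genuinely has a derivative in $\mu$ that is $L^1$ on $(0,\mu_0)$ uniformly in $n,m$. The polynomially growing kernels $v_k(n) = n^k v(n)$ are the delicate point: they are controlled only because $\beta$ is large enough that $\langle n\rangle^k v(n) \in \ell^2$ (indeed $\ell^1$) for the relevant $k$, and this is the real source of the explicit constants $15, 19, 27$. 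Once the integrand is reduced to the normal form $e^{-it\mu^4}\mu^a \phi(\mu, n, m)$ with $a\ge 0$ and $\phi$ as above, the substitution ${\rm cos}\theta_+ = 1 - \mu^2/2$ and Corollary \ref{corollary} finish each case exactly as in the free estimate.
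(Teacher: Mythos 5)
Your treatment of $K^{\pm}_{2}$ is essentially the paper's: factor out $e^{\mp i\theta_{+}|n-m|}$ into the phase via \eqref{R0 in the R0tuta form}, use Theorem \ref{LAP-theorem} to control $R^{\pm}_{V}(\mu^4)$ and $\partial_{\mu}R^{\pm}_{V}(\mu^4)$ in $\B(s,-s)$ on the compact interval, absorb the weights into $V$, and apply Corollary \ref{corollary}. The overall architecture for $K^{\pm}_{1}$ and $K^{\pm}_{3}$ (insert Theorem \ref{Asymptotic expansion theorem}, reduce to finitely many oscillatory integrals, Van der Corput) is also the right one. However, the concrete mechanism you propose for $K^{\pm}_{1}$ contains a genuine gap.

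You propose to expand the \emph{outer} free resolvents as $R^{\pm}_0(\mu^4)\sim\sum_{j\ge-3}\mu^jG^{\pm}_j$ and let the projections in $(M^{\pm}(\mu))^{-1}$ cancel the singular coefficients. This expansion is valid only in $\B(s,-s)$: the kernels $G^{\pm}_j(n,m)$ are polynomials in $|n-m|$ of degree $j+3$, and the remainder's kernel grows like $\mu^{N+1}|n-m|^{N+4}$. After composing with $v(M^{\pm}(\mu))^{-1}v$ and the second resolvent, the resulting kernels are unbounded in $n$ and $m$, so no uniform $\ell^1\to\ell^{\infty}$ bound can come out; moreover, Taylor-expanding in $\mu$ discards the factor $e^{\mp i\theta_{+}|n-m|}$, which must remain in the phase for the Van der Corput estimate to be uniform in $n,m$ (your own first paragraph insists on this, so the two halves of your plan are in tension). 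The paper's resolution is Lemma \ref{cancelation lemma}: keep $R^{\pm}_0(\mu^4,n,m)$ intact and Taylor-expand the exponentials $F^{\pm}_1(s)=e^{\pm is}$, $F_2(s)=e^{-s}$ at $s=-\theta_{+}|n|$ \emph{in the variable $m$} with integral remainder; the orthogonality conditions $\langle f,v_k\rangle=0$ defining $Q,S_0,S_2$ annihilate the polynomial part, leaving $\int_0^1(\cdots)e^{\mp i\theta_{+}|n-\rho m|}\,d\rho$ times $v_k(m)$, so the gain in powers of $\mu$ (through $\theta_{+}^k$, $b(\mu)^k$), the transfer of growth from $n$ to $m$ (absorbed by $v$), and the survival of the oscillation are achieved simultaneously. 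Two further points you would miss on this route: the genuinely discrete term $c_3(\mu)=\theta_{+}a_1(\mu)+b(\mu)a_2(\mu)=O(\mu^3)$ produces non-oscillatory factors $|n-m|v(m)$ that must be tamed separately via $\langle S_0f,v\rangle=0$ (Remark \ref{remark of cancelation lemma}); and near $\mu=2$ the cancellation cannot be applied to $R^{\pm}_0(\mu^4)$ directly but only to $R^{\mp}_{-\Delta}(4-\mu^2)$ after the conjugation \eqref{relation of Rminus Laplacian 0and4}, with the difference $K^{+}_3-K^{-}_3$ exploited only in the eigenvalue case (Proposition \ref{propo of eigenvalue 2}), not to kill the leading $(2-\mu)^{-1/2}$ singularity as you suggest.
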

Once Theorem \ref{theorem of estimate of K1,K3} is established, which together with Theorem \ref{D-E for free case} gives Theorem \ref{main-theorem}. In the sequel, to obtain Theorem \ref{theorem of estimate of K1,K3}, our proof will be divided into three subsections.
\subsection{The estimates of kernels $K^{\pm}_1(t,n,m)$}\label{subse of K1}
In this subsection, we establish the estimates \eqref{esti for Kpm1} for kernels $K^{\pm}_1(t,n,m)$ under the assumptions \eqref{cases and conditions 0}. First, we give a crucial lemma, which plays a key role in eliminating the singularity of $R^{\pm}_0(\mu^4)$ near $\mu=0$.
\begin{lemma}\label{cancelation lemma}
Let $Q,S_j~(j=0,1,2)$ be the operators defined in Definition {\rm\ref{definition of Sj}}. Then for any $f\in\ell^2(\Z)$, the following statements hold:
\vskip0.2cm
\noindent{\rm(1)}~$(R^{\pm}_0(\mu^4)vQf)(n)=\frac{1}{4\mu^3}\sum\limits_{m\in\Z}\int_{0}^{1}({\rm sign}(n-\rho m))\big({\bm {b_1(\mu)}}e^{\mp i\theta_{+}|n-\rho m|}+{\bm {b_2(\mu)}}e^{b(\mu)|n-\rho m|}\big)d\rho$
\vskip0.2cm
    \qquad \qquad \qquad \qquad\quad $\times v_1(m)(Qf)(m)$,
    \vskip0.2cm
    \qquad \qquad \qquad \quad\ \ $:=\frac{1}{4\mu^3}\sum\limits_{m\in\Z}\mcaB^{\pm}(\mu,n,m)(Qf)(m)$,
    \vskip0.25cm
\noindent{\rm(2)}~$(R^{\pm}_0(\mu^4)vS_jf)(n)=\frac{1}{4\mu^3}\sum\limits_{m\in\Z}\Big[\int_{0}^{1}(1-\rho)\big({\bm{c^{\pm}_1(\mu)}}e^{\mp i\theta_{+}|n-\rho m|}+{\bm{c_2(\mu)}}e^{b(\mu)|n-\rho m|}\big)d\rho\cdot v_2(m)$
\vskip0.2cm
    \qquad \qquad \qquad \qquad\quad $+{\bm{c_3(\mu)}}|n-m|v(m)\Big](S_jf)(m),$
    \vskip0.2cm
    \qquad \qquad \qquad \quad\ \ $:=\frac{1}{4\mu^3}\sum\limits_{m\in\Z}\mcaC^{\pm}(\mu,n,m)(S_jf)(m),\quad j=0,1,$
    \vskip0.25cm
\noindent{\rm(3)}~$(R^{\pm}_0(\mu^4)vS_2f)(n)=\frac{1}{8\mu^3}\sum\limits_{m\in\Z}\Big[\int_{0}^{1}(1-\rho)^2({\rm sign}(n-\rho m))^3\big({\bm{d_1(\mu)}}e^{\mp i\theta_{+}|n-\rho m|}+{\bm{d_2(\mu)}}e^{b(\mu)|n-\rho m|}\big)d\rho$
\vskip0.2cm
\qquad \qquad \qquad \qquad\quad$\times v_3(m)+{\bm{d_3(\mu)}}|n-m|v(m)\Big](S_2f)(m)$,
\vskip0.2cm
    \qquad \qquad \qquad \quad\ \ $:=\frac{1}{8\mu^3}\sum\limits_{m\in\Z}\mcaD^{\pm}(\mu,n,m)(S_2f)(m)$,
\vskip0.25cm
\noindent{\rm(4)}~$Q(vR^{\pm}_0(\mu^4)f\big)=Qf^{\pm},\quad S_j\big(vR^{\pm}_0(\mu^4)f\big)=S_jf^{\pm}_j,\quad j=0,1,2$,
\vskip0.25cm
\noindent where $v_{k}(m)=m^kv(m)$, $a_1(\mu)=\frac{1}{\sqrt{1-\frac{\mu^2}{4}}}$, $a_2(\mu)=\frac{-1}{\sqrt{1+\frac{\mu^2}{4}}}$ and
\vskip0.25cm
\begin{itemize}
\item $b_1(\mu)=-\theta_{+}a_1(\mu), \quad b_2(\mu)=-b(\mu)a_2(\mu)$,
\vskip0.2cm
\item $c^{\pm}_1(\mu)=\mp i\theta^2_{+}a_1(\mu), \quad c_2(\mu)=(b(\mu))^2a_2(\mu),\quad c_3(\mu)=\theta_{+}a_1(\mu)+b(\mu)a_2(\mu)$,
    \vskip0.2cm
\item $d_1(\mu)=\theta^3_{+}a_1(\mu), \quad d_2(\mu)=-(b(\mu))^3a_2(\mu),\quad d_3(\mu)=2c_3(\mu),$
\vskip0.2cm
\item $f^{\pm}(n)=\frac{1}{4\mu^3}\sum\limits_{m\in\Z}\mcaB^{\pm}(\mu,m,n)f(m),\quad f^{\pm}_j(n)=\frac{1}{4\mu^3}\sum\limits_{m\in\Z}\mcaC^{\pm}(\mu,m,n)f(m),\quad j=0,1$,
\vskip0.2cm
\item $f^{\pm}_2(n)=\frac{1}{8\mu^3}\sum\limits_{m\in\Z}\mcaD^{\pm}(\mu,m,n)f(m)$.
\end{itemize}
\end{lemma}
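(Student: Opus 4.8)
The plan is to begin from the closed form of the free resolvent kernel in \eqref{kernel of R0 boundary}, namely $R^{\pm}_0(\mu^4,n,m)=\frac{1}{4\mu^3}\mcaA^{\pm}(\mu,n,m)$ with $\mcaA^{\pm}(\mu,n,m)=\pm i a_1(\mu)e^{\mp i\theta_+|n-m|}+a_2(\mu)e^{b(\mu)|n-m|}$, and to exploit the vanishing moments built into $Q$ and $S_j$. Since $Q\ell^2(\Z)=(\mathrm{span}\{v\})^{\perp}$, while $S_j\ell^2(\Z)\subseteq(\mathrm{span}\{v,v_1\})^{\perp}$ for $j=0,1$ and $S_2\ell^2(\Z)\subseteq(\mathrm{span}\{v,v_1,v_2\})^{\perp}$, one has $\langle Qf,v\rangle=0$, $\langle S_jf,v_k\rangle=0$ for $k\le 1$, $j\le 1$, and $\langle S_2f,v_k\rangle=0$ for $k\le 2$. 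For part (1) I would use $\sum_m v(m)(Qf)(m)=0$ to replace $\mcaA^{\pm}(\mu,n,m)$ by $\mcaA^{\pm}(\mu,n,m)-\mcaA^{\pm}(\mu,n,0)$ inside the sum, and then write the difference as $\int_0^1\partial_\rho\mcaA^{\pm}(\mu,n,\rho m)\,d\rho$, expanding along the segment $\rho\mapsto\rho m$, $\rho\in[0,1]$. Because $\partial_\rho|n-\rho m|=-m\,\mathrm{sign}(n-\rho m)$ a.e., the derivative brings down one factor $(\mp i\theta_+)$ (resp. $b(\mu)$) and one factor $-m\,\mathrm{sign}(n-\rho m)$; collecting $m\,v(m)=v_1(m)$ and the constants $\pm i a_1(\mp i\theta_+)=-\theta_+a_1=b_1$ and $-b(\mu)a_2=b_2$ gives the stated form of $\mcaB^{\pm}$. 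Since $\theta_+=\theta_+(\mu^2)=O(\mu)$ and $b(\mu)=O(\mu)$ as $\mu\to0$, each such factor lowers the order of the $\mu^{-3}$ singularity.

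For parts (2) and (3) the same computation is carried to second and third order: by Taylor's formula with integral remainder along $\rho\mapsto\rho m$ one subtracts the first-order (resp. first- and second-order) part, which is annihilated on summing against $v(m)(S_jf)(m)$ thanks to $\langle S_jf,v_1\rangle=0$ (resp. also $\langle S_2f,v_2\rangle=0$) when $n\ne0$, and is left with $\int_0^1(1-\rho)\partial_\rho^2\mcaA^{\pm}(\mu,n,\rho m)\,d\rho$ (resp. $\frac12\int_0^1(1-\rho)^2\partial_\rho^3\mcaA^{\pm}(\mu,n,\rho m)\,d\rho$). Each $\rho$-derivative produces the pattern $(\mathrm{sign}(n-\rho m))^{\ell}m^{\ell}(\mp i\theta_+)^{\ell}$ (and similarly with $b(\mu)$); since $(\mathrm{sign})^2=1$ this explains why no $\mathrm{sign}$ and the weight $v_2$ appear in (2), whereas $(\mathrm{sign})^3=\mathrm{sign}$ explains $(\mathrm{sign}(n-\rho m))^3$ and $v_3$ in (3), and collecting constants via \eqref{exp of a1 a2 bu} yields $c_1^{\pm},c_2,d_1,d_2$. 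I expect the genuine obstacle to be the non-smoothness of $\rho\mapsto|n-\rho m|$, which has a corner at $\rho=n/m$: the second $\rho$-derivative of $e^{\mp i\theta_+|n-\rho m|}$ carries a Dirac contribution from the jump of $\mathrm{sign}(n-\rho m)$, and the endpoint $\rho=0$ must be treated separately when $n=0$ (there the first-order term does not vanish but equals precisely $c_3(\mu)|m|v(m)=c_3(\mu)|n-m|v(m)$). The plan is to fix $n,m$, split $[0,1]$ at $\rho=n/m$ when $0<n/m<1$, apply the classical Taylor formula on each subinterval, recombine, and verify that all such corner/endpoint contributions from the two exponentials assemble into exactly the extra remainder terms $c_3(\mu)|n-m|v(m)$ in (2) and $d_3(\mu)|n-m|v(m)$ in (3), with $c_3(\mu)=\theta_+a_1(\mu)+b(\mu)a_2(\mu)$ and $d_3(\mu)=2c_3(\mu)$; the degenerate cases $m=0$ (where $v_{\ell}(m)=0$ for $\ell\ge1$) and $n=m$ (where $|n-m|=0$) are immediate. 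This is where essentially all of the sign- and constant-bookkeeping is concentrated.

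Finally, part (4) follows from parts (1)--(3) by transposition. The kernel $\mcaA^{\pm}(\mu,n,m)$ depends only on $|n-m|$, hence is symmetric, so $vR^{\pm}_0(\mu^4)$ has kernel $\frac{1}{4\mu^3}v(n)\mcaA^{\pm}(\mu,m,n)$, the transpose of the kernel of $R^{\pm}_0(\mu^4)v$. The identities proved in (1)--(3) say precisely that, for each fixed first argument, $v(m)\mcaA^{\pm}(\mu,n,m)-\mcaB^{\pm}(\mu,n,m)\in\mathrm{span}\{v\}$ as a function of $m$, and likewise $v(m)\mcaA^{\pm}(\mu,n,m)-\mcaC^{\pm}(\mu,n,m)\in\mathrm{span}\{v,v_1\}$ and $v(m)\mcaA^{\pm}(\mu,n,m)-\mcaD^{\pm}(\mu,n,m)\in\mathrm{span}\{v,v_1,v_2\}$. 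Transposing $n\leftrightarrow m$ and using that $Q$ annihilates $\mathrm{span}\{v\}$, $S_j$ ($j=0,1$) annihilates $\mathrm{span}\{v,v_1\}$, and $S_2$ annihilates $\mathrm{span}\{v,v_1,v_2\}$, we obtain $Q(vR^{\pm}_0(\mu^4)f)=Qf^{\pm}$ and $S_j(vR^{\pm}_0(\mu^4)f)=S_jf^{\pm}_j$ with $f^{\pm},f^{\pm}_j$ the transposed kernels written in the statement, which completes the proof.
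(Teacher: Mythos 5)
Your overall strategy---Taylor expansion of the kernel along $\rho\mapsto|n-\rho m|$ with integral remainder, use of the vanishing moments of $Qf$ and $S_jf$ to discard the polynomial part, and transposition of the symmetric kernel for part (4)---is exactly the paper's, which packages the expansion formulas as \cite[Lemma 2.5]{SWY22}. Parts (1) and (4) of your argument go through essentially verbatim (in (1), note that $(\pm i)(\mp i)=+1$, so the constant $b_1=-\theta_+a_1$ comes from the factor $-m$ in $\partial_\rho|n-\rho m|=-m\,\mathrm{sign}(n-\rho m)$, not from $\pm ia_1(\mp i\theta_+)$ as you wrote; the final constant is right, the intermediate identity is not).

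The substantive divergence is in how the corner of $\rho\mapsto|n-\rho m|$ at $\rho_0=n/m$ is handled in (2)--(3), and here your claimed verification will not come out as stated. The paper never tracks corner contributions: it first replaces $F_1^\pm(s)=e^{\pm is}$, $F_2(s)=e^{-s}$ by $\widetilde F_1^\pm(s)=F_1^\pm(s)\mp is$, $\widetilde F_2(s)=F_2(s)+s$ (and for (3) also subtracts the quadratic part), so that $\widetilde F'(0)=0$. Since the jump of the first $\rho$-derivative at the corner is proportional to $\widetilde F'(-\theta_+|n-\rho_0 m|)=\widetilde F'(0)=0$, the second-order Taylor formula with integral remainder then holds with no singular terms, and the subtracted linear parts of the exponentials reassemble \emph{algebraically} into $c_3(\mu)|n-m|$, with coefficient exactly $1$ for every $n,m$. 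In your scheme, expanding the exponentials directly, the corner contribution to $\int_0^1(1-\rho)h''(\rho)\,d\rho$ is $(1-\rho_0)\cdot 2c_3(\mu)|m|\,=\,2c_3(\mu)|n-m|\,\mathbf{1}_{\{0<n/m<1\}}$, not $c_3(\mu)|n-m|$. The two expressions are reconciled only through the elementary identity $2|n-m|\mathbf{1}_{\{0<n/m<1\}}=|n-m|-|n|+m\,\mathrm{sign}(n)$, i.e.\ your corner term equals $c_3(\mu)|n-m|$ \emph{plus an affine function of $m$}, which must then be thrown back into the polynomial part annihilated by $\langle S_jf,v\rangle=\langle S_jf,v_1\rangle=0$. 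So the word ``exactly'' in your plan is wrong, and the lemma's specific kernel $\mcaC^\pm$ is obtained only after this extra cancellation; the same issue recurs one degree higher in (3), where the recombination produces jumps of both $h'$ and $h''$ weighted by $(1-\rho_0)^2/2$ and $(1-\rho_0)$, and the discrepancy is a quadratic polynomial in $m$ killed by $\langle S_2f,v_2\rangle=0$. This is all repairable and elementary, but I would advise either performing the paper's subtraction first (which makes the corner terms vanish identically and produces $c_3|n-m|$, $d_3|n-m|$ with no case analysis) or stating and proving the corner identity above explicitly; as written, the verification you promise would fail.
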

\begin{remark}\label{remark of cancelation lemma}
{\rm (1) Noting that $\theta_{+}$, $b(\mu)$ and $c_3(\mu)$ exhibit the following behaviors, respectively:
    $$\theta_{+}=-\mu+o(\mu),\quad b(\mu)=-\mu+o(\mu),\quad c_3(\mu)=-\frac{2}{3}\mu^3+o(\mu^3),\quad \mu\rightarrow0^+.$$
This indicates that, compared to the free resolvent $R^{\pm}_0(\mu^4)=O(\mu^{-3})$, the operators considered in this lemma can decrease the singularity near $\mu=0$. Precisely, we have
\begin{align*}
R^{\pm}_0(\mu^4)vQ=O(\mu^{-2}),\quad R^{\pm}_0(\mu^4)vS_j=O(\mu^{-1}),\quad R^{\pm}_0(\mu^4)vS_2=O(1),\quad j=0,1,\\
QvR^{\pm}_0(\mu^4)=O(\mu^{-2}),\quad S_jvR^{\pm}_0(\mu^4)=O(\mu^{-1}),\quad S_2vR^{\pm}_0(\mu^4)=O(1),\quad j=0,1.
\end{align*}
%We remark that this property is crucial for the subsequent decay estimates.
\vskip0.15cm
\noindent(2) Additionally, recall that the kernel of free resolvent in the continuous analogue \cite{SWY22} is given by
$$R^{\pm}_0(\mu^4,x,y)=\frac{1}{4\mu^3}\big(\pm ie^{\pm i\mu|x-y|}-e^{-\mu|x-y|}\big),\quad x,y\in \R.$$
  Observing that the counterpart $c_3(\mu)$ in this circumstance vanishes. This difference implies that additional techniques are required to handle the decay estimates in our discrete setting.
}
\end{remark}

The proof of this lemma will be delayed to the end of this subsection. Now, we prove the \eqref{esti for Kpm1} case by case.
\subsubsection{{\bf {Regular case}}}
In this part, we present the proof of \eqref{esti for Kpm1} for the case where $0$ is a regular point of $H$. Recalling the kernels of $K^{\pm}_1$
\begin{equation*}%\label{kernel of Kpm1}
K^{\pm}_{1}(t,n,m)=\int_{0}^{\mu_0}e^{-it\mu^4}\mu^3\big[R^{\pm}_0(\mu^4)v\big(M^{\pm}(\mu)\big)^{-1}vR^{\pm}_0(\mu^4)\big](n,m)d\mu,
\end{equation*}
and the expansions of $(M^{\pm}(\mu))^{-1}$ in \eqref{asy expan of regular 0}:
\begin{align*}
\left(M^{\pm}(\mu)\right)^{-1}=S_0A_{01}S_0+\mu QA^{\pm,0}_{11}Q+\mu^2(QA^{\pm,0}_{21}Q+S_0A^{\pm,0}_{22}+A^{\pm,0}_{23}S_0)+\mu^3A^{\pm,0}_3+\Gamma^{0}_{4}(\mu),
\end{align*}
then $K^{\pm}_{1}(t,n,m)$ can be further decomposed as the finite sum
\begin{align}\label{new expr of K1 regular 0}
K^{\pm}_1(t,n,m)=\Big(\sum\limits_{A\in\mcaA_0}K^{\pm}_A+K^{\pm,0}_r\Big)(t,n,m),
\end{align}
where set $\mcaA_0=\{S_0A_{01}S_0,\mu QA^{\pm,0}_{11}Q,\mu^2QA^{\pm,0}_{21}Q,\mu^2S_0A^{\pm,0}_{22},\mu^2A^{\pm,0}_{23}S_0,\mu^3A^{\pm,0}_3\}$ and
\begin{align}\label{set A0 and kernel in regular case}
\begin{split}
%K^{\pm,j}_{3}(t,n,m)&=\int_{0}^{\mu_0}e^{-it\mu^4}\mu^6\big[R^{\pm}_0(\mu^4)vA^{\pm,j}_{3}vR^{\pm}_0(\mu^4)\big](n,m)d\mu,\\
K^{\pm}_A(t,n,m)&=\int_{0}^{\mu_0}e^{-it\mu^4}\mu^3\big[R^{\pm}_0(\mu^4)vAvR^{\pm}_0(\mu^4)\big](n,m)d\mu,\quad A\in\mcaA_0,\\
K^{\pm,0}_r(t,n,m)&=\int_{0}^{\mu_0}e^{-it\mu^4}\mu^3\big[R^{\pm}_0(\mu^4)v\Gamma ^{0}_{4}(\mu)vR^{\pm}_0(\mu^4)\big](n,m)d\mu.
\end{split}
\end{align}
Consequently, the estimate \eqref{esti for Kpm1} reduces to establishing the corresponding estimates for $\{K^{\pm}_{A}\}_{A\in\mcaA_0}$ and $K^{\pm,0}_{r}$. Next we will prove these estimates through three propositions.
\begin{proposition}\label{propo of K 01}
Let $H=\Delta^2+V$ with $|V(n)|\lesssim \left<n\right>^{-\beta}$ for $\beta>15$. Suppose that $0$ is a regular point of $H$, then we have the following estimate:
\begin{equation*}%\label{estimate of K-1}
\sup\limits_{n,m\in\Z}\Big|\int_{0}^{\mu_0}e^{-it\mu^4}\mu^3\big[R^{\pm}_0(\mu^4)vS_0A_{01}S_0vR^{\pm}_0(\mu^4)\big](n,m)d\mu\Big|\lesssim |t|^{-\frac{1}{4}},\quad t\neq0.
\end{equation*}
\end{proposition}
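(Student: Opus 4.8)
The plan is to use Lemma \ref{cancelation lemma} to absorb all three factors of $\mu^{-3}$ coming from the two free resolvents and the remaining $\mu^3$ weight, so that the resulting oscillatory integral has a bounded, $L^1$-in-$\mu$ amplitude. Since $S_0 = QS_0 = S_0Q$, we first write $S_0A_{01}S_0 = S_0(S_0A_{01}S_0)S_0$ and apply parts (2) and (4) of Lemma \ref{cancelation lemma} to both the left factor $R^\pm_0(\mu^4)vS_0$ and the right factor $S_0vR^\pm_0(\mu^4)$. Concretely, $R^\pm_0(\mu^4)vS_0 = O(\mu^{-1})$ with kernel $\tfrac{1}{4\mu^3}\mcaC^\pm(\mu,n,m)$ and the adjoint relation $S_0(vR^\pm_0(\mu^4)f) = S_0 f^\pm_0$ identifies the right factor similarly. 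After substituting these two representations, the prefactor $\mu^3 \cdot \tfrac{1}{4\mu^3}\cdot\tfrac{1}{4\mu^3}$ leaves exactly $\mu^{-3}$; but since both $\mcaC^\pm$-kernels carry a factor of order $\mu$ (because $c_1^\pm(\mu), c_2(\mu) = O(\mu^2)$ after the $\rho$-integration against the smooth exponentials, and $c_3(\mu) = O(\mu^3)$), the net amplitude is in fact bounded. More carefully, I would factor out the "worst" power: write $\mcaC^\pm(\mu,n,m) = \mu\,\widetilde{\mcaC}^\pm(\mu,n,m)$ where $\widetilde{\mcaC}^\pm$ and its $\mu$-derivative are $O(1)$ uniformly in $n,m$, using that $\theta_+ = -\mu + o(\mu)$, $b(\mu) = -\mu + o(\mu)$, $a_1, a_2$ are smooth near $0$, and the exponentials $e^{\mp i\theta_+|n-\rho m|}$, $e^{b(\mu)|n-\rho m|}$ together with the term $c_3(\mu)|n-m|v(m)$ remain controlled (the last one because $c_3(\mu) = O(\mu^3)$ kills the linear growth in $|n-m|$, as in Remark \ref{remark of cancelation lemma}).

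The key estimate, then, is
\begin{equation*}
\Big|\int_0^{\mu_0} e^{-it\mu^4}\, \Phi^\pm(\mu,n,m)\, d\mu\Big| \lesssim |t|^{-\frac14},
\end{equation*}
where $\Phi^\pm(\mu,n,m) = \tfrac{1}{16}\,\widetilde{\mcaC}^\pm(\mu,n,\cdot)\,(S_0 A_{01}S_0)\,\widetilde{\mcaC}^\pm(\mu,\cdot,m)$ is, after summing the (absolutely convergent) series in the intermediate variables, a function of $\mu$ with $\|\Phi^\pm(\cdot,n,m)\|_{L^\infty} + \|\partial_\mu \Phi^\pm(\cdot,n,m)\|_{L^1((0,\mu_0))} \lesssim 1$ uniformly in $n,m$. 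For the absolute convergence of the internal sums I would use that $S_0\ell^2(\Z)$-boundedness of $A_{01}$ combined with the decay $v(m) = O(\langle m\rangle^{-\beta/2})$ controls quantities like $\sum_m \langle m\rangle^{k} v_k(m)\,|(S_0A_{01}S_0 g)(m)|$ for the relevant $k \le 3$; here the hypothesis $\beta > 15$ enters, since the kernels $\mcaC^\pm$ involve $v_2 = m^2 v(m)$ (and, via the derivative, effectively up to $v_3$), and we also need $\left<\cdot\right>^s v(\cdot) \in \ell^2$ for the weights that appear when pairing with $S_0$-functions. Once $\Phi^\pm$ is shown to have bounded variation uniformly in $n,m$, the oscillatory integral bound follows from the Van der Corput Lemma \cite[pp.~332--334]{Ste93} applied to the phase $\mu \mapsto \mu^4$ on $(0,\mu_0)$, exactly as in the estimate of $K_{0,2}$ in the proof of Theorem \ref{D-E for free case}: the phase $\mu^4$ has $|\partial_\mu^4(\mu^4)| = 24$, giving the $|t|^{-1/4}$ decay, and the amplitude contributes only the harmless factor $\|\Phi^\pm\|_{L^\infty} + \|\partial_\mu\Phi^\pm\|_{L^1}$.

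The main obstacle I expect is bookkeeping rather than conceptual: one must carefully track which powers of $\mu$ are genuinely available. A naive count gives $\mu^3 \cdot \mu^{-3} \cdot \mu^{-3} = \mu^{-3}$, which is badly divergent; the whole point is that the projections $S_0$ annihilate $v$ and $v_1$, so that in $(R_0^\pm(\mu^4)vS_0)$ the leading $\mu^{-3}$, $\mu^{-2}$ terms of the Taylor expansion of the exponentials cancel against the moment conditions $\langle f, v\rangle = \langle f, v_1\rangle = 0$, leaving the $O(\mu^{-1})$ stated in Lemma \ref{cancelation lemma}; doing this on both sides recovers exactly the two missing powers. A secondary subtlety is handling the non-oscillatory piece $e^{b(\mu)|n-m|}$ and the explicit $|n-m|$ term uniformly in $n,m$: here I would reuse the observation \eqref{integral ebnm} that $b(\mu), b'(\mu) < 0$ makes $\|\partial_\mu(e^{b(\mu)|n-m|})\|_{L^1((0,\mu_0))} \lesssim 1$ uniformly, and that $c_3(\mu)|n-m|v(m) = O(\mu^3)\cdot O(\langle m\rangle^{1-\beta/2})$ is summable and smooth in $\mu$. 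Finally, the error term $K^{\pm,0}_r$ involving $\Gamma^0_4(\mu)$ is treated in the companion propositions, so it is not needed here.
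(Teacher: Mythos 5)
Your overall skeleton is right: apply Lemma \ref{cancelation lemma}(2),(4) on both sides so that the two projections $S_0$ each buy two powers of $\mu$, turning the naively divergent $\mu^{-3}$ amplitude into something of size $O(\mu)$, and then estimate an oscillatory integral. This matches the paper's strategy up to the point where the oscillatory integral is actually estimated. But there is a genuine gap there.

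You propose to treat the entire factor $\widetilde{\mcaC}^{\pm}(\mu,n,m)$ — including the oscillatory exponentials $e^{\mp i\theta_{+}|n-\rho m|}$ — as an amplitude whose $\mu$-derivative is $O(1)$ (or $L^1$ in $\mu$) uniformly in $n,m$, and then apply Van der Corput to the phase $\mu^4$ alone. This fails: $\partial_{\mu}e^{\mp i\theta_{+}|n-\rho m|}=\mp i\,\theta_{+}'(\mu)\,|n-\rho m|\,e^{\mp i\theta_{+}|n-\rho m|}$ has modulus comparable to $|n-\rho m|$, with no decay to compensate (unlike $e^{b(\mu)|n-m|}$, where $b<0$ makes \eqref{integral ebnm} work). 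So $\|\partial_{\mu}\Phi^{\pm}(\cdot,n,m)\|_{L^{1}}$ grows linearly in $|n|,|m|$ and the Van der Corput bound you invoke is not uniform. The paper's proof avoids this by performing the change of variables \eqref{varible substi1} and moving $e^{\mp i\theta_{+}(|N_1|+|M_2|)}$ \emph{into the phase}, reducing matters to $\sup_{s\in\R}\bigl|\int e^{-it[(2-2\cos\theta_+)^2-s\theta_+]}\phi\,d\theta_+\bigr|$, which is exactly what Lemma \ref{Von der} and Corollary \ref{corollary} control uniformly in the linear perturbation $s$. That uniformity in $s$ is the essential ingredient your argument is missing; without it the proposition does not follow.

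A secondary issue: you bound $c_3(\mu)|n-m|v(m)$ by $O(\mu^3)\cdot O(\langle m\rangle^{1-\beta/2})$, silently dropping the $n$-dependence; since $|n-m|\le |n|+|m|$, this term is not summable uniformly in $n$ as written. The paper handles it (cases $j=2,4$ of its proof) by using the orthogonality $\langle S_0f,v\rangle=0$ to replace $|n-m_1|$ by $|n-m_1|-|n|$, which is bounded by $|m_1|$. Both repairs are available from the paper's Section~\ref{Sec of decay for free} machinery, but as stated your proof of the key oscillatory estimate does not go through.
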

\begin{proof}
Denote
$$K^{\pm}_{01}(t,n,m):=\int_{0}^{\mu_0}e^{-it\mu^4}\mu^3\big[R^{\pm}_0(\mu^4)vS_0A_{01}S_0vR^{\pm}_0(\mu^4)\big](n,m)d\mu.$$
Applying Lemma \ref{cancelation lemma} to $K^{\pm}_{01}(t,n,m)$ yields
\begin{align}\label{expre of K01}
\begin{split}
%K^{\pm}_{-1}(t,n,m)&:=\int_{0}^{\mu_0}e^{-it\mu^4}\mu^2\left[R^{\pm}_0(\mu^4)vS_1A^{\pm}_{-1}S_{1}vR^{\pm}_0(\mu^4)\right](n,m)d\mu\\
K^{\pm}_{01}(t,n,m)&=\frac{1}{16}\int_{0}^{\mu_0}e^{-it\mu^4}\frac{1}{\mu^3}\sum\limits_{m_1,m_2\in\Z}\mcaC^{\pm}(\mu,n,m_1)\mcaC^{\pm}(\mu,m,m_2)(S_0A_{01}S_0)(m_1,m_2)d\mu\\
&=\frac{1}{16}\sum\limits_{j=1}^{4}\int_{0}^{\mu_0}e^{-it\mu^4}\frac{1}{\mu^3}\sum\limits_{m_1,m_2\in\Z}I^{\pm}_j(\mu,n,m,m_1,m_2)(S_0A_{01}S_0)(m_1,m_2)d\mu\\
&:=\frac{1}{16}\sum\limits_{j=1}^{4}K^{\pm,j}_{01}(t,n,m),
\end{split}
\end{align}
where $N_1=n-\rho_1m_1$, $M_2=m-\rho_2m_2$ and $I^{\pm}_{j}(\mu,n,m,m_1,m_2)$ are given by
\begin{align}
I^{\pm}_{1}(\mu,n,m,m_1,m_2)&=\int_{[0,1]^2}(1-\rho_1)(1-\rho_2)\prod\limits_{\alpha\in\{N_1,M_2\}}\big(c^{\pm}_1(\mu)e^{\mp i\theta_{+}|\alpha|}+c_2(\mu)e^{b(\mu)|\alpha|}\big)d\rho_1d\rho_2\notag\\
&\quad \times v_2(m_1)v_2(m_2),\notag\\
I^{\pm}_{2}(\mu,n,m,m_1,m_2)&=c_3(\mu)\int_{0}^{1}(1-\rho_1)\big(c^{\pm}_1(\mu)e^{\mp i\theta_{+}|N_1|}+c_2(\mu)e^{b(\mu)|N_1|}\big)d\rho_1\cdot v_2(m_1)|m-m_2|v(m_2),\notag\\
I^{\pm}_{3}(\mu,n,m,m_1,m_2)&=c_3(\mu)\int_{0}^{1}(1-\rho_2)\big(c^{\pm}_1(\mu)e^{\mp i\theta_{+}|M_2|}+c_2(\mu)e^{b(\mu)|M_2|}\big)d\rho_2\cdot|n-m_1|v(m_1)v_2(m_2),\notag\\
I^{\pm}_4(\mu,n,m,m_1,m_2)&=(c_3(\mu))^2|n-m_1|\cdot|m-m_2|v(m_1)v(m_2).\label{expre of Ipm}
\end{align}
 By \eqref{expre of K01} and the symmetry between $I^{\pm}_2$ and $I^{\pm}_3$, it suffices to establish the following decay estimates:
\begin{equation}\label{estimate for kpmj-1}
\sup\limits_{n,m\in\Z}\big|K^{\pm,j}_{01}(t,n,m)\big|\lesssim |t|^{-\frac{1}{4}},\quad t\neq0,\quad j=1,2,4.
\end{equation}
% holds for each $K^{\pm,j}_{-1}(t,n,m)$ with $j=1,2,4$.

{\underline{$\bm {Case\ j=1.}$}}
It follows from \eqref{expre of K01} and \eqref{expre of Ipm} that
\begin{equation*}
K^{\pm,1}_{01}(t,n,m)=\sum\limits_{m_1,m_2\in\Z}\int_{[0,1]^2}(1-\rho_1)(1-\rho_2)\Omega^{\pm}_{1}(t,N_1,M_2)d\rho_1d\rho_2(v_2S_0A_{01}S_0v_2)(m_1,m_2),
\end{equation*}
where
\begin{align*}
\Omega^{\pm}_{1}(t,N_1,M_2)=\int_{0}^{\mu_0}e^{-it\mu^4}\mu\prod\limits_{\alpha\in\{N_1,M_2\}}\Big(\frac{c^{\pm}_1(\mu)}{\mu^2}e^{\mp i\theta_{+}|\alpha|}+\frac{c_2(\mu)}{\mu^2}e^{b(\mu)|\alpha|}\Big)d\mu:=\sum\limits_{\ell=1}^{4}\Omega^{\pm}_{1\ell}(t,N_1,M_2),
\end{align*}
with
\begin{align*}
\Omega^{\pm}_{11}(t,N_1,M_2)&=\int_{0}^{\mu_0}e^{-it\mu^4}e^{\mp i\theta_{+}(|N_1|+|M_2|)}\mu{\Big(\frac{c^{\pm}_1(\mu)}{\mu^2}\Big)}^2d\mu,\\
\Omega^{\pm}_{12}(t,N_1,M_2)&=\int_{0}^{\mu_0}e^{-it\mu^4}e^{\mp i\theta_{+}|N_1|}e^{b(\mu)|M_2|}\mu\frac{c^{\pm}_1(\mu)c_2(\mu)}{\mu^4}d\mu,\\
\Omega^{\pm}_{13}(t,N_1,M_2)&=\Omega^{\pm}_{12}(t,M_2,N_1),\\
\Omega^{\pm}_{14}(t,N_1,M_2)&=\int_{0}^{\mu_0}e^{-it\mu^4}e^{b(\mu)(|N_1|+|M_2|)}\mu{\Big(\frac{c_2(\mu)}{\mu^2}\Big)}^2d\mu.
\end{align*}
Next we show that for $1\leq \ell\leq 4$,
\begin{equation}\label{estimate of Omegapm1,j}
\big|\Omega^{\pm}_{1\ell}(t,N_1,M_2)\big|\lesssim |t|^{-\frac{1}{4}},\quad t\neq0,\  {\rm uniformly\ in}\ N_1,M_2.
\end{equation}
Once this estimate holds, by H\"{o}lder's inequality, it yields that
\begin{equation}\label{estimate of kpm1,-1}
|K^{\pm,1}_{01}(t,n,m)|\lesssim |t|^{-\frac{1}{4}},\quad t\neq0,\  {\rm uniformly\ in}\ n,m\in\Z.
\end{equation}
\vskip0.1cm
{\bf{(i)}} For $\ell=1,2$, we perform the change of variable \eqref{varible substi1} to obtain
 %\begin{equation}\label{varible substi1}
 %{\rm cos}\theta_{+}=1-\frac{\mu^2}{2} \Longrightarrow\  \frac{d\mu}{d\theta_+}=\frac{{\rm sin}\theta_+}{\mu}=-\sqrt{1-\frac{\mu^2}{4}},
%\end{equation}
%where $\theta_+\rightarrow0$ as $\mu\rightarrow0$ and $\theta_+\rightarrow-\pi$ as $\mu\rightarrow2$. Then for $t\neq0$, we have
\begin{align*}
\Omega^{\pm}_{11}(t,N_1,M_2)&=\int_{r_0}^{0}e^{-it\left[(2-2{\rm cos}\theta_+)^2\pm\theta_{+}\left(\frac{|N_1|+|M_2|}{t}\right)\right]}c_{11}(\mu(\theta_+))d\theta_+,\\
\Omega^{\pm}_{12}(t,N_1,M_2)&=\int_{r_0}^{0}e^{-it\left[(2-2{\rm cos}\theta_+)^2\pm\theta_{+}\frac{|N_1|}{t}\right]}c^{\pm}_{12}(\mu(\theta_+))e^{b(\mu(\theta_+))|M_2|}d\theta_+,
\end{align*}
where $r_0\in (-\pi,0)$ satisfying ${\rm cos}r_0=1-\frac{\mu^2_0}{2}$ and
$$c_{11}(\mu)=-\mu{\Big(\frac{\theta_+}{\mu}\Big)}^4a_1(\mu),\quad c^{\pm}_{12}(\mu)=\mp i\mu\frac{(\theta_+b(\mu))^2}{\mu^4}a_2(\mu).$$
Noticing that for $k=0,1$,
$$\lim\limits_{\mu\rightarrow0^+}{\Big(\frac{\theta_{+}}{\mu}\Big)}^{(k)}\ {\rm and}\ \lim\limits_{\mu\rightarrow0^+}{\Big(\frac{b(\mu)}{\mu}\Big)}^{(k)}\ {\rm exist},$$
this fact together with the estimate \eqref{integral ebnm} and Corollary \ref{corollary} gives that
\begin{equation}\label{estimate of Omegapm1,12}
\big|\Omega^{\pm}_{1\ell}(t,N_1,M_2)\big|\lesssim |t|^{-\frac{1}{4}},\quad t\neq0,\  {\rm uniformly\ in}\ N_1,M_2,\quad \ell=1,2.
\end{equation}
\vskip0.2cm
{\bf{(ii)}} For $\ell=4$, a direct application of Van der Corput Lemma and \eqref{integral ebnm} give
\begin{equation*}
\big|\Omega^{\pm}_{14}(t,N_1,M_2)\big|\lesssim |t|^{-\frac{1}{4}},\quad t\neq0,\  {\rm uniformly\ in}\ N_1,M_2,
\end{equation*}
which together with \eqref{estimate of Omegapm1,12} gives the \eqref{estimate of Omegapm1,j} and thus the \eqref{estimate of kpm1,-1} is obtained.
\vskip0.3cm
{\underline{$\bm {Case\ j=2.}$}} By \eqref{expre of K01} and \eqref{expre of Ipm}, we have
\begin{align*}
K^{\pm,2}_{01}(t,n,m)&=\sum\limits_{m_1,m_2\in\Z}\int_{0}^{1}(1-\rho_1)\Omega^{\pm}_{2}(t,N_1)d\rho_1\cdot|m-m_2|v(m_2)v_2(m_1)(S_0A_{01}S_0)(m_1,m_2),\\
&\xlongequal[]{\left<S_0f,v\right>=0}\sum\limits_{m_2\in\Z}^{}(|m-m_2|-|m|)v(m_2)\overline{S_0A^{*}_{01}S_0\overline{W^{\pm}}}(m_2)
\end{align*}
where %in the third equality we used the property that $\left<S_1f,v\right>=0$,
$A^{*}_{01}$ denotes the dual operator of $A_{01}$ and
\begin{align*}
\Omega^{\pm}_{2}(t,N_1)&=\int_{0}^{\mu_0}e^{-it\mu^4}\frac{c_3(\mu)}{\mu^3}\big(c^{\pm}_1(\mu)e^{\mp i\theta_{+}|N_1|}+c_2(\mu)e^{b(\mu)|N_1|}\big)d\mu,\\
W^{\pm}(t,n,m_1)&=v_2(m_1)\int_{0}^{1}(1-\rho_1)\Omega^{\pm}_{2}(t,N_1)d\rho_1.
\end{align*}
Since $\lim\limits_{\mu\rightarrow0^{+}}{\Big(\frac{c_3(\mu)}{\mu}\Big)}^{(k)}$ exist for $k=0,1$, applying analogous arguments for $\Omega^{\pm}_{11}$ and $\Omega^{\pm}_{14}$, we obtain
\begin{equation*}
\big|\Omega^{\pm}_{2}(t,N_1)\big|\lesssim |t|^{-\frac{1}{4}},\quad t\neq0,\  {\rm uniformly\ in}\ N_1.
\end{equation*}
Using H\"{o}lder's inequality and triangle inequality, it yields that
\begin{equation}\label{estimate of kpm2,-1}
|K^{\pm,2}_{01}(t,n,m)|\lesssim \|\left<\cdot\right>v(\cdot)\|_{\ell^2}\|S_0A^{*}_{01}S_0\|_{\ell^2\rightarrow\ell^2}\|W^{\pm}\|_{\ell^2}\lesssim |t|^{-\frac{1}{4}},\quad {\rm uniformly\ in}\ n,m\in\Z.
\end{equation}
\vskip0.3cm
{\underline{$\bm {Case\ j=4.}$}} As before, one has
\begin{align*}
K^{\pm,4}_{01}(t,n,m)&=\int_{0}^{\mu_0}e^{-it\mu^4}\mu{\Big(\frac{c_3(\mu)}{\mu^2}\Big)}^2d\mu\sum\limits_{m_1,m_2\in\Z}(S_0A_{01}S_0)(m_1,m_2)|n-m_1|\cdot |m-m_2|v(m_1)v(m_2)\\
&:=g(t)h(n,m).
\end{align*}
Since $\lim\limits_{\mu\rightarrow0^{+}}{\Big(\frac{c_3(\mu)}{\mu^2}\Big)}^{(k)}$ exist for $k=0,1$, then using Van der Corput Lemma directly yields that $|g(t)|\lesssim|t|^{-\frac{1}{4}}$. Furthermore, by virtue of $\left<S_0f,v\right>=0$ and $S_0v=0$, we have
$$h(n,m)=\sum\limits_{m_1\in\Z}(|n-m_1|-|n|)v(m_1)\sum\limits_{m_2\in\Z}(S_0A_{01}S_0)(m_1,m_2)(|m-m_2|-|m|)v(m_2),$$
and consequently derive
\begin{equation}\label{estimate of kpm4,-1}
|K^{\pm,4}_{01}(t,n,m)|\lesssim |t|^{-\frac{1}{4}},\quad t\neq0, \quad {\rm uniformly\ in}\ n,m\in\Z.
\end{equation}
Hence, combining \eqref{estimate of kpm1,-1}, \eqref{estimate of kpm2,-1} and \eqref{estimate of kpm4,-1}, we establish \eqref{estimate for kpmj-1}, and thereby completing the proof.
\end{proof}
\begin{proposition}\label{propo of KB 0}
Under the assumptions of Proposition \ref{propo of K 01}, let $\mcaA_0$ be defined as in \eqref{new expr of K1 regular 0}. Then for any $A\in\mcaA_0\setminus\{S_0A_{01}S_0\}$, we have the estimate
\begin{equation}\label{estimate of KB regular 0}
\sup\limits_{n,m\in\Z}\Big|\int_{0}^{\mu_0}e^{-it\mu^4}\mu^3\big[R^{\pm}_0(\mu^4)vAvR^{\pm}_0(\mu^4)\big](n,m)d\mu\Big|\lesssim |t|^{-\frac{1}{4}},\quad t\neq0.
\end{equation}
\end{proposition}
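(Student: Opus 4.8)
The plan is to adapt, in a lighter form, the argument already carried out in Proposition~\ref{propo of K 01}, using the structural fact that every operator $A\in\mcaA_0\setminus\{S_0A_{01}S_0\}$ carries at least one extra power of $\mu$ and is flanked by a projection ($Q$ or $S_0$) on at least one side. For each such $A$ I would first insert the representation formulas of Lemma~\ref{cancelation lemma}: parts~(1) and~(4) rewrite the factors $R^{\pm}_0(\mu^4)vQ$ and $QvR^{\pm}_0(\mu^4)$ through the kernel $\mcaB^{\pm}$; part~(2) and its transpose rewrite the factors carrying $S_0$ through $\mcaC^{\pm}$; a resolvent factor with no adjacent projection (as in $\mu^3A^{\pm,0}_3$, or the outer resolvent next to $S_0A^{\pm,0}_{22}$ and to $A^{\pm,0}_{23}S_0$) is kept in the bare form~\eqref{kernel of R0 boundary}. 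A short power count --- combining the $\mu^3$ of Stone's formula, the extra $\mu^k$ ($k\geq1$) inside $A$, the two factors $\mu^{-3}$ from $R^{\pm}_0(\mu^4)$, and the small coefficients $b_1(\mu),b_2(\mu)=O(\mu)$ of $\mcaB^{\pm}$ and $c^{\pm}_1(\mu),c_2(\mu)=O(\mu^2)$, $c_3(\mu)=O(\mu^3)$ of $\mcaC^{\pm}$ --- then shows that each resulting integrand equals $\mu^{\ell}$ with $\ell\geq0$ times bounded, $C^1$-in-$\mu$ amplitudes, so that no singularity survives at $\mu=0$; this is exactly the gain recorded in Remark~\ref{remark of cancelation lemma}. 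The two borderline cases, with net power $\mu^0$, are $A=\mu QA^{\pm,0}_{11}Q$ and $A=\mu^3A^{\pm,0}_3$.

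Next I would expand each such integrand into a finite sum of elementary pieces, indexed by the choice of $e^{\mp i\theta_+|\cdot|}$ versus $e^{b(\mu)|\cdot|}$ in each of the (at most two) resolvent factors, together with the purely polynomial pieces coming from the $c_3(\mu)|n-m|v(m)$ term of $\mcaC^{\pm}$. For a piece containing the oscillating factor $e^{\mp i\theta_+(|\cdot|+|\cdot|)}$, or $e^{\mp i\theta_+|\cdot|}$ alone, I would perform the substitution~\eqref{varible substi1}, $\cos\theta_+=1-\frac{\mu^2}{2}$, turning the $\mu$-integral into a $\theta_+$-integral over $[r_0,0]$ with phase $(2-2\cos\theta_+)^2-s\theta_+$ and amplitude obtained from the $b$- or $c$-coefficients divided by the relevant power of $\mu$ (times a possible $e^{b(\mu(\theta_+))|\cdot|}$ factor, which is $\leq1$, and the Jacobian); since $\lim_{\mu\to0^+}(\theta_+/\mu)^{(k)}$, $\lim_{\mu\to0^+}(b(\mu)/\mu)^{(k)}$ and $\lim_{\mu\to0^+}(c_3(\mu)/\mu^3)^{(k)}$ all exist for $k=0,1$, these amplitudes are $C^1$ up to the endpoint $\mu=0$, and Lemma~\ref{Von der} together with Corollary~\ref{corollary}(ii) --- and~\eqref{integral ebnm} to control $\|\partial_\mu e^{b(\mu)|n-m|}\|_{L^1((0,2))}$ uniformly in $n,m$ --- yields an $|t|^{-1/4}$ bound, uniform in the internal lattice variables. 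For a piece carrying only factors $e^{b(\mu)|\cdot|}$, or a polynomial piece, I would instead apply the Van der Corput Lemma directly in $\mu$ (the phase $\mu^4$ has $\frac{d^4}{d\mu^4}\mu^4\neq0$), again using~\eqref{integral ebnm} to bound the total variation of the amplitude uniformly.

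Finally I would sum over the internal lattice points $m_1,m_2$ by H\"{o}lder's inequality: the middle operators $A^{\pm,0}_{11},A^{\pm,0}_{21},A^{\pm,0}_{22},A^{\pm,0}_{23},A^{\pm,0}_3$ are bounded on $\ell^2(\Z)$ by Theorem~\ref{Asymptotic expansion theorem}(i) (which is in force since $\beta>15$), and the weights $v,v_1,v_2$ attached by $\mcaB^{\pm}$ and $\mcaC^{\pm}$ lie in $\ell^2(\Z)$ for $\beta>15$. For the terms flanked by $S_0$, before summing one uses the moment relation $\langle v,S_0g\rangle=0$ (equivalently $S_0v=0$), exactly as in cases $j=2,4$ of Proposition~\ref{propo of K 01}, to replace the unbounded factor $|n-m_1|$ of the $c_3(\mu)$-piece by $|n-m_1|-|n|$, which is $O(|m_1|)$ and hence $\ell^2$-summable against $v(m_1)$. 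Assembling these bounds gives $O(|t|^{-1/4})$ for each term, uniformly in $n,m$, which is~\eqref{estimate of KB regular 0}. I expect the only real difficulty to be organizational rather than conceptual: keeping the power bookkeeping straight so that the two borderline operators $\mu QA^{\pm,0}_{11}Q$ and $\mu^3A^{\pm,0}_3$ indeed produce $C^1$ amplitudes after the change of variables --- all the remaining operators carry a strictly positive power of $\mu$ and are strictly more favorable --- and checking that these amplitudes have limits at both endpoints of $[r_0,0]$, so that Corollary~\ref{corollary}(ii) genuinely applies.
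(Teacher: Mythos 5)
Your proposal is correct and follows essentially the same route as the paper: reduce (by symmetry) to the representative operators, insert the kernels $\mcaB^{\pm},\mcaC^{\pm},\mcaA^{\pm}$ from Lemma \ref{cancelation lemma}, verify by power counting that the singularity at $\mu=0$ cancels, and then treat each elementary oscillatory integral via the substitution \eqref{varible substi1} plus Corollary \ref{corollary}, or directly via Van der Corput together with \eqref{integral ebnm}, before summing with H\"older and the $\ell^2$-boundedness of the middle operators. Your explicit handling of the $c_3(\mu)|n-m_1|$ growth via $S_0v=0$ is in fact spelled out more carefully than in the paper, which only refers back to the corresponding step in Proposition \ref{propo of K 01}.
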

\begin{proof}
By the definition of $\mcaA_0$ and symmetry, it suffices to handle the cases that $A=\mu QA^{\pm,0}_{11}Q$, $\mu^2S_0A^{\pm,0}_{22},\mu^3A^{\pm,0}_3$. Let $\mcaK^{\pm}_A(\mu,n,m):=16\mu^3\big[R^{\pm}_0(\mu^4)vAvR^{\pm}_0(\mu^4)\big](n,m)$. By Lemma \ref{cancelation lemma} and \eqref{kernel of R0 boundary},
\begin{align}\label{expre of R0vBvR0 regular 0}
\mcaK^{\pm}_A(\mu,n,m)=
\left\{\begin{aligned}&\sum\limits_{m_1,m_2}\mu^{-2}\mcaB^{\pm}(\mu,n,m_1)\mcaB^{\pm}(\mu,m,m_2)(QA^{\pm,0}_{11}Q)(m_1,m_2),\ A=\mu QA^{\pm,0}_{11}Q,\\
&\sum\limits_{m_1,m_2}\mu^{-1}\mcaC^{\pm}(\mu,n,m_1)\mcaA^{\pm}(\mu,m,m_2)(S_0A^{\pm,0}_{22}v)(m_1,m_2),\ A= \mu^2S_0A^{\pm,0}_{22},\\
&\sum\limits_{m_1,m_2}\mcaA^{\pm}(\mu,n,m_1)\mcaA^{\pm}(\mu,m,m_2)(vA^{\pm,0}_{31}v)(m_1,m_2),\ A=\mu^3 A^{\pm,0}_{31}.\end{aligned}\right.
\end{align}
Substituting \eqref{expre of R0vBvR0 regular 0} into the integral in \eqref{estimate of KB regular 0} and following the methodology of Proposition \ref{propo of K 01}, it is enough to estimate the following key oscillatory integrals:
\begin{align*}
\Omega^{\pm,0}_{11}(t,N_1,M_2)&=\int_{0}^{\mu_0}e^{-it\mu^4}\prod\limits_{\alpha\in\{N_1,M_2\}}\Big(\frac{b_1(\mu)}{\mu}e^{\mp i\theta_{+}|\alpha|}+\frac{b_2(\mu)}{\mu}e^{b(\mu)|\alpha|}\Big)d\mu,\\
\Omega^{\pm,0}_{22}(t,N_1,\widetilde{M}_2)&=\int_{0}^{\mu_0}e^{-it\mu^4}\mu\Big(\frac{c^{\pm}_1(\mu)}{\mu^2}e^{\mp i\theta_{+}|N_1|}+\frac{c_2(\mu)}{\mu^2}e^{b(\mu)|N_1|}+\frac{c_3(\mu)}{\mu^2}\Big)\\
&\quad\times \Big(\pm ia_1(\mu)e^{\mp i\theta_{+}|\widetilde{M}_2|}+a_2(\mu)e^{b(\mu)|\widetilde{M}_2|}\Big)d\mu,\\
\Omega^{\pm,0}_{31}(t,\widetilde{N}_1,\widetilde{M}_2)&=\int_{0}^{\mu_0}e^{-it\mu^4}\prod\limits_{\alpha\in\{\widetilde{N}_1,\widetilde{M}_2\}}\Big(\pm ia_1(\mu)e^{\mp i\theta_{+}|\alpha|}+a_2(\mu)e^{b(\mu)|\alpha|}\Big)d\mu,
\end{align*}
where $N_1=n-\rho_1m_1,\widetilde{N}_1=n-m_1,M_2=m-\rho_2m_2,\widetilde{M}_2=m-m_2$. By virtue of the similar method as $\Omega^{\pm}_1$ and $\Omega^{\pm}_{2}$ in Proposition \ref{propo of K 01}, we can show that all these integrals are uniformly bounded by $|t|^{-\frac{1}{4}}$. This completes the proof of \eqref{estimate of KB regular 0} for any $A\in\mcaA_0\setminus\{S_0A_{01}S_0\}$.
\end{proof}
Therefore, this proposition combined with Proposition \ref{propo of K 01} gives the estimate \eqref{esti for Kpm1} of $K^{\pm}_A$ for any $A\in\mcaA_0$. Then it remains to deal with the remainder $K^{\pm,0}_{r}$.
\begin{proposition}\label{propo of K4 0}
Under the assumptions of Proposition \ref{propo of K 01}, let $K^{\pm,0}_{r}(t,n,m)$ be defined as in \eqref{set A0 and kernel in regular case}, then we have
\begin{equation*}%\label{estimate of K-1}
\sup\limits_{n,m\in\Z}\big|K^{\pm,0}_r(t,n,m)\big|\lesssim |t|^{-\frac{1}{4}},\quad t\neq0.
\end{equation*}
\end{proposition}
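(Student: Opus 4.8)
The plan is to combine the extra $\mu^{4}$ vanishing of the remainder $\Gamma^{0}_{4}(\mu)$ with the oscillation of $e^{-it\mu^{4}}$, handling the $\mu^{-3}$ singularity of the two outer free resolvents exactly as in the free case (Theorem \ref{D-E for free case}), i.e.\ by the substitution $\cos\theta_{+}=1-\mu^{2}/2$. Inserting the kernel formula \eqref{kernel of R0 boundary} for both copies of $R^{\pm}_{0}(\mu^{4})$ in \eqref{set A0 and kernel in regular case} and expanding the product, $K^{\pm,0}_{r}(t,n,m)$ becomes a finite sum, over $(m_{1},m_{2})\in\Z^{2}$ and over the four oscillatory/exponential type choices for the two resolvent kernels, of integrals
$$\int_{0}^{\mu_{0}}e^{-it\mu^{4}}\,\frac{a(\mu)}{\mu^{3}}\,\mcaE(\mu,n,m,m_{1},m_{2})\,\big(v\,\Gamma^{0}_{4}(\mu)\,v\big)(m_{1},m_{2})\,d\mu ,$$
where $a(\mu)\in\{a_{1}^{2},a_{1}a_{2},a_{2}^{2}\}$ and $\mcaE$ is a product of at most one factor $e^{\mp i\theta_{+}|n-m_{1}|}$ and at most one factor $e^{\mp i\theta_{+}|m_{2}-m|}$, the remaining factors being $e^{b(\mu)|n-m_{1}|}$ or $e^{b(\mu)|m_{2}-m|}$. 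The point of keeping the $v$'s attached to $\Gamma^{0}_{4}(\mu)$ is that, as comes out of the construction of $\Gamma^{0}_{4}(\mu)$ in Section \ref{proof of Asy theorem} (it is a finite composition of $\mu$-independent bounded operators with $v$-sandwiched Taylor remainders of $R^{\pm}_{0}(\mu^{4})$, whose kernels grow at most polynomially in $|n-m|$), the hypothesis $\beta>15$ supplies nonnegative constants $c(m_{1},m_{2})$ with $\{c(m_{1},m_{2})\}\in\ell^{1}(\Z^{2})$ such that
$$\big|(v\Gamma^{0}_{4}(\mu)v)(m_{1},m_{2})\big|+\mu\big|\partial_{\mu}(v\Gamma^{0}_{4}(\mu)v)(m_{1},m_{2})\big|\le c(m_{1},m_{2})\,\mu^{4},\qquad 0<\mu<\mu_{0}.$$

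For each fixed $(m_{1},m_{2})$ the inner $\mu$-integral is estimated precisely like the free integrals $K^{\pm}_{0,1}$ and $K_{0,2}$ in the proof of Theorem \ref{D-E for free case}. If $\mcaE$ carries an oscillatory factor, we apply the change of variables \eqref{varible substi1}: then $e^{-it\mu^{4}}$ combines with the oscillatory exponential(s) into $\exp\{-it[(2-2\cos\theta_{+})^{2}-s\theta_{+}]\}$ with $s$ one of $\mp(|n-m_{1}|+|m_{2}-m|)/t$, $\mp|n-m_{1}|/t$, $\mp|m_{2}-m|/t$, and, after absorbing the Jacobian and using $a_{1}(\mu)\sqrt{1-\mu^{2}/4}=1$, the surviving amplitude is $\pm i\,\tilde a(\mu)\mu^{-3}\big(\prod e^{b(\mu)|\cdot|}\big)(v\Gamma^{0}_{4}(\mu)v)(m_{1},m_{2})$ for a smooth bounded function $\tilde a$ on $[0,\mu_{0}]$. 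By the bounds above this amplitude is $O(\mu\,c(m_{1},m_{2}))$, hence tends to $0$ as $\mu\to0^{+}$, and — using $b(\mu),b'(\mu)<0$ together with the uniform estimate \eqref{integral ebnm} to control $\partial_{\mu}(e^{b(\mu)|\cdot|})$ — its $\mu$-derivative has $L^{1}((0,\mu_{0}))$-norm $O(c(m_{1},m_{2}))$. Corollary \ref{corollary}(ii), whose bound is uniform in the parameter $s$ and therefore in $n,m$, then estimates this term by $\lesssim|t|^{-1/4}c(m_{1},m_{2})$. If instead $\mcaE$ has no oscillatory factor (the $a_{2}^{2}$, purely exponential term), one applies the Van der Corput Lemma directly with phase $\mu^{4}$ (so $\tfrac{d^{4}}{d\mu^{4}}\mu^{4}=24$), the amplitude $\mu^{-3}a_{2}(\mu)^{2}e^{b(\mu)(|n-m_{1}|+|m_{2}-m|)}(v\Gamma^{0}_{4}(\mu)v)(m_{1},m_{2})$ again having $L^{\infty}$- and derivative-$L^{1}$-norms on $(0,\mu_{0})$ of size $O(c(m_{1},m_{2}))$ by \eqref{integral ebnm}; this gives $\lesssim|t|^{-1/4}c(m_{1},m_{2})$ as well. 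Summing over the finitely many type choices and over $(m_{1},m_{2})$, and using $\{c(m_{1},m_{2})\}\in\ell^{1}(\Z^{2})$, yields $|K^{\pm,0}_{r}(t,n,m)|\lesssim|t|^{-1/4}$ uniformly in $n,m$, which is the claim.

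The main obstacle is precisely the bookkeeping just indicated. The oscillatory exponentials $e^{\mp i\theta_{+}|n-m_{1}|}$ and $e^{\mp i\theta_{+}|m_{2}-m|}$ have $\mu$-frequency of order $|n|+|m|$, so they cannot be left inside the amplitude — differentiating them in $\mu$ would cost a factor $|n|+|m|$ and destroy uniformity — and must instead be moved into the oscillatory phase; since their combined frequency depends on $(m_{1},m_{2})$, one is forced to carry out the estimate one pair $(m_{1},m_{2})$ at a time, and the resulting sum converges only because of the (weighted, Hilbert--Schmidt--type) control of $v\Gamma^{0}_{4}(\mu)v$ extracted from Section \ref{proof of Asy theorem}, which is exactly where the hypothesis $\beta>15$ is used. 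Apart from this, the argument is parallel to, and simpler than, those of Propositions \ref{propo of K 01} and \ref{propo of KB 0}, and the same scheme handles the analogous remainder terms in the first- and second-kind resonance cases at threshold $0$ and the remainder terms at threshold $16$ with only notational changes.
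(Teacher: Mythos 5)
Your proposal is correct and follows the same overall scheme as the paper's proof of Proposition \ref{propo of K4 0}: expand both copies of $R^{\pm}_0(\mu^4)$ via \eqref{kernel of R0 boundary} into the four type-choices, absorb the $\mu^{-6}$ from the two resolvents and the prefactor $\mu^3$ into the $\mu^4$ smallness of $\Gamma^0_4(\mu)$ from \eqref{estimate of Gamma}, pass the oscillatory exponentials into the phase via the substitution \eqref{varible substi1}, and invoke Corollary \ref{corollary} (resp.\ Van der Corput for the purely exponential $a_2^2$-term) together with \eqref{integral ebnm}.

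The one place where you genuinely diverge is the handling of the sum over $(m_1,m_2)$. You fix $(m_1,m_2)$, estimate each oscillatory integral uniformly in the phase parameter, and then sum, which forces you to upgrade the operator-norm bound \eqref{estimate of Gamma} to a pointwise, $\ell^1(\Z^2)$-summable kernel bound on $v\Gamma^0_4(\mu)v$. That bound is indeed available, and more cheaply than your structural argument suggests: since $(v\Gamma^0_4(\mu)v)(m_1,m_2)=v(m_1)\langle\Gamma^0_4(\mu)\delta_{m_2},\delta_{m_1}\rangle v(m_2)$, the estimate \eqref{estimate of Gamma} immediately gives $c(m_1,m_2)=Cv(m_1)v(m_2)$, which is in $\ell^1(\Z^2)$ because $\beta>2$. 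The paper instead performs the $(m_1,m_2)$-sum first, defining the scalar amplitudes $\Phi_1(\mu)$ and $\Phi_2(\mu,m)$ and controlling them by Cauchy--Schwarz against $\|\Gamma^0_4(\mu)\|_{\ell^2\to\ell^2}$, which only uses $v\in\ell^2$; the trade-off is that the $(m_1,m_2)$-dependence of the oscillatory factors is then buried inside the summed amplitude, a point your estimate-then-sum ordering handles more transparently (at the harmless cost of requiring $v\in\ell^1$, amply covered by $\beta>15$). Your observation that the oscillatory factors must not be differentiated as part of the amplitude, since that would cost $|n|+|m|$, is exactly the right concern and is resolved correctly in your argument. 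No gap.
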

\begin{proof}
From \eqref{kernel of R0 boundary} and \eqref{set A0 and kernel in regular case}, we can express $K^{\pm,0}_{r}(t,n,m)$ as
$$K^{\pm,0}_{r}(t,n,m)=\frac{1}{16}\sum\limits_{j=1}^{4}{\Omega}^{\pm}_{rj}(t,n,m),$$
where $N_1=n-m_1$, $M_2=m-m_2$, $\widetilde{\Gamma}_4(\mu)=\frac{{\Gamma}^0_4(\mu)}{\mu^3}$ and
\begin{align*}%\label{kernels of Omegaj12}
\begin{split}
\Omega^{\pm}_{r1}(\mu,n,m)&=-\int_{0}^{\mu_0}e^{-it\mu^4}(a_1(\mu))^2\sum\limits_{m_1,m_2\in\Z}^{}e^{\mp i\theta_+(|N_1|+|M_2|)}(v\widetilde{\Gamma}_4(\mu)v)(m_1,m_2)d\mu,\\
\Omega^{\pm}_{r2}(\mu,n,m)&=\pm i\int_{0}^{\mu_0}e^{-it\mu^4}a_1(\mu)a_2(\mu)\sum\limits_{m_1,m_2\in\Z}^{}e^{\mp i\theta_+|N_1|}(v\widetilde{\Gamma}_4(\mu)v)(m_1,m_2)e^{b(\mu)|M_2|}d\mu,\\
\Omega^{\pm}_{r3}(\mu,n,m)&=\pm i\int_{0}^{\mu_0}e^{-it\mu^4}a_1(\mu)a_2(\mu)\sum\limits_{m_1,m_2\in\Z}^{}e^{\mp i\theta_+|M_2|}(v\widetilde{\Gamma}_4(\mu)v)(m_1,m_2)e^{b(\mu)|N_1|}d\mu,\\
\Omega^{\pm}_{r4}(\mu,n,m)&=\int_{0}^{\mu_0}e^{-it\mu^4}(a_2(\mu))^2\sum\limits_{m_1,m_2\in\Z}^{}(v\widetilde{\Gamma}_4(\mu)v)(m_1,m_2)e^{b(\mu)(|N_1|+|M_2|)}d\mu.
\end{split}
\end{align*}
By symmetry between $\Omega^{\pm}_{r2}$ and $\Omega^{\pm}_{r3}$, it suffices to analyze the kernels $\Omega^{\pm}_{rj}(t,n,m)$ for $j=1,2,4$.

On one hand, applying the variable substitution \eqref{varible substi1} to $\Omega^{\pm}_{rj}(t,n,m)$ for $j=1,2$, we obtain
\begin{align*}
%\begin{split}
\Omega^{\pm}_{r1}(t,n,m)&=-\int_{r_0}^{0}\sum\limits_{m_1,m_2\in\Z}^{}e^{-it(2-2{\rm cos}\theta_+)^2}e^{\mp i\theta_+(|N_1|+|M_2|)}(v\varphi_1(\mu(\theta_+)v)(m_1,m_2)d\theta_+,\\
%\end{split}
%\end{align}
%and
%\begin{align}
%\begin{split}
\Omega^{\pm}_{r2}(t,n,m)&=\pm i\int_{r_0}^{0}\sum\limits_{m_1,m_2\in\Z}^{}e^{-it(2-2{\rm cos}\theta_+)^2}e^{\mp i\theta_+|N_1|}(v\varphi_2(\mu(\theta_+)v)(m_1,m_2)e^{b(\mu(\theta_+))|M_2|}d\theta_+,
%\end{split}
\end{align*}
where $\varphi_{j}(\mu)=\widetilde{\Gamma}_4(\mu)a_j(\mu).$
%In what follows, we will show that $$
Then, %if we can further demonstrate that $\lim\limits_{\theta_+\rightarrow0}\Phi_{j}(\mu\theta_+)$ exists and for $j=1,2$
\begin{align*}
\begin{split}
\big|\Omega^{\pm}_{r1}(t,n,m)\big|&\leq\sup\limits_{s\in\R}^{}\Big|\int_{r_0}^{0}e^{-it\big[(2-2{\rm cos}\theta_+)^2-s\theta_+\big]}\Phi_1(\mu(\theta_+))d\theta_+\Big|,\\
\big|\Omega^{\pm}_{r2}(t,n,m)\big|&\leq\sup\limits_{s\in\R}^{}\Big|\int_{r_0}^{0}e^{-it\big[(2-2{\rm cos}\theta_+)^2-s\theta_+\big]}\Phi_2(\mu(\theta_+),m)d\theta_+\Big|,
\end{split}
\end{align*}
with
\begin{align*}
%\begin{split}
\Phi_1(\mu)=\sum\limits_{m_1\in\Z}^{}v(m_1)(\varphi_{1}(\mu)v)(m_1),\quad
\Phi_2(\mu,m)=\sum\limits_{m_1\in\Z}^{}v(m_1)\Big(\varphi_{2}(\mu)\big(v(\cdot)e^{b(\mu)|m-\cdot|}\big)\Big)(m_1).
\end{align*}
In what follows, we show that
\begin{equation}\label{limit of Phi12}
\lim\limits_{\theta_+\rightarrow0}\Phi_1(\mu(\theta_+))=\lim\limits_{\theta_+\rightarrow0}\Phi_{2}(\mu(\theta_+),m)=0,\quad {\rm uniformly\ in}\ m\in\Z,
\end{equation}
and
\begin{equation}\label{derivative of Phi12}
\|\left(\Phi_1(\mu(\theta_+))\right)'\|_{L^{1}([r_0,0))}+\left\|\frac{\partial\Phi_2}{\partial\theta_+}(\mu(\theta_+),m)\right\|_{L^{1}([r_0,0))}\lesssim 1,\quad {\rm uniformly\ in}\ m\in\Z.
\end{equation}
Then, by Corollary \ref{corollary}, we consequently obtain
\begin{equation}\label{estimate of Omega12,1,2}
\big|\Omega^{\pm}_{rj}(t,n,m)\big|\lesssim |t|^{-\frac{1}{4}},\quad t\neq0,\quad {\rm uniformly\ in}\ n,m\in\Z,\quad j=1,2.
\end{equation}
Indeed, noting that by virtue of \eqref{estimate of Gamma}, for $\mu\in(0,\mu_0]$,
\begin{equation}\label{property of varphi1,2}
\|\varphi_{j}(\mu)\|_{\B(0,0)}\lesssim\mu,\quad\sup\limits_{\mu\in(0,\mu_0]}\left\|\varphi'_{j}(\mu)\right\|_{\B(0,0)}\lesssim1,\quad j=1,2,
\end{equation}
which combined with H\"{o}lder inequality implies that
\begin{align}
\begin{split}
\left|\Phi_{1}(\mu)\right|+\left|\Phi_{2}(\mu,m)\right|\lesssim\mu,\quad \sup\limits_{\mu\in(0,\mu_0]}\left|\partial_{\mu}\Phi_{1}(\mu)\right|\lesssim1, \quad {\rm uniformly\ in}\ m\in\Z.
\end{split}
\end{align}
%uniformly in $m\in \Z$.
This proves \eqref{limit of Phi12}.
Since $\mu'(\theta_+)<0$, we have
$$\int_{r_0}^{0}\left|\left(\Phi_1(\mu(\theta_+))\right)'\right|d\theta_+ =\int_{r_0}^{0}\left|\left(\partial_{\mu}\Phi_1\right)(\mu(\theta_+))\mu'(\theta_+)\right|d\theta_+\lesssim1.$$
%Consequently, by Corollary \ref{corollary},
%$$|\eqref{Omega121}|\lesssim |t|^{-\frac{1}{4}}, \quad t\neq0.$$
Moreover, one can calculate that
\begin{align*}
\partial_{\mu}\big(\Phi_{2}(\mu,m)\big)&=\sum\limits_{m_1\in\Z}^{}v(m_1)\big(\varphi'_2(\mu)\big((v(\cdot)e^{b(\mu)|m-\cdot|})\big)\big)(m_1)\\
&\quad+\sum\limits_{m_1\in\Z}^{}v(m_1)\big(\varphi_2(\mu)(v(\cdot)\partial_{\mu}(e^{b(\mu)|m-\cdot|}))\big)(m_1).
\end{align*}
Then by \eqref{property of varphi1,2}, for any $\mu\in(0,\mu_0]$,
\begin{align*}
\left|\partial_{\mu}\left(\Phi_{2}(\mu,m)\right)\right|\lesssim\|v\|^2_{\ell^{2}}+\|v\|_{\ell^{2}}\left\|v(\cdot)\partial_{\mu}\left(e^{b(\mu)|m-\cdot|}\right)\right\|_{\ell^1},
\end{align*}
which together with \eqref{integral ebnm} gives that
$$\int_{r_0}^{0}\left|\frac{\partial\Phi_2}{\partial\theta_+}\left(\mu(\theta_+),m\right)\right|d\theta_+\lesssim1+\sum\limits_{m_1\in\Z}^{}|v(m_1)|\int_{r_0}^{0}\left|\partial_{\mu}\left(e^{b(\mu(\theta_+))|m-m_1|}\right)\mu'(\theta_+)\right|d\theta_+\lesssim1,$$
uniformly in $m\in \Z$. Thus, \eqref{derivative of Phi12} is obtained. %by Lemma \ref{Von der}, we prove that \eqref{estimate for Kpm12 1} hold for $\widetilde{\Omega}^{+,j}_{12}(j=1,2)$, and so does $\widetilde{\Omega}^{+,3}_{12}$.

On the other hand, the kernels of ${\Omega}^{\pm}_{r4}(t,n,m)$ are as follows:
$${\Omega}^{\pm}_{r4}(t,n,m)=\int_{0}^{\mu_0}e^{-it\mu^4}\Phi_{4}(\mu,n,m)d\mu,$$
where
\begin{equation*}
\Phi_{4}(\mu,n,m)=\sum\limits_{m_1\in\Z}^{}v(m_1)e^{b(\mu)|n-m_1|}\big(\varphi_4(\mu)\big(v(\cdot)e^{b(\mu)|m-\cdot|}\big)\big)(m_1),\quad\varphi_4(\mu)=\widetilde{\Gamma}_4(\mu)(a_2(\mu))^2.
\end{equation*}
Applying the Van der Corput Lemma directly, it follows that
\begin{equation}\label{omega124}
\big|{\Omega}^{\pm}_{r4}(t,n,m)\big|\lesssim |t|^{-\frac{1}{4}}\big(|\Phi_{4}(\mu_0,n,m)|+\big\|(\partial_{\mu}\Phi_4)(\mu,n,m)\big\|_{L^{1}((0,\mu_0])}\big)\lesssim |t|^{-\frac{1}{4}},\quad t\neq0,
\end{equation}
uniformly in $n,m\in\Z$, where the uniform boundedness of $|\Phi_{4}(\mu_0,n,m)|$ relies on the facts that $b(\mu)<0$ and $\varphi_4(\mu)$ also satisfies \eqref{property of varphi1,2}. The uniform estimate for the integral of $\partial_{\mu}\Phi_4$ can be derived similarly to $\Phi_2(\mu,m)$. Therefore, combining \eqref{estimate of Omega12,1,2} and \eqref{omega124}, the desired result is obtained.
%Then follow the same argument about estimates for $\partial^{(k)}_{\mu}\Phi_4(\mu,n,m)$ for $k=0,1$ and apply the Von der corput Lemma directly, one can prove that the estimates \eqref{estimate for Kpm12 1} hold for $\widetilde{\Omega}^{+,4}_{12}$, and therefore, this completes the proof.
\end{proof}
In summary, through Propositions \ref{propo of K 01}$\sim$\ref{propo of K4 0}, we establish the \eqref{esti for Kpm1} for the regular case.
\subsubsection{{\bf {First kind of resonance case}}}
For this case, we first recall the expansion \eqref{asy expan of 1st 0} that
\begin{align*}
\left(M^{\pm}(\mu)\right)^{-1}&=\mu^{-1}S_{1}A^{\pm}_{-1}S_{1}+\big(S_0A^{\pm,1}_{01}Q+QA^{\pm,1}_{02}S_0\big)+\mu\big(S_0A^{\pm,1}_{11}+A^{\pm,1}_{12}S_0+QA^{\pm,1}_{13}Q\big)\\
&\quad+\mu^2\big(QA^{\pm,1}_{21}+A^{\pm,1}_{22}Q\big)+\mu^3A^{\pm,1}_{3}+\Gamma^{1}_{4}(\mu),
\end{align*}
then
\begin{align}\label{new expr of K1 1st}
K^{\pm}_1(t,n,m)=\Big(\sum\limits_{A\in\mcaA_1}K^{\pm}_A+K^{\pm,1}_r\Big)(t,n,m),
\end{align}
where $$\mcaA_1=\{\mu^{-1}S_{1}A^{\pm}_{-1}S_{1},S_0A^{\pm,1}_{01}Q,QA^{\pm,1}_{02}S_0,\mu S_0A^{\pm,1}_{11},\mu A^{\pm,1}_{12}S_0,\mu QA^{\pm,1}_{13}Q,\mu^2QA^{\pm,1}_{21},\mu^2A^{\pm,1}_{22}Q,\mu^3A^{\pm,1}_3\}$$ and
\begin{align}\label{set A1 and kernel in regular case}
\begin{split}
%K^{\pm,j}_{3}(t,n,m)&=\int_{0}^{\mu_0}e^{-it\mu^4}\mu^6\big[R^{\pm}_0(\mu^4)vA^{\pm,j}_{3}vR^{\pm}_0(\mu^4)\big](n,m)d\mu,\\
K^{\pm}_A(t,n,m)&=\int_{0}^{\mu_0}e^{-it\mu^4}\mu^3\big[R^{\pm}_0(\mu^4)vAvR^{\pm}_0(\mu^4)\big](n,m)d\mu,\quad A\in\mcaA_1,\\
K^{\pm,1}_r(t,n,m)&=\int_{0}^{\mu_0}e^{-it\mu^4}\mu^3\big[R^{\pm}_0(\mu^4)v\Gamma ^{1}_{4}(\mu)vR^{\pm}_0(\mu^4)\big](n,m)d\mu.
\end{split}
\end{align}

From Proposition \ref{propo of K4 0}, it suffices to establish the estimate \eqref{esti for Kpm1} for $\{K^{\pm}_{A}\}_{A\in\mcaA_1}$.
\begin{proposition}\label{propo of KB 0 1st}
Let $H=\Delta^2+V$ with $|V(n)|\lesssim \left<n\right>^{-\beta}$ for $\beta>19$. Suppose that $0$ is a first kind resonance of $H$, then for any $A\in\mcaA_1$, the following estimate hold:
\begin{equation}\label{estimate of KB 0 1st}
|K^{\pm}_A(t,n,m)|\lesssim |t|^{-\frac{1}{4}},\quad t\neq0,\quad uniformly\  in\  n,m\in\Z.
\end{equation}
\end{proposition}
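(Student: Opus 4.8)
The plan is to follow the scheme of Propositions~\ref{propo of K 01} and~\ref{propo of KB 0}: apply Lemma~\ref{cancelation lemma} to move the vanishing encoded in the projections $S_1,S_0,Q$ onto the singular free resolvents $R^\pm_0(\mu^4)=O(\mu^{-3})$, so that the prefactor $\mu^3$ in $K^\pm_A$ exactly absorbs all the remaining negative powers of $\mu$; then change variables via $\cos\theta_+=1-\tfrac{\mu^2}{2}$ and bound the resulting oscillatory integrals by $|t|^{-1/4}$ using Lemma~\ref{Von der}, Corollary~\ref{corollary}, and the $L^1$-bound \eqref{integral ebnm} for the factors $e^{b(\mu)|\cdot|}$. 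By the transposition symmetry $n\leftrightarrow m$ it suffices to treat the representatives $A\in\{\mu^{-1}S_1A^\pm_{-1}S_1,\ S_0A^{\pm,1}_{01}Q,\ \mu S_0A^{\pm,1}_{11},\ \mu QA^{\pm,1}_{13}Q,\ \mu^2QA^{\pm,1}_{21},\ \mu^3A^{\pm,1}_3\}$; throughout, the inclusion $S_1\ell^2(\Z)\subseteq S_0\ell^2(\Z)$ lets us apply the $S_0$-formulas of Lemma~\ref{cancelation lemma}(2) and (4) to the $S_1$-sandwiched pieces.

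For $A\in\{\mu S_0A^{\pm,1}_{11},\ \mu QA^{\pm,1}_{13}Q,\ \mu^2QA^{\pm,1}_{21},\ \mu^3A^{\pm,1}_3\}$ I would argue exactly along the lines of Proposition~\ref{propo of KB 0}. Setting $\mcaK^\pm_A(\mu,n,m):=16\mu^3[R^\pm_0(\mu^4)vAvR^\pm_0(\mu^4)](n,m)$, Lemma~\ref{cancelation lemma} and \eqref{kernel of R0 boundary} write $\mcaK^\pm_A$ as a finite sum of products of the kernels $\mcaA^\pm,\mcaB^\pm,\mcaC^\pm$ in which every negative power of $\mu$ has cancelled (the quotient amplitudes $b_i(\mu)/\mu$, $c^\pm_1(\mu)/\mu^2$, $c_2(\mu)/\mu^2$, $c_3(\mu)/\mu^3$ and $a_i(\mu)$ extend to $C^1$ functions at $\mu=0$ by Remark~\ref{remark of cancelation lemma}). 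One is then reduced to oscillatory integrals of precisely the form of $\Omega^{\pm,0}_{11},\Omega^{\pm,0}_{22},\Omega^{\pm,0}_{31}$ from Proposition~\ref{propo of KB 0}, which after the substitution $\cos\theta_+=1-\tfrac{\mu^2}{2}$ fall under Lemma~\ref{Von der} and Corollary~\ref{corollary} and are hence $O(|t|^{-1/4})$ uniformly in the spatial variables; summing over $m_1,m_2$ by H\"older's inequality, using $\langle\cdot\rangle^kv\in\ell^2(\Z)$ for $k\le2$ (valid since $\beta>19$) and the $\ell^2$-boundedness of the operators $A^{\pm,1}_{jk},A^{\pm,1}_3$ from Theorem~\ref{Asymptotic expansion theorem}, gives \eqref{estimate of KB 0 1st} in these cases.

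The genuinely new contributions are $A=S_0A^{\pm,1}_{01}Q$ and, above all, $A=\mu^{-1}S_1A^\pm_{-1}S_1$. For $A=S_0A^{\pm,1}_{01}Q$, Lemma~\ref{cancelation lemma} gives $\mcaK^\pm_A(\mu,n,m)=\mu^{-3}\sum_{m_1,m_2}\mcaC^\pm(\mu,n,m_1)\mcaB^\pm(\mu,m,m_2)(S_0A^{\pm,1}_{01}Q)(m_1,m_2)$, bounded in $\mu$ because $\mcaC^\pm=O(\mu^2)$ and $\mcaB^\pm=O(\mu)$; expanding $\mcaC^\pm$ and $\mcaB^\pm$ into their $e^{\mp i\theta_+|\cdot|}$, $e^{b(\mu)|\cdot|}$, and $c_3(\mu)|n-m_1|v(m_1)$ pieces reduces the matter to Van der Corput integrals of the types $\Omega^\pm_1,\Omega^\pm_2$ of Proposition~\ref{propo of K 01}, while the bare factor $|n-m_1|$ from the $c_3$-piece is neutralized, after summation against $S_0$, by $\langle S_0(\cdot),v\rangle=0$ (replace $|n-m_1|$ by $|n-m_1|-|n|$, which is dominated by $|m_1|$), exactly as for $K^{\pm,2}_{01}$ and $K^{\pm,4}_{01}$. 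For $A=\mu^{-1}S_1A^\pm_{-1}S_1$, Lemma~\ref{cancelation lemma}(2),(4) on both sides yields $\mcaK^\pm_A(\mu,n,m)=\mu^{-4}\sum_{m_1,m_2}\mcaC^\pm(\mu,n,m_1)\mcaC^\pm(\mu,m,m_2)(S_1A^\pm_{-1}S_1)(m_1,m_2)$, still bounded in $\mu$ since $\mcaC^\pm(\mu,n,m_1)\mcaC^\pm(\mu,m,m_2)=O(\mu^4)$; expanding each $\mcaC^\pm$ into its three pieces produces a finite sum of terms $\sum_{m_1,m_2}\big(\int_{[0,1]^2}(1-\rho_1)(1-\rho_2)\,\Omega^\pm(t,N_1,M_2)\,d\rho_1d\rho_2\big)(w_1S_1A^\pm_{-1}S_1w_2)(m_1,m_2)$ and mixed variants, where $w_i\in\{v,v_2\}$, $N_1,M_2$ are of the form $n-\rho_1m_1$ or $n-m_1$ (resp. $m-\rho_2m_2$ or $m-m_2$), and $\Omega^\pm$ is an integral of $e^{-it\mu^4}$ against a product of two factors from $\{e^{\mp i\theta_+|\cdot|},e^{b(\mu)|\cdot|},1\}$ and the $C^1$ amplitudes above. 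Then $\Omega^\pm=O(|t|^{-1/4})$ by Lemma~\ref{Von der}, Corollary~\ref{corollary} and \eqref{integral ebnm}, and summing over $m_1,m_2$ uses $\langle\cdot\rangle^2v\in\ell^2(\Z)$ for the $v_2$-weights, the conditions $\langle S_1f,v\rangle=0$ and $S_1v=0$ to absorb any bare $|n-m_1|$ or $|m-m_2|$ produced by $c_3$-pieces (as in the $K^{\pm,4}_{01}$ computation), and the $\ell^2$-boundedness of $A^\pm_{-1}$; this yields \eqref{estimate of KB 0 1st}.

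The step I expect to be the main obstacle is the double-cancellation bookkeeping for $A=\mu^{-1}S_1A^\pm_{-1}S_1$: after inserting the full $\mcaC^\pm$-kernel on both sides one must check that every resulting term carries either a genuinely decaying weight $v_k$ ($k\le2$) or else only a linearly growing factor $|n-m_1|$ or $|m-m_2|$ annihilated by the $S_1$-orthogonality, with no term left with a bare $|n|$ or $|m|$; once this is established the amplitude estimates reduce verbatim to the Van der Corput computations already carried out in Proposition~\ref{propo of K 01}. The hypothesis $\beta>19$ is used only to guarantee the expansion \eqref{asy expan of 1st 0} (hence the boundedness of all coefficient operators and the bound \eqref{estimate of Gamma} on the remainder) and the membership $\langle\cdot\rangle^kv\in\ell^2(\Z)$ for the weights above; it is presumably not optimal for the present estimate.
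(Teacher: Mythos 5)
Your proposal is correct and follows essentially the same route as the paper: reduce to the representatives by transposition symmetry, apply Lemma \ref{cancelation lemma} so that the projections $S_1\subseteq S_0\subseteq Q$ absorb the $\mu^{-3}$ singularity (the paper likewise disposes of $\mu^{-1}S_1A^{\pm}_{-1}S_1$ and $\mu S_0A^{\pm,1}_{11}$ by noting that the cancelation of $S_1$ is the same as that of $S_0$ and simply removing one factor of $\mu$ from Propositions \ref{propo of K 01} and \ref{propo of KB 0}), and then run the Van der Corput/Corollary \ref{corollary} estimates on the resulting amplitudes, using $\langle S_1f,v\rangle=0$ to tame the bare $|n-m_1|$, $|m-m_2|$ factors from the $c_3$-pieces exactly as in the $K^{\pm,2}_{01}$, $K^{\pm,4}_{01}$ computations. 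The only cosmetic difference is that the paper singles out $S_0A^{\pm,1}_{01}Q$ and $\mu^2QA^{\pm,1}_{21}$ as the genuinely new terms and writes down the hybrid oscillatory integrals $\Omega^{\pm,1}_{01}$, $\Omega^{\pm,1}_{21}$ (mixing $\mcaC^{\pm}/\mu^2$-, $\mcaB^{\pm}/\mu$- and $\mcaA^{\pm}$-type factors), whereas you fold $\mu^2QA^{\pm,1}_{21}$ into the Proposition \ref{propo of KB 0} family; either bookkeeping leads to the same estimates.
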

\begin{proof}
Since the cancelation of $S_1$ is same as $S_0$, consequently by removing the factor $\mu$, the proofs of Proposition \ref{propo of K 01} for $\mu^{-1}S_1A^{\pm}_{-1}S_1$ and Proposition \ref{propo of KB 0} for $\mu S_0A^{\pm,1}_{11},\mu A^{\pm,1}_{12}S_0$ remain valid. Moreover, Proposition \ref{propo of KB 0} establishes the estimate \eqref{estimate of KB 0 1st} for $A=\mu QA^{\pm,1}_{13}Q,\mu^3A^{\pm,1}_3$. These observations combined with symmetry indicate that it suffices to show that \eqref{estimate of KB 0 1st} holds for $A=S_0A^{\pm,1}_{01}Q,\mu^2QA^{\pm,1}_{21}$.
\vskip0.15cm
Denote $\mcaK^{\pm}_A(\mu,n,m):=16\mu^3\big[R^{\pm}_0(\mu^4)vAvR^{\pm}_0(\mu^4)\big](n,m)$, then it follows from Lemma \ref{cancelation lemma} that
\begin{align}\label{expre of R0vBvR0 1st 0}
\mcaK^{\pm}_A(\mu,n,m)=
\left\{\begin{aligned}&\sum\limits_{m_1,m_2}\mu^{-3}\mcaC^{\pm}(\mu,n,m_1)\mcaB^{\pm}(\mu,m,m_2)(S_0A^{\pm,1}_{01}Q)(m_1,m_2),\ A=S_0A^{\pm,1}_{01}Q,\\
&\sum\limits_{m_1,m_2}\mu^{-1}\mcaB^{\pm}(\mu,n,m_1)\mcaA^{\pm}(\mu,m,m_2)(QA^{\pm,1}_{21}v)(m_1,m_2),\ A= \mu^2QA^{\pm,1}_{21}.\end{aligned}\right.
\end{align}
Taking \eqref{expre of R0vBvR0 1st 0} into the integral in \eqref{estimate of KB 0 1st}, using the method of Proposition \ref{propo of K 01}, it reduces to estimate the following crucial oscillatory integrals:
\begin{align*}
\Omega^{\pm,1}_{01}(t,N_1,{M}_2)&=\int_{0}^{\mu_0}e^{-it\mu^4}\Big(\frac{c^{\pm}_1(\mu)}{\mu^2}e^{\mp i\theta_{+}|N_1|}+\frac{c_2(\mu)}{\mu^2}e^{b(\mu)|N_1|}+\frac{c_3(\mu)}{\mu^2}\Big)\\
&\quad\times \Big(\frac{b_1(\mu)}{\mu}e^{\mp i\theta_{+}|{M}_2|}+\frac{b_2(\mu)}{\mu}e^{b(\mu)|{M}_2|}\Big)d\mu,\\
\Omega^{\pm,1}_{21}(t,{N}_1,\widetilde{M}_2)&=\int_{0}^{\mu_0}e^{-it\mu^4}\Big(\frac{b_1(\mu)}{\mu}e^{\mp i\theta_{+}|{N}_1|}+\frac{b_2(\mu)}{\mu}e^{b(\mu)|{N}_1|}\Big)\\
&\quad\times\Big(\pm ia_1(\mu)e^{\mp i\theta_{+}|\widetilde{M}_2|}+a_2(\mu)e^{b(\mu)|\widetilde{M}_2|}\Big)d\mu,
\end{align*}
where $N_1=n-\rho_1m_1,M_2=m-\rho_2m_2,\widetilde{M}_2=m-m_2$. Following the same method as used for $\Omega^{\pm}_1$ and $\Omega^{\pm}_2$ in Proposition \ref{propo of K 01}, then the desired \eqref{estimate of KB 0 1st} is obtained for any $A\in\mcaA_1$.
\end{proof}
Therefore, this proposition together with Proposition \ref{propo of K4 0} gives the proof of \eqref{esti for Kpm1} for the first kind of resonance case.
\subsubsection{{\bf {Second kind of resonance case}}}
In this case, due to the high singularity, careful handling is required. First, as before, from the expansion \eqref{asy expan resonance 2}
\begin{align}
\left(M^{\pm}(\mu)\right)^{-1}&=\frac{S_{2}A^{\pm}_{-3}S_{2}}{\mu^3}+\frac{S_2A^{\pm}_{-2,1}S_0+S_0A^{\pm}_{-2,2}S_2}{\mu^2}
+\frac{S_2A^{\pm}_{-1,1}Q+QA^{\pm}_{-1,2}S_2+S_0A^{\pm}_{-1,3}S_0}{\mu}\notag\\
 &\quad+\big(S_2A^{\pm,2}_{01}+A^{\pm,2}_{02}S_2+QA^{\pm,2}_{03}S_0+S_0A^{\pm,2}_{04}Q\big)+\mu\big(S_0A^{\pm,2}_{11}+A^{\pm,2}_{12}S_0+QA^{\pm,2}_{13}Q\big)\notag\\
&\quad+\mu^2\big(QA^{\pm,2}_{21}+A^{\pm,2}_{22}Q\big)+\mu^3A^{\pm,2}_{3}+\Gamma^{2}_{4}(\mu),\label{asy expan resonance 2new}
\end{align}
we have
\begin{align}\label{new expr of K1 2nd}
K^{\pm}_1(t,n,m)=\Big(\sum\limits_{A\in\mcaA_2}K^{\pm}_A+K^{\pm,2}_r\Big)(t,n,m),
\end{align}
where $\mcaA_2$ is the set made up of all the terms with power of $\mu$ no more than three occurred in \eqref{asy expan resonance 2new} and
\begin{align*}%\label{set A2 and kernel in regular case}
\begin{split}
%K^{\pm,j}_{3}(t,n,m)&=\int_{0}^{\mu_0}e^{-it\mu^4}\mu^6\big[R^{\pm}_0(\mu^4)vA^{\pm,j}_{3}vR^{\pm}_0(\mu^4)\big](n,m)d\mu,\\
K^{\pm}_A(t,n,m)&=\int_{0}^{\mu_0}e^{-it\mu^4}\mu^3\big[R^{\pm}_0(\mu^4)vAvR^{\pm}_0(\mu^4)\big](n,m)d\mu,\quad A\in\mcaA_2,\\
K^{\pm,2}_r(t,n,m)&=\int_{0}^{\mu_0}e^{-it\mu^4}\mu^3\big[R^{\pm}_0(\mu^4)v\Gamma ^{2}_{4}(\mu)vR^{\pm}_0(\mu^4)\big](n,m)d\mu.
\end{split}
\end{align*}

Based on the preceding Propositions \ref{propo of K4 0} and \ref{propo of KB 0 1st}, in this case, by also symmetry, we only need to prove the estimate \eqref{esti for Kpm1} for $K^{\pm}_A$ with $A=\mu^{-3}S_{2}A^{\pm}_{-3}S_{2},\mu^{-2}S_2A^{\pm}_{-2,1}S_0$, $\mu^{-1}S_2A^{\pm}_{-1,1}Q, S_2A^{\pm,2}_{01}$.
\begin{proposition}\label{propo of K-3}
 Let $H=\Delta^2+V$ with $|V(n)|\lesssim \left<n\right>^{-\beta}$ for $\beta>27$. Suppose that $0$ is a second kind resonance of $H$, then
\begin{equation}\label{estimate of K-3}
|K^{\pm}_A(t,n,m)|\lesssim |t|^{-\frac{1}{4}},\quad t\neq0,\quad uniformly\  in\  n,m\in\Z,
\end{equation}
for $A=\mu^{-3}S_{2}A^{\pm}_{-3}S_{2},\mu^{-2}S_2A^{\pm}_{-2,1}S_0$, $\mu^{-1}S_2A^{\pm}_{-1,1}Q, S_2A^{\pm,2}_{01}$.
\end{proposition}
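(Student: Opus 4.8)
The plan is to reduce each of the four kernels $K^{\pm}_A$ (for $A=\mu^{-3}S_{2}A^{\pm}_{-3}S_{2}$, $\mu^{-2}S_2A^{\pm}_{-2,1}S_0$, $\mu^{-1}S_2A^{\pm}_{-1,1}Q$, $S_2A^{\pm,2}_{01}$) to a finite sum of scalar oscillatory integrals of exactly the kind treated in the proof of Proposition \ref{propo of K 01}, the reduction being carried out by the cancellation identities of Lemma \ref{cancelation lemma}, which are designed precisely to kill the $\mu^{-3}$ singularity of $R^{\pm}_0(\mu^4)$ at $\mu=0$. First I would insert, on each of the two copies of $R^{\pm}_0(\mu^4)$ flanking $A$, the appropriate representation from Lemma \ref{cancelation lemma}: on a factor meeting $S_2$ part (3), on a factor meeting $S_0$ part (2), on a factor meeting $Q$ part (1), on the bare factor occurring in $S_2A^{\pm,2}_{01}$ the raw formula \eqref{kernel of R0 boundary}, and symmetrically on the left via part (4). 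Since $d_i(\mu)=O(\mu^3)$, $c^{\pm}_1(\mu),c_2(\mu)=O(\mu^2)$, $c_3(\mu)=O(\mu^3)$, $b_1(\mu),b_2(\mu)=O(\mu)$ and $\mcaA^{\pm}(\mu,\cdot,\cdot)=O(1)$ as $\mu\to0^+$ (Remark \ref{remark of cancelation lemma}(1)), the $\mu$-powers will balance exactly in each of the four cases, and $\mu^3\big[R^{\pm}_0(\mu^4)vAvR^{\pm}_0(\mu^4)\big](n,m)$ will turn into a finite sum of terms of the shape
$$\sum_{m_1,m_2\in\Z}\Big(\int_0^{\mu_0}e^{-it\mu^4}h(\mu)\,e^{\mp i\theta_+(\varepsilon_1|\alpha_1|+\varepsilon_2|\alpha_2|)}e^{b(\mu)(\varepsilon'_1|\alpha_1|+\varepsilon'_2|\alpha_2|)}d\mu\Big)w_1(m_1)w_2(m_2)(SA'S)(m_1,m_2),$$
where $\varepsilon_i,\varepsilon'_i\in\{0,1\}$ with $\varepsilon_i+\varepsilon'_i\le1$, the $\alpha_i$ are affine in $n,m,m_1,m_2$ (up to an extra average in a parameter $\rho_i\in[0,1]$), $h$ is a product of the regularized coefficients $d_i/\mu^3$, $c^{\pm}_1/\mu^2$, $c_2/\mu^2$, $c_3/\mu^3$, $b_i/\mu$ and $a_j$ — hence $C^1$ on $[0,\mu_0]$ with a limit at $\mu=0$ — and $w_1,w_2$ are taken from $\{v,v_2,v_3\}$, possibly carrying an extra weight $|n-m_1|$ or $|m-m_2|$ inherited from the $d_3(\mu)|n-m|v(m)$ and $c_3(\mu)|n-m|v(m)$ pieces.

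Next I would estimate the inner $\mu$-integral term by term, following verbatim the analysis of $\Omega^{\pm}_1$, $\Omega^{\pm}_2$ in Proposition \ref{propo of K 01}. Whenever a factor $e^{\mp i\theta_+|\alpha|}$ is present I perform the change of variable \eqref{varible substi1}, turning the integral into $\int_{r_0}^0 e^{-it[(2-2\cos\theta_+)^2-s\theta_+]}\widetilde h(\theta_+)\,d\theta_+$ with $s$ a ratio over $t$ of the discrete data; the rescaled amplitude $\widetilde h$, which may still carry a factor $e^{b(\mu(\theta_+))|\alpha_2|}$, has a limit as $\theta_+\to0^-$ and an $L^1([r_0,0))$ derivative bounded uniformly in $n,m$ — this uses that $\theta_+/\mu$, $b(\mu)/\mu$, $c_3(\mu)/\mu^3$, $a_j(\mu)$ are $C^1$ up to $\mu=0$, together with the bound \eqref{integral ebnm} — so Corollary \ref{corollary}(ii) gives the decay $|t|^{-1/4}$. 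When no oscillatory factor survives (both exponentials of type $e^{b(\mu)|\cdot|}$) I apply the Van der Corput lemma directly in $\mu$ on $[0,\mu_0]$ with phase $\mu^4$, whose fourth derivative is nonvanishing, the amplitude again being controlled by \eqref{integral ebnm}; this likewise yields $|t|^{-1/4}$. The summation over $m_1,m_2$ is finally carried out by H\"older's inequality, and converges because $v,v_2,v_3\in\ell^2(\Z)$ when $\beta>27$.

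The one genuinely new point — and the step I expect to be the main obstacle — is securing these bounds uniformly in $n,m\in\Z$ despite the weights $|n-m_1|$, $|m-m_2|$, which have no counterpart in the continuous model (Remark \ref{remark of cancelation lemma}(2)). To handle them I would exploit the moment conditions defining the projections, namely $S_2v=S_2v_1=0$ and $S_0v=0$: whenever a factor $|n-m_1|v(m_1)$ multiplies an operator with range in $S_2\ell^2(\Z)$ or $S_0\ell^2(\Z)$, I replace $|n-m_1|$ by $|n-m_1|-|n|$, the constant piece $|n|$ being annihilated by the orthogonality $\langle v,S_2(\cdot)\rangle=0$; since $\big||n-m_1|-|n|\big|\le|m_1|$, the surviving weight is dominated by $|v_1(m_1)|\in\ell^2(\Z)$, which restores uniformity in $n$, and analogously in $m$. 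The hardest instance is $A=\mu^{-3}S_2A^{\pm}_{-3}S_2$, where both flanking resolvents must be regularized through part (3) of Lemma \ref{cancelation lemma} and the extra weight may occur on both sides at once; but the same subtraction works independently on each side, and the slowest of the resulting inner integrals is still governed by the phase $(2-2\cos\theta_+)^2$ near $\theta_+=0$, i.e.\ the $|t|^{-1/4}$ regime of Lemma \ref{Von der}. Assembling all pieces will then give \eqref{estimate of K-3} for the four operators $\mu^{-3}S_2A^{\pm}_{-3}S_2$, $\mu^{-2}S_2A^{\pm}_{-2,1}S_0$, $\mu^{-1}S_2A^{\pm}_{-1,1}Q$, $S_2A^{\pm,2}_{01}$.
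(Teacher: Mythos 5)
Your proposal follows essentially the same route as the paper: insert the cancellation identities of Lemma \ref{cancelation lemma} on each side of $A$ according to which projection ($S_2$, $S_0$, $Q$, or none) it meets, check that the powers of $\mu$ balance, reduce to scalar oscillatory integrals, and close with the change of variable \eqref{varible substi1}, Corollary \ref{corollary}, the Van der Corput Lemma, \eqref{integral ebnm}, and the moment conditions $\left<S_jf,v\right>=0$ to tame the weights $|n-m_1|$, $|m-m_2|$.

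The one place you diverge from the paper is the regularity of $c_3(\mu)/\mu^3$ (and hence of the amplitudes $d_{13},d_{23},d_{33}$ in $\Omega^{\pm,2}_{-3}$ and $\Omega^{\pm,3}_{-3}$). You assert that $c_3(\mu)/\mu^3$ is $C^1$ up to $\mu=0$ with bounded derivative; the paper explicitly claims otherwise in \eqref{lim exi of c3m3}, namely that $\bigl(c_3(\mu)/\mu^3\bigr)'$ blows up as $\mu\rightarrow0^{+}$ even though the function itself has a limit. The paper therefore cannot (and does not) conclude the $L^1$ bound on the derivative from boundedness; instead it uses that $\bigl(c_3(\mu)/\mu^3\bigr)'>0$ near $0$ together with the finiteness of the limit, so that
$\int_{0}^{\mu_0}\bigl(c_3(\mu)/\mu^3\bigr)'d\mu$
equals a finite increment, which is exactly what Corollary \ref{corollary} and the Van der Corput Lemma require (this is the content of \eqref{integral of dj3}). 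Since only the $L^1$ norm of the derivative enters, your conclusion survives, but if one accepts \eqref{lim exi of c3m3} your justification at precisely the new technical point of the second-resonance case is incomplete and should be replaced by this monotonicity argument. Everything else — including your treatment of the double weights in the $\mu^{-3}S_2A^{\pm}_{-3}S_2$ term via independent subtractions on each side — matches the paper's proof.
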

\begin{proof} {\bf\underline{(i)}} For $A=\mu^{-3}S_{2}A^{\pm}_{-3}S_{2}$, we denote
$$K^{\pm}_{-3}(t,n,m):=\int_{0}^{\mu_0}e^{-it\mu^4}\big[R^{\pm}_0(\mu^4)vS_2A^{\pm}_{-3}S_{2}vR^{\pm}_0(\mu^4)\big](n,m)d\mu.$$
From Lemma \ref{cancelation lemma}, it can be further expressed as
\begin{align*}
K^{\pm}_{-3}(t,n,m)=\frac{1}{64}\int_{0}^{\mu_0}e^{-it\mu^4}\sum\limits_{m_1,m_2\in\Z}\frac{1}{\mu^6}\mcaD^{\pm}(\mu,n,m_1)\mcaD^{\pm}(\mu,m,m_2)(S_2A^{\pm}_{-3}S_2)(m_1,m_2)d\mu.
\end{align*}
Following the method of Proposition \ref{propo of K 01}, the proof of \eqref{estimate of K-3} reduces to estimating three types of oscillatory integrals:
\begin{align*}
\Omega^{\pm,1}_{-3}(t,N_1,M_2)&=\int_{0}^{\mu_0}e^{-it\mu^4}\prod\limits_{\alpha\in\{N_1,M_2\}}\big(\frac{d_1(\mu)}{\mu^3}e^{\mp i\theta_{+}|\alpha|}+\frac{d_2(\mu)}{\mu^3}e^{b(\mu)|\alpha|}\big)d\mu,\\
\Omega^{\pm,2}_{-3}(t,N_1)&=\int_{0}^{\mu_0}e^{-it\mu^4}\frac{c_3(\mu)}{\mu^3}\big(\frac{d_1(\mu)}{\mu^3}e^{\mp i\theta_{+}|N_1|}+\frac{d_2(\mu)}{\mu^3}e^{b(\mu)|N_1|}\big)d\mu,\\
\Omega^{\pm,3}_{-3}(t)&=\int_{0}^{\mu_0}e^{-it\mu^4}\Big(\frac{c_3(\mu)}{\mu^3}\Big)^2d\mu,\quad N_1=n-\rho_1m_1, M_2=m-\rho_2m_2.
\end{align*}
%where $N_1=n-\rho_1m_1$, $M_2=m-\rho_2m_2$.
Since $\frac{d_1(\mu)}{\mu^3}=\Big(\frac{\theta_+}{\mu}\Big)^3a_1(\mu)$ and $\frac{d_2(\mu)}{\mu^3}=-\Big(\frac{b(\mu)}{\mu}\Big)^3a_2(\mu)$, then for $k=0,1$,
\begin{equation}\label{lim exi of d12}
\lim\limits_{\mu\rightarrow0^{+}}\left(\frac{d_{j}(\mu)}{\mu^3}\right)^{(k)} \  {\rm exist},\quad j=1,2.
\end{equation}
Using the method for $\Omega^{\pm}_{1}(t,N_1,M_2)$ in Proposition \ref{propo of K 01}, we can immediately derive
\begin{equation}\label{esti for omega-3pm1}
\big|\Omega^{\pm,1}_{-3}(t,N_1,M_2)\big|\lesssim |t|^{-\frac{1}{4}}, \quad t\neq0,\quad{\rm uniformly\ in}\ N_1,M_2.
\end{equation}
For the remaining integrals containing $\frac{c_3(\mu)}{\mu^3}$, we note that
\begin{equation}\label{lim exi of c3m3}
\lim\limits_{\mu\rightarrow0^{+}}\frac{c_{3}(\mu)}{\mu^3}\quad {\rm exists\ but }\ \left(\frac{c_{3}(\mu)}{\mu^3}\right)'=\frac{1}{\mu}+O(1)>0\ {\rm as}\ \mu\rightarrow0^{+}.
\end{equation}
To establish the estimate \eqref{esti for omega-3pm1} for $\Omega^{\pm,j}_{-3}$ ($j=2,3$), we additionally verify the fact that
\begin{align}\label{integral of dj3}
\|(d_{13}(\mu))'\|_{L^1{((0,\mu_0])}}+\sup\limits_{N_1}\|\partial_{\mu}(d_{23}(\mu)e^{b(\mu)|N_1|})\|_{L^1{((0,\mu_0])}}+\|(d_{33}(\mu))'\|_{L^1{((0,\mu_0])}}\lesssim 1,
\end{align}
where
$d_{13}(\mu)=\frac{d_1(\mu)c_3(\mu)}{\mu^6}$, $d_{23}(\mu)= \frac{d_2(\mu)c_3(\mu)}{\mu^6}$ and $d_{33}(\mu)=\frac{(c_3(\mu))^2}{\mu^6}.$
Indeed, for $\mu\in(0,\mu_0]$, by \eqref{lim exi of d12} and \eqref{lim exi of c3m3}, we have
$$|(d_{j3}(\mu))'|\leq C+\big (\frac{c_3(\mu)}{\mu^3}\big)',\quad |(d_{33}(\mu))'|\lesssim\big(\frac{c_3(\mu)}{\mu^3}\big)',\quad j=1,2.$$
The \eqref{integral of dj3} then follows from \eqref{integral ebnm} and the observation that
$$\int_{0}^{\mu_0}\big(\frac{c_3(\mu)}{\mu^3}\big)'d\mu=\frac{c_3(\mu_0)}{\mu^3_0}+\frac{2}{3}<\infty.$$
This establishes estimate \eqref{estimate of K-3} for $A=\mu^{-3}S_{2}A^{\pm}_{-3}S_{2}$.

\vskip0.3cm
{\bf\underline{(ii)}}~For $A=\mu^{-2}S_2A^{\pm}_{-2,1}S_0$, $\mu^{-1}S_2A^{\pm}_{-1,1}Q, S_2A^{\pm,2}_{01}$, denote

 $$\mcaK^{\pm}_A(\mu,n,m):=32\mu^3\big[R^{\pm}_0(\mu^4)vAvR^{\pm}_0(\mu^4)\big](n,m),$$
 then %from Lemma \ref{cancelation lemma}, we have
\begin{align*}
\mcaK^{\pm}_A(\mu,n,m)=
\left\{\begin{aligned}&\sum\limits_{m_1,m_2}\mu^{-5}\mcaD^{\pm}(\mu,n,m_1)\mcaC^{\pm}(\mu,m,m_2)(S_2A^{\pm}_{-2,1}S_0)(m_1,m_2),\ A=\mu^{-2}S_2A^{\pm}_{-2,1}S_0,\\
&\sum\limits_{m_1,m_2}\mu^{-4}\mcaD^{\pm}(\mu,n,m_1)\mcaB^{\pm}(\mu,m,m_2)(S_2A^{\pm}_{-1,1}Q)(m_1,m_2),\ A= \mu^{-1}S_2A^{\pm}_{-1,1}Q,\\
&\sum\limits_{m_1,m_2}\mu^{-3}\mcaD^{\pm}(\mu,n,m_1)\mcaA^{\pm}(\mu,m,m_2)(S_2A^{\pm,2}_{01}v)(m_1,m_2),\ A= S_2A^{\pm,2}_{01}.\end{aligned}\right.
\end{align*}
As before, the estimates reduce to analyzing the following oscillatory integrals:
\begin{align*}
\Omega^{\pm}_{-2,1}(t,N_1,{M}_2)&=\int_{0}^{\mu_0}e^{-it\mu^4}\Big(\frac{d_1(\mu)}{\mu^3}e^{\mp i\theta_{+}|N_1|}+\frac{d_2(\mu)}{\mu^3}e^{b(\mu)|N_1|}+\frac{2c_3(\mu)}{\mu^3}\Big)\\
&\quad\times \Big(\frac{c^{\pm}_1(\mu)}{\mu^2}e^{\mp i\theta_{+}|{M}_2|}+\frac{c_2(\mu)}{\mu^2}e^{b(\mu)|{M}_2|}+\frac{c_3(\mu)}{\mu^2}\Big)d\mu,\\
\Omega^{\pm}_{-1,1}(t,{N}_1,{M}_2)&=\int_{0}^{\mu_0}e^{-it\mu^4}\Big(\frac{d_1(\mu)}{\mu^3}e^{\mp i\theta_{+}|N_1|}+\frac{d_2(\mu)}{\mu^3}e^{b(\mu)|N_1|}+\frac{2c_3(\mu)}{\mu^3}\Big)\\
&\quad\times\Big(\frac{b_1(\mu)}{\mu}e^{\mp i\theta_{+}|{M}_2|}+\frac{b_2(\mu)}{\mu}e^{b(\mu)|{M}_2|}\Big),\\
\Omega^{\pm,2}_{01}(t,{N}_1,\widetilde{M}_2)&=\int_{0}^{\mu_0}e^{-it\mu^4}\Big(\frac{d_1(\mu)}{\mu^3}e^{\mp i\theta_{+}|N_1|}+\frac{d_2(\mu)}{\mu^3}e^{b(\mu)|N_1|}+\frac{2c_3(\mu)}{\mu^3}\Big)\\
&\quad\times\Big(\pm ia_1(\mu)e^{\mp i\theta_{+}|\widetilde{M}_2|}+a_2(\mu)e^{b(\mu)|\widetilde{M}_2|}\Big)d\mu,
\end{align*}
where $N_1=n-\rho_1m_1,M_2=m-\rho_2m_2,\widetilde{M}_2=m-m_2$. Applying the method for $\Omega^{\pm,j}_{-3}$, the desired estimate is derived.
\end{proof}
This proposition, together with Propositions \ref{propo of KB 0 1st} and \ref{propo of K4 0} completes the proof of \eqref{esti for Kpm1} for the second kind of resonance case. We thus conclude the full proof of Theorem \ref{theorem of estimate of K1,K3} (i).
\vskip0.3cm
Finally, we end this subsection with complementing the proof of Lemma \ref{cancelation lemma}.
\begin{proof}[{\bf{\underline{Proof of Lemma \ref{cancelation lemma}}}}]
Let $F^{\pm}_1(s)=e^{\pm is},F_2(s)=e^{-s}$, then equation \eqref{kernel of R0 boundary} can be rewritten as
\begin{equation}\label{new expre of R0 kernel}
R^{\pm}_0(\mu^4,n,m)=\frac{1}{4\mu^3}\Big[\pm ia_1(\mu)F^{\pm}_1(-\theta_{+}|n-m|)+a_2(\mu)F_2(-b(\mu)|n-m|)\Big].
\end{equation}

{\bf{\underline{(1)}}} Applying \cite[Lemma 2.5~(i)]{SWY22} to $F^{\pm}_1$ and $F_2$, and noting that $$(F^{\pm}_{1})'(s)=\pm iF^{\pm}_{1}(s),\quad F'_2(s)=-F_2(s),$$
we obtain
\begin{align}\label{F1F2 taylor Q}
\begin{split}
F^{\pm}_1(-\theta_{+}|n-m|)&=F^{\pm}_1(-\theta_{+}|n|)\pm i\theta_{+}m\int_{0}^{1}({\rm sign}(n-\rho m))F^{\pm}_{1}(-\theta_{+}|n-\rho m|)d\rho,\\
F_2(-b(\mu)|n-m|)&=F_2(-b(\mu)|n|)-b(\mu)m\int_{0}^{1}({\rm sign}(n-\rho m))F_2(-b(\mu)|n-\rho m|)d\rho.
\end{split}
\end{align}
Substituting these two expressions into \eqref{new expre of R0 kernel} and using the fact that $\left<Qf,v\right>=0$, we derive
\begin{equation*}
\big(R^{\pm}_0(\mu^4)vQf\big)(n)%&=\sum\limits_{m\in\Z}R^{\pm}_0(\mu^4,n,m)v(m)(Qf)(m)\\
=\frac{1}{4\mu^3}\sum\limits_{m\in\Z}\int_{0}^{1}({\rm sign}(n-\rho m))\big(b_1(\mu)e^{\mp i\theta_{+}|n-\rho m|}+b_2(\mu)e^{b(\mu)|n-\rho m|}\big)d\rho(v_1Qf)(m).
\end{equation*}
%\vskip0.2cm

{\bf{\underline{(2)}}} Define
\begin{equation}\label{F1pmtuta,F2tuta Sj}
\widetilde{F}^{\pm}_1(s)=F^{\pm}_1(s)\mp is,\quad  \widetilde{F}_2(s)=F_2(s)+s,
\end{equation}
 from which a direct computation shows that $\big(\widetilde{F}^{\pm}_1\big)'(0)=0=\big(\widetilde{F}_2\big)'(0)$. By \cite[Lemma 2.5~(ii)]{SWY22}, we have
\begin{align}\label{F1F2 taylor Sj}
\begin{split}
\widetilde{F}^{\pm}_1(-\theta_{+}|n-m|)&=\widetilde{F}^{\pm}_1(-\theta_{+}|n|)+\theta_{+}m({\rm sign}(n))\big(\widetilde{F}^{\pm}_1\big)'(-\theta_{+}|n|)\\
&\quad+{(\theta_{+})}^2m^2\int_{0}^{1}(1-\rho){\big(\widetilde{F}^{\pm}_1\big)}''(-\theta_{+}|n-\rho m|)d\rho,\\
\widetilde{F}_2(-b(\mu)|n-m|)&=\widetilde{F}_2(-b(\mu)|n|)+b(\mu)m({\rm sign}(n))\big(\widetilde{F}_2\big)'(-b(\mu)|n|)\\
&\quad+{(b(\mu))}^2m^2\int_{0}^{1}(1-\rho){\big(\widetilde{F}_2\big)}''(-b(\mu)|n-\rho m|)d\rho.
\end{split}
\end{align}
For $j=0,1$, combining \eqref{F1pmtuta,F2tuta Sj} and \eqref{new expre of R0 kernel} yields
\begin{align*}
\big(R^{\pm}_0(\mu^4)vS_jf\big)(n)%&=\sum\limits_{m\in\Z}R^{\pm}_0(\mu^4,n,m)v(m)(S_jf)(m)\\
&=\frac{1}{4\mu^3}\sum\limits_{m\in\Z}\Big[\pm ia_1(\mu)\widetilde{F}^{\pm}_1(-\theta_{+}|n-m|)+a_2(\mu)\widetilde{F}_2(-b(\mu)|n-m|)\\
&\quad+\big(\theta_{+}a_1(\mu)+b(\mu)a_2(\mu)\big)|n-m|\Big](vS_jf)(m).
\end{align*}
Substituting \eqref{F1F2 taylor Sj} into this expression and using $$\big(\widetilde{F}^{\pm}_1\big)''(s)=-{F}^{\pm}_1(s),\   \big(\widetilde{F}_2\big)''(s)=F_2(s)\ {\rm and}\  \left<S_jf,v_k\right>=0,\quad k=0,1,$$
we obtain the desired (2).
\vskip0.2cm
{\bf{\underline{(3)}}} Let
$$\widetilde{F}^{\pm}_1(s)=F^{\pm}_1(s)\mp is+\frac{1}{2}s^2,\quad \widetilde{F}_2(s)=F_2(s)+s-\frac{1}{2}s^2,$$
 then $\big(\widetilde{F}^{\pm}_1\big)'(0)=\big(\widetilde{F}^{\pm}_1\big)''(0)=0=\big(\widetilde{F}_2\big)''(0)=\big(\widetilde{F}_2\big)'(0)$. Applying \cite[Lemma 2.5~(iii)]{SWY22}, one has
\begin{align*}
\widetilde{F}^{\pm}_1(-\theta_{+}|n-m|)&=\widetilde{F}^{\pm}_1(-\theta_{+}|n|)+\theta_{+}m({\rm sign}(n))\big(\widetilde{F}^{\pm}_1\big)'(-\theta_{+}|n|)+\frac{{(\theta_{+})}^2m^2}{2}\big(\widetilde{F}^{\pm}_1\big)''(-\theta_{+}|n|)\notag\\
&\quad+\frac{{(\theta_{+})}^3m^3}{2}\int_{0}^{1}{(1-\rho)}^2{({\rm sign}(n-\rho m))}^3{\big(\widetilde{F}^{\pm}_1\big)}^{(3)}(-\theta_{+}|n-\rho m|)d\rho,\notag\\
\widetilde{F}_2(-b(\mu)|n-m|)&=\widetilde{F}_2(-b(\mu)|n|)+b(\mu)m({\rm sign}(n))\big(\widetilde{F}_2\big)'(-b(\mu)|n|)+\frac{{(b(\mu))}^2m^2}{2}\big(\widetilde{F}_2\big)''(-b(\mu)|n|)\notag\\
&\quad+\frac{{(b(\mu))}^3m^3}{2}\int_{0}^{1}{(1-\rho)}^2{({\rm sign}(n-\rho m))}^3{\big(\widetilde{F}_2\big)}^{(3)}(-b(\mu)|n-\rho m|)d\rho.%\label{F1F2 taylor S2}
\end{align*}
Using the same method as in {\bf{\underline{(2)}}} and the property $\left<S_2f,v_k\right>=0$ for $k=0,1,2$, part (3) follows.
\vskip0.2cm
{\bf{\underline{(4)}}} For brevity, we only consider $QvR^{\pm}_0(\mu^4)f$, as other cases are analogous.
By exchanging the roles of $n$ and $m$ in \eqref{F1F2 taylor Q}, substituting this new expression into \eqref{new expre of R0 kernel} and using $Qv=0$, the desired result is obtained. This completes the whole proof.
\end{proof}
\subsection{The estimates of kernels $K^{\pm}_2(t,n,m)$}\label{subse of K2}
This subsection is devoted to establishing the estimate \eqref{esti for Kpm2} for $K^{\pm}_2$, thereby completes the proof of Theorem \ref{theorem of estimate of K1,K3}~(ii).
Recall from \eqref{kernels of Ki} that
\begin{equation*}%\label{kernel of K2(two parts)}
K^{\pm}_{2}(t,n,m)=\left(K^{\pm}_{21}-K^{\pm}_{22}\right)(t,n,m),
\end{equation*}
where
\begin{align}\label{kernels of Kpm21,22}
\begin{split}
K^{\pm}_{21}(t,n,m)&=\int_{\mu_0}^{2-\mu_0}e^{-it\mu^4}\mu^3\big(R^{\pm}_{0}(\mu^4)VR^{\pm}_{0}(\mu^4)\big)(n,m)d\mu,\\
K^{\pm}_{22}(t,n,m)&=\int_{\mu_0}^{2-\mu_0}e^{-it\mu^4}\mu^3\big(R^{\pm}_{0}(\mu^4)VR^{\pm}_V(\mu^4)VR^{\pm}_{0}(\mu^4)\big)(n,m)d\mu.
\end{split}
\end{align}
\begin{proof}[Proof of Theorem \ref{theorem of estimate of K1,K3} (ii)]
To derive \eqref{esti for Kpm2}, it suffices to establish
\begin{align}\label{estimate for kpm21 kpm22}
|K^{\pm}_{21}(t,n,m)|+|K^{\pm}_{22}(t,n,m)|\lesssim |t|^{-\frac{1}{4}}, \quad {\rm uniformly\ in}\ n,m\in\Z.
\end{align}
We begin by reformulating the free resolvent expression \eqref{kernel of R0 boundary} as
\begin{equation}\label{R0 in the R0tuta form}
R^{\pm}_0(\mu^{4},n,m)=\frac{1}{4\mu^3}e^{\mp i\theta_{+}|n-m|}\left(\pm ia_1(\mu)+a_2(\mu)e^{(b(\mu)\pm i\theta_{+})|n-m|}\right):=e^{\mp i\theta_{+}|n-m|}\widetilde{R}^{\pm}_{0}(\mu,n-m).
\end{equation}
This form allows us to express the kernels \eqref{kernels of Kpm21,22} as
\begin{align*}
K^{\pm}_{21}(t,n,m)&=\sum\limits_{m_1\in\Z}^{}\int_{\mu_0}^{2-\mu_0}e^{-it\mu^4}e^{\mp i\theta_{+}(|N_1|+|M_1|)}\mu^3\widetilde{R}^{\pm}_{0}(\mu,N_1)\widetilde{R}^{\pm}_{0}(\mu,M_1)d\mu \cdot V(m_1)\\
&:=\sum\limits_{m_1\in\Z}^{}\Omega^{\pm}_{21}(t,N_1,M_1)V(m_1),\\
K^{\pm}_{22}(t,n,m)&=\int_{\mu_0}^{2-\mu_0}e^{-it\mu^4}\sum\limits_{m_1,m_2\in\Z}^{}e^{\mp i\theta_+(|N_1|+|M_2|)}\mu^3\widetilde{R}^{\pm}_{0}(\mu,N_1)(VR^{\pm}_V(\mu^4)V)(m_1,m_2)\widetilde{R}^{\pm}_{0}(\mu,M_2)d\mu,
\end{align*}
where $N_1=n-m_1,M_1=m-m_1,M_2=m-m_2$.
%\begin{align}\label{Omegapm21}
%\begin{split}
%\Omega^{\pm}_{21}(t,N_1,M_1)=\int_{\mu_0}^{2-\mu_0}e^{-it\mu^4}e^{\mp i\theta_{+}(|N_1|+|M_1|)}\mu^3\widetilde{R}^{\pm}_{0}(\mu,N_1)\widetilde{R}^{\pm}_{0}(\mu,M_1)d\mu.
%\end{split}
%\end{align}
Applying the variable substitution \eqref{varible substi1} to $\Omega^{\pm}_{21}$ and $K^{\pm}_{22}$, we obtain
\begin{align*}%\label{new expre of Omegap 21}
\Omega^{\pm}_{21}(t,N_1,M_1)&=\int_{r_1}^{r_0}e^{-it\big[(2-2{\rm cos}\theta_+)^2\pm\theta_{+}\big(\frac{|N_1|+|M_1|}{t}\big)\big]}F^{\pm}_{21}(\mu(\theta_+),N_1,M_1)d\theta_+,\\
K^{\pm}_{22}(t,n,m)&=\int_{r_1}^{r_0}\sum\limits_{m_1,m_2\in\Z}e^{-it\big[(2-2{\rm cos}\theta_+)^2\pm\theta_{+}\big(\frac{|N_1|+|M_2|}{t}\big)\big]}F^{\pm}_{22}(\mu(\theta_+),n,m,m_1,m_2)d\theta_+,
\end{align*}
where $r_1\in(-\pi,0)$ satisfying ${\rm{cos}}r_1=1-\frac{(2-\mu_0)^2}{2}$, $\widetilde{a_1}(\mu)=\mu^3(a_1(\mu))^{-1}$ and
\begin{align*}
F^{\pm}_{21}(\mu,N_1,M_1)&=\widetilde{a_1}(\mu)\widetilde{R}^{\pm}_{0}(\mu,N_1)\widetilde{R}^{\pm}_{0}(\mu,M_1),\\
F^{\pm}_{22}(\mu,n,m,m_1,m_2)&=\widetilde{a_1}(\mu)\widetilde{R}^{\pm}_{0}(\mu,N_1)(VR^{\pm}_V(\mu^4)V)(m_1,m_2)\widetilde{R}^{\pm}_{0}(\mu,M_2).
\end{align*}
Noting that
\begin{equation}\label{esti for R0tuta}
\sup\limits_{\mu\in[\mu_0,2-\mu_0]}\big|\widetilde{R}^{\pm}_{0}(\mu,N)\big|+\big\|\big(\partial_{\mu}\widetilde{R}^{\pm}_{0}\big)(\mu,N)\big\|_{L^1([\mu_0,2-\mu_0])}\lesssim1,\quad {\rm uniformly\ in}\ N\in\Z.
\end{equation}
This estimate combined with Corollary \ref{corollary} immediately yields
\begin{equation}\label{esti for Omega21}
\big|\Omega^{\pm}_{21}(t,N_1,M_1)\big|\lesssim |t|^{-\frac{1}{4}},\quad t\neq0,\quad{\rm uniformly\ in}\ N_1,M_1\in\Z.
\end{equation}
For $K^{\pm}_2$, we denote
\begin{equation*}
\widetilde{F}^{\pm}_{22}(\mu,n,m)=\sum\limits_{m_1,m_2\in\Z}F^{\pm}_{22}(\mu,n,m,m_1,m_2):=\widetilde{a_1}(\mu)\Psi^{\pm}(\mu,n,m).
\end{equation*}
In what follows, we will prove
\begin{equation}\label{esti for Phi2}
\sup\limits_{\mu\in[\mu_0,2-\mu_0]}|\Psi^{\pm}(\mu,n,m)|+\|(\partial_{\mu}\Psi^{\pm})(\mu,n,m)\|_{L^1([\mu_0,2-\mu_0])}\lesssim 1,\quad {\rm uniformly\ in}\ n,m\in\Z,
\end{equation}
which together with Corollary \ref{corollary} gives
\begin{equation}\label{esti for K22}
|K^{\pm}_{22}(t,n,m)|\leq\sup\limits_{s\in\R}\left|\int_{r_1}^{r_0}e^{-it\big[(2-2{\rm cos}\theta_+)^2-s\theta_+\big]}\widetilde{F}^{\pm}_{22}(\mu(\theta_+),n,m)d\theta_+\right|\lesssim|t|^{-\frac{1}{4}}.
\end{equation}
Therefore, combining \eqref{esti for Omega21} and \eqref{esti for K22}, the desired \eqref{estimate for kpm21 kpm22} is derived.

To verify \eqref{esti for Phi2},
first observe that for any $\mu\in[\mu_0,2-\mu_0]$, taking $\frac{1}{2}<\varepsilon_1<\beta-\frac{1}{2}$, the continuity of $R^{\pm}_{V}(\mu^4)$ established in Theorem \ref{LAP-theorem} and \eqref{esti for R0tuta} yield %exists a constant $C(\mu_0)>0$ such that
$$|\Psi^{\pm}(\mu,n,m)|\lesssim \left\|V(\cdot)\left<\cdot\right>^{\varepsilon_1}\right\|^2_{\ell^2}\left\|R^{\pm}_{V}(\mu^4)\right\|_{\B(\varepsilon_1,-\varepsilon_1)}\lesssim 1.$$
For the derivative term, through a direct calculation
\begin{align*}%\label{derivative of Fp22mu}
(\partial_{\mu}\Psi^{\pm})(\mu,n,m)&=\sum\limits_{k_1+k_2+k_3=1}\sum\limits_{m_1,m_2\in\Z}^{}\big(\partial^{k_1}_{\mu}\widetilde{R}^{\pm}_{0}\big)(\mu,N_1)(V\partial^{k_2}_{\mu}(R^{\pm}_V(\mu^4))V)(m_1,m_2)\big(\partial^{k_3}_{\mu}\widetilde{R}^{\pm}_{0}\big)(\mu,M_2),\\
&:=\sum\limits_{k_1+k_2+k_3=1}L^{\pm}_{k_1,k_2,k_3}(\mu,n,m),
\end{align*}
and symmetry, it is enough to show that
\begin{equation}\label{esti for Lpm}
\|L^{\pm}_{1,0,0}(\mu,n,m)\|_{L^1([\mu_0,2-\mu_0])}+\|L^{\pm}_{0,1,0}(\mu,n,m)\|_{L^1([\mu_0,2-\mu_0])}\lesssim 1,
\end{equation}
 uniformly in $ n,m\in\Z$. Indeed, for any $\mu\in[\mu_0,2-\mu_0]$, selecting $\frac{1}{2}<\varepsilon_2<\beta-1$ and   $\frac{3}{2}<\varepsilon_3<\beta-\frac{1}{2}$, respectively, we utilize Theorem \ref{LAP-theorem} and \eqref{esti for R0tuta} to obtain
\begin{align*}
\big|L^{\pm}_{1,0,0}(\mu,n,m)\big|&\leq\big\|\left<\cdot\right>^{\varepsilon_2}V(\cdot)\big(\partial_{\mu}\widetilde{R}^{\pm}_{0}\big)(\mu,n-\cdot)\big\|_{\ell^2}\big\|R^{\pm}_{V}(\mu^4)\big\|_{\B(\varepsilon_2,-\varepsilon_2)}\big\|\left<\cdot\right>^{\varepsilon_2}V(\cdot)\widetilde{R}^{\pm}_{0}(\mu,m-\cdot)\big\|_{\ell^2}\\
&\lesssim\sum\limits_{m_1\in\Z}\left<m_1\right>^{\varepsilon_2}|V(m_1)|\cdot\big|\big(\partial_{\mu}\widetilde{R}^{\pm}_{0}\big)(\mu,n-m_1)\big|,\\
\big|L^{\pm}_{0,1,0}(\mu,n,m)\big|&\leq\big\|\left<\cdot\right>^{\varepsilon_3}V(\cdot)\widetilde{R}^{\pm}_{0}(\mu,n-\cdot)\big\|_{\ell^2}\big\|\partial_{\mu}(R^{\pm}_{V}(\mu^4))\big\|_{\B(\varepsilon_3,-\varepsilon_3)}\big\|\left<\cdot\right>^{\varepsilon_3}V(\cdot)\widetilde{R}^{\pm}_{0}(\mu,m-\cdot)\big\|_{\ell^2}\lesssim1.
\end{align*}
Applying \eqref{esti for R0tuta} again, we get \eqref{esti for Lpm} and thereby complete the proof.
\end{proof}
\subsection{The estimate of kernel $(K^{+}_3-K^{-}_3)(t,n,m)$}\label{subse of K3}
In this subsection, we are dedicated to proving the estimate \eqref{esti for Kpm3} for $(K^{+}_3-K^{-}_3)(t,n,m)$ under the assumptions \eqref{cases and conditions 16}.
\subsubsection{{\bf {Regular case}}}
First recall that the kernels of $K^{\pm}_{3}$ in \eqref{kernels of Ki}
\begin{equation*}%\label{kernel of K3}
K^{\pm}_{3}(t,n,m)=\int_{2-\mu_0}^{2}e^{-it\mu^4}\mu^3\big[R^{\pm}_0(\mu^4)v\big(M^{\pm}(\mu)\big)^{-1}vR^{\pm}_0(\mu^4)\big](n,m)d\mu,
\end{equation*}
and the asymptotic expansion \eqref{asy expan regular 2} of $\left(M^{\pm}(\mu)\right)^{-1}$
\begin{equation*}%\label{asy of Mmu 2(new)}
\left(M^{\pm}(\mu)\right)^{-1}=\widetilde{Q}B_{01}\widetilde{Q}+(2-\mu)^{\frac{1}{2}}B^{\pm,0}_{1} +\Gamma^0_{1}(2-\mu),\quad\mu\in[2-\mu_0,2).
\end{equation*}
Then we further obtain
$$K^{\pm}_{3}(t,n,m)=\sum\limits_{j=1}^{3}K^{\pm}_{3j}(t,n,m),$$
where
\begin{align}\label{kernels of Kpm3i}
\begin{split}
K^{\pm}_{31}(t,n,m)&=\int_{2-\mu_0}^{2}e^{-it\mu^4}\mu^3\big(R^{\pm}_0(\mu^4)v\widetilde{Q}B_{01}\widetilde{Q}vR^{\pm}_0(\mu^4)\big)(n,m)d\mu,\\
K^{\pm}_{32}(t,n,m)&=\int_{2-\mu_0}^{2}e^{-it\mu^4}\mu^3(2-\mu)^{\frac{1}{2}}\big(R^{\pm}_0(\mu^4)vB^{\pm,0}_{1} vR^{\pm}_0(\mu^4)\big)(n,m)d\mu,\\
K^{\pm}_{33}(t,n,m)&=\int_{2-\mu_0}^{2}e^{-it\mu^4}\mu^3\big(R^{\pm}_0(\mu^4)v\Gamma^0_{1}(2-\mu)vR^{\pm}_0(\mu^4)\big)(n,m)d\mu.
\end{split}
\end{align}
Thus, \eqref{esti for Kpm3} can be obtained by establishing the decay estimates for each $K^{\pm}_{3j}(t,n,m)$.
%With this lemma, we now proceed with the estimate for the kernel $K^{\pm}_{31}(t,n,m)$.
\begin{proposition}\label{propo of Kpm31}
{ Let $H=\Delta^2+V$ with $|V(n)|\lesssim \left<n\right>^{-\beta}$ for $\beta>7$. Assume that $16$ is a regular point of $H$ and let $K^{\pm}_{31}(t,n,m)$ be defined as above. Then one has
\begin{equation}\label{estimate of K31}
\left|K^{\pm}_{31}(t,n,m)\right|\lesssim |t|^{-\frac{1}{4}},\quad t\neq0,\ {\rm uniformly\ in}\ n,m\in\Z.
\end{equation}
}
\end{proposition}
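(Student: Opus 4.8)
The plan is to exhibit, inside each of the two free resolvent factors surrounding $\widetilde{Q}B_{01}\widetilde{Q}$ in \eqref{kernels of Kpm3i}, a cancellation by $\widetilde{Q}$ that removes the $(2-\mu)^{-1/2}$ blow-up of $R^{\pm}_{0}(\mu^{4})$ near $\mu=2$, and then to reduce everything to oscillatory integrals of the type already handled by Lemma \ref{Von der} and Corollary \ref{corollary}. First I would rewrite \eqref{kernel of R0 boundary} near $\mu=2$: setting $\theta_{+}=-\pi+\psi(\mu)$ one has $e^{\mp i\theta_{+}|n-m|}=(-1)^{n+m}e^{\mp i\psi|n-m|}$, where $\cos\psi=\tfrac{\mu^{2}}{2}-1$, so $\mu=2\cos(\psi/2)$, $a_{1}(\mu)=1/\sin(\psi/2)$, and $\psi\to0^{+}$ with $\psi\asymp(2-\mu)^{1/2}$ and $a_{1}(\mu)\psi\asymp1$ as $\mu\to2^{-}$. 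Since $\widetilde{P}=\|V\|^{-1}_{\ell^{1}}\langle\cdot,\tilde v\rangle\tilde v$ with $\tilde v(m)=(-1)^{m}v(m)$, every column $\widetilde{Q}(\cdot,m_{1})$ is orthogonal to $\tilde v$; expanding $e^{\mp i\psi|n-m_{1}|}=e^{\mp i\psi|n|}\pm i\psi m_{1}\int_{0}^{1}({\rm sign}(n-\rho m_{1}))e^{\mp i\psi|n-\rho m_{1}|}d\rho$ exactly as in \eqref{F1F2 taylor Q} and summing against $\widetilde{Q}$, the constant term is annihilated, leaving
$$\big(R^{\pm}_{0}(\mu^{4})v\widetilde{Q}f\big)(n)=\frac{1}{4\mu^{3}}\sum_{m_{1}}\Big[\widetilde{b}_{1}(\mu)(-1)^{n+m_{1}}\!\int_{0}^{1}({\rm sign}(n-\rho m_{1}))e^{\mp i\psi|n-\rho m_{1}|}\,d\rho\;v_{1}(m_{1})+a_{2}(\mu)e^{b(\mu)|n-m_{1}|}v(m_{1})\Big](\widetilde{Q}f)(m_{1}),$$
with $\widetilde{b}_{1}(\mu)=-a_{1}(\mu)\psi(\mu)$, which is bounded on $[2-\mu_{0},2]$; the analogous identity holds for $\widetilde{Q}vR^{\pm}_{0}(\mu^{4})$. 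This is the exact analogue near $16$ of Lemma \ref{cancelation lemma}(1). Note that the $a_{2}(\mu)e^{b(\mu)|\cdot|}$ part is already smooth and bounded near $\mu=2$ because $b(2)=\ln(3-2\sqrt2)<0$, so it needs no cancellation.

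Second, I would insert \eqref{asy expan regular 2} — here only its leading term $\widetilde{Q}B_{01}\widetilde{Q}$ — into \eqref{kernels of Kpm3i}, substitute the two cancelled representations, and write $K^{\pm}_{31}$ as a finite sum of model integrals of the schematic form $\int_{2-\mu_{0}}^{2}e^{-it\mu^{4}}\tfrac{\mu^{3}}{16\mu^{6}}\sum_{m_{1},m_{2}}\Xi^{\pm}_{1}(\mu,n,m_{1})(B_{01})(m_{1},m_{2})\Xi^{\pm}_{2}(\mu,m_{2},m)\,d\mu$, where each $\Xi^{\pm}_{i}$ is either of the \emph{oscillatory} type $\widetilde{b}_{1}(\mu)\,e^{\mp i\psi|\cdot|}$ (averaged in $\rho$ and carrying a weight $v_{1}$) or of the \emph{decaying} type $a_{2}(\mu)\,e^{b(\mu)|\cdot|}v$. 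By the $m_{1}\leftrightarrow m_{2}$ symmetry it suffices to treat three combinations. For any model integral carrying at least one oscillatory factor I would apply the substitution \eqref{varible substi1}, which maps $[2-\mu_{0},2]$ onto a subinterval $[-\pi,r_{1}]$ of $[-\pi,0]$, gives $d\mu=-a_{1}(\mu)^{-1}d\theta_{+}$ and $\mu^{4}=(2-2\cos\theta_{+})^{2}$, and linearizes the dangerous phase, $\psi(\mu)|n-\rho m|=(\theta_{+}+\pi)|n-\rho m|$. After Fubini (pulling out the $m_{1},m_{2},\rho_{1},\rho_{2}$), the inner $\theta_{+}$-integral takes the form $\int_{-\pi}^{r_{1}}e^{-it[(2-2\cos\theta_{+})^{2}-s\theta_{+}]}\phi(\theta_{+})\,d\theta_{+}$ with $s\in\R$ arbitrary and $\phi$ independent of $(n,m)$ and of $\rho_{j},m_{j}$; the crucial point is that in this variable $a_{1}(\mu)\psi^{2}\mu^{-3}=\psi^{2}\big(8\sin(\psi/2)\cos^{3}(\psi/2)\big)^{-1}$ with $\psi=\theta_{+}+\pi$ is smooth on $[-\pi,r_{1}]$ and vanishes linearly at $\theta_{+}=-\pi$, while the decaying factors $a_{2}(\mu)e^{b(\mu)|\cdot|}$ contribute bounded amplitudes whose $\theta_{+}$-derivatives are $L^{1}$ uniformly in the shift parameter, by \eqref{integral ebnm}. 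Hence $\phi$ is bounded with $\phi'\in L^{1}([-\pi,r_{1}])$, and Corollary \ref{corollary}(ii) gives the inner integral $\lesssim|t|^{-1/4}$ uniformly. Summing back over $m_{1},m_{2}$ by H\"older's inequality — using $\{\langle m\rangle v(m)\}_{m\in\Z}\in\ell^{2}$, valid since $\beta>7$, and the $\ell^{2}$-boundedness of $B_{01}$ from Theorem \ref{Asymptotic expansion theorem}(iv) — then yields $\lesssim|t|^{-1/4}$ for all such pieces. For the remaining decaying$\times$decaying piece there is no dangerous phase at all: it equals $\int_{2-\mu_{0}}^{2}e^{-it\mu^{4}}\Phi(\mu,n,m)\,d\mu$ with $\Phi$ bounded and $\partial_{\mu}\Phi\in L^{1}$ uniformly in $n,m$ (again by \eqref{integral ebnm} and the smoothness of $a_{2},b$ on $[2-\mu_{0},2]$), and since $|\partial_{\mu}\mu^{4}|\ge4(2-\mu_{0})^{3}>0$ there, the Van der Corput Lemma gives $\lesssim|t|^{-1/2}\lesssim|t|^{-1/4}$. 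Combining the three cases proves \eqref{estimate of K31}.

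The step I expect to be the main obstacle is making the cancellation and the ensuing change of variable fully rigorous with uniform control of all amplitudes: one must verify that the surviving prefactor $a_{1}(\mu)\psi(\mu)$ is genuinely $C^{1}$ up to $\mu=2$ even though $a_{1}(\mu)\sim(2-\mu)^{-1/2}$ and $\psi(\mu)\sim2(2-\mu)^{1/2}$ separately fail to be, that the $\rho$-averaged oscillatory remainders and, after the substitution, the $\theta_{+}$-amplitudes stay bounded with integrable derivative uniformly in $(n,m)$ and in $(m_{1},m_{2})$, and that the final $m_{1},m_{2}$-summation against $B_{01}$ converges absolutely — it is here that the decay requirement $\beta>7$ (equivalently, the validity of the expansion \eqref{asy expan regular 2} in Theorem \ref{Asymptotic expansion theorem}(iv)) enters. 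Everything else is a direct repetition of the oscillatory-integral arguments already carried out in Section \ref{Sec of decay for free} and in Propositions \ref{propo of K 01}--\ref{propo of K4 0}.
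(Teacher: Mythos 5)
Your proposal is correct and follows essentially the same route as the paper: extracting the factor $(-1)^{n+m}$ from $e^{\mp i\theta_+|n-m|}$ near $\mu=2$ and using $\langle\widetilde Qf,\tilde v\rangle=0$ to cancel the constant term of the first-order Taylor expansion is precisely the content of the paper's $J$-conjugation identity \eqref{relation of Rminus Laplacian 0and4} together with Lemma \ref{cancelation lemma16}, and your surviving amplitude $a_1(\mu)\psi(\mu)=\mu\psi/\sin\psi$ is the paper's $\tilde\theta_+/\sin\tilde\theta_+$ factor. The subsequent reduction to the three oscillatory/decaying combinations, the angular change of variable (yours via \eqref{varible substi1} at $\theta_+\approx-\pi$, the paper's via the equivalent \eqref{varible substi2} at $\tilde\theta_+\approx0$), and the application of Corollary \ref{corollary} and \eqref{integral ebnm} all match the paper's argument.
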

Prior to the proof, we emphasize that a key distinction from Subsection \ref{subse of K1} lies that it is not straightforward to use the cancelation properties of $\widetilde{Q}$ to eliminate the singularity near $\mu=2$. To overcome this, we employ the unitary operator $J$ defined in \eqref{J}, i.e., $(J\phi)(n)=(-1)^{n}\phi(n)$, which satisfies
\begin{equation*}
JR_{-\Delta}(z)J=-R_{-\Delta}(4-z),\quad z\in\C\setminus[0,4].
\end{equation*}
Through the limiting absorption principle, it yields that
\begin{equation}\label{relation of Rminus Laplacian 0and4}
JR^{\pm}_{-\Delta}(\mu^2)J=-R^{\mp}_{-\Delta}(4-\mu^2),\quad \mu\in(0,2).
\end{equation}
Combining this with the resolvent decomposition \eqref{R0mu4 and Rdeltamu2} and $J^2=I$, we obtain
\begin{align}\label{R0vQBQvR0}
R^{\pm}_0(\mu^4)v\widetilde{Q}B_{01}\widetilde{Q}vR^{\pm}_0(\mu^4)&=\frac{1}{4\mu^4}\Big[\Big(JR^{\mp}_{-\Delta}(4-\mu^2)\tilde{v}\widetilde{Q}+R_{-\Delta}(-\mu^2)v\widetilde{Q}\Big)B_{01}\widetilde{Q}\tilde{v}R^{\mp}_{-\Delta}(4-\mu^2)J\notag\\
&\quad\quad\quad+\Big(JR^{\mp}_{-\Delta}(4-\mu^2)\tilde{v}\widetilde{Q}+R_{-\Delta}(-\mu^2)v\widetilde{Q}\Big)B_{01}\widetilde{Q}vR_{-\Delta}(-\mu^2)\Big].
\end{align}
This representation demonstrates that the singularity analysis of $R^{\pm}_0(\mu^4)v\widetilde{Q}B_{01}\widetilde{Q}vR^{\pm}_0(\mu^4)$ at $\mu=2$ ultimately reduces to those of $R^{\mp}_{-\Delta}(4-\mu^2)\tilde{v}\widetilde{Q}$ and $\widetilde{Q}\tilde{v}R^{\mp}_{-\Delta}(4-\mu^2)$..

Precisely, we have
\begin{lemma}\label{cancelation lemma16}
{ Let $\widetilde{Q}$ be as in Definition \ref{definition of Sjtilde} and $\tilde{v}=Jv$. For any $f\in\ell^{2}(\Z)$, then we have
\vskip0.15cm
\noindent{\rm(1)} $\big(R^{\mp}_{-\Delta}(4-\mu^2)\tilde{v}\widetilde{Q}f\big)(n)=(2{\rm sin}\tilde{\theta}_+)^{-1}\sum\limits_{m\in\Z}\int_{0}^{1}{\bm {\tilde{\theta}_+}}({\rm sign}(n-\rho m))e^{\pm i\tilde{\theta}_{+}|n-\rho m|}d\rho\cdot \tilde{v}_1(m)(\widetilde{Q}f)(m)$,
     \vskip0.15cm
    \qquad \qquad \qquad \quad\quad\qquad $:=(2{\rm sin}\tilde{\theta}_+)^{-1}\sum\limits_{m\in\Z}\widetilde{\mcaB}^{\pm}(\tilde{\theta}_+,n,m)(\widetilde{Q}f)(m)$,
    \vskip0.2cm
\noindent{\rm(2)} $\widetilde{Q}\big(\tilde{v}R^{\mp}_{-\Delta}(4-\mu^2)f\big)=\widetilde{Q}\widetilde{f}^{\pm},\quad \widetilde{f}^{\pm}(n)=(2{\rm sin}\tilde{\theta}_+)^{-1}\sum\limits_{m\in\Z}\widetilde{\mcaB}^{\pm}(\tilde{\theta}_+,m,n)f(m)$,
where $\tilde{\theta}_{+}$ satisfies ${\rm cos}\tilde{\theta}_{+}=\frac{\mu^2}{2}-1$ with $ \tilde{\theta}_{+}\in(-\pi,0)$.
}
\end{lemma}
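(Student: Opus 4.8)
The plan is to prove Lemma~\ref{cancelation lemma16} by the same mechanism that underlies Lemma~\ref{cancelation lemma}: a first--order Taylor expansion with integral remainder of the exponential kernel, combined with the orthogonality relation $\widetilde{Q}\tilde v=0$ (equivalently, $\langle\widetilde Q f,\tilde v\rangle=0$ for every $f$, which follows from $\widetilde Q=I-\widetilde P$ in Definition~\ref{definition of Sjtilde} together with $\|\tilde v\|_{\ell^2}^2=\|V\|_{\ell^1}$). The preliminary step is to record the explicit kernel of $R^{\mp}_{-\Delta}(4-\mu^2)$. Since $2-2\cos\tilde\theta_+=4-\mu^2$ with $\tilde\theta_+\in(-\pi,0)$, we have $\theta_+(4-\mu^2)=\tilde\theta_+$ and $\theta_-(4-\mu^2)=-\tilde\theta_+$, so by Lemma~\ref{kernel of lapla} and \eqref{kernel of lapa boundary} the kernel of $R^{\mp}_{-\Delta}(4-\mu^2)$ equals $\dfrac{\pm i}{2\sin\tilde\theta_+}\,e^{\pm i\tilde\theta_+|n-m|}$, where the displayed sign $\pm$ is opposite to the superscript sign of $R^{\mp}$ (i.e.\ $R^{-}_{-\Delta}$ gives the $e^{+i\tilde\theta_+|n-m|}$ branch).

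For part~(1), I would expand $e^{\pm i\tilde\theta_+|n-m|}$ in the variable $m$ about $m=0$. Applying \cite[Lemma 2.5 (i)]{SWY22} to $s\mapsto e^{\pm is}$ exactly as in the proof of Lemma~\ref{cancelation lemma}(1) — the integral form being what handles the merely Lipschitz (not $C^1$) dependence of $|n-\rho m|$ on $\rho$ — gives
\begin{equation*}
e^{\pm i\tilde\theta_+|n-m|}=e^{\pm i\tilde\theta_+|n|}\mp i\tilde\theta_+\,m\int_{0}^{1}({\rm sign}(n-\rho m))\,e^{\pm i\tilde\theta_+|n-\rho m|}\,d\rho .
\end{equation*}
Substituting this into $(R^{\mp}_{-\Delta}(4-\mu^2)\tilde v\widetilde Q f)(n)=\sum_{m}\tfrac{\pm i}{2\sin\tilde\theta_+}\,e^{\pm i\tilde\theta_+|n-m|}\,\tilde v(m)(\widetilde Q f)(m)$, the leading term is proportional to $e^{\pm i\tilde\theta_+|n|}\langle\widetilde Q f,\tilde v\rangle=0$ and drops out, while in the remainder term the two imaginary prefactors multiply to $(\pm i)(\mp i)=1$, the extra factor $m$ converts $\tilde v$ into $\tilde v_1=m\tilde v$, and what survives is precisely $(2\sin\tilde\theta_+)^{-1}\sum_m\widetilde{\mcaB}^{\pm}(\tilde\theta_+,n,m)(\widetilde Q f)(m)$ with $\widetilde{\mcaB}^{\pm}$ as in the statement.

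For part~(2), I would invoke the symmetry of the kernel of $R^{\mp}_{-\Delta}(4-\mu^2)$ — it depends only on $|n-m|$ — so that interchanging the roles of $n$ and $m$ in the expansion above and applying $\langle\widetilde Q\,\cdot\,,\tilde v\rangle=0$ to the left factor yields $\widetilde Q\big(\tilde v R^{\mp}_{-\Delta}(4-\mu^2)f\big)=\widetilde Q\widetilde f^{\pm}$ with $\widetilde f^{\pm}(n)=(2\sin\tilde\theta_+)^{-1}\sum_m\widetilde{\mcaB}^{\pm}(\tilde\theta_+,m,n)f(m)$, in complete parallel with Lemma~\ref{cancelation lemma}(4). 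The computation is routine; the only delicate point is the sign and branch bookkeeping, both in identifying $\theta_{\mp}(4-\mu^2)$ with $\mp\tilde\theta_+$ and in tracking how $R^{\pm}_0(\mu^4)$ near $\mu=2$ produces $R^{\mp}_{-\Delta}(4-\mu^2)$ through $JR^{\pm}_{-\Delta}(\mu^2)J=-R^{\mp}_{-\Delta}(4-\mu^2)$, cf.\ \eqref{relation of Rminus Laplacian 0and4}--\eqref{R0vQBQvR0}. The conceptual payoff I would emphasize is that $\tilde\theta_+/\sin\tilde\theta_+\to1$ as $\mu\to2$, so the factor $\tilde\theta_+$ in $\widetilde{\mcaB}^{\pm}$ absorbs the $(\sin\tilde\theta_+)^{-1}\sim(2-\mu)^{-1/2}$ singularity of the free resolvent; consequently $R^{\mp}_{-\Delta}(4-\mu^2)\tilde v\widetilde Q$ and $\widetilde Q\tilde v R^{\mp}_{-\Delta}(4-\mu^2)$ are $O(1)$ near $\mu=2$, which is exactly what makes the oscillatory-integral estimates at threshold $16$ (Proposition~\ref{propo of Kpm31} and the rest of Subsection~\ref{subse of K3}) go through.
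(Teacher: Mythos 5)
Your proposal is correct and follows essentially the same route as the paper: identify the explicit kernel $R^{\mp}_{-\Delta}(4-\mu^2,n,m)=\pm i(2\sin\tilde{\theta}_+)^{-1}e^{\pm i\tilde{\theta}_+|n-m|}$ via \eqref{kernel of lapa boundary}, apply the first-order Taylor expansion with integral remainder from \cite[Lemma 2.5 (i)]{SWY22} exactly as in the proof of Lemma \ref{cancelation lemma}, and use $\langle\widetilde{Q}f,\tilde{v}\rangle=0$ (resp. $\widetilde{Q}\tilde{v}=0$) to remove the leading term. The sign and branch bookkeeping in your argument is consistent with the paper's conventions, so nothing further is needed.
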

\begin{proof}[Proof of Lemma \ref{cancelation lemma16}]
It follows from \eqref{kernel of lapa boundary} that
\begin{equation}\label{kernel of Rlapalace 4-mu2}
R^{\mp}_{-\Delta}(4-\mu^2,n,m)=\pm i(2{\rm sin}\tilde{\theta}_+)^{-1}e^{\pm i\tilde{\theta}_{+}|n-m|}:=\pm i(2{\rm sin}\tilde{\theta}_+)^{-1}F^{\mp}(-\tilde{\theta}_+|n-m|),
\end{equation}
where $\tilde{\theta}_{+}$ satisfies ${\rm cos}\tilde{\theta}_{+}=\frac{\mu^2}{2}-1$ with $ \tilde{\theta}_{+}\in(-\pi,0)$ and $F^{\mp}(s)=e^{\mp is}$.
Using the properties $\big<\widetilde{Q}f,\tilde{v}\big>=0$ and $\widetilde{Q}\tilde{v}=0$, and then following the proofs of $R^{\pm}_0(\mu^4)vQf$ and $Q(vR^{\pm}_0(\mu^4)f\big)$ in Lemma \ref{cancelation lemma}, the desired result is obtained.
\end{proof}
Noting that $\tilde{\theta}_{+}=O((2-\mu)^{\frac{1}{2}})$ as $\mu\rightarrow2$, this lemma shows that these operators can eliminate the singularity of $R^{\mp}_{-\Delta}(4-\mu^2)$.
\vskip0.2cm
With this lemma, now we proceed with the proof of Proposition \ref{propo of Kpm31}.
%\begin{remark}
%{\rm %From \eqref{kernel of R0 boundary}, we observe that the singularity $(2-\mu)^{-\frac{1}{2}}$ of the kernel $R^{\pm}_0(\mu^4,n,m)$ near $\mu=2$ originates from $R^{\pm}_{-\Delta}(\mu^2)$ only. This singularity, in turn, can be transferred to $R^{\mp}_{-\Delta}(4-\mu^2)$ via the unitary transform $J$. Recalling \eqref{kernel of lapa boundary}, the kernel of $R^{\mp}_{-\Delta}(4-\mu^2)$ is given by:
%, whose kernel is given by
%$$R^{\mp}_{-\Delta}(4-\mu^2,n,m)=\pm i(2{\rm sin}\tilde{\theta}_+)^{-1}e^{\pm i\tilde{\theta}_{+}|n-m|}.$$
%}
%\end{remark}
\begin{proof}[Proof of Proposition \ref{propo of Kpm31}]
From \eqref{R0vQBQvR0}, the kernels $K^{\pm}_{31}(t,n,m)$ can be further expressed as
\begin{align*}
K^{\pm}_{31}(t,n,m)=\sum\limits_{j=1}^{4}K^{\pm,j}_{31}(t,n,m),
\end{align*}
where
\begin{align*}
K^{\pm,j}_{31}(t,n,m)=\frac{1}{4}\int_{2-\mu_0}^{2}e^{-it\mu^4}\mu^{-1}\big(\Lambda^{\pm,j}_{31}(\mu)\big)(n,m)d\mu,
\end{align*}
with
\begin{align}\label{Lamda 31j}
\begin{split}
\Lambda^{\pm,1}_{31}(\mu)&=JR^{\mp}_{-\Delta}(4-\mu^2)\tilde{v}\widetilde{Q}B_{01}\widetilde{Q}\tilde{v}R^{\mp}_{-\Delta}(4-\mu^2)J,\\
\Lambda^{\pm,2}_{31}(\mu)&=R_{-\Delta}(-\mu^2)v\widetilde{Q}B_{01}\widetilde{Q}\tilde{v}R^{\mp}_{-\Delta}(4-\mu^2)J,\\
\Lambda^{\pm,3}_{31}(\mu)&=JR^{\mp}_{-\Delta}(4-\mu^2)\tilde{v}\widetilde{Q}B_{01}\widetilde{Q}vR_{-\Delta}(-\mu^2),\\
\Lambda^{\pm,4}_{31}(\mu)&=R_{-\Delta}(-\mu^2)v\widetilde{Q}B_{01}\widetilde{Q}vR_{-\Delta}(-\mu^2).
\end{split}
\end{align}
By symmetry between $\Lambda^{\pm,2}_{31}$ and $\Lambda^{\pm,3}_{31}$, it suffices to analyze the cases $K^{\pm,j}_{31}(t,n,m)$ for $j=1,2,4$.

First, recalling from \eqref{kernel of R0 boundary} that the kernel of $R_{-\Delta}(-\mu^2)$ is given by
\begin{equation}\label{kernel of Rlaplace-mu2}
R_{-\Delta}(-\mu^2,n,m)=-\frac{1}{2\mu}a_2(\mu)e^{b(\mu)|n-m|}=\widetilde{a_2}(\mu)e^{b(\mu)|n-m|}:=\widetilde{\mcaA}(\mu,n,m).
\end{equation}
This combined Lemma \ref{cancelation lemma16} gives that
\begin{align*}
\big(\Lambda^{\pm,j}_{31}(\mu)\big)(n,m)=\left\{\begin{aligned}&\frac{(-1)^{n+m}}{4{\rm sin}^2\tilde{\theta}_+}\sum\limits_{m_1,m_2}\widetilde{\mcaB}^{\pm}(\tilde{\theta}_+,n,m_1)\widetilde{\mcaB}^{\pm}(\tilde{\theta}_+,m,m_2)(\widetilde{Q}B_{01}\widetilde{Q})(m_1,m_2),\ j=1,\\
&\frac{(-1)^{m}}{2{\rm sin}\tilde{\theta}_+}\sum\limits_{m_1,m_2}\widetilde{\mcaA}(\mu,n,m_1)\widetilde{\mcaB}^{\pm}(\tilde{\theta}_+,m,m_2)(v\widetilde{Q}B_{01}\widetilde{Q})(m_1,m_2),\ j=2,\\
&\sum\limits_{m_1,m_2}\widetilde{\mcaA}(\mu,n,m_1)\widetilde{\mcaA}(\mu,m,m_2)(v\widetilde{Q}B_{01}\widetilde{Q}v)(m_1,m_2),\ j=4.\end{aligned}\right.
\end{align*}
Furthermore, for any $j\in\{1,2,4\}$, the estimate for $K^{\pm,j}_{31}$ ultimately reduces to that of $\Omega^{\pm,j}_{31}$, where
\begin{align*}
\Omega^{\pm,1}_{31}(t,N_1,M_2)&=\int_{2-\mu_0}^{2}e^{-it\mu^4}e^{\pm i\tilde{\theta}_+(|N_1|+|M_2|)}\mu^{-1}\Big(\frac{\tilde{\theta}_+}{{\rm sin}\tilde{\theta}_+}\Big)^2d\mu,\\
\Omega^{\pm,2}_{31}(t,\widetilde{N}_1,M_2)&=\int_{2-\mu_0}^{2}e^{-it\mu^4}e^{\pm i\tilde{\theta}_+|M_2|}\mu^{-1}\widetilde{a_2}(\mu)\frac{\tilde{\theta}_+}{{\rm sin}\tilde{\theta}_+}e^{b(\mu)|\widetilde{N}_1|}d\mu,\\
\Omega^{\pm,4}_{31}(t,\widetilde{N}_1,\widetilde{M}_2)&=\int_{2-\mu_0}^{2}e^{-it\mu^4}\mu^{-1}(\widetilde{a_2}(\mu))^2e^{b(\mu)(|\widetilde{N}_1|+|\widetilde{M}_2|)}d\mu,
\end{align*}
with $N_1=n-\rho_1m_1,\widetilde{N}_1=n-m_1,M_2=m-\rho_2m_2,\widetilde{M}_2=m-m_2$.

Obviously, \eqref{estimate of K31} holds for $\Omega^{\pm,4}_{31}$ by applying Van der Corput Lemma and \eqref{integral ebnm} directly. As for $j=1,2$, we consider the change of variable
\begin{align}\label{varible substi2}
{\rm cos}\tilde{\theta}_{+}=\frac{\mu^2}{2}-1 \Longrightarrow\  \frac{d\mu}{d\tilde{\theta}_+}=-\frac{{\rm sin}\tilde{\theta}_+}{\mu},\quad \tilde{\theta}_+\rightarrow0\ {\rm as}\  \mu\rightarrow2,
\end{align}
 obtaining that
 \begin{align*}%\label{new expr of Omega31 1}
 \Omega^{\pm,1}_{31}(t,N_1,M_2)&=\int_{r_2}^{0}e^{-it(2+2{\rm cos}\tilde{\theta}_+)^2}e^{\pm i\tilde{\theta}_+(|N_1|+|M_2|)}F_{31}(\tilde{\theta}_{+})d\tilde{\theta}_+,\\
 \Omega^{\pm,2}_{31}(t,\widetilde{N}_1,M_2)&=\int_{r_2}^{0}e^{-it(2+2{\rm cos}\tilde{\theta}_+)^2}e^{\pm i\tilde{\theta}_+|M_2|}\tilde{\theta}_+\widetilde{F}_{31}(\mu(\tilde{\theta}_{+}),\widetilde{N}_1)d\tilde{\theta}_+,
 \end{align*}
where $r_2\in(-\pi,0)$ satisfying ${\rm cos}r_2=\frac{(2-\mu_0)^2}{2}-1$ and
 $$F_{31}(\tilde{\theta}_+)=\frac{-\tilde{\theta}^2_+}{(2+2{\rm cos}\tilde{\theta}_+){\rm sin}\tilde{\theta}_+},\quad \widetilde{F}_{31}(\mu,\widetilde{N}_1)=-\frac{\widetilde{a_2}(\mu)}{\mu^2}e^{b(\mu)|\widetilde{N}_1|}.$$
Since $\lim\limits_{x\rightarrow0}\Big(\frac{x}{{\rm sin}x}\Big)^{(k)}$ exist for $k=0,1$, this together with \eqref{integral ebnm} and Corollary \ref{corollary} yields that
\begin{equation*}%\label{estimate for omega31,1}
\big|\Omega^{\pm,1}_{31}(t,N_1,M_2)\big|+\big|\Omega^{\pm,2}_{31}(t,\widetilde{N}_1,M_2)\big|\lesssim |t|^{-\frac{1}{4}},\quad t\neq0,\ {\rm uniformly\ in}\ N_1,\widetilde{N}_1,M_2.
\end{equation*}
This completes the proof.
\end{proof}
\begin{remark}
{\rm We note that both variable substitution \eqref{varible substi1} and \eqref{varible substi2}  play important roles in handling the oscillatory integrals. However, they exhibit slight differences in addressing singularity. Specifically, the \eqref{varible substi1} does not alter the singularity near $\mu=0$, whereas \eqref{varible substi2} decreases a singularity of order $(2-\mu)^{-\frac{1}{2}}$.
}
\end{remark}
This observation enables us to verify that $K^{\pm}_{32}$ also likewise satisfy the decay estimates \eqref{estimate of K31}.
\begin{proposition}\label{Propo of Kpm32}
{ Under the assumptions of Proposition \ref{propo of Kpm31}, let $K^{\pm}_{32}(t,n,m)$ be defined as in \eqref{kernels of Kpm3i}. Then
\begin{equation}\label{estimate of K32}
\left|K^{\pm}_{32}(t,n,m)\right|\lesssim |t|^{-\frac{1}{4}},\quad t\neq0,\ {\rm uniformly\ in}\ n,m\in\Z.
\end{equation}
}
\end{proposition}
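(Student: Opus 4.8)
The plan is to follow closely the strategy used for $K^{\pm}_{31}$ in Proposition \ref{propo of Kpm31}, exploiting the expansion \eqref{asy expan regular 2} together with the unitary operator $J$ and Lemma \ref{cancelation lemma16} to remove the singularity of the resolvents near $\mu=2$. First I would insert the resolvent decomposition \eqref{R0mu4 and Rdeltamu2} and the identity $JR^{\pm}_{-\Delta}(\mu^2)J=-R^{\mp}_{-\Delta}(4-\mu^2)$ into the kernel
\begin{equation*}
K^{\pm}_{32}(t,n,m)=\int_{2-\mu_0}^{2}e^{-it\mu^4}\mu^3(2-\mu)^{\frac{1}{2}}\big(R^{\pm}_0(\mu^4)vB^{\pm,0}_{1}vR^{\pm}_0(\mu^4)\big)(n,m)d\mu,
\end{equation*}
obtaining an analogue of \eqref{R0vQBQvR0} in which the middle operator $\widetilde{Q}B_{01}\widetilde{Q}$ is replaced by $B^{\pm,0}_{1}$. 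Since $B^{\pm,0}_{1}$ is merely a $\mu$-independent bounded operator on $\ell^2(\Z)$ (not sandwiched between projections $\widetilde{Q}$), one of the two resolvent factors on each side will not enjoy the cancelation of Lemma \ref{cancelation lemma16}; however, here this is harmless because the extra prefactor $(2-\mu)^{\frac{1}{2}}$ supplies precisely the vanishing needed to offset the remaining $(2{\rm sin}\tilde\theta_+)^{-1}=O((2-\mu)^{-\frac12})$ singularity of $R^{\mp}_{-\Delta}(4-\mu^2)$.

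Next I would decompose $K^{\pm}_{32}$ into a finite sum of terms $K^{\pm,j}_{32}$ according to which of $R^{\mp}_{-\Delta}(4-\mu^2)$ or the smooth piece $R_{-\Delta}(-\mu^2)$ appears on each side, exactly as in \eqref{Lamda 31j}. Using \eqref{kernel of Rlapalace 4-mu2} and \eqref{kernel of Rlaplace-mu2}, and applying Lemma \ref{cancelation lemma16} only to the factor that multiplies $\tilde{v}\widetilde{Q}$ from $B^{\pm,0}_{1}$-free side where available, each $K^{\pm,j}_{32}$ reduces to an oscillatory integral of the form
\begin{equation*}
\int_{2-\mu_0}^{2}e^{-it\mu^4}(2-\mu)^{\frac12}\,\mu^{-1}\,g^{\pm}_j(\mu,N_1,M_2)\,e^{\pm i\tilde\theta_+(|N_1|+|M_2|)}\,d\mu,
\end{equation*}
with $g^{\pm}_j$ a product of bounded factors built from $\widetilde{a_2}(\mu)$, $e^{b(\mu)|\cdot|}$, and at most one factor $(2{\rm sin}\tilde\theta_+)^{-1}$. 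Then I would perform the change of variable \eqref{varible substi2}, ${\rm cos}\tilde\theta_+=\tfrac{\mu^2}{2}-1$, under which $d\mu=-\mu^{-1}{\rm sin}\tilde\theta_+\,d\tilde\theta_+$ and $(2-\mu)^{\frac12}\asymp|\tilde\theta_+|$; the worst term becomes $\int_{r_2}^{0}e^{-it(2+2{\rm cos}\tilde\theta_+)^2}e^{\pm i\tilde\theta_+(|N_1|+|M_2|)}\widetilde F(\mu(\tilde\theta_+))\,d\tilde\theta_+$ with $\widetilde F$ continuous on $[r_2,0]$ with integrable derivative, using that $\lim_{x\to0}(x/{\sin}x)^{(k)}$ exist for $k=0,1$ and the bound \eqref{integral ebnm}. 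Applying Corollary \ref{corollary} (with the phase $(2+2\cos\tilde\theta_+)^2$, which on $[-\pi,0]$ is of the form $(2\pm2\cos)^2$) then yields $|K^{\pm,j}_{32}(t,n,m)|\lesssim|t|^{-\frac14}$ uniformly in $n,m$, and summing over $j$ gives \eqref{estimate of K32}.

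The main obstacle I anticipate is bookkeeping the cancelation: since $B^{\pm,0}_{1}$ is not flanked by $\widetilde{Q}$, the naive substitution would leave two factors of $(2{\rm sin}\tilde\theta_+)^{-1}$ — an $O((2-\mu)^{-1})$ singularity — which the single prefactor $(2-\mu)^{\frac12}$ cannot absorb. The resolution is to observe that in the term where $R^{\mp}_{-\Delta}(4-\mu^2)$ appears on \emph{both} sides, one may still extract one cancelation by writing the outer $e^{\pm i\tilde\theta_+|n-m_1|}$ via the Taylor identity of Lemma \ref{cancelation lemma16} applied to $v$ (which is summable against $\langle\cdot\rangle^{k}$ under the decay hypothesis $\beta>7$), or alternatively to absorb the remaining half-order singularity into the oscillatory estimate itself: near $\mu=2$ the phase $\mu^4$ has $\partial_\mu(\mu^4)=4\mu^3\neq0$, so a non-stationary phase (integration by parts) argument on a dyadic piece $\mu\in[2-2^{-k},2-2^{-k-1}]$ converts the $(2-\mu)^{-1}$ into a convergent series after pairing with $(2-\mu)^{\frac12}$ and the $|t|^{-\frac14}$ gain from Van der Corput on the complementary regime. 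I would use whichever of these is cleanest; once the singularity is tamed the remaining estimates are routine applications of Corollary \ref{corollary} and \eqref{integral ebnm}, exactly as in Propositions \ref{propo of Kpm31} and \ref{propo of K4 0}.
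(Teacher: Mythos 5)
Your second paragraph contains the essential, correct mechanism, and it is close in spirit to the paper's argument; but your third paragraph mis-diagnoses the difficulty and then proposes two remedies that do not work. First, the comparison: the paper does not route $K^{\pm}_{32}$ through the $J$-conjugation and Lemma \ref{cancelation lemma16} at all. It simply inserts the explicit kernel \eqref{kernel of R0 boundary}, so the worst term carries the factor $(a_1(\mu))^2\sim (2-\mu)^{-1}$, and then performs the substitution \eqref{varible substi1}, whose Jacobian $d\mu/d\theta_+=-(a_1(\mu))^{-1}$ absorbs one factor of $a_1(\mu)$ while the explicit prefactor $(2-\mu)^{\frac12}$ absorbs the other, leaving the smooth amplitude $2(2+\mu)^{-\frac12}\mu^{-3}$; Corollary \ref{corollary} and \eqref{integral ebnm} then finish. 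Your $J$-conjugated version works for exactly the same reason: in the term with $R^{\mp}_{-\Delta}(4-\mu^2)$ on both sides you get $(2\sin\tilde\theta_+)^{-2}\sim(2-\mu)^{-1}$, and the Jacobian $\sin\tilde\theta_+\asymp(2-\mu)^{\frac12}$ of \eqref{varible substi2} together with the prefactor $(2-\mu)^{\frac12}$ cancels it completely. So no cancelation from $\widetilde Q$ is needed anywhere, and your claim in paragraph two that at most one factor $(2\sin\tilde\theta_+)^{-1}$ survives is false but harmless.

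The genuine problem is your third paragraph. Having (correctly) observed that $B^{\pm,0}_{1}$ is not flanked by projections, you conclude that a residual half-order singularity survives; this is a mis-count, because you have forgotten that the Jacobian you yourself invoked supplies the second factor of $(2-\mu)^{\frac12}$. Worse, neither proposed patch is viable. (a) The Taylor identities underlying Lemma \ref{cancelation lemma16} produce cancelation only because the constant term of the expansion is killed by $\langle\widetilde Qf,\tilde v\rangle=0$; applied to $v$ without a projection they yield nothing. (b) The dyadic non-stationary-phase argument fails on two counts: with an amplitude of size $(2-\mu)^{-\frac12}$ on a piece where $2-\mu\sim 2^{-k}$, combining the trivial bound $2^{-k/2}$ with the Van der Corput bound $|t|^{-\frac14}2^{k/2}$ and summing over $k$ yields only $O(|t|^{-\frac18})$, not $|t|^{-\frac14}$; and integration by parts in $\mu$ is not uniformly available because the full phase contains $\mp\theta_+(\mu)|N_1|$, whose $\mu$-derivative $\mp\mu|N_1|/\sin\theta_+$ is unbounded near $\mu=2$ and can cancel $\partial_\mu(\mu^4)$ for suitable $n,m$. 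Delete the third paragraph, keep the change of variables and track the Jacobian, and your proof is correct.
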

\begin{proof}
From \eqref{kernels of Kpm3i} and \eqref{kernel of R0 boundary}, we have the representation
\begin{align*}
K^{\pm}_{32}(t,n,m)&=\frac{1}{16}\int_{2-\mu_0}^{2}e^{-it\mu^4}\mu^{-3}(2-\mu)^{\frac{1}{2}}\sum\limits_{m_1,m_2}(vB^{\pm,0}_1v)(m_1,m_2)\\
&\quad\times\prod\limits_{\alpha\in\{N_1,M_2\}}\Big(\pm ia_1(\mu)e^{\mp i\theta_{+}|\alpha|}+a_2(\mu)e^{b(\mu)|\alpha|}\Big)d\mu,
\end{align*}
where $N_1=n-m_1,M_2=m-m_2$, $\theta_+\in(-\pi,0)$ satisfying ${\rm cos}\theta_+=1-\frac{\mu^2}{2}$ and
$$a_1(\mu)=\frac{1}{\sqrt{1-\frac{\mu^2}{4}}},\quad a_2(\mu)=\frac{-1}{\sqrt{1+\frac{\mu^2}{4}}}.$$
The estimate \eqref{estimate of K32} consequently reduces to proving corresponding estimates for these three types:
\begin{align*}
\Omega^{\pm,1}_{32}(t,N_1,M_2)&=\int_{2-\mu_0}^{2}e^{-it\mu^4}e^{\mp i\theta_{+}(|N_1|+|M_2|)}\mu^{-3}(2-\mu)^{\frac{1}{2}}(a_1(\mu))^2d\mu,\\
\Omega^{\pm,2}_{32}(t,N_1,M_2)&=\int_{2-\mu_0}^{2}e^{-it\mu^4}e^{\mp i\theta_{+}|N_1|}\mu^{-3}(2-\mu)^{\frac{1}{2}}a_1(\mu)a_2(\mu)e^{b(\mu)|M_2|}d\mu,\\
\Omega^{\pm,3}_{32}(t,N_1,M_2)&=\int_{2-\mu_0}^{2}e^{-it\mu^4}\mu^{-3}(2-\mu)^{\frac{1}{2}}(a_2(\mu))^2e^{b(\mu)(|N_1|+|M_2|)}d\mu.
\end{align*}
For $\Omega^{\pm,3}_{32}$, it follows immediately from a direct application of the Van der Corput Lemma combined with \eqref{integral ebnm}. For the cases $j=1,2$, we employ the change of variable
\begin{equation*}%\label{varible substi3}
{\rm cos}\theta_{+}=1-\frac{\mu^2}{2} \Longrightarrow\  \frac{d\mu}{d\theta_+}=\frac{{\rm sin}\theta_+}{\mu}=-\sqrt{1-\frac{\mu^2}{4}}=-(a_1(\mu))^{-1},
\end{equation*}
obtaining
\begin{align*}
\Omega^{\pm,1}_{32}(t,N_1,M_2)&=\int_{-\pi}^{r_3}e^{-it(2-2{\rm cos}\theta_+)^2}e^{\mp i\theta_{+}(|N_1|+|M_2|)}F_{32}(\theta_+)d\theta_+,\\
\Omega^{\pm,2}_{32}(t,N_1,M_2)&=\int_{-\pi}^{r_3}e^{-it(2-2{\rm cos}\theta_+)^2}e^{\mp i\theta_{+}|N_1|}\widetilde{F}_{32}(\mu(\theta_+),M_2)d\theta_+,
\end{align*}
where $r_3\in(-\pi,0)$ satisfying ${\rm cos}r_3=1-\frac{(2-\mu_0)^2}{2}$ and
$$F_{32}(\mu)=2(2+\mu)^{-\frac{1}{2}}\mu^{-3},\quad \widetilde{F}_{32}(\mu,M_2)=\mu^{-3}(2-\mu)^{\frac{1}{2}}a_2(\mu)e^{b(\mu)|M_2|}.$$
Then it can be easily verified concluded from Corollary \ref{corollary} that the estimates hold for $\Omega^{\pm,j}_{32}$ for $j=1,2$.
\end{proof}

Finally, the estimates for $K^{\pm}_{33}(t,n,m)$ can be derived by a similar argument of Proposition \ref{propo of K4 0}.
\begin{proposition}\label{Propo of Kpm33}
{ Under the assumptions in Propositions \ref{propo of Kpm31}, let $K^{\pm}_{33}(t,n,m)$ be defined as in \eqref{kernels of Kpm3i}. Then
\begin{equation}\label{estimate of K33}
\left|K^{\pm}_{33}(t,n,m)\right|\lesssim |t|^{-\frac{1}{4}},\quad t\neq0,\ {\rm uniformly\ in}\ n,m\in\Z.
\end{equation}
}
\end{proposition}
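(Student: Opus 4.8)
The plan is to mimic the proof of Proposition~\ref{Propo of Kpm32}, inserting the extra $\mu$-dependence carried by the remainder $\Gamma^0_{1}(2-\mu)$ exactly as the remainder $\Gamma^0_4(\mu)$ was handled in Proposition~\ref{propo of K4 0}. The point is that, by \eqref{estimate of Gamma} with $i=0$ and $\ell=1$, one has $\|\Gamma^0_{1}(2-\mu)\|_{\ell^2\to\ell^2}\lesssim 2-\mu$ and $\|\partial_{\mu}\Gamma^0_{1}(2-\mu)\|_{\ell^2\to\ell^2}\lesssim 1$ for $\mu\in[2-\mu_0,2)$; this first-order vanishing of $\Gamma^0_{1}$ at $\mu=2$ more than compensates the singular behaviour of the free resolvent near the threshold $16$, so that, unlike in Proposition~\ref{propo of Kpm31}, the cancellation Lemma~\ref{cancelation lemma16} is not needed here. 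Concretely, I would substitute the kernel \eqref{kernel of R0 boundary} into the definition \eqref{kernels of Kpm3i} of $K^{\pm}_{33}$ and expand $R^{\pm}_0(\mu^4)v\Gamma^0_{1}(2-\mu)vR^{\pm}_0(\mu^4)$ into four pieces, according to whether each of the two factors of $\mcaA^{\pm}$ contributes its singular part $\pm i a_1(\mu)e^{\mp i\theta_{+}|\cdot|}$, with $a_1(\mu)=(1-\mu^2/4)^{-1/2}=O((2-\mu)^{-1/2})$, or its regular part $a_2(\mu)e^{b(\mu)|\cdot|}$. This reduces $K^{\pm}_{33}$ to a finite sum of oscillatory integrals in $\mu$ whose amplitudes are double sums against $v\Gamma^0_{1}(2-\mu)v$, weighted by products of $a_1$, $a_2$ and $e^{b(\mu)|\cdot|}$.

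For the doubly-singular piece and the two mixed pieces I would perform the change of variable \eqref{varible substi1}, $\cos\theta_+=1-\mu^2/2$, whose Jacobian $d\mu/d\theta_+=-(a_1(\mu))^{-1}$ removes one power of $a_1$ and turns the integral into $\int_{-\pi}^{r_3}e^{-it[(2-2\cos\theta_+)^2-s\theta_+]}\Phi^{\pm}(\theta_+)\,d\theta_+$ over a subinterval $[-\pi,r_3]\subseteq[-\pi,0]$ near $-\pi$ (with $r_3$ as in Proposition~\ref{Propo of Kpm32}). In the doubly-singular case the resulting amplitude is a double sum against $\mu^{-3}a_1(\mu)\,\Gamma^0_{1}(2-\mu)$; since $\mu(\theta_+)$ is smooth near $-\pi$ with $2-\mu(\theta_+)$ vanishing to second order there, while $a_1(\mu(\theta_+))=O(|\theta_++\pi|^{-1})$ and $\|\Gamma^0_{1}(2-\mu(\theta_+))\|=O(|\theta_++\pi|^{2})$, the product tends to $0$ as $\theta_+\to-\pi^+$, and distributing $\partial_{\theta_+}$ between the scalar factor $\mu^{-3}a_1$ (whose $\theta_+$-derivative is $O(|\theta_++\pi|^{-2})$, using $\partial_{\mu}a_1=\tfrac{\mu}{4}a_1^3$) and the operator factor $\Gamma^0_{1}$ (whose $\theta_+$-derivative is $O(|\theta_++\pi|)$, using $\|\partial_{\mu}\Gamma^0_{1}(2-\mu)\|\lesssim1$) yields a derivative that is bounded on $(-\pi,r_3)$, hence in $L^1$ there, uniformly in $n,m\in\Z$ after the standard H\"older estimate in the $m_1,m_2$-sums. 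Corollary~\ref{corollary}(ii) then gives the bound $|t|^{-1/4}$. The mixed pieces are identical: the substitution kills the single $a_1$ entirely and leaves an amplitude of the form $\mu^{-3}a_2(\mu)\Gamma^0_{1}(2-\mu)e^{b(\mu)|\cdot|}=O(2-\mu)$, the $e^{b(\mu)|\cdot|}$-factor and its derivative being controlled uniformly by \eqref{integral ebnm}. Finally, the purely regular piece (two factors $a_2(\mu)e^{b(\mu)|\cdot|}$) carries no singularity and no substitution is needed: a direct application of the Van der Corput Lemma on $[2-\mu_0,2]$, together with $\|\Gamma^0_{1}(2-\mu)\|\lesssim 2-\mu$, $\|\partial_{\mu}\Gamma^0_{1}(2-\mu)\|\lesssim1$ and \eqref{integral ebnm}, yields an even faster decay. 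Summing the four contributions gives \eqref{estimate of K33}.

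The only real obstacle is bookkeeping: one must check, piece by piece, that the singular factors — $a_1(\mu)=O((2-\mu)^{-1/2})$, and the $O((2-\mu)^{-3/2})$ produced when a $\mu$-derivative hits $a_1$ — are in every instance absorbed by the estimates $\|\Gamma^0_{1}(2-\mu)\|\lesssim 2-\mu$ and $\|\partial_{\mu}\Gamma^0_{1}(2-\mu)\|\lesssim 1$ supplied by \eqref{estimate of Gamma}, so that after the change of variable \eqref{varible substi1} (which itself strips off one power of $a_1$) the amplitudes extend continuously to $\mu=2$ with $L^1$ derivatives near $\mu=2$. Apart from this, the argument is essentially a line-by-line transcription of the proofs of Proposition~\ref{Propo of Kpm32} (for the treatment of the threshold singularity at $16$ via \eqref{varible substi1}) and Proposition~\ref{propo of K4 0} (for the treatment of the $\mu$-dependent remainder via \eqref{estimate of Gamma}), so that no genuinely new idea is required; the decay condition $\beta>7$ inherited from Proposition~\ref{propo of Kpm31} is what makes the $v$-weighted sums converge.
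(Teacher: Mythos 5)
Your proposal is correct and follows essentially the same route as the paper: the paper likewise splits $K^{\pm}_{33}$ according to the singular part $\pm ia_1(\mu)e^{\mp i\theta_+|\cdot|}$ versus the regular part $a_2(\mu)e^{b(\mu)|\cdot|}$ of each resolvent factor, applies the substitution \eqref{varible substi1} to the pieces carrying $a_1$, controls the resulting amplitudes via $\|\partial_\mu^{k}\big(\Gamma^0_1(2-\mu)/\sqrt{2-\mu}\big)\|\lesssim (2-\mu)^{\frac12-k}$ (equivalent to your bookkeeping with $\|\Gamma^0_1(2-\mu)\|\lesssim 2-\mu$, $\|\partial_\mu\Gamma^0_1(2-\mu)\|\lesssim 1$ and the Jacobian), and treats the purely regular piece by a direct application of the Van der Corput Lemma together with \eqref{integral ebnm}. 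Your observation that Lemma \ref{cancelation lemma16} is not needed here also matches the paper.
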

\begin{proof}
Following the method as $K^{\pm,0}_r$ in Proposition \ref{propo of K4 0}, it suffices to prove that the following key oscillatory integrals satisfy the estimate \eqref{estimate of K33}:
\begin{align*}
\Omega^{\pm,1}_{33}(t)&=\int_{-\pi}^{r_3}e^{-it\big[(2-2{\rm cos}\theta_+)^2-s\theta_+\big]}\widetilde{\Phi}_1(\mu(\theta_+))d\theta_+,\\
\Omega^{\pm,2}_{33}(t,m)&=\int_{-\pi}^{r_3}e^{-it\big[(2-2{\rm cos}\theta_+)^2-s\theta_+\big]}\widetilde{\Phi}_2(\mu(\theta_+),m)d\theta_+,\\
\Omega^{\pm,3}_{33}(t,n,m)&=\int_{2-\mu_0}^{2}e^{-it\mu^4}(a_2(\mu))^2\mu^{-3}\widetilde{\Phi}_3(\mu,n,m)d\mu,
\end{align*}
where $N_1=n-m_1$, $M_2=m-m_2$, $r_3$ is defined in Proposition \ref{Propo of Kpm32} and
\begin{align*}
\widetilde{\Phi}_1(\mu)&=\sum\limits_{m_1\in\Z}v(m_1)(\widetilde{\varphi}_1(\mu)v)(m_1),\quad \widetilde{\Phi}_2(\mu,m)=\sum\limits_{m_1\in\Z}v(m_1)(\widetilde{\varphi}_2(\mu)(v(\cdot)e^{b(\mu)|m-\cdot|}))(m_1),\\
\widetilde{\Phi}_3(\mu,n,m)&=\sum\limits_{m_1\in\Z}^{}v(m_1)e^{b(\mu)|n-m_1|}\Big(\Gamma^0_{1}(2-\mu)(v(\cdot)e^{b(\mu)|m-\cdot|})\Big)(m_1),
\end{align*}
with
$$\widetilde{\varphi}_{1}(\mu)=\mu^{-3}(2+\mu)^{-\frac{1}{2}}\widetilde{\Gamma}_{1}(\mu),\quad \widetilde{\varphi}_{2}(\mu)=\mu^{-3}a_2(\mu)\Gamma^0_{1}(2-\mu),\quad \widetilde{\Gamma}_{1}(\mu)=\frac{\Gamma^0_{1}(2-\mu)}{\sqrt{2-\mu}}.$$
Combining \eqref{integral ebnm} and the facts
\begin{align}
\mu'(\theta_+)=-\sqrt{1-\frac{\mu^2}{4}},\quad \|\widetilde{\Gamma}^{(k)}_{1}(\mu)\|_{\B(0,0)}\lesssim (2-\mu)^{\frac{1}{2}-k}, \quad k=0,1,
\end{align}
the desired estimates follow from Corollary \ref{corollary} and Van der Corput Lemma.
\end{proof}
 Hence, the estimate \eqref{esti for Kpm3} for the regular case is established by Propositions \ref{propo of Kpm31}, \ref{Propo of Kpm32} and \ref{Propo of Kpm33}.
\subsubsection{{\bf {Resonant case}}}If $16$ is the resonance of $H$, as before, it follows from \eqref{kernels of Ki} and \eqref{asy expan resonance 2} that
\begin{equation}\label{decomp of Kpm3 resonance}
K^{\pm}_{3}(t,n,m)=\sum\limits_{B\in\mcaB_{11}\cup\mcaB_{12}}K^{\pm}_{B}(t,n,m),
\end{equation}
where sets $\mcaB_{1j}$ are defined as follows:
\begin{equation*}
\mcaB_{11}=\{(2-\mu)^{-\frac{1}{2}}\widetilde{S}_0B^{\pm}_{-1}\widetilde{S}_0,\ \widetilde{S}_0B^{\pm,1}_{01},\ B^{\pm,1}_{02}\widetilde{S}_0\},\quad \mcaB_{12}=\{\widetilde{Q}B^{\pm,1}_{03}\widetilde{Q},\ (2-\mu)^{\frac{1}{2}}B^{\pm,1}_1,\  \Gamma^{1}_{1}(2-\mu)\},
\end{equation*}
and
\begin{equation}\label{kernel of KB}
K^{\pm}_B(t,n,m)=\int_{2-\mu_0}^{2}e^{-it\mu^4}\mu^3\big[R^{\pm}_0(\mu^4)vBvR^{\pm}_0(\mu^4)\big](n,m)d\mu.
\end{equation}
From Propositions \ref{propo of Kpm31}, \ref{Propo of Kpm32} and \ref{Propo of Kpm33}, it suffices to establish the estimate \eqref{esti for Kpm3} for $K^{\pm}_B$ with $B\in\mcaB_{11}$.
\begin{proposition}\label{propo of resonance 2}
Let $H=\Delta^2+V$ with $|V(n)|\lesssim \left<n\right>^{-\beta}$ for $\beta>11$. Assume that $16$ is a resonance of $H$, then for any $B\in\mcaB_{11}$, we have
\begin{equation}\label{estimate of K3 resonance}
\left|K^{\pm}_{B}(t,n,m)\right|\lesssim |t|^{-\frac{1}{4}},\quad t\neq0,\ {\rm uniformly\ in}\ n,m\in\Z.
\end{equation}
\end{proposition}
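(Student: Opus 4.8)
The plan is to follow the same three-step scheme already used for the regular case at the threshold $16$ (Propositions \ref{propo of Kpm31}, \ref{Propo of Kpm32}, \ref{Propo of Kpm33}), keeping careful track of the single extra negative power of $(2-\mu)$ that the resonance introduces. By the decomposition \eqref{decomp of Kpm3 resonance} we write $K^{\pm}_3=\sum_{B\in\mcaB_{11}\cup\mcaB_{12}}K^{\pm}_B$. The three operators in $\mcaB_{12}=\{\widetilde{Q}B^{\pm,1}_{03}\widetilde{Q},\,(2-\mu)^{1/2}B^{\pm,1}_1,\,\Gamma^{1}_{1}(2-\mu)\}$ have exactly the same structure (a $\widetilde{Q}(\cdot)\widetilde{Q}$ term, a term carrying a $(2-\mu)^{1/2}$ smallness factor, and a $\Gamma$-remainder obeying \eqref{estimate of Gamma}) as the three pieces treated in Propositions \ref{propo of Kpm31}--\ref{Propo of Kpm33}, so those propositions apply to them verbatim. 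Hence the task reduces to $B\in\mcaB_{11}=\{(2-\mu)^{-1/2}\widetilde{S}_0B^{\pm}_{-1}\widetilde{S}_0,\,\widetilde{S}_0B^{\pm,1}_{01},\,B^{\pm,1}_{02}\widetilde{S}_0\}$, and, by the symmetry between $\widetilde{S}_0B^{\pm,1}_{01}$ and $B^{\pm,1}_{02}\widetilde{S}_0$, to the two representative cases $B=(2-\mu)^{-1/2}\widetilde{S}_0B^{\pm}_{-1}\widetilde{S}_0$ and $B=\widetilde{S}_0B^{\pm,1}_{01}$.

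The key observation is that $\widetilde{S}_0\ell^2(\Z)\subseteq\widetilde{Q}\ell^2(\Z)$, so $\widetilde{S}_0=\widetilde{Q}\widetilde{S}_0=\widetilde{S}_0\widetilde{Q}$ and the cancellation identities of Lemma \ref{cancelation lemma16} apply with $\widetilde{S}_0f$ playing the role of $\widetilde{Q}f$. Thus, combining \eqref{R0mu4 and Rdeltamu2}, \eqref{relation of Rminus Laplacian 0and4} and $\widetilde{S}_0=\widetilde{Q}\widetilde{S}_0$ we have the identity $R^{\pm}_0(\mu^4)v\widetilde{S}_0=\tfrac{1}{2\mu^2}\big(-JR^{\mp}_{-\Delta}(4-\mu^2)\tilde v\,\widetilde{S}_0-R_{-\Delta}(-\mu^2)v\widetilde{S}_0\big)$ (and its adjoint version on the right); inserting these exactly as in \eqref{R0vQBQvR0} splits $R^{\pm}_0(\mu^4)vBvR^{\pm}_0(\mu^4)$ into four pieces indexed by which free resolvent — $R^{\mp}_{-\Delta}(4-\mu^2)$, singular of order $(2-\mu)^{-1/2}$ at $\mu=2$, versus $R_{-\Delta}(-\mu^2)$, smooth and bounded at $\mu=2$ by \eqref{kernel of Rlaplace-mu2} — appears on each side, as in \eqref{Lamda 31j}. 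On each side carrying $R^{\mp}_{-\Delta}(4-\mu^2)$ we invoke Lemma \ref{cancelation lemma16}, which yields the bounded factor $\tilde\theta_+/(2\sin\tilde\theta_+)=O(1)$ together with the weight $\tilde v_1$ (absolutely summable since $\beta>11$), so that side becomes bounded near $\mu=2$. For $B=\widetilde{S}_0B^{\pm,1}_{01}$ only the left side carries a projection, so its right factor is the full $R^{\pm}_0(\mu^4)$; on that side we instead apply the change of variable \eqref{varible substi1}, whose Jacobian $-(a_1(\mu))^{-1}$ exactly cancels the $a_1(\mu)=(1-\mu^2/4)^{-1/2}$ singularity of the kernel \eqref{kernel of R0 boundary}.

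Next, on every piece carrying a $\widetilde{\mcaB}^{\pm}$-kernel I would perform the change of variable \eqref{varible substi2} ($\cos\tilde\theta_+=\mu^2/2-1$, $d\mu/d\tilde\theta_+=-\sin\tilde\theta_+/\mu$); since $(2-\mu)^{-1/2}\simeq C|\tilde\theta_+|^{-1}$ and the Jacobian is $\simeq|\tilde\theta_+|$, the explicit weight of the $B^{\pm}_{-1}$-term is neutralized and a smooth amplitude remains. On the remaining piece, where both free resolvents are the smooth $R_{-\Delta}(-\mu^2)$ and there is no cancellation against the weight, the same substitution \eqref{varible substi2} still converts $(2-\mu)^{-1/2}\,d\mu$ into a bounded measure and the phase into $(2+2\cos\tilde\theta_+)^2$, after which the Van der Corput Lemma applies directly. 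In all cases one ends with oscillatory integrals whose phases are $(2+2\cos\tilde\theta_+)^2-s\tilde\theta_+$ on a small interval near $\tilde\theta_+=0$ (or $(2-2\cos\theta_+)^2-s\theta_+$ on an interval near $\theta_+=-\pi$ for $B=\widetilde{S}_0B^{\pm,1}_{01}$), with amplitudes that are smooth in the new variable times factors of the form $e^{b(\mu)|\cdot|}$. Because these amplitudes have limits at the endpoints and $L^1$ derivatives, uniformly in $n,m$ — a consequence of $b(\mu),b'(\mu)<0$ together with \eqref{integral ebnm}, of the smoothness of $\tilde\theta_+/\sin\tilde\theta_+$ near $0$, and of $\|v\|_{\ell^2}+\|\tilde v_1\|_{\ell^1}<\infty$ — Corollary \ref{corollary} gives $|K^{\pm}_B(t,n,m)|\lesssim|t|^{-1/4}$ uniformly in $n,m$, which is \eqref{estimate of K3 resonance}.

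I expect the only genuinely new difficulty, compared with Proposition \ref{propo of Kpm31}, to be the bookkeeping that confirms the weight $(2-\mu)^{-1/2}$ is always neutralized: on the pieces produced by Lemma \ref{cancelation lemma16} it is cancelled by the Jacobian $\sin\tilde\theta_+/\mu$ of \eqref{varible substi2}, while on the piece with $R_{-\Delta}(-\mu^2)$ on both sides it is absorbed by the same substitution acting against a phase that has at worst a non-degenerate critical point. As in the regular case, the region $\mu=2$ is in fact softer than $\mu=0$ — there $\partial_\mu(\mu^4)=4\mu^3$ is bounded below and the resulting phases admit at worst non-degenerate critical points, so the genuine decay is $|t|^{-1/2}$; we record only the weaker $|t|^{-1/4}$ needed here. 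No input beyond $\beta>11$ is required, this bound being precisely what guarantees the expansion \eqref{asy expan resonance 2} and the summability of the weights $\tilde v_1$ that arise through Lemma \ref{cancelation lemma16}.
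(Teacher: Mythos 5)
Your proposal is correct and follows essentially the same route as the paper: reduce by symmetry to $B=(2-\mu)^{-1/2}\widetilde{S}_0B^{\pm}_{-1}\widetilde{S}_0$ and $B=\widetilde{S}_0B^{\pm,1}_{01}$, observe that Lemma \ref{cancelation lemma16} applies to $\widetilde{S}_0$ because $\langle\widetilde{S}_0f,\tilde v\rangle=0$ and $\widetilde{S}_0\tilde v=0$, decompose via \eqref{R0vQBQvR0} into pieces built from $R^{\mp}_{-\Delta}(4-\mu^2)$ and the smooth $R_{-\Delta}(-\mu^2)$, and let the Jacobian of the change of variable neutralize the residual $(2-\mu)^{-1/2}$ singularity before invoking Corollary \ref{corollary}. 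Your one deviation — applying \eqref{varible substi1} to the unprojected right factor of $\widetilde{S}_0B^{\pm,1}_{01}$ rather than decomposing it and using \eqref{varible substi2} throughout as the paper does — is immaterial, since near $\mu=2$ one has $\tilde\theta_+=-\pi-\theta_+$, so the two substitutions differ only by an affine map (phases stay linear in either variable) and both Jacobians are comparable to $(2-\mu)^{1/2}$.
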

\begin{proof}
By symmetry, we focus on operators $B=(2-\mu)^{-\frac{1}{2}}\widetilde{S}_0B^{\pm}_{-1}\widetilde{S}_0,\widetilde{S}_0B^{\pm,1}_{01}$ only. Denote
\begin{align*}
K^{\pm}_{-1}(t,n,m)&:=\int_{2-\mu_0}^{2}e^{-it\mu^4}(2-\mu)^{-\frac{1}{2}}\mu^3\big[R^{\pm}_0(\mu^4)v\widetilde{S}_0B^{\pm}_{-1}\widetilde{S}_0vR^{\pm}_0(\mu^4)\big](n,m)d\mu,\\
\widetilde{K}^{\pm}_{01}(t,n,m)&:=\int_{2-\mu_0}^{2}e^{-it\mu^4}\mu^3\big[R^{\pm}_0(\mu^4)v\widetilde{S}_0B^{\pm,1}_{01}vR^{\pm}_0(\mu^4)\big](n,m)d\mu.
\end{align*}
Substituting $\widetilde{Q}B_{01}\widetilde{Q}$ in \eqref{R0vQBQvR0} with $\widetilde{S}_0B^{\pm}_{-1}\widetilde{S}_0$ and $\widetilde{S}_0B^{\pm,1}_{01}$, these two kernels can further expressed as
\begin{align}
K^{\pm}_{-1}(t,n,m)&=\frac{1}{4}\int_{2-\mu_0}^{2}e^{-it\mu^4}(2-\mu)^{\frac{1}{2}}\mu^{-1}\big(\Lambda^{\pm,j}_{-1}(\mu)\big)(n,m)d\mu,\\
\widetilde{K}^{\pm}_{01}(t,n,m)&=\frac{1}{4}\int_{2-\mu_0}^{2}e^{-it\mu^4}\mu^{-1}\big(\Lambda^{\pm,j}_{01}(\mu)\big)(n,m)d\mu,
\end{align}
where $\Lambda^{\pm,j}_{-1}(\mu)$ and $\Lambda^{\pm,j}_{01}(\mu)$ are defined in \eqref{Lamda 31j} by replacing $\widetilde{Q}B_{01}\widetilde{Q}$ with $\widetilde{S}_0B^{\pm}_{-1}\widetilde{S}_0$ and $\widetilde{S}_0B^{\pm,1}_{01}$, respectively.
Since $\big<\widetilde{S}_0f,\tilde{v}\big>=0,\widetilde{S}_0\tilde{v}=0$, Lemma \ref{cancelation lemma16} is also valid for $\widetilde{S}_0$. Basing on Proposition \ref{propo of Kpm31}, it ultimately reduces to estimate the following key oscillatory integrals:
\begin{align*}
\Omega^{\pm,1}_{-1}(t,N_1,M_2)&=\int_{2-\mu_0}^{2}e^{-it\mu^4}e^{\pm i\tilde{\theta}_+(|N_1|+|M_2|)}(2-\mu)^{-\frac{1}{2}}\mu^{-1}\Big(\frac{\tilde{\theta}_+}{{\rm sin}\tilde{\theta}_+}\Big)^2d\mu,\\
\Omega^{\pm,2}_{-1}(t,\widetilde{N}_1,M_2)&=\int_{2-\mu_0}^{2}e^{-it\mu^4}e^{\pm i\tilde{\theta}_+|M_2|}(2-\mu)^{-\frac{1}{2}}\mu^{-1}\widetilde{a_2}(\mu)\frac{\tilde{\theta}_+}{{\rm sin}\tilde{\theta}_+}e^{b(\mu)|\widetilde{N}_1|}d\mu,\\
\Omega^{\pm,3}_{-1}(t,\widetilde{N}_1,\widetilde{M}_2)&=\int_{2-\mu_0}^{2}e^{-it\mu^4}(2-\mu)^{-\frac{1}{2}}\mu^{-1}(\widetilde{a_2}(\mu))^2e^{b(\mu)(|\widetilde{N}_1|+|\widetilde{M}_2|)}d\mu,\\
\Omega^{\pm,1}_{01}(t,N_1,\widetilde{M}_2)&=\int_{2-\mu_0}^{2}e^{-it\mu^4}e^{\pm i\tilde{\theta}_+(|N_1|+|\widetilde{M}_2|)}\mu^{-1}\frac{\tilde{\theta}_+}{{\rm sin}^2\tilde{\theta}_+}d\mu,\\
\Omega^{\pm,2}_{01}(t,\widetilde{N}_1,\widetilde{M}_2)&=\int_{2-\mu_0}^{2}e^{-it\mu^4}e^{\pm i\tilde{\theta}_+|\widetilde{M}_2|}\mu^{-1}\widetilde{a_2}(\mu)\frac{1}{{\rm sin}\tilde{\theta}_+}e^{b(\mu)|\widetilde{N}_1|}d\mu,
\end{align*}
where $N_1=n-\rho_1m_1,\widetilde{N}_1=n-m_1,M_2=m-\rho_2m_2,\widetilde{M}_2=m-m_2$. Since the variable substitution \eqref{varible substi2} contributes a factor $(2-\mu)^{\frac{1}{2}}$, we apply it to these five integrals, yielding the desired result by Corollary \ref{corollary}. This completes the proof.
\end{proof}
\subsubsection{{\bf {Eigenvalue case}}} In this scenario, due to the high singularity, our approach will differ slightly from the previous two cases. First, it follows from \eqref{kernels of Ki} and \eqref{asy expan eigenvalue 2} that
\begin{equation*}%\label{decomp of Kpm3 resonance}
(K^{+}_{3}-K^{-}_{3})(t,n,m)=(\widetilde{K}_{1}+\widetilde{K}^{+}_{2}-\widetilde{K}^{-}_2)(t,n,m),
\end{equation*}
where
\begin{align}
\widetilde{K}_{1}(t,n,m)&=\int_{2-\mu_0}^{2}e^{-it\mu^4}(2-\mu)^{-1}\mu^3\Big[R^+_0(\mu^4)v\widetilde{S}_1B_{-2}\widetilde{S}_1vR^+_0(\mu^4)\notag\\
&\quad-R^-_0(\mu^4)v\widetilde{S}_1B_{-2}\widetilde{S}_1vR^-_0(\mu^4)\Big](n,m)d\mu,\label{kernel of K1tuta}\\
\widetilde{K}^{\pm}_{2}(t,n,m)&=\sum\limits_{B\in\mcaB_2}\int_{2-\mu_0}^{2}e^{-it\mu^4}\mu^3\big[R^{\pm}_0(\mu^4)vBvR^{\pm}_0(\mu^4)\big](n,m)d\mu,\notag
\end{align}
with
\begin{equation*}
\mcaB_{2}=\big\{(2-\mu)^{-\frac{1}{2}}\widetilde{S}_0B^{\pm}_{-1,1}\widetilde{Q},\ (2-\mu)^{-\frac{1}{2}}\widetilde{Q}B^{\pm}_{-1,2}\widetilde{S}_0,\
\widetilde{Q}B^{\pm,2}_{01},\ B^{\pm,2}_{02}\widetilde{Q},\
(2-\mu)^{\frac{1}{2}}B^{\pm,2}_1,\ \Gamma^{2}_{1}(2-\mu)\big\}.
\end{equation*}
From Propositions \ref{propo of Kpm31}, \ref{Propo of Kpm32}, \ref{Propo of Kpm33} and \ref{propo of resonance 2}, the estimate \eqref{esti for Kpm3} for $K^{+}_3-K^{-}_3$ finally reduces to $\widetilde{K}_{1}$.
\begin{proposition}\label{propo of eigenvalue 2}
Let $H=\Delta^2+V$ with $|V(n)|\lesssim \left<n\right>^{-\beta}$ for $\beta>15$. Assume that $16$ is an eigenvalue of $H$, then the following estimate hold:
\begin{equation}\label{estim of Ktuta1,Ktuta2}
\big|\widetilde{K}_{1}(t,n,m)\big|\lesssim |t|^{-\frac{1}{4}},\quad t\neq0,\ {\rm uniformly\ in}\ n,m\in\Z.
\end{equation}
\end{proposition}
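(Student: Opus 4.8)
The plan is to run the same scheme as for the regular and resonant cases at the threshold $16$ (Propositions \ref{propo of Kpm31}--\ref{propo of resonance 2}), the new ingredient being that $\widetilde S_1$ carries \emph{two} vanishing moments, $\widetilde S_1\tilde v=\widetilde S_1\tilde v_1=0$, which produces a cancellation strong enough to absorb the worse prefactor $(2-\mu)^{-1}$ in \eqref{kernel of K1tuta}. First I would use \eqref{R0mu4 and Rdeltamu2} together with the reflection identity \eqref{relation of Rminus Laplacian 0and4}, $JR^{\pm}_{-\Delta}(\mu^2)J=-R^{\mp}_{-\Delta}(4-\mu^2)$, to write, for $\mu\in[2-\mu_0,2)$,
\begin{align*}
X^{\pm}:=R^{\pm}_0(\mu^4)v\widetilde S_1=X_0+X^{\pm}_1,\qquad Y^{\pm}:=\widetilde S_1 vR^{\pm}_0(\mu^4)=Y_0+Y^{\pm}_1,
\end{align*}
where, identifying $Jv=vJ$ with multiplication by $\tilde v=Jv$,
\begin{align*}
X_0=-\tfrac1{2\mu^2}R_{-\Delta}(-\mu^2)v\widetilde S_1,\quad X^{\pm}_1=-\tfrac1{2\mu^2}JR^{\mp}_{-\Delta}(4-\mu^2)\tilde v\,\widetilde S_1,\quad Y_0=-\tfrac1{2\mu^2}\widetilde S_1 vR_{-\Delta}(-\mu^2),\quad Y^{\pm}_1=-\tfrac1{2\mu^2}\widetilde S_1\tilde v\,R^{\mp}_{-\Delta}(4-\mu^2)J.
\end{align*}
Since $R_{-\Delta}(-\mu^2)$ is regular at $\mu=2$, the $\pm$-independent pieces $X_0,Y_0$ are $\mu$-smooth and uniformly bounded on $[2-\mu_0,2]$. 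For $X^{\pm}_1,Y^{\pm}_1$ I would use a two-moment analogue of Lemma \ref{cancelation lemma16} (proved exactly as Lemma \ref{cancelation lemma}(2), by Taylor expanding the kernel $\pm i(2\sin\tilde\theta_+)^{-1}e^{\pm i\tilde\theta_+|n-m|}$ of $R^{\mp}_{-\Delta}(4-\mu^2)$ to second order in the lattice variable and using $\langle\widetilde S_1 f,\tilde v_k\rangle=0$ for $k=0,1$): the two vanishing moments gain a factor $\tilde\theta_+^2$, which against the prefactor $(2\sin\tilde\theta_+)^{-1}=O(\tilde\theta_+^{-1})$ leaves $O(\tilde\theta_+)=O((2-\mu)^{1/2})$, with the clean structure $\sqrt{2-\mu}$ times a $\mu$-smooth bounded operator (here $\tilde\theta_+$ solves $\cos\tilde\theta_+=\tfrac{\mu^2}{2}-1$).

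The algebraic heart is the cancellation of the bounded $\pm$-independent background. Because the middle factor $\widetilde S_1 B_{-2}\widetilde S_1$ is $\pm$-free,
\begin{align*}
R^{+}_0(\mu^4)v\widetilde S_1B_{-2}\widetilde S_1vR^{+}_0(\mu^4)-R^{-}_0(\mu^4)v\widetilde S_1B_{-2}\widetilde S_1vR^{-}_0(\mu^4)
=X_0B_{-2}(Y^{+}_1-Y^{-}_1)+(X^{+}_1-X^{-}_1)B_{-2}Y_0+X^{+}_1B_{-2}Y^{+}_1-X^{-}_1B_{-2}Y^{-}_1,
\end{align*}
the terms $X_0B_{-2}Y_0$ dropping out. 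Hence every surviving term carries at least one factor $X^{\pm}_1$ or $Y^{\pm}_1$, so the whole bracket is $O((2-\mu)^{1/2})$ (the last two terms even $O(2-\mu)$), again of the form $\sqrt{2-\mu}$ times smooth. Inserting this into \eqref{kernel of K1tuta}, the integrand of $\widetilde K_1$ is $(2-\mu)^{-1}\mu^3$ times a quantity that is $O((2-\mu)^{1/2})$ with controlled $\mu$-derivatives.

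To convert this into time decay I would apply the substitution \eqref{varible substi2}, $\cos\tilde\theta_+=\tfrac{\mu^2}{2}-1$, so that $\mu^4=(2+2\cos\tilde\theta_+)^2$, $d\mu=-\tfrac{\sin\tilde\theta_+}{\mu}d\tilde\theta_+$, and $2-\mu\sim\tfrac{\tilde\theta_+^2}{4}$, $\sin\tilde\theta_+\sim\tilde\theta_+$ as $\mu\to2$. Then the singular weight $(2-\mu)^{-1}$, the gain $(2-\mu)^{1/2}$ and the Jacobian combine into a factor that is bounded on $[r_2,0]$ (with $r_2\in(-\pi,0)$, $\cos r_2=\tfrac{(2-\mu_0)^2}{2}-1$) with integrable derivative, uniformly in the lattice variables; summing the internal indices $m_1,m_2$ by H\"older's inequality is legitimate for $\beta>15$ (so that $v,v_1,v_2$ lie in the relevant weighted spaces, and the expansion \eqref{asy expan eigenvalue 2} holds). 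Thus $\widetilde K_1$ reduces to a finite sum of oscillatory integrals of the form $\int_{r_2}^{0}e^{-it[(2+2\cos\tilde\theta_+)^2-s\tilde\theta_+]}\phi(\tilde\theta_+)\,d\tilde\theta_+$ with $\|\phi\|_{\infty}+\|\phi'\|_{L^1}\lesssim1$, to which Corollary \ref{corollary}(ii) applies and yields the bound $|t|^{-1/4}$, uniformly in $n,m$. Together with Propositions \ref{propo of Kpm31}, \ref{Propo of Kpm32}, \ref{Propo of Kpm33} and \ref{propo of resonance 2} (which dispose of $\widetilde K^{\pm}_2$), this establishes \eqref{estim of Ktuta1,Ktuta2}, and hence \eqref{esti for Kpm3} in the eigenvalue case of \eqref{cases and conditions 16}, completing Theorem \ref{theorem of estimate of K1,K3}(iii).

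I expect the main obstacle to be the bookkeeping required to make the two layers of cancellation—the two vanishing moments of $\widetilde S_1$ and the $R^{+}_0-R^{-}_0$ difference—compound correctly: one must verify that each surviving term genuinely acquires the full factor $(2-\mu)^{1/2}$ and that the accompanying operator/remainder factors depend on $\mu$ with an $L^1$ derivative even after the substitution (the tail remainders being controlled as in Proposition \ref{Propo of Kpm33}), so that Van der Corput's lemma via Corollary \ref{corollary} can be legitimately invoked without reintroducing a nonintegrable singularity at $\mu=2$.
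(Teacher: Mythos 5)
Your overall architecture (split each resolvent into a part regular at $\mu=2$ plus a part built from $R^{\mp}_{-\Delta}(4-\mu^2)$, cancel the $\pm$-independent background in the $+/-$ difference, and gain $(2-\mu)^{1/2}$ from the two vanishing moments of $\widetilde{S}_1$) is close in spirit to the paper's argument, but the key quantitative claim is wrong as stated: the ``two-moment analogue of Lemma \ref{cancelation lemma16}'' does \emph{not} give $X^{\pm}_1=O((2-\mu)^{1/2})$. The kernel of $R^{\mp}_{-\Delta}(4-\mu^2)$ is $\pm i(2\sin\tilde{\theta}_{+})^{-1}e^{\pm i\tilde{\theta}_{+}|n-m|}$, and the single exponential $F^{\mp}(s)=e^{\mp is}$ has $(F^{\mp})'(0)=\mp i\neq 0$. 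To Taylor-expand to second order one must first subtract the linear part, i.e.\ write $e^{\pm i\tilde{\theta}_{+}|n-m|}=\widetilde{F}^{\mp}(-\tilde{\theta}_{+}|n-m|)\pm i\tilde{\theta}_{+}|n-m|$; the $\widetilde{F}^{\mp}$ piece indeed yields $O(\tilde{\theta}_{+}^{2})$ against the two moments, but the re-added term $\pm i\tilde{\theta}_{+}|n-m|$ is \emph{not} annihilated by $\langle \widetilde{S}_1f,\tilde v\rangle=\langle \widetilde{S}_1f,\tilde v_1\rangle=0$ (since $|n-m|$ is not a degree-one polynomial in $m$), and against the prefactor $(2\sin\tilde{\theta}_{+})^{-1}$ it contributes $-\tfrac{\tilde{\theta}_{+}}{2\sin\tilde{\theta}_{+}}|n-m|=O(1)$. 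This is exactly the analogue of the $c_3(\mu)|n-m|$ term in Lemma \ref{cancelation lemma}(2); at $\mu=0$ that coefficient happens to be $O(\mu^3)$, but here it is not small. So $X^{\pm}_1,Y^{\pm}_1$ are only $O(1)$, your bound ``$O((2-\mu)^{1/2})$'' for the surviving terms is unjustified, and ``the last two terms even $O(2-\mu)$'' is false.

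The argument is salvageable, and the repair shows why the paper proceeds differently. Note that the offending $O(1)$ term carries the factor $(\pm i)(\pm i)=-1$ and is therefore $\pm$-\emph{independent}; if you split $X^{\pm}_1$ further into this $\pm$-free $O(1)$ piece plus a genuinely $\pm$-dependent $O(\tilde{\theta}_{+})$ piece and redo the cancellation bookkeeping, every surviving term in $X^{+}B_{-2}Y^{+}-X^{-}B_{-2}Y^{-}$ does acquire a full factor $O((2-\mu)^{1/2})$ and the rest of your estimate goes through. The paper avoids this extra layer by the add-and-subtract identity
\begin{equation*}
R^{+}_0vBvR^{+}_0-R^{-}_0vBvR^{-}_0=(R^{+}_0-R^{-}_0)vBvR^{+}_0+R^{-}_0vBv(R^{+}_0-R^{-}_0),
\end{equation*}
so that $\widetilde{S}_1$ only ever meets the \emph{difference} $(R^{-}_{-\Delta}-R^{+}_{-\Delta})(4-\mu^2)$, whose kernel involves $F(s)=i(e^{-is}+e^{is})$ with $F'(0)=0$; then the second-order Taylor formula applies directly with no linear correction term (Lemma \ref{cancelation lemma16 eigenvalue}), and the two moments give the clean gain $\tilde{\theta}_{+}^{2}/\sin\tilde{\theta}_{+}=O((2-\mu)^{1/2})$. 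You should either adopt that identity or carry out the finer splitting of $X^{\pm}_1$; as written, the proof has a gap at its central cancellation step.
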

\begin{proof}
 To this end, we employ a trick by adding and substracting a term to obtain
\begin{align*}
&R^+_0(\mu^4)v\widetilde{S}_1B_{-2}\widetilde{S}_1vR^+_0(\mu^4)-R^-_0(\mu^4)v\widetilde{S}_1B_{-2}\widetilde{S}_1vR^-_0(\mu^4)\\
=&\big((R^+_0-R^-_0)(\mu^4)\big)v\widetilde{S}_1B_{-2}\widetilde{S}_1vR^+_0(\mu^4)+R^-_0(\mu^4)v\widetilde{S}_1B_{-2}\widetilde{S}_1v\big((R^+_0-R^-_0)(\mu^4)\big).
\end{align*}
Furthermore, by \eqref{kernel of K1tuta} and symmetry, it suffices to establish the estimate \eqref{estim of Ktuta1,Ktuta2} for $\widetilde{\widetilde{K}}_{1}$, where
\begin{equation*}
\widetilde{\widetilde{K}}_{1}(t,n,m)=\int_{2-\mu_0}^{2}e^{-it\mu^4}(2-\mu)^{-1}\mu^3\big[((R^+_0-R^-_0)(\mu^4))v\widetilde{S}_1B_{-2}\widetilde{S}_1vR^+_0(\mu^4)\big](n,m)d\mu.
\end{equation*}
Using \eqref{relation of Rminus Laplacian 0and4}, \eqref{R0mu4 and Rdeltamu2} and $J^2=I$, we obtain
\begin{align}
\widetilde{\widetilde{K}}_{1}(t,n,m)=\frac{1}{4}\sum\limits_{j=1}^{2}\int_{2-\mu_0}^{2}e^{-it\mu^4}(2-\mu)^{-1}\mu^{-1}(\Lambda_{1j})(\mu)(n,m)d\mu,
\end{align}
where
\begin{align*}
\Lambda_{11}(\mu)&=J\big((R^{-}_{-\Delta}-R^+_{-\Delta})(4-\mu^2)\big)\tilde{v}\widetilde{S}_1B_{-2}\widetilde{S}_1\tilde{v}R^-_{-\Delta}(4-\mu^2)J,\\
\Lambda_{12}(\mu)&=J\big((R^{-}_{-\Delta}-R^+_{-\Delta})(4-\mu^2)\big)\tilde{v}\widetilde{S}_1B_{-2}\widetilde{S}_1vR_{-\Delta}(-\mu^2).
\end{align*}
From \eqref{kernel of Rlaplace-mu2} and the following Lemma \ref{cancelation lemma16 eigenvalue}, it reduces to estimate these two oscillatory integrals:
\begin{align*}
\Omega^{\pm}_{11}(t,N_1,M_2)&=\int_{2-\mu_0}^{2}e^{-it\mu^4}e^{i\tilde{\theta}_+(|M_2|\pm|N_1|)}(2-\mu)^{-1}\mu^{-1}\frac{\tilde{\theta}^3_+}{{\rm sin}^2\tilde{\theta}_+}d\mu,\\
\Omega^{\pm}_{12}(t,N_1,\widetilde{M}_2)&=\int_{2-\mu_0}^{2}e^{-it\mu^4}e^{\pm i\tilde{\theta}_+|N_1|}(2-\mu)^{-1}\widetilde{a_2}(\mu)\mu^{-1}\frac{\tilde{\theta}^2_+}{{\rm sin}\tilde{\theta}_+}e^{b(\mu)|\widetilde{M}_2|}d\mu,
\end{align*}
where $N_1=n-\rho_1m_1,M_2=m-\rho_2m_2,\widetilde{M}_2=m-m_2$. Applying the variable substitution \eqref{varible substi2} to these integrals and then using Corollary \ref{corollary}, the desired estimate is obtained.
\end{proof}
\begin{lemma}\label{cancelation lemma16 eigenvalue}
Let $\widetilde{S}_1$ be as in Definition \ref{definition of Sjtilde} and $\tilde{v}=Jv$. For any $f\in\ell^{2}(\Z)$, then we have
\vskip0.15cm
\noindent{\rm(1)} $\big[\big((R^{-}_{-\Delta}-R^{+}_{-\Delta})(4-\mu^2)\big)\tilde{v}\widetilde{S}_1f\big](n)=\frac{- \tilde{\theta}^2_+}{2{\rm sin}\tilde{\theta}_+}\sum\limits_{m\in\Z}\int_{0}^{1}(1-\rho)F(-\tilde{\theta}_+|n-\rho m|)d\rho\cdot\tilde{v}_2(m)(\widetilde{S}_1f)(m)$,
    \vskip0.2cm
\noindent{\rm(2)} $\widetilde{S}_1\big(\tilde{v}\big((R^{-}_{-\Delta}-R^{+}_{-\Delta})(4-\mu^2)\big)f\big)=\frac{- \tilde{\theta}^2_+}{2{\rm sin}\tilde{\theta}_+}\widetilde{S}_1\big(\tilde{v}_2(\cdot)\sum\limits_{m\in\Z}\int_{0}^{1}(1-\rho)F(-\tilde{\theta}_+|m-\cdot\rho |)d\rho f(m)\big)$,

where $\tilde{\theta}_{+}$ satisfies ${\rm cos}\tilde{\theta}_{+}=\frac{\mu^2}{2}-1$ with $ \tilde{\theta}_{+}\in(-\pi,0)$ and $F(s)=i(e^{-is}+e^{is})$.
\end{lemma}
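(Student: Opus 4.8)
The plan is to follow the same route as in the proof of Lemma~\ref{cancelation lemma16}, pushing the Taylor expansion of the kernel one order further, since $\widetilde{S}_1$ annihilates both moments $\tilde{v}_0$ and $\tilde{v}_1$ rather than only $\tilde{v}_0$. First I would record the kernel of the difference operator: from \eqref{kernel of Rlapalace 4-mu2} one has $R^{\mp}_{-\Delta}(4-\mu^2,n,m)=\pm i(2\,{\rm sin}\,\tilde{\theta}_+)^{-1}e^{\pm i\tilde{\theta}_{+}|n-m|}$, so that
\begin{equation*}
\big((R^{-}_{-\Delta}-R^{+}_{-\Delta})(4-\mu^2)\big)(n,m)=\frac{1}{2\,{\rm sin}\,\tilde{\theta}_+}\,F(-\tilde{\theta}_+|n-m|),\qquad F(s)=i\big(e^{-is}+e^{is}\big)=2i\,{\rm cos}\,s.
\end{equation*}
Note that $F'(s)=-2i\,{\rm sin}\,s$, hence $F'(0)=0$, and $F''(s)=-F(s)$. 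The vanishing of $F'(0)$ is precisely the hypothesis required to invoke the second-order Taylor expansion with integral remainder of \cite[Lemma 2.5~(ii)]{SWY22}, which after substituting $F''=-F$ reads
\begin{equation*}
F(-\tilde{\theta}_+|n-m|)=F(-\tilde{\theta}_+|n|)+\tilde{\theta}_+m\,({\rm sign}(n))F'(-\tilde{\theta}_+|n|)-\tilde{\theta}^2_+m^2\int_{0}^{1}(1-\rho)F(-\tilde{\theta}_+|n-\rho m|)\,d\rho.
\end{equation*}

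For part~(1), I would substitute this expansion into $\big[(R^{-}_{-\Delta}-R^{+}_{-\Delta})(4-\mu^2)\tilde{v}\widetilde{S}_1f\big](n)=(2\,{\rm sin}\,\tilde{\theta}_+)^{-1}\sum_{m\in\Z}F(-\tilde{\theta}_+|n-m|)\,\tilde{v}(m)(\widetilde{S}_1f)(m)$, and then use that, since $\widetilde{S}_1f\in\widetilde{S}_1\ell^2(\Z)$ and $\tilde{v}$ is real-valued, $\sum_{m}m^{k}\tilde{v}(m)(\widetilde{S}_1f)(m)=\langle\widetilde{S}_1f,\tilde{v}_k\rangle=0$ for $k=0,1$ (by Definition~\ref{definition of Sjtilde}). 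Hence the first two terms of the expansion vanish, and because $m^{2}\tilde{v}(m)=\tilde{v}_2(m)$ the surviving term is exactly the claimed expression.

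For part~(2), I would instead write $\big(\tilde{v}(R^{-}_{-\Delta}-R^{+}_{-\Delta})(4-\mu^2)f\big)(n)=(2\,{\rm sin}\,\tilde{\theta}_+)^{-1}\tilde{v}(n)\sum_{m}F(-\tilde{\theta}_+|m-n|)f(m)$, using $|n-m|=|m-n|$, and expand $F(-\tilde{\theta}_+|m-n|)$ in the variable $n$, i.e.\ apply the Taylor formula above with the roles of $n$ and $m$ interchanged. Since $\tilde{v}(n)$ times a constant (in $n$) is a multiple of $\tilde{v}_0$, and $\tilde{v}(n)\,n$ times a constant is a multiple of $\tilde{v}_1$, while $\tilde{v}(n)\,n^2=\tilde{v}_2(n)$, this represents the quantity as a multiple of $\tilde{v}_0$, plus a multiple of $\tilde{v}_1$, plus $-\tilde{\theta}^2_+(2\,{\rm sin}\,\tilde{\theta}_+)^{-1}\tilde{v}_2(n)\sum_{m}\int_0^1(1-\rho)F(-\tilde{\theta}_+|m-\rho n|)\,d\rho\,f(m)$. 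Since $\tilde{v}_k\perp\widetilde{S}_1\ell^2(\Z)$, so that $\widetilde{S}_1\tilde{v}_k=0$ for $k=0,1$, applying $\widetilde{S}_1$ annihilates the first two terms and leaves exactly the asserted identity.

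The argument is essentially bookkeeping once the earlier cancellation lemmas are in hand. The only points needing a little care are checking $F'(0)=0$ — so that the second-order expansion carries a genuine integral remainder with no $\delta$-type contribution at $n=\rho m$ — and tracking the real-valuedness of $\tilde{v}$ when rewriting $\sum_m m^k\tilde{v}(m)(\cdot)(m)$ as an inner product against $\tilde{v}_k$; neither represents a genuine obstacle, so I do not expect any serious difficulty here.
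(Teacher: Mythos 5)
Your proposal is correct and follows essentially the same route as the paper: the paper's proof likewise computes the kernel of the difference as $(2\sin\tilde{\theta}_+)^{-1}F(-\tilde{\theta}_+|n-m|)$, observes $F'(0)=0$, and invokes the second-order Taylor expansion with integral remainder from the method of Lemma \ref{cancelation lemma}~(2) together with $\langle\widetilde{S}_1f,\tilde{v}_k\rangle=0$ and $\widetilde{S}_1\tilde{v}_k=0$ for $k=0,1$. Your write-up simply makes explicit the steps the paper leaves to the reader.
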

\begin{proof}
From \eqref{kernel of Rlapalace 4-mu2}, we have
$$(R^{-}_{-\Delta}-R^{+}_{-\Delta}\big)(4-\mu^2,n,m)=\frac{1}{2{\rm sin}\tilde{\theta}_+}F(-\tilde{\theta}_+|n-m|).$$
Combining $F'(0)=0$, $\big<\widetilde{S}_1f,\tilde{v}_k\big>=0$ and $\widetilde{S}_1\tilde{v}_k=0$ for $k=0,1$, the desired conclusion can be obtained by following the method in the proof of Lemma \ref{cancelation lemma} (2).

\end{proof}
\section{Proof of Theorem \ref{Asymptotic expansion theorem}}\label{proof of Asy theorem}
This section is devoted to presenting the proof of Theorem \ref{Asymptotic expansion theorem}. As preliminary steps, we first establish several auxiliary lemmas. To begin with, we give asymptotic behaviors of $R^{\pm}_{0}(\mu^4)$ near $\mu=0$ and $\mu=2$ in some suitable weighted space $\B(s,-s)$.
\begin{comment}
Firstly, recall from \eqref{kernel of R0 boundary} that the kernel of $R^{\pm}_0(\mu^4)$ takes the form:
\begin{equation*}
R^{\pm}_{0}(\mu^4,n,m)=\frac{1}{4\mu^3}\big(A^{\pm}(\mu)e^{-i\theta_{\pm}|n-m|}-B(\mu)e^{b(\mu)|n-m|}\big),
\end{equation*}
where $A^{\pm}(\mu)=\frac{\pm i}{\sqrt{1-\frac{\mu^2}{4}}}$, $B(\mu)=\frac{1}{\sqrt{1+\frac{\mu^2}{4}}}$ and $b(\mu)={\rm ln} \big(1+\frac{\mu^2}{2}-\mu(1+\frac{\mu^2}{4})^{\frac{1}{2}}\big)$. Then we have
\end{comment}
\begin{lemma}\label{Puiseux expan of R0}
Let $N$ be an integer and $\mu\in(0,2)$.
\begin{itemize}
\item [{\rm(i)}] Suppose that $N\geq-3$ and $s>\frac{1}{2}+N+4$, then
\begin{equation}\label{Puiseux expan of R0 0}
R^{\pm}_0(\mu^4)=\sum\limits_{j=-3}^{N}\mu^{j}G^{\pm}_j+r^{\pm}_{N}(\mu),\quad\mu\rightarrow0\ {\rm in}\ \B(s,-s),
\end{equation}
\end{itemize}
where $\left\|r^{\pm}_{N}(\mu)\right\|_{\B(s,-s)}=O\left(\mu^{N+1}\right)$ as $\mu\rightarrow0$
and $G^{\pm}_j$ are integral operators with kernels given as follows:
\begin{itemize}
 \item $G^{\pm}_{-3}(n,m)=\frac{-1\pm i}{4}$,\quad $G^{\pm}_{-2}(n,m)=0$,
 \vskip0.2cm
 \item $G^{\pm}_{-1}(n,m)=\frac{1\pm i}{4}\left(\frac{1}{8}-\frac{1}{2}|n-m|^2\right):=\frac{1\pm i}{4}{\bm{G_{-1}}}(n,m)$,
     \vskip0.2cm
 \item $G^{\pm}_{0}(n,m)=\frac{1}{12}\left(|n-m|^3-|n-m|\right):={\bm{G_0}}(n,m)$,
 \vskip0.2cm
 \item $G^{\pm}_{1}(n,m)=\frac{-1\pm i}{32}\big(\frac{1}{3}|n-m|^4-\frac{5}{6}|n-m|^2+\frac{3}{16}\big):=\frac{-1\pm i}{32}{\bm{G_1}}(n,m)$,
     \vskip0.2cm
 \item $G^{\pm}_{2}(n,m)=0$,\quad$G^{\pm}_{3}(n,m)=\frac{-1\mp i}{4\cdot6!}\big(|n-m|^6-\frac{35}{4}|n-m|^4+\frac{259}{16}|n-m|^2-\frac{225}{64}\big):=\frac{-1\mp i}{4\cdot6!}{\bm{G_3}}(n,m)$,
     \vskip0.2cm
 \item for $j\in\N_+$,
     \begin{equation}\label{expan coeffie of Gpm}
G^{\pm}_{j}(n,m)=\sum\limits_{k=0}^{j+3}c^{\pm}_{k,j}|n-m|^k,\quad c_{k,j}\in\C.
\end{equation}
 %\item $\widetilde{G}^{\pm}_{-1}(n,m)=\frac{\pm i}{32}(-1)^{|n-m|}$,\quad $\widetilde{G}^{\pm}_{0}(n,m)=\frac{(-1)^{|n-m|}}{32\sqrt2}\left(2\sqrt2 |n-m|-\left(2\sqrt2-3\right)^{|n-m|}\right)$,
\end{itemize}
Moreover, in the same sense, the \eqref{Puiseux expan of R0 0} can be differentiated $N+4$ times in $\mu$.
\begin{itemize}
\item[{\rm(ii)}] Suppose that $N\geq-1$ and $s>\frac{1}{2}+N+2$, then
\begin{equation}\label{Puiseux expan of R0 2}
JR^{\pm}_0((2-\mu)^4)J=\sum\limits_{j=-1}^{N}\mu^{\frac{j}{2}}\widetilde{G}^{\pm}_j+\mcaE^{\pm}_{N}(\mu),\quad\mu\rightarrow0\ {\rm in}\ \B(s,-s),
\end{equation}
\end{itemize}
where $\left\|\mcaE^{\pm}_{N}(\mu)\right\|_{\B(s,-s)}=O\left(\mu^{\frac{N+1}{2}}\right)$ as $\mu\rightarrow0$ and $\widetilde{G}^{\pm}_j$ are integral operators with kernels given as follows:
\begin{itemize}
 \item $\widetilde{G}^{\pm}_{-1}(n,m)=\frac{\pm i}{32}:=\pm i{\bm{\widetilde{G}_{-1}}}(n,m)$,
 \vskip0.2cm
 \item $\widetilde{G}^{\pm}_{0}(n,m)=\frac{1}{32\sqrt2}\big(2\sqrt2|n-m|-{(2\sqrt2-3)}^{|n-m|}\big):={\bm{\widetilde{G}_0}}(n,m)$,
     \vskip0.2cm
 \item $\widetilde{G}^{\pm}_{1}(n,m)=\frac{\mp i}{32}\big(2|n-m|^2-\frac{13}{8}\big):=\pm i{\bm{\widetilde{G}_{1}}}(n,m)$,
     \vskip0.2cm
 \item $\widetilde{G}^{\pm}_{2}(n,m)=-\frac{1}{24}|n-m|^3+\frac{5}{48}|n-m|-\big(2\sqrt2-3\big)^{|n-m|}\Big(\frac{\sqrt2}{2}|n-m|-\frac{1}{8}+\frac{15}{256\sqrt2}\Big):={\bm{\widetilde{G}_{2}}}(n,m)$,
     \vskip0.2cm
 \item for $j\in\N_+$,
     \begin{equation}\label{expan coeffie of Gtutapm}
\widetilde{G}^{\pm}_{j}(n,m)=\sum\limits_{k=0}^{j+1}d^{\pm}_{k,j}(n,m)|n-m|^{k},\ d^{\pm}_{j+1,j}(n,m)=\frac{(\mp2i)^{j}}{16(j+1)!}.
\end{equation}
\end{itemize}
Furthermore, in the same sense, the \eqref{Puiseux expan of R0 2} can be differentiated $N+2$ times in $\mu$.
\end{lemma}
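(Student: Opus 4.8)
The plan is to prove (i) and (ii) directly from the explicit scalar kernel \eqref{kernel of R0 boundary} of $R^{\pm}_0(\mu^4)$, and to convert pointwise kernel expansions into operator expansions in $\B(s,-s)$ via the elementary fact that the operator with kernel $\left<n-m\right>^{a}$ ($a\ge 0$) is bounded from $\ell^{2,s}(\Z)$ to $\ell^{2,-s}(\Z)$ whenever $s>a+\tfrac12$ (use $\left<n-m\right>^{a}\lesssim\left<n\right>^{a}\left<m\right>^{a}$ together with $\left<m\right>^{a-s}\in\ell^2$). Thus it suffices to expand the scalar kernel in powers of $\mu$ — integer powers near $\mu=0$, half-integer powers near $\mu=2$ — with coefficients that are polynomials in $|n-m|$ of controlled degree, plus a remainder whose order in $\mu$ and whose polynomial growth in $|n-m|$ produce the claimed $O(\mu^{N+1})$, resp. $O(\mu^{(N+1)/2})$, and the stated threshold on $s$.

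\emph{Part (i).} Near $\mu=0$ the building blocks are real-analytic: $a_1(\mu)=(1-\tfrac{\mu^2}4)^{-1/2}$ and $a_2(\mu)=-(1+\tfrac{\mu^2}4)^{-1/2}$ are even; $\theta_{+}=-2\arcsin(\mu/2)$ is odd; and from $(1+\tfrac{\mu^2}2-\mu(1+\tfrac{\mu^2}4)^{1/2})(1+\tfrac{\mu^2}2+\mu(1+\tfrac{\mu^2}4)^{1/2})=1$ one gets $b(\mu)=-\ln\!\bigl(1+\tfrac{\mu^2}2+\mu(1+\tfrac{\mu^2}4)^{1/2}\bigr)$, which is odd and analytic, with $\theta_{+}=-\mu-\tfrac{\mu^3}{24}+\cdots$ and $b(\mu)=-\mu+\tfrac{\mu^3}{24}+\cdots$. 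Expanding $e^{\mp i\theta_{+}|n-m|}=\sum_k\tfrac{(\mp i|n-m|)^k\theta_{+}^{\,k}}{k!}$ and $e^{b(\mu)|n-m|}$ and regrouping by powers of $\mu$ (each $\theta_+^{\,k}$, $b(\mu)^{k}$ starts at $\mu^{k}$) gives $\mcaA^{\pm}(\mu,n,m)=\sum_{j\ge 0}\mu^{j}R^{\pm}_j(|n-m|)$ with $\deg R^{\pm}_j\le j$, whence $R^{\pm}_0(\mu^4,n,m)=\tfrac{1}{4\mu^3}\mcaA^{\pm}=\sum_{j\ge-3}\mu^{j}G^{\pm}_j(n,m)$ with $G^{\pm}_j(n,m)=\tfrac14 R^{\pm}_{j+3}(|n-m|)$ of degree $\le j+3$, i.e.\ the form \eqref{expan coeffie of Gpm}; the explicit low-order coefficients (and the vanishing of $G^{\pm}_{-2},G^{\pm}_2$, which is forced by the parity of $a_1,a_2$) follow by collecting terms through order $\mu^{3}$. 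For the remainder, apply Taylor's formula with integral remainder in $\mu$ to $\mcaA^{\pm}$ at order $N+3$; combined with the uniform bound $|\partial_\mu^{k}\mcaA^{\pm}(\mu,n,m)|\lesssim\left<n-m\right>^{k}$ for $\mu\in(0,\mu_0)$ (since $|e^{\mp i\theta_{+}|n-m|}|=1$, $|e^{b(\mu)|n-m|}|\le1$, and $\theta_{+},b$ are smooth with bounded derivatives there) this yields $|r^{\pm}_N(\mu,n,m)|\lesssim\mu^{N+1}\left<n-m\right>^{N+4}$, hence $\|r^{\pm}_N(\mu)\|_{\B(s,-s)}=O(\mu^{N+1})$ for $s>\tfrac12+N+4$; differentiating the integral remainder $N+4$ times and repeating the bound gives the claimed differentiability.

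\emph{Part (ii).} Setting $\mu\to0$ in $R^{\pm}_0((2-\mu)^4)$ and using \eqref{R0mu4 and Rdeltamu2}, \eqref{relation of Rminus Laplacian 0and4} and $J^2=I$, together with $4-(2-\mu)^2=\mu(4-\mu)$, write
\[
JR^{\pm}_0((2-\mu)^4)J=\frac{1}{2(2-\mu)^2}\Bigl(-R^{\mp}_{-\Delta}\bigl(\mu(4-\mu)\bigr)-JR_{-\Delta}\bigl(-(2-\mu)^2\bigr)J\Bigr).
\]
In the first term the $-\Delta$ resolvent sits at its threshold $0$: the branch angle $\theta$ entering $R^{\mp}_{-\Delta}$ satisfies $\cos\theta=1-\tfrac{\mu(4-\mu)}2$, and since $1-\tfrac{\mu(4-\mu)}4=(1-\tfrac\mu2)^2$ one has $\sin\theta=\pm\sqrt{\mu(4-\mu)}\,(1-\tfrac\mu2)=\pm2\sqrt\mu\,(1-\tfrac\mu4)^{1/2}(1-\tfrac\mu2)$ and $\theta=\pm\sqrt{\mu(4-\mu)}\,h_1(\mu)$ with $h_1$ analytic, $h_1(0)=1$ (signs tied to the superscript $\pm$); thus $\tfrac{1}{\sin\theta}=\pm\tfrac{1}{2\sqrt\mu}h_2(\mu)$, $h_2(0)=1$, while $e^{-i\theta|n-m|}=\sum_{j\ge0}\mu^{j/2}\widetilde p^{\pm}_j(|n-m|)$ with $\deg\widetilde p^{\pm}_j\le j$ and top coefficient $\tfrac{(\mp2i)^{j}}{j!}$ (from $\theta\sim\pm2\sqrt\mu$). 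Multiplying by $\tfrac{-i}{2}$ and by the analytic prefactor produces a Puiseux series $\sum_{j\ge-1}\mu^{j/2}(\cdots)$ whose $\mu^{j/2}$-coefficient is a polynomial of degree $\le j+1$ with leading coefficient $\tfrac{(\mp2i)^{j}}{16(j+1)!}$; the leading term is $\tfrac{\pm i}{32\sqrt\mu}$, matching $\widetilde G^{\pm}_{-1}=\tfrac{\pm i}{32}$. In the second term the energy $-(2-\mu)^2\to-4$ stays off $[0,4]$, so $R_{-\Delta}(-(2-\mu)^2,n,m)=\tfrac{-1}{2\sinh\rho(\mu)}e^{\rho(\mu)|n-m|}$ with $\cosh\rho(\mu)=1+\tfrac{(2-\mu)^2}2$, $\rho$ real-analytic near $0$, $\rho(0)=\ln(3-2\sqrt2)$, $\sinh\rho(0)=-2\sqrt2$; conjugation by $J$ inserts $(-1)^{n-m}=(-1)^{|n-m|}$, turning $e^{\rho(0)|n-m|}$ into $(2\sqrt2-3)^{|n-m|}$ — exactly the geometric factor in $\widetilde G_0,\widetilde G_2$ and in the coefficients $d^{\pm}_{k,j}(n,m)$. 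Adding the two contributions gives \eqref{Puiseux expan of R0 2} with $\widetilde G^{\pm}_j(n,m)=\sum_{k=0}^{j+1}d^{\pm}_{k,j}(n,m)|n-m|^{k}$ and the stated top coefficient; the low-order kernels $\widetilde G^{\pm}_{-1},\dots,\widetilde G^{\pm}_{2}$ come from expanding $h_1,h_2,\rho$ to the needed order, and the remainder $\mcaE^{\pm}_N(\mu)$ is controlled exactly as in (i), through a half-integer Taylor remainder, giving $|\mcaE^{\pm}_N(\mu,n,m)|\lesssim\mu^{(N+1)/2}\left<n-m\right>^{N+2}$ and hence the $\B(s,-s)$ bound for $s>\tfrac12+N+2$, with $N+2$ derivatives handled as before.

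The delicate point is the degree bookkeeping: one must check that the $k$-th order term of each exponential's Taylor series contributes $|n-m|^{k}$ and no higher power, so that the retained sums $\sum_j\mu^{j}G^{\pm}_j$ and $\sum_j\mu^{j/2}\widetilde G^{\pm}_j$ have kernels of degree $\le j+3$, resp.\ $\le j+1$, and so that the first omitted term dictates the threshold on $s$. The genuinely new feature relative to the continuum analogue \cite{SWY22} is part (ii): the square-root branching of the $-\Delta$ angle at its threshold forces half-integer powers of $\mu$, and the conjugation by $J$ produces the exponentially decaying, non-polynomial kernel $(2\sqrt2-3)^{|n-m|}$ which must be carried along as a bounded (hence $\B(s,-s)$-harmless) coefficient $d^{\pm}_{k,j}(n,m)$ rather than a constant. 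These points, together with verifying the uniform bounds on $\partial_\mu^{k}\mcaA^{\pm}$ and on the derivatives of the $\mu=2$ expansion, are the only real work; the explicit values of the first few coefficients are a routine but lengthy computation.
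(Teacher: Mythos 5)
Your proposal is correct and follows essentially the same route as the paper: Taylor-expand the two exponential terms of the explicit kernel \eqref{kernel of R0 boundary} (integer powers of $\mu$ at $0$, half-integer powers of $\omega=2-\mu$ at $2$), track that the $\mu^{j}$-coefficient is a polynomial in $|n-m|$ of the stated degree, and convert the pointwise remainder bound $\mu^{N+1}\langle n-m\rangle^{N+4}$ (resp. $\mu^{(N+1)/2}\langle n-m\rangle^{N+2}$) into a $\B(s,-s)$ bound via the Hilbert--Schmidt estimate under $s>\frac12+N+4$ (resp. $s>\frac12+N+2$), with differentiability handled by the same bookkeeping. Your reorganization of part (ii) through \eqref{R0mu4 and Rdeltamu2} and \eqref{relation of Rminus Laplacian 0and4} is only a repackaging of the two exponential terms already present in \eqref{kernel of R0 boundary}, so it does not constitute a genuinely different argument.
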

\begin{proof}
To begin, we recall from \eqref{kernel of R0 boundary} that
$$R^{\pm}_{0}(\mu^4,n,m)=\frac{1}{4\mu^3}\left(\frac{\pm ie^{\mp i\theta_{+}|n-m|}}{\sqrt{1-\frac{\mu^2}{4}}}-\frac{e^{b(\mu)|n-m|}}{\sqrt{1+\frac{\mu^2}{4}}}\right),$$
then by Euler's formula,
\begin{align}\label{expans of eitheta,bu}
e^{\mp i\theta_{+}}=1-\frac{\mu^2}{2}\pm i\mu\Big(1-\frac{\mu^2}{4}\Big)^{\frac{1}{2}},\quad e^{b(\mu)}=1+\frac{\mu^2}{2}-\mu\Big(1+\frac{\mu^2}{4}\Big)^{\frac{1}{2}}.
\end{align}
By means of Taylor's expansion near $\mu=0$, we can obtain the following formal expansion:
$$R^{\pm}_0(\mu^4,n,m)\thicksim \sum\limits_{j=-3}^{+\infty}\mu^{j}G^{\pm}_j(n,m),\quad \mu\rightarrow0,$$
with $G^{\pm}_j(n,m)$ defined in \eqref{expan coeffie of Gpm}. As for $\mu\rightarrow2$, we substitute $\mu$ with $2-\omega$ in \eqref{expans of eitheta,bu} obtaining that
\begin{align*}%\label{expans of eitheta,bu 2}
e^{\mp i\theta_{+}}&=-\Big(1-2\omega+\frac{\omega^2}{2}\mp 2i\omega^{\frac{1}{2}}\big(1-\frac{5\omega}{4}+\frac{\omega^2}{2}-\frac{\omega^3}{16}\big)^{\frac{1}{2}}\Big),\\ e^{b(\mu)}&=3\Big(1-\frac{2\omega}{3}+\frac{\omega^2}{6}-\frac{2\sqrt2}{3}\Big(1-\frac{3\omega}{2}+\frac{7\omega^2}{8}-\frac{\omega^3}{4}+\frac{\omega^4}{32}\Big)^{\frac{1}{2}}\Big).
\end{align*}
Combining the Taylor's expansion near $\omega=0$ with the relation $(-1)^{|n-m|}=(-1)^{n+m}$, the formal expansion can be also derived:
$$(JR^{\pm}_0(2-\mu)^4J)(n,m)\thicksim\sum\limits_{j=-1}^{+\infty}\mu^{\frac{j}{2}}\widetilde{G}^{\pm}_j(n,m),\quad \mu\rightarrow0,$$
with $\widetilde{G}^{\pm}_j$ defined in \eqref{expan coeffie of Gtutapm}. In what follows, we claim that the formal expansions above hold in the space $\B(s,-s)$ for suitable $s$. We only deal with (i) since (ii) can follow in a similar way. Given that $N\geq-3$ and $s>\frac{1}{2}+N+4$. Firstly, by Taylor's expansion with remainders, one obtains that
$$r^{\pm}_{N}(\mu,n,m)=\mu^{N+1}\sum\limits_{k=0}^{N+4}a^{\pm}_{k}(\mu)|n-m|^{k},$$
where $a^{\pm}_{k}(\mu)=O(1)$ as $\mu\rightarrow0$. Since that $s>\frac{1}{2}+N+4$ and $|n-m|^{2k}\lesssim\left<n\right>^{2k}\left<m\right>^{2k}$ for any $k\in\N$, we have
$$\sum\limits_{n\in\Z}^{}\sum\limits_{m\in\Z}\left<n\right>^{-{2s}}|n-m|^{2(N+4)}\left<m\right>^{-{2s}}<\infty,$$
then it follows that $\left\|r^{\pm}_{N}(\mu)\right\|_{\B(s,-s)}=O\left(\mu^{N+1}\right)$. Regarding the differentiability, noting that for each differentiation of \eqref{kernel of R0 boundary}, we can obtain a power of $|n-m|$. Therefore, repeating the process above, we can get the desired conclusion.
\end{proof}
%\subsection{Asymptotic expansion at threshold 0}
The following second lemma further gives a characterization on resonances types of $H$ from the invertibilities of operators, which provides a direct approach to computing $\left(M^{\pm}(\mu)\right)^{-1}$ via Von-Neumann series expansion. For brevity, we denote the kernel of the operator $T$ restricted to the space $X$ by ${\rm Ker}T\big|_{X}=\{f\in X:Tf=0\}$.
\begin{lemma}\label{lemma of kernel and operaor}
Let $G_{-1},G_0,G_1,G_3$ and $\widetilde{G}_0,\widetilde{G}_1,\widetilde{G}_2$ be integral operators with kernels defined in \eqref{expan coeffie of Gpm} and \eqref{expan coeffie of Gtutapm}, respectively. Let $T_0,\widetilde{T}_0$ be defined as in \eqref{T0,T0tuta}. Then the following statements hold:
\vskip0.2cm
{\rm(1)}\begin{align}\label{QvGVQ}
{\rm Ker}QvG_{-1}vQ\big|_{Q\ell^2(\Z)}=S_0\ell^2(\Z).
\end{align}
\vskip0.2cm
{\rm(2)} Denote by $D_0$ the inverse of $QvG_{-1}vQ+S_0$ on $Q\ell^2(\Z)$ and define
 \begin{align}\label{T1}
T_1=S_1vG_1vS_1+\frac{8}{\|V\|_{\ell^1}}S_1vG_{-1}vPvG_{-1}vS_1+64S_1T_0D_0T_0S_1,
\end{align}
then
$${\rm Ker}T_1\big|_{S_1\ell^2(\Z)}=S_2\ell^2(\Z).$$
\vskip0.2cm
{\rm(3)} Denote by $D_2$ the inverse of $T_1+S_2$ on $S_1\ell^2(\Z)$ and let
\begin{align}\label{T2}
T_2&=\frac{1}{6!}\Big(S_2vG_3vS_2-\frac{8\cdot6!}{\|V\|_{\ell^1}}S_2{(T_0)}^2S_2-\frac{6!}{64}S_2vG_1vD_0vG_1vS_2\Big)\notag\\
&\quad+\frac{64}{\|V\|^2_{\ell^1}}\big(S_2T_0vG_{-1}vD_0-\frac{\|V\|_{\ell^1}}{8}S_2vG_1vD_0T_0D_0\big)D_2\\
&\quad\times \big(D_0vG_{-1}vT_0S_2-\frac{\|V\|_{\ell^1}}{8}D_0T_0D_0vG_1vS_2\big),\notag
\end{align}
then
$${\rm Ker}T_2\big|_{S_2\ell^2(\Z)}=S_3\ell^2(\Z).$$
\vskip0.2cm
{\rm(4)} Define
\begin{equation}\label{T1tuta}
\widetilde{T}_1=\widetilde{S}_0\tilde{v}\widetilde{G}_1\tilde{v}\widetilde{S}_0+\frac{32}{\|V\|_{\ell^1}}\widetilde{S}_0(\widetilde{T}_0)^2\widetilde{S}_0,
\end{equation}
then
$${\rm Ker}\widetilde{T}_1\big|_{\widetilde{S}_0\ell^2(\Z)}=\widetilde{S}_1\ell^2(\Z).$$
\vskip0.2cm
{\rm(5)} Let $H=\Delta^2+V$ on $\Z$ and $|V(n)|\lesssim\left<n\right>^{-\beta}$ with $\beta>9$. Define $\widetilde{T}_2=\widetilde{S}_1\tilde{v}\widetilde{G}_2\tilde{v}\widetilde{S}_1$, then
$$S_3\ell^2(\Z)=\{0\},\quad{\rm Ker}\widetilde{T}_2\big|_{\widetilde{S}_1\ell^2(\Z)}=\{0\}.$$
\end{lemma}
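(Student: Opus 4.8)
The five assertions split into two groups. Parts (1)--(4) are the algebraic "kernel-identification" steps that feed the iterated Jensen--Nenciu inversion of $M^{\pm}(\mu)$ carried out in Section~\ref{proof of Asy theorem}, and I would prove all four by one uniform device: in each case the operator in question ($QvG_{-1}vQ$ on $Q\ell^2(\Z)$; $T_1$ on $S_1\ell^2(\Z)$; $T_2$ on $S_2\ell^2(\Z)$; $\widetilde T_1$ on $\widetilde S_0\ell^2(\Z)$) is self-adjoint, and after substituting the orthogonality relations that define its domain subspace together with the explicit polynomial kernels of Lemma~\ref{Puiseux expan of R0}, it collapses to a finite sum of manifestly sign-definite pieces; its kernel is then the common kernel of those pieces, read off from a short quadratic-form computation, and this coincides with the definition of the next projection space. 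Part (5) is the termination statement, and this is where $\beta>9$ is genuinely used.

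For (1): if $f\in Q\ell^2(\Z)$ then $\langle Qf,v\rangle=0$, and since the kernel of $G_{-1}$ is $\tfrac18-\tfrac12|n-m|^2=\tfrac18-\tfrac12(n^2-2nm+m^2)$, the constant and the $n^2$-terms drop, leaving $QvG_{-1}vQf=\langle f,v_1\rangle\,Qv_1$; hence $\langle QvG_{-1}vQf,f\rangle=|\langle f,v_1\rangle|^2\ge0$ and ${\rm Ker}\,QvG_{-1}vQ|_{Q\ell^2}=\{f\in Q\ell^2:\langle f,v_1\rangle=0\}=S_0\ell^2(\Z)$ (in the degenerate case $Qv_1=0$, i.e.\ $v$ a point mass, one has $S_0\ell^2=Q\ell^2$ and the claim is trivial). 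This also shows $QvG_{-1}vQ$ has rank $\le1$, so $QvG_{-1}vQ+S_0$ is invertible on $Q\ell^2$ and $D_0\ge0$. For (2): if $f\in S_1\ell^2$ then $\langle f,v\rangle=\langle f,v_1\rangle=0$ and $S_0T_0f=0$, so $QT_0f\in(Q-S_0)\ell^2$; a term-by-term computation shows $S_1vG_1vS_1f$ and $\tfrac{8}{\|V\|_{\ell^1}}S_1vG_{-1}vPvG_{-1}vS_1f$ each reduce to a positive multiple of $\langle f,v_2\rangle\,S_1v_2$, while $\langle S_1T_0D_0T_0S_1f,f\rangle=\|D_0^{1/2}QT_0f\|^2$; adding these, $\langle T_1f,f\rangle=c_1|\langle f,v_2\rangle|^2+c_2\|D_0^{1/2}QT_0f\|^2$ with $c_1,c_2>0$, whence ${\rm Ker}\,T_1|_{S_1\ell^2}=\{f\in S_1\ell^2:\langle f,v_2\rangle=0,\ QT_0f=0\}=S_2\ell^2(\Z)$, and $T_1$ has finite rank on $S_1\ell^2$ so $D_2$ exists. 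Part (3) runs the same scheme one level deeper (here $T_2$ is the stated combination of $S_2vG_3vS_2$, $S_2(T_0)^2S_2$, $S_2vG_1vD_0vG_1vS_2$ and the $D_2$-sandwiched terms, each of which, using $\langle f,v_k\rangle=0$ for $k\le2$, $QT_0f=0$ and $T_1f=0$ for $f\in S_2\ell^2$, becomes sign-definite), and $\langle T_2f,f\rangle$ comes out as a nonnegative combination whose vanishing forces $\langle f,v_3\rangle=0$ and $T_0f=0$, giving ${\rm Ker}\,T_2|_{S_2\ell^2}=S_3\ell^2(\Z)$. Part (4) is (2) transported to $\mu=2$: for $f\in\widetilde S_0\ell^2$ one has $\langle f,\tilde v\rangle=0$ and $\widetilde Q\widetilde T_0f=0$, and $\langle\widetilde T_1f,f\rangle=\tfrac18|\langle f,\tilde v_1\rangle|^2+\tfrac{32}{\|V\|_{\ell^1}^2}|\langle\widetilde T_0f,\tilde v\rangle|^2$, so ${\rm Ker}\,\widetilde T_1|_{\widetilde S_0\ell^2}=\{f:\langle f,\tilde v_1\rangle=0,\ \widetilde T_0f=0\}=\widetilde S_1\ell^2(\Z)$.

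For part (5) I would first dispatch ${\rm Ker}\,\widetilde T_2|_{\widetilde S_1\ell^2}=\{0\}$. From $J\Delta^2J=(\Delta+4)^2$ one reads off the relations $L\widetilde G_0=I$ and $L\widetilde G_2=-32\widetilde G_0$ with $L:=\Delta^2+8\Delta$. Given $f\in\widetilde S_1\ell^2$, put $\psi:=-\widetilde G_0\tilde vf$; then $\widetilde T_0f=0$ gives $Uf=\tilde v\psi$, $\tilde vf=V\psi$ and $(L+V)\psi=0$ (equivalently $H(J\psi)=16\,J\psi$), the conditions $\langle f,\tilde v_k\rangle=0$ for $k=0,1$ force $\psi\in\ell^2(\Z)$, and a Fourier bootstrap of $(L+V)\psi=0$ with $\beta>9$ gives enough polynomial decay of $\psi$ that the summations by parts in the chain $\langle\widetilde T_2f,f\rangle=\langle\widetilde G_2(V\psi),V\psi\rangle=\langle\widetilde G_2(L\psi),L\psi\rangle=\langle L\widetilde G_2\psi,L\psi\rangle=-32\langle L\widetilde G_0\psi,\psi\rangle=-32\|\psi\|_{\ell^2}^2$ are legitimate; since $\widetilde T_2f=0$ this forces $\psi=0$, hence $Uf=0$, hence $f=0$. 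The assertion $S_3\ell^2(\Z)=\{0\}$ is the substantive part: an $f\in S_3\ell^2$ yields $\phi:=-G_0vf\in\ell^2$ with $H\phi=0$, so that $S_3\ell^2=\{0\}$ is equivalent to the absence of an $\ell^2$-eigenvalue of $H$ at $0$ for $\beta>9$ (to be contrasted with the $O(\langle n\rangle^{-4})$ example of the introduction, where $0$ is an eigenvalue). Here Plancherel applied to $\widehat\phi=-\widehat{V\phi}/(2-2\cos x)^2$ together with the fourth-order zero of the symbol forces $V\phi$ to have vanishing moments of orders $0,\dots,3$, and bootstrapping $\Delta^2\phi=-V\phi$ improves the decay of $\phi$; the argument is then closed, for $\beta>9$, either by a discrete Rellich-type uniqueness statement for $\Delta^2+V$, or by iterating the moment-vanishing mechanism ($\phi\in\ell^1$ plus $\beta>9$ forces further moments of $V\phi$ to vanish, hence further decay, which cannot be sustained), forcing $\phi=0$.

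The main obstacle is precisely this last step. Parts (1)--(4), and even the $\widetilde T_2$ half of (5), are essentially bookkeeping once the correct closed forms are produced; but excluding the zero eigenvalue of $H$ for $\beta>9$ requires the decay bootstrap together with a genuine uniqueness input, and — as is typical in such arguments — checking that the summation-by-parts estimates go through with the threshold as low as $\beta>9$ is the delicate point.
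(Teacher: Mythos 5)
Your treatment of parts (1), (2) and (4) coincides with the paper's: one computes the operator explicitly on the relevant subspace (e.g.\ $QvG_{-1}vQf=\left<f,v_1\right>Q(v_1)$ and $T_1f=4\left<f,v_2\right>S_1(v_2)+64\|v'\|_{\ell^2}^{-4}\left<T_0f,v'\right>S_1(T_0v')$) and reads the kernel off a sum of squares. The $\widetilde T_2$ half of (5) is also essentially sound: your identities $L\widetilde G_0=I$ and $L\widetilde G_2=-32\widetilde G_0$ with $L=\Delta^2+8\Delta$ are exactly what the expansion \eqref{Puiseux expan of R0 2} yields, and they lead to the same conclusion $\big<\widetilde G_2\tilde vf,\tilde vf\big>=-32\|\widetilde G_0\tilde vf\|_{\ell^2}^2$ that the paper obtains by a resolvent-limit/Plancherel computation.

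There are, however, two genuine gaps. First, in part (3) you assert that $\left<T_2f,f\right>$ comes out as a combination of sign-definite pieces whose vanishing forces $\left<f,v_3\right>=0$ and $T_0f=0$. It does not: the three terms in the first line of \eqref{T2} contribute \emph{negative} multiples of $|\left<f,v_3\right>|^2$ and $|\left<T_0f,v\right>|^2$, while the $D_2$-sandwiched term has the form $AD_2A^{*}$ with $D_2\geq0$ and contributes with the \emph{opposite} sign, and it couples $X=\left<f,v_3\right>$ and $Y=\left<T_0f,v\right>$ through genuine cross terms. The paper must prove that the resulting real quadratic form in $(\Re X,\Im X,\Re Y,\Im Y)$ is negative definite, which requires the nontrivial bounds $\big<D_2S_1v_2,S_1v_2\big>\leq\frac14$ and $\big<D_2S_1T_0v',S_1T_0v'\big>\leq\frac{1}{64}\|v'\|^4_{\ell^2}$ (extracted from $g=(T_1+S_2)D_2g$ and part (2)) together with a Cauchy--Schwarz inequality for $D_2$ to verify $4ab>d^2(c_1^2+c_2^2)$; your proposal never confronts this competition of signs. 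Second, for $S_3\ell^2(\Z)=\{0\}$ neither of your suggested closings works as stated: a ``discrete Rellich-type uniqueness statement'' is invoked but not supplied, and iterating the moment-vanishing mechanism stalls because $\widehat{V\phi}=(2-2{\rm cos}x)^2\widehat{\phi}$ vanishes at $x=0$ only to order four, so moments of $V\phi$ of order $\geq4$ need not vanish and no further decay is generated. The paper's mechanism is different: since $|x|^3-|x|$ is an odd function of $x$ and all moments of $vf$ up to order $3$ vanish, the representation $\phi=-G_0vf$ collapses to the one-sided Volterra sum $\phi(n)=-\frac16\sum_{m<n}\big((n-m)^3-(n-m)\big)V(m)\phi(m)$, after which a discrete Gronwall inequality for $\left<n\right>^{-3}|\phi(n)|$ (using $\left<\cdot\right>^{6}V\in\ell^1$) forces $\phi\equiv0$. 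This one-sidedness is the missing idea.
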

\begin{remark}\label{remark of charac from inverti of operators}
{\rm Basing on Theorem \ref{relation space and resonance types} and the observation that
$$S_1\ell^2(\Z)={\rm ker}S_0T_0S_0\big|_{S_0\ell^2(\Z)},\quad \widetilde{S}_0\ell^2(\Z)={\rm Ker}\widetilde{Q}\widetilde{T}_0\widetilde{Q}\big|_{\widetilde{Q}\ell^2(\Z)},$$
 this lemma further indicates the following characterizations on resonances types of $H$:
\vskip0.15cm
(i) 0 is a regular point of $H$ if and only if $S_0T_0S_0$ is invertible on $S_0\ell^2(\Z)$.
\vskip0.15cm
(ii) 0 is a first kind resonance of $H$ if and only if $S_0T_0S_0$ is not invertible on $S_0\ell^2(\Z)$ but $T_1$ is invertible on $S_1\ell^2(\Z)$.
\vskip0.15cm
(iii) 0 is a second kind resonance of $H$ if and only if $S_0T_0S_0$ is not invertible on $S_0\ell^2(\Z)$, $T_1$ is not invertible on $S_1\ell^2(\Z)$ but $T_2$ is invertible on $S_2\ell^2(\Z)$.
\vskip0.15cm
(iv) 16 is a regular point of $H$ if and only if $\widetilde{Q}\widetilde{T}_0\widetilde{Q}$ is invertible on $\widetilde{Q}\ell^2(\Z)$.
\vskip0.15cm
(v) 16 is a resonance of $H$ if and only if $\widetilde{Q}\widetilde{T}_0\widetilde{Q}$ is not invertible on $\widetilde{Q}\ell^2(\Z)$ but $\widetilde{T}_1$ is invertible on $\widetilde{S}_0\ell^2(\Z)$.
\vskip0.15cm
(vi) 16 is an eigenvalue of $H$ if and only if $\widetilde{Q}\widetilde{T}_0\widetilde{Q}$ is not invertible on $\widetilde{Q}\ell^2(\Z)$, $\widetilde{T}_1$ is not invertible on $\widetilde{S}_0\ell^2(\Z)$ but $\widetilde{T}_2$ is invertible on $\widetilde{S}_1\ell^2(\Z)$.
}
\end{remark}
The proof of this lemma will be delayed to the end of this section. Furthermore, the following lemma will be frequently used in the proof.
\begin{lemma}\label{lemm of expa}
{\rm \cite[Lemma 2.1]{JN01}} Let $\mscH$ be a complex Hilbert space. Let $A$ be a closed operator and $S$ a projection. Suppose $A+S$ has a bounded inverse. Then $A$ has a bounded inverse if and only if
$$a\equiv S-S(A+S)^{-1}S$$
has a bounded inverse in $S\mscH$, and in this case
$$A^{-1}=(A+S)^{-1}+(A+S)^{-1}Sa^{-1}S(A+S)^{-1}.$$
\end{lemma}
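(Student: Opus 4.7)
The plan is to treat this as a discrete Schur-complement / block-inversion identity. Set $B := (A+S)^{-1}$, which is bounded by hypothesis, and exploit the fact that $S$ is a (bounded) projection so $S^2 = S$. Writing $A = (A+S) - S$ on $\mathrm{Dom}(A)$, the key algebraic factorization is
\begin{equation*}
A \;=\; (A+S)\,(I - BS) \qquad \text{on } \mathrm{Dom}(A),
\end{equation*}
and dually $A = (I - SB)(A+S)$. Since $A+S$ is already invertible with bounded inverse, invertibility of $A$ on $\mathscr H$ is thereby reduced to invertibility of the bounded operator $I - BS$ on $\mathscr H$.

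Next I would show the equivalence: $I - BS$ is boundedly invertible on $\mathscr H$ if and only if $a := S - SBS$ is boundedly invertible on $S\mathscr H$. The forward direction constructs the candidate inverse $T := I + BSa^{-1}S$ and checks $T(I-BS) = (I-BS)T = I$ by expanding and using the single crucial identity $SBS = S - a$ to make the middle terms telescope. For the reverse direction, suppose $A$ (equivalently $I-BS$) is invertible: if $g \in S\mathscr H$ satisfies $ag = 0$, then $f := Bg$ lies in $\mathrm{Dom}(A)$ and a short computation gives $(I-BS)f = 0$, whence $f = 0$ and therefore $g = Sf\cdot(\text{something}) = 0$ after using $g = SBg$; surjectivity is handled analogously by solving $(I-BS)f = Bh$ for prescribed $h \in S\mathscr H$ and setting $g := Sf$.

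With the equivalence in hand, I would verify the stated explicit formula
\begin{equation*}
A^{-1} \;=\; B + B S a^{-1} S B
\end{equation*}
by direct multiplication. Left-multiplying by $A$, use $AB = I - SB$ (which follows from $(A+S)B = I$) and then substitute $SBS = S - a$; the two $Sa^{-1}SB$ contributions cancel against an $a\cdot a^{-1}$ factor, leaving $I$. The right-multiplication check is symmetric, now using $BA = I - BS$.

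The main obstacle I anticipate is bookkeeping with domains: $A$ is only assumed closed (possibly unbounded), so the factorization $A = (A+S)(I-BS)$ must be read on $\mathrm{Dom}(A)$, noting that $B$ maps $\mathscr H$ into $\mathrm{Dom}(A+S) = \mathrm{Dom}(A)$ so that $BSa^{-1}SB$ lands in $\mathrm{Dom}(A)$ and $A^{-1}$ is realized as a bounded operator $\mathscr H \to \mathrm{Dom}(A)$. Once these domain inclusions are tracked, the Schur-type algebra above goes through verbatim, and the iff together with the explicit formula follow simultaneously.
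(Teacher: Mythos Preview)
The paper does not give its own proof of this lemma; it is quoted verbatim with a citation to Jensen--Nenciu \cite{JN01}, and only \emph{used} (repeatedly, via Lemma~\ref{lemm of expa}) in the inversion scheme of Section~\ref{proof of Asy theorem}. So there is nothing to compare your argument against in this manuscript.

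That said, your Schur-complement/block-inversion plan is the standard route and is correct in outline. The factorization $A=(A+S)(I-BS)$ with $B=(A+S)^{-1}$, the reduction to invertibility of $I-BS$, and the verification of the explicit formula by left/right multiplication using $AB=I-SB$, $BA=I-BS$ and $SBS=S-a$ all go through exactly as you describe, and the domain remarks (that $B$ maps $\mathscr H$ into $\mathrm{Dom}(A+S)=\mathrm{Dom}(A)$, so the candidate inverse lands in $\mathrm{Dom}(A)$) are the right things to track.

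One small point to tighten in the reverse implication: your injectivity step is fine (indeed once $(I-BS)f=0$ with $f=Bg$ you get $g=0$ directly since $B$ is a bijection, no ``$g=Sf\cdot(\text{something})$'' needed), but the surjectivity sketch as written does not quite close. Solving $(I-BS)f=Bh$ and setting $g=Sf$ yields $ag=S(I-BS)f=SBh$, not $h$. A cleaner way is to use the dual factorization $A=(I-SB)(A+S)$: if $A$ is invertible then $I-SB$ is surjective, so for $h\in S\mathscr H$ pick $f$ with $(I-SB)f=h$; applying $S$ gives $Sf-SBf=h$, and since $(I-SB)|_{\ker S}=I$ one may replace $f$ by $Sf\in S\mathscr H$ to get $a(Sf)=h$. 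With this fix, your plan is complete.
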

With these three lemmas, we begin the proof of Theorem \ref{Asymptotic expansion theorem}.
\begin{proof}[Proof of Theorem {\rm\ref{Asymptotic expansion theorem}}]{\bf{(i) ${\bm{0}}$ is the regular point of $H$ and {$\bm{\beta>15}$}.}}
Taking $N=3$ in \eqref{Puiseux expan of R0 0}, namely, as $s>\frac{15}{2}$, we have the resolvent expansion
\begin{equation*}
R^{\pm}_0(\mu^4)=\mu^{-3}G^{\pm}_{-3}+\mu^{-1}G^{\pm}_{-1}+G^{\pm}_0+\sum\limits_{k=1}^{3}\mu^kG^{\pm}_{k}+\Gamma_4(\mu)\ \ {\rm in}\ \B(s,-s),\ \mu \rightarrow0.
\end{equation*}
Since $\beta>15$ and $M^{\pm}(\mu)=U+vR^{\pm}_0(\mu^4)v$, we obtain the following relation on $\ell^2(\Z)$ as $\mu\rightarrow0$,
 \begin{align*}
 M^{\pm}(\mu)=\frac{a^{\pm}}{\mu^3}\Big(P+\frac{\mu^2}{a^{\pm}}vG^{\pm}_{-1}v+\frac{\mu^3}{a^{\pm}}T_0+\frac{1}{a^{\pm}}\sum\limits_{k=1}^{3}\mu^{k+3}vG^{\pm}_{k}v+\Gamma_7(\mu)\Big):=\frac{a^{\pm}}{\mu^3}\widetilde{M}^{\pm}(\mu),
 \end{align*}
 where we used the fact that $vG^{\pm}_{-3}v=a^{\pm}P$ with $a^{\pm}=\frac{-1\pm i}{4}\|V\|_{\ell^1}$. Then
 \begin{align}\label{Relat of M and M tuta} \left(M^{\pm}(\mu)\right)^{-1}=\frac{\mu^3}{a^{\pm}}\big(\widetilde{M}^{\pm}(\mu)\big)^{-1}.
\end{align}

{\bf{\underline{Step1:}}} By Lemma \ref{lemm of expa}, $\widetilde{M}^{\pm}(\mu)$ is invertible on $\ell^{2}(\Z)\Leftrightarrow M^{\pm}_1(\mu):=Q-Q\big(\widetilde{M}^{\pm}(\mu)+Q\big)^{-1}Q$ is invertible on $Q\ell^{2}(\Z)$ and in this case, one has
\begin{align}\label{Relat of M tuta and M1}
\left(\widetilde{M}^{\pm}(\mu)\right)^{-1}=\big(\widetilde{M}^{\pm}(\mu)+Q\big)^{-1}\Big[I+Q\left(M^{\pm}_1(\mu)\right)^{-1}Q\big(\widetilde{M}^{\pm}(\mu)+Q\big)^{-1}\Big],
\end{align}
where
\begin{align}\label{expr of Bkpm}
\big(\widetilde{M}^{\pm}(\mu)+Q\big)^{-1}=I-\sum\limits_{k=1}^{5}\mu^{k+1}B^{\pm}_{k}+\Gamma_7(\mu),\quad \mu\rightarrow0,
\end{align}
with
\begin{itemize}
\item $B^{\pm}_{1}=\frac{1}{a^{\pm}}vG^{\pm}_{-1}v$,\quad$B^{\pm}_{2}=\frac{1}{a^{\pm}}T_0$,\quad $B^{\pm}_{3}=\frac{1}{a^{\pm}}vG^{\pm}_{1}v-\left(\frac{1}{a^{\pm}}vG^{\pm}_{-1}v\right)^2$,
\vskip0.2cm
\item $B^{\pm}_{4}=-\left(\frac{1}{a^{\pm}}\right)^2\left(vG^{\pm}_{-1}vT_0+T_0vG^{\pm}_{-1}v\right)$,
\item
$B^{\pm}_{5}=\frac{1}{a^{\pm}}vG^{\pm}_{3}v-\left(\frac{1}{a^{\pm}}\right)^2\left(vG^{\pm}_{-1}v\cdot vG^{\pm}_{1}v+(T_0)^2+vG^{\pm}_{1}v \cdot vG^{\pm}_{-1}v\right)+\left(\frac{1}{a^{\pm}}vG^{\pm}_{-1}v\right)^3$.
%\begin{equation}\label{expr of Bkpm}
%\end{equation}
\end{itemize}
This yields
\begin{align*}
M^{\pm}_1(\mu)=b^{\pm}\mu^2\Big(QvG_{-1}vQ+\frac{1}{b^{\pm}}\sum\limits_{k=2}^{5}\mu^{k-1}QB^{\pm}_{k}Q+Q\Gamma_5(\mu)Q\Big):=b^{\pm}\mu^2\widetilde{M}^{\pm}_1(\mu),
\end{align*}
where we used the fact that $QB^{\pm}_1Q=\frac{a^{\pm}_{-1}}{a^{\pm}}QvG_{-1}vQ:=b^{\pm}QvG_{-1}vQ$ with $a^{\pm}_{-1}=\frac{1\pm i}{4}$. Hence,
\begin{equation}\label{Relat of M1 and M1 tuta}
\left(M^{\pm}_1(\mu)\right)^{-1}=\frac{1}{b^{\pm}\mu^2}\left(\widetilde{M}_{1}^{\pm}(\mu)\right)^{-1}.
\end{equation}
Since ${\rm Ker}QvG_{-1}vQ\big|_{Q\ell^2(\Z)}=S_0\ell^{2}(\Z)$, then by Lemma \ref{lemm of expa}, we have
\vskip0.15cm
{\bf{\underline{Step2:}}} \  $\widetilde{M}_{1}^{\pm}(\mu)$ is invertible on $Q\ell^{2}(\Z)\Leftrightarrow M^{\pm}_{2}(\mu):=S_0-S_0\big(\widetilde{M}_{1}^{\pm}(\mu)+S_0\big)^{-1}S_0$ is invertible on $S_0\ell^{2}(\Z)$. In this case,
\begin{equation}\label{Relat of M1 tuta and M2}
\big(\widetilde{M}_{1}^{\pm}(\mu)\big)^{-1}=\big(\widetilde{M}_{1}^{\pm}(\mu)+S_0\big)^{-1}\Big(I+S_0\big(M^{\pm}_{2}(\mu)\big)^{-1}S_0\big(\widetilde{M}_{1}^{\pm}(\mu)+S_0\big)^{-1}\Big),
\end{equation}
where
\begin{equation}\label{expre of Bpm tuta}
\left(\widetilde{M}_{1}^{\pm}(\mu)+S_0\right)^{-1}=D_0-\sum\limits_{k=1}^{4}\mu^{k}\widetilde{B}_{k}^{\pm}+D_0\Gamma_5(\mu)D_0,\quad\mu\rightarrow0,
\end{equation}
with $D_0$ being the inverse of $QvG_{-1}vQ+S_0$ on $Q\ell^2(\Z)$~(hence $D_0Q=D_0=QD_0$) and
\begin{itemize}
\item $\widetilde{B}_{1}^{\pm}=\frac{1}{b^{\pm}}D_0B^{\pm}_2D_0$,\quad$\widetilde{B}_{2}^{\pm}=\frac{1}{b^{\pm}}D_0B^{\pm}_3D_0-\left(\frac{1}{b^{\pm}}\right)^2\left(D_0B^{\pm}_2\right)^2D_0$,
\vskip0.2cm
\item
$\widetilde{B}_{3}^{\pm}=\frac{1}{b^{\pm}}D_0B^{\pm}_4D_0-\left(\frac{1}{b^{\pm}}\right)^2\left(D_0B^{\pm}_2D_0B^{\pm}_3D_0+D_0B^{\pm}_3D_0B^{\pm}_2D_0\right)+\left(\frac{1}{b^{\pm}}D_0B^{\pm}_2\right)^3D_0,$
\vskip0.2cm
\item $\widetilde{B}_{4}^{\pm}=\frac{1}{b^{\pm}}D_0B^{\pm}_5D_0-\left(\frac{1}{b^{\pm}}\right)^2\left(D_0B^{\pm}_2D_0B^{\pm}_4D_0+\left(D_0B^{\pm}_3\right)^2D_0+D_0B^{\pm}_4D_0B^{\pm}_2D_0\right)$

    $\quad\ \ +\left(\frac{1}{b^{\pm}}\right)^3\left(\left(D_0B^{\pm}_2\right)^2D_0B^{\pm}_3D_0+D_0B^{\pm}_2D_0B^{\pm}_3D_0B^{\pm}_2D_0+D_0B^{\pm}_3\left(D_0B^{\pm}_2\right)^2D_0\right)$

$\quad\ \ -\left(\frac{1}{b^{\pm}}D_0B^{\pm}_2\right)^4D_0$.
%\begin{equation}\label{expre of Bpm tuta}
%\end{equation}
\end{itemize}
Using $S_0D_0=S_0=S_0D_0$ and $S_0\widetilde{B}_{1}^{\pm}S_0=\frac{1}{a^{\pm}_{-1}}S_0T_0S_0$, we obtain
\begin{align*}
M^{\pm}_2(\mu)=\frac{\mu}{a^{\pm}_{-1}}\Big(S_0T_0S_0+a^{\pm}_{-1}\sum\limits_{k=2}^{4}\mu^{k-1}S_0\widetilde{B}_{k}^{\pm}S_0+S_0\Gamma_4(\mu)S_0\Big):=\frac{\mu}{a^{\pm}_{-1}}\widetilde{M}^{\pm}_2(\mu),
\end{align*}
and thus
\begin{equation}\label{Relat of M2 and M2 tuta}
\left(M^{\pm}_2(\mu)\right)^{-1}=\frac{a^{\pm}_{-1}}{\mu}\left(\widetilde{M}_{2}^{\pm}(\mu)\right)^{-1}.
\end{equation}
Since $0$ is a regular point of $H$, then by Remark \ref{remark of charac from inverti of operators}, $S_0T_0S_0$ is invertible on $S_0\ell^2(\Z)$. Denote its inverse by $D_1$, then using Von-Neumann expansion and $S_0D_1=D_1=D_1S_0$, one has
\begin{align}\label{inverse of Mpm2}
\left(M^{\pm}_2(\mu)\right)^{-1}=\frac{a^{\pm}_{-1}}{\mu}D_1+\sum\limits_{k=0}^{2}\widetilde{D}^{\pm}_k\mu^k+D_1\Gamma_3(\mu)D_1,\quad\mu\rightarrow0,
\end{align}
with
\begin{itemize}
\item
$\widetilde{D}^{\pm}_0=-\left(a^{\pm}_{-1}\right)^2D_1\widetilde{B}_{2}^{\pm}D_1$,\quad $\widetilde{D}^{\pm}_{1}=-\big(a^{\pm}_{-1}\big)^2\big(D_1\widetilde{B}_{3}^{\pm}D_1-a^{\pm}_{-1}\big(D_1\widetilde{B}_{2}^{\pm}\big)^2D_1\big)$,
\vskip0.2cm
\item $\widetilde{D}^{\pm}_2=-\big(a^{\pm}_{-1}\big)^2D_1\widetilde{B}_{4}^{\pm}D_1+\big(a^{\pm}_{-1}\big)^3\big(D_1\widetilde{B}_{2}^{\pm}D_1\widetilde{B}_{3}^{\pm}D_1+D_1\widetilde{B}_{3}^{\pm}D_1\widetilde{B}_{2}^{\pm}D_1\big)$
    $-\big(a^{\pm}_{-1}\big)^4\big(D_1\widetilde{B}_{2}^{\pm}\big)^3D_1$.
\end{itemize}
Taking \eqref{inverse of Mpm2} into \eqref{Relat of M1 tuta and M2},\eqref{Relat of M1 and M1 tuta},\eqref{Relat of M tuta and M1} and \eqref{Relat of M and M tuta} in order, the desired \eqref{asy expan of regular 0} is obtained.
\vskip0.3cm
{\bf{(ii) ${\bm{0}}$ is the first kind resonance of $H$ and {$\bm{\beta>19}$}.}} For this case, we take $N=5$ in \eqref{Puiseux expan of R0 0} and repeat {\bf{\underline{Step1}}} to {\bf{\underline{Step2}}} in {\bf{(i)}}. However, since $S_0T_0S_0$ is not invertible in $S_0\ell^2(\Z)$, we need to further analyze the $\widetilde{M}^{\pm}_2(\mu)$ in {\bf{\underline{Step2}}} of {\bf{(i)}}. By virtue of ${\rm Ker}S_0T_0S_0\big|_{S_0\ell^2(\Z)}=S_1\ell^2(\Z)$ and Lemma \ref{lemm of expa}, we proceed with

\vskip0.15cm
{\bf{\underline{Step3:}}} \ $\widetilde{M}_{2}^{\pm}(\mu)$ is invertible on $S_0\ell^{2}(\Z)\Leftrightarrow M^{\pm}_{3}(\mu):=S_1-S_1\big(\widetilde{M}_{2}^{\pm}(\mu)+S_1\big)^{-1}S_1$ is invertible on $S_1\ell^{2}(\Z)$. In this case,
\begin{equation}\label{Relat of M2 tuta and M3}
\big(\widetilde{M}_{2}^{\pm}(\mu)\big)^{-1}=\big(\widetilde{M}_{2}^{\pm}(\mu)+S_1\big)^{-1}\Big(I+S_1\big(M^{\pm}_{3}(\mu)\big)^{-1}S_1\big(\widetilde{M}_{2}^{\pm}(\mu)+S_1\big)^{-1}\Big),
\end{equation}
where
\begin{equation}\label{expre of Bpm ttuta}
\left(\widetilde{M}_{2}^{\pm}(\mu)+S_1\right)^{-1}=D_1-\sum\limits_{k=1}^{5}\mu^{k}\tilde{\tilde{B}}^{\pm}_{k}+D_1\Gamma_6(\mu)D_1,\quad\mu\rightarrow0,
\end{equation}
with $D_1$ being the inverse of $S_0T_0S_0+S_1$ on $S_0\ell^2(\Z)$~(then $D_1S_0=D_1=S_0D_1$) and
\begin{itemize}
\item $\tilde{\tilde{B}}^{\pm}_{1}=a^{\pm}_{-1}D_1\widetilde{B}^{\pm}_2D_1$,\quad$\tilde{\tilde{B}}^{\pm}_{2}=a^{\pm}_{-1}D_1\widetilde{B}^{\pm}_3D_1-\big(a^{\pm}_{-1}D_1\widetilde{B}^{\pm}_2\big)^2D_1$,
\vskip0.2cm
\item $\tilde{\tilde{B}}^{\pm}_{3}=a^{\pm}_{-1}D_1\widetilde{B}^{\pm}_4D_1-{(a^{\pm}_{-1})}^2\big(D_1\widetilde{B}^{\pm}_2D_1\widetilde{B}^{\pm}_3D_1+D_1\widetilde{B}^{\pm}_3D_1\widetilde{B}^{\pm}_2D_1\big)+\big(a^{\pm}_{-1}D_1\widetilde{B}^{\pm}_2\big)^3D_1$.
%\begin{equation}\label{expre of Bpm ttuta}
%\end{equation}
\end{itemize}
Using $S_1D_1=S_1=D_1S_1$, $S_1D_0=S_1=D_0S_1$ and $S_1\tilde{\tilde{B}}^{\pm}_{1}S_1=a^{\pm}_{1}T_1$ with $a^{\pm}_{1}=\frac{-1\pm i}{32}$, we obtain
\begin{align*}
M^{\pm}_3(\mu)=a^{\pm}_{1}\mu\Big(T_1+\frac{1}{a^{\pm}_{1}}\sum\limits_{k=2}^{5}\mu^{k-1}S_1\tilde{\tilde{B}}^{\pm}_{k}S_1+S_1\Gamma_5(\mu)S_1\Big):=a^{\pm}_{1}\mu\widetilde{M}^{\pm}_3(\mu),
\end{align*}
and then
\begin{equation}\label{Relat of M3 and M3 tuta}
\left(M^{\pm}_3(\mu)\right)^{-1}=\frac{1}{a^{\pm}_{1}\mu}\left(\widetilde{M}_{3}^{\pm}(\mu)\right)^{-1}.
\end{equation}
By Remark \ref{remark of charac from inverti of operators}, since $T_1$ is invertible on $S_1\ell^2(\Z)$, using Von-Neumann expansion yields that
\begin{align}\label{inverse of Mtuta3}
\left(\widetilde{M}_{3}^{\pm}(\mu)\right)^{-1}%&=\Big(T_1+\frac{1}{a^{\pm}_{1}}\sum\limits_{k=2}^{5}\mu^{k-1}S_1\tilde{\tilde{B}}^{\pm}_{k}S_1+S_1\Gamma_5(\mu)S_1\Big)^{-1}\notag\\
&=D_2-\sum\limits_{k=1}^{4}\mu^k\tilde{\tilde{\tilde{B}}}^{\pm}_{k}+D_2\Gamma_5(\mu)D_2,\quad \mu\rightarrow0,
\end{align}
where $D_2$ is the inverse of $T_1$ on $S_1\ell^2(\Z)$~(then $D_2S_1=D_2=S_1D_2$) and
\begin{equation}\label{expre of Btttpm tuta} \tilde{\tilde{\tilde{B}}}^{\pm}_{1}=\frac{1}{a^{\pm}_{1}}D_2\tilde{\tilde{B}}^{\pm}_2D_2,\quad\tilde{\tilde{\tilde{B}}}^{\pm}_{2}=\frac{1}{a^{\pm}_{1}}D_2\tilde{\tilde{B}}^{\pm}_3D_2-\Big(\frac{1}{a^{\pm}_{1}}D_2\tilde{\tilde{B}}^{\pm}_2\Big)^2D_2.
\end{equation}
Then \eqref{asy expan of 1st 0} is derived by combining \eqref{inverse of Mtuta3},\eqref{Relat of M3 and M3 tuta},\eqref{Relat of M2 tuta and M3},\eqref{Relat of M2 and M2 tuta},\eqref{Relat of M1 tuta and M2},\eqref{Relat of M1 and M1 tuta},\eqref{Relat of M tuta and M1} and \eqref{Relat of M and M tuta}.
\vskip0.3cm
{\bf{(iii) ${\bm{0}}$ is the second kind resonance of $H$ and {$\bm{\beta>27}$}.}} For this case, we take $N=9$ in \eqref{Puiseux expan of R0 0} and repeat {\bf{\underline{Step1}}} to {\bf{\underline{Step3}}} in {\bf{(i)}} and {\bf{(ii)}}. Since $T_1$ is not invertible on $S_1\ell^2(\Z)$, we need to further analyze the $\widetilde{M}^{\pm}_3(\mu)$ in {\bf{\underline{Step3}}} of {\bf{(ii)}}. By virtue of ${\rm Ker}T_1\big|_{S_1\ell^2(\Z)}=S_2\ell^2(\Z)$ and Lemma \ref{lemm of expa}, we proceed with
\vskip0.15cm
{\bf{\underline{Step4:}}} \ $\widetilde{M}_{3}^{\pm}(\mu)$ is invertible on $S_1\ell^{2}(\Z)\Leftrightarrow M^{\pm}_{4}(\mu):=S_2-S_2\big(\widetilde{M}_{3}^{\pm}(\mu)+S_2\big)^{-1}S_2$ is invertible on $S_2\ell^{2}(\Z)$. In this case,
\begin{equation}\label{Relat of M3 tuta and M4}
\big(\widetilde{M}_{3}^{\pm}(\mu)\big)^{-1}=\big(\widetilde{M}_{3}^{\pm}(\mu)+S_2\big)^{-1}\Big(I+S_2\big(M^{\pm}_{4}(\mu)\big)^{-1}S_2\big(\widetilde{M}_{3}^{\pm}(\mu)+S_2\big)^{-1}\Big),
\end{equation}
where
\begin{equation*}
\left(\widetilde{M}_{3}^{\pm}(\mu)+S_2\right)^{-1}=D_2-\sum\limits_{k=1}^{8}\mu^k\tilde{\tilde{\tilde{B}}}^{\pm}_{k}+D_2\Gamma_9(\mu)D_2, \quad \mu\rightarrow0,
\end{equation*}
with $D_2$ being the inverse of $T_1+S_2$ on $S_1\ell^2(\Z)$.
Then using $S_2D_2=S_2=D_2S_2$, we have
\begin{align*}
M^{\pm}_4(\mu)&=\sum\limits_{k=2}^{8}\mu^{k}S_2\tilde{\tilde{\tilde{B}}}^{\pm}_{k}S_2+S_2\Gamma_9(\mu)S_2=d^{\pm}\mu^2\Big(T_2+\frac{1}{d^{\pm}}\sum\limits_{k=3}^{8}\mu^{k-2}S_2\tilde{\tilde{\tilde{B}}}^{\pm}_{k}S_2+S_2\Gamma_7(\mu)S_2\Big),\\
&:=d^{\pm}\mu^2\widetilde{M}^{\pm}_4(\mu),
\end{align*}
where $d^{\pm}=\pm8i$, and in the first and second equalities, we used the facts that
\begin{equation}\label{S2Bttt1S2,S2Bttt2S2}
S_2\tilde{\tilde{\tilde{B}}}^{\pm}_{1}S_2=0, \quad S_2\tilde{\tilde{\tilde{B}}}^{\pm}_{2}S_2=d^{\pm}T_2.
\end{equation}
We delay them in the end of this part to illustrate. Then,
\begin{equation}\label{Relat of M4 and M4 tuta}
\left(M^{\pm}_4(\mu)\right)^{-1}=\frac{1}{d^{\pm}\mu^2}\left(\widetilde{M}_{4}^{\pm}(\mu)\right)^{-1}.
\end{equation}
Therefore, by Remark \ref{remark of charac from inverti of operators}, since $T_2$ is invertible on $S_2\ell^2(\Z)$, and denoting its inverse by $D_3$, using Von-Neumann expansion, then
\begin{align}\label{inverse of Mtuta4}
\left(\widetilde{M}_{4}^{\pm}(\mu)\right)^{-1}%&=\Big(T_2+\frac{1}{d^{\pm}}\sum\limits_{k=3}^{8}\mu^{k-2}S_2\tilde{\tilde{\tilde{B}}}^{\pm}_{k}S_2+D_2\Gamma_7(\mu)D_2\Big)^{-1}\notag\\
&=D_3+\sum\limits_{k=1}^{6}\mu^k\hat{B}^{\pm}_{k}+D_3\Gamma_7(\mu)D_3,\quad \mu\rightarrow0.
\end{align}
Then \eqref{asy expan 2nd 0} is derived by combining \eqref{inverse of Mtuta4}, \eqref{Relat of M4 and M4 tuta}, \eqref{Relat of M3 tuta and M4}, \eqref{Relat of M3 and M3 tuta}, \eqref{Relat of M2 tuta and M3}, \eqref{Relat of M2 and M2 tuta}, \eqref{Relat of M1 tuta and M2}, \eqref{Relat of M1 and M1 tuta}, \eqref{Relat of M tuta and M1} and \eqref{Relat of M and M tuta}.

Now, we verify \eqref{S2Bttt1S2,S2Bttt2S2}. For the former, it can be concluded from \eqref{expre of Btttpm tuta}, \eqref{expre of Bpm ttuta}, \eqref{expre of Bpm tuta}, \eqref{expr of Bkpm} and the following relations:
\begin{align*}
S_2D_j&=S_2=D_jS_2~(j=0,1,2),\ D_0Q=D_0=QD_0,\  S_0D_1=D_1=D_1S_0,\\
D_0D_1&=D_1=D_1D_0,\  S_2B^{\pm}_4S_2=0,\  S_2B^{\pm}_2Q=0=QB^{\pm}_2S_2,\ S_2B^{\pm}_3S_0=0=S_0B^{\pm}_3S_2.
\end{align*}
For the latter, since
\begin{align*}
S_2\tilde{\tilde{\tilde{B}}}^{\pm}_{2}S_2=\frac{1}{a^{\pm}_1}S_2\tilde{\tilde{B}}^{\pm}_3S_2-\Big(\frac{1}{a^{\pm}_{1}}\Big)^2S_2\tilde{\tilde{B}}^{\pm}_2D_2\tilde{\tilde{B}}^{\pm}_2S_2.
\end{align*}
Noting that $S_2\widetilde{B}^{\pm}_2=0=S_2\widetilde{B}^{\pm}_2$, $S_2B^{\pm}_2Q=0=QB^{\pm}_2S_2$, $D_1D_2=D_2=D_2D_1$, one can respectively calculate that
\begin{align}
\begin{split}
\frac{1}{a^{\pm}_1}S_2\tilde{\tilde{B}}^{\pm}_3S_2&=\frac{1}{a^{\pm}_1}S_2vG^{\pm}_3vS_2-\frac{1}{a^{\pm}_1a^{\pm}}S_2\big(T_0\big)^2S_2-\frac{1}{a^{\pm}_1a^{\pm}_{-1}}S_2vG^{\pm}_1vD_0vG^{\pm}_1vS_2,\\
\Big(\frac{1}{a^{\pm}_{1}}\Big)^2S_2\tilde{\tilde{B}}^{\pm}_2D_2\tilde{\tilde{B}}^{\pm}_2S_2&=\frac{1}{{\big(a^{\pm}_1\big)}^2}\Big(\frac{1}{a^{\pm}}S_2T_0vG^{\pm}_{-1}vD_0+\frac{1}{a^{\pm}_{-1}}S_2vG^{\pm}_1vD_0T_0D_0\Big)D_2\\
&\quad\times\Big(\frac{1}{a^{\pm}}D_0vG^{\pm}_{-1}vT_0S_2+\frac{1}{a^{\pm}_{-1}}D_0T_0D_0vG^{\pm}_1vS_2\Big),
\end{split}
\end{align}
which gives that $S_2\tilde{\tilde{\tilde{B}}}^{\pm}_{2}S_2=d^{\pm}T_2$ and this proves \eqref{S2Bttt1S2,S2Bttt2S2}.
\vskip0.3cm
{\bf{(iv) ${\bm{16}}$ is the regular point of $H$ and {$\bm{\beta>7}$}.}} Taking $N=1$ in \eqref{Puiseux expan of R0 2}, then as $\mu\rightarrow0$,
\begin{align*}
M^{\pm}(2-\mu)&=U+\tilde{v}JR^{\pm}_0((2-\mu)^4)J\tilde{v}
=\frac{d^{\pm}_{-1}}{\mu^{\frac{1}{2}}}\Big(\widetilde{P}+\frac{\mu^{\frac{1}{2}}}{d^{\pm}_{-1}}\widetilde{T}_0+\frac{\mu}{d^{\pm}_{-1}}\tilde{v}\widetilde{G}^{\pm}_{1}\tilde{v}+\Gamma_{\frac{3}{2}}(\mu)\Big):=\frac{d^{\pm}_{-1}}{\mu^{\frac{1}{2}}}\widetilde{M}^{\pm}(\mu),
\end{align*}
where we used the fact that $\tilde{v}\widetilde{G}^{\pm}_{-1}\tilde{v}=d^{\pm}_{-1}\widetilde{P}$ with $d^{\pm}_{-1}=\frac{\pm i}{32}\|V\|_{\ell^1}$. Then
\begin{align}\label{Relat of M and M tuta at 2} \left(M^{\pm}(2-\mu)\right)^{-1}=\frac{\mu^{\frac{1}{2}}}{d^{\pm}_{-1}}\big(\widetilde{M}^{\pm}(\mu)\big)^{-1}.
\end{align}

{\bf{\underline{Step1:}}} By Lemma \ref{lemm of expa}, $\widetilde{M}^{\pm}(\mu)$ is invertible on $\ell^{2}(\Z)\Leftrightarrow M^{\pm}_1(\mu):=\widetilde{Q}-\widetilde{Q}\big(\widetilde{M}^{\pm}(\mu)+\widetilde{Q}\big)^{-1}\widetilde{Q}$ is invertible on $\widetilde{Q}\ell^{2}(\Z)$ and in this case, one has
\begin{align}\label{Relat of M tuta and M1 at 2}
\left(\widetilde{M}^{\pm}(\mu)\right)^{-1}=\big(\widetilde{M}^{\pm}(\mu)+\widetilde{Q}\big)^{-1}\Big[I+\widetilde{Q}\left(M^{\pm}_1(\mu)\right)^{-1}\widetilde{Q}\big(\widetilde{M}^{\pm}(\mu)+\widetilde{Q}\big)^{-1}\Big],
\end{align}
where \begin{align*}
\big(\widetilde{M}^{\pm}(\mu)+\widetilde{Q}\big)^{-1}=I-\sum\limits_{k=1}^{2}\mu^{\frac{k}{2}}C^{\pm}_{k}+\Gamma_{\frac{3}{2}}(\mu),\quad \mu\rightarrow0,
\end{align*}
with
\begin{align}\label{expr of Ckpm}
\begin{split}
 C^{\pm}_{1}&=\frac{1}{d^{\pm}_{-1}}\widetilde{T}_0,\quad C^{\pm}_{2}=\frac{1}{d^{\pm}_{-1}}\tilde{v}\widetilde{G}^{\pm}_1\tilde{v}-\Big(\frac{1}{d^{\pm}_{-1}}\widetilde{T}_0\Big)^2.
\end{split}
\end{align}
Then
\begin{align*}
M^{\pm}_1(\mu)=\frac{\mu^{\frac{1}{2}}}{d^{\pm}_{-1}}\Big(\widetilde{Q}\widetilde{T}_0\widetilde{Q}+d^{\pm}_{-1}\mu^{\frac{1}{2}}\widetilde{Q}C^{\pm}_{2}\widetilde{Q}+\widetilde{Q}\Gamma_{1}(\mu)\widetilde{Q}\Big):=\frac{\mu^{\frac{1}{2}}}{d^{\pm}_{-1}}\widetilde{M}^{\pm}_1(\mu),
\end{align*}
and thus,
\begin{equation}\label{Relat of M1 and M1 tuta at 2}
\left(M^{\pm}_1(\mu)\right)^{-1}=\frac{d^{\pm}_{-1}}{\mu^{\frac{1}{2}}}\left(\widetilde{M}_{1}^{\pm}(\mu)\right)^{-1}.
\end{equation}
By Remark \ref{remark of charac from inverti of operators}, since $\widetilde{Q}\widetilde{T}_0\widetilde{Q}$ is invertible on $\widetilde{Q}\ell^{2}(\Z)$, we denote its inverse by $E_0$. Then $E_0\widetilde{Q}=E_0=\widetilde{Q}E_0$ and by Von-Neumann expansion, one has
\begin{equation}\label{inverse of M1 2}
\left( M^{\pm}_1(\mu)\right)^{-1}=\frac{d^{\pm}_{-1}}{\mu^{\frac{1}{2}}}E_0-\left(d^{\pm}_{-1}\right)^2E_0C^{\pm}_2E_0+E_0\Gamma_{\frac{1}{2}}(\mu)E_0,\quad\mu\rightarrow0.
\end{equation}
Then by \eqref{inverse of M1 2}, \eqref{Relat of M tuta and M1 at 2} and \eqref{Relat of M and M tuta at 2}, the desired \eqref{asy expan regular 2} is obtained.
\vskip0.3cm
{\bf{(v) ${\bm{16}}$ is the resonance of $H$ and {$\bm{\beta>11}$}.}} Taking $N=3$ in \eqref{Puiseux expan of R0 2} and repeating {\bf{\underline{Step1}}} in {\bf{(iv)}}, since ${\rm Ker}\widetilde{Q}\widetilde{T}_0\widetilde{Q}\big|_{\widetilde{Q}\ell^2(\Z)}=\widetilde{S}_0\ell^2(\Z)$, by Lemma \ref{lemm of expa}, we proceed with
\vskip0.15cm
{\bf{\underline{Step2:}}}\ $\widetilde{M}_{1}^{\pm}(\mu)$ is invertible on $\widetilde{Q}\ell^{2}(\Z)\Leftrightarrow M^{\pm}_{2}(\mu):=\widetilde{S}_0-\widetilde{S}_0\big(\widetilde{M}_{1}^{\pm}(\mu)+\widetilde{S}_0\big)^{-1}\widetilde{S}_0$ is invertible on $\widetilde{S}_0\ell^{2}(\Z)$. In this case,
\begin{equation}\label{Relat of M1 tuta and M2 at 2}
\big(\widetilde{M}_{1}^{\pm}(\mu)\big)^{-1}=\big(\widetilde{M}_{1}^{\pm}(\mu)+\widetilde{S}_0\big)^{-1}\Big(I+\widetilde{S}_0\big(M^{\pm}_{2}(\mu)\big)^{-1}\widetilde{S}_0\big(\widetilde{M}_{1}^{\pm}(\mu)+\widetilde{S}_0\big)^{-1}\Big),
\end{equation}
where
\begin{align*}
\big(\widetilde{M}_{1}^{\pm}(\mu)+\widetilde{S}_0\big)^{-1}=E_0-\sum\limits_{k=1}^{3}\mu^{\frac{k}{2}}\widetilde{C}^{\pm}_{k}+E_0\Gamma_{2}(\mu)E_0,
\end{align*}
with $E_0$ being the inverse of $\widetilde{Q}\widetilde{T}_0\widetilde{Q}+\widetilde{S}_0$ on $\widetilde{Q}\ell^2(\Z)$ (then $E_0\widetilde{Q}=E_0=\widetilde{Q}E_0$) and
\begin{equation}\label{Ctutak}
\widetilde{C}^{\pm}_{1}=d^{\pm}_{-1}E_0C^{\pm}_{2}E_0,\quad \widetilde{C}^{\pm}_{2}=d^{\pm}_{-1}E_0C^{\pm}_{3}E_0-\big(d^{\pm}_{-1}\big)^2\big(E_0C^{\pm}_2\big)^2E_0,
\end{equation}
with $C^{\pm}_{1}$, $C^{\pm}_2$ defined in \eqref{expr of Ckpm} and
\begin{equation*}
C^{\pm}_{3}=\frac{1}{d^{\pm}_{-1}}\tilde{v}\widetilde{G}^{\pm}_2\tilde{v}-\Big(\frac{1}{d^{\pm}_{-1}}\Big)^2\Big(\widetilde{T}_0\tilde{v}\widetilde{G}^{\pm}_1\tilde{v}+\tilde{v}\widetilde{G}^{\pm}_1\tilde{v}\widetilde{T}_0\Big)+\Big(\frac{1}{d^{\pm}_{-1}}\widetilde{T}_0\Big)^3.
\end{equation*}
Then basing on $\widetilde{S}_0E_0=\widetilde{S}_0=E_0\widetilde{S}_0$, we obtain
\begin{align*}
M^{\pm}_2(\mu)=d^{\pm}_1\mu^{\frac{1}{2}}\Big(\widetilde{T}_1+\frac{1}{d^{\pm}_{1}}\sum\limits_{k=2}^{3}\mu^{\frac{k-1}{2}}\widetilde{S}_0\widetilde{C}_{k}^{\pm}\widetilde{S}_0+\widetilde{S}_0\Gamma_{\frac{3}{2}}(\mu)\widetilde{S}_0\Big):=d^{\pm}_1\mu^{\frac{1}{2}}\widetilde{M}^{\pm}_2(\mu),\quad d^{\pm}_1=\pm i,
\end{align*}
which follows that
\begin{equation}\label{Relat of M2 and M2 tuta at 2}
\left(M^{\pm}_2(\mu)\right)^{-1}=\frac{1}{d^{\pm}_{1}\mu^{\frac{1}{2}}}\left(\widetilde{M}_{2}^{\pm}(\mu)\right)^{-1}.
\end{equation}
By Remark \ref{remark of charac from inverti of operators}, since $\widetilde{T}_1$ is invertible on $\widetilde{S}_0\ell^2(\Z)$, using Von-Neumann expansion, it yields that
\begin{align}\label{inverse of Mtuta2}
\left(\widetilde{M}_{2}^{\pm}(\mu)\right)^{-1}&=\Big(\widetilde{T}_1+\frac{1}{d^{\pm}_{1}}\sum\limits_{k=2}^{3}\mu^{\frac{k-1}{2}}\widetilde{S}_0\widetilde{C}_{k}^{\pm}\widetilde{S}_0+\widetilde{S}_0\Gamma_{\frac{3}{2}}(\mu)\widetilde{S}_0\Big)^{-1}\notag\\
&=E_1-\mu^{\frac{1}{2}}\tilde{\tilde{C}}^{\pm}_{1}-\mu \tilde{\tilde{C}}^{\pm}_2+E_1\Gamma_{\frac{3}{2}}(\mu)E_1,\quad \mu\rightarrow0,
\end{align}
where $E_1$ is the inverse of $\widetilde{T}_1$ on $\widetilde{S}_0\ell^2(\Z)$~(then $E_1\widetilde{S}_0=E_1=\widetilde{S}_0E_1$) and %$\tilde{\tilde{C}}^{\pm}_{1}=\frac{1}{d^{\pm}_1}E_1\widetilde{C}^{\pm}_2E_1$.
\begin{align}\label{expr of Ckpmttuta}
\tilde{\tilde{C}}^{\pm}_{1}=\frac{1}{d^{\pm}_1}E_1\widetilde{C}^{\pm}_2E_1,\quad \tilde{\tilde{C}}^{\pm}_{2}=\frac{1}{d^{\pm}_1}E_1\widetilde{C}^{\pm}_3E_1-\Big(\frac{1}{d^{\pm}_1}\Big)^2 \big(E_1\widetilde{C}^{\pm}_2\big)^2E_1,
\end{align}
with
$$\widetilde{C}^{\pm}_{3}=d^{\pm}_{-1}E_0C^{\pm}_{4}E_0-(d^{\pm}_{-1})^2(E_0{C}^{\pm}_2E_0{C}^{\pm}_3E_0+E_0{C}^{\pm}_3E_0{C}^{\pm}_2E_0)+(d^{\pm}_{-1})^3(E_0C^{\pm}_2)^3E_0,$$
and
\begin{align*}
C^{\pm}_4&=\frac{1}{d^{\pm}_{-1}}\tilde{v}\widetilde{G}^{\pm}_3\tilde{v}-\big(\frac{1}{d^{\pm}_{-1}}\big)^2(\widetilde{T}_0\tilde{v}\widetilde{G}^{\pm}_2\tilde{v}+(\tilde{v}\widetilde{G}^{\pm}_1\tilde{v})^2+\tilde{v}\widetilde{G}^{\pm}_2\tilde{v}\widetilde{T}_0)\\
&\quad+\big(\frac{1}{d^{\pm}_{-1}}\big)^3((\widetilde{T}_0)^2\tilde{v}\widetilde{G}^{\pm}_1\tilde{v}+\widetilde{T}_0\tilde{v}\widetilde{G}^{\pm}_1\tilde{v}\widetilde{T}_0+\tilde{v}\widetilde{G}^{\pm}_1\tilde{v}(\widetilde{T}_0)^2)-\big(\frac{1}{d^{\pm}_{-1}}\widetilde{T}_0\big)^4.
\end{align*}
Combining \eqref{inverse of Mtuta2}, \eqref{Relat of M2 and M2 tuta at 2}, \eqref{Relat of M1 tuta and M2 at 2}, \eqref{Relat of M1 and M1 tuta at 2}, \eqref{Relat of M tuta and M1 at 2} and \eqref{Relat of M and M tuta at 2}, \eqref{asy expan resonance 2} is established. %and this completes the whole proof of Theorem \ref{Asymptotic expansion theorem}.
\vskip0.3cm
{\bf{(vi) ${\bm{16}}$ is the eigenvalue of $H$ and {$\bm{\beta>15}$}.}} Taking $N=5$ in \eqref{Puiseux expan of R0 2} and repeating {\bf{\underline{Step1}}} in {\bf{(iv)}} and {\bf{\underline{Step2}}} in {\bf{(v)}}, by ${\rm Ker}\widetilde{T}_1\big|_{\widetilde{S}_0\ell^2(\Z)}=\widetilde{S}_1\ell^2(\Z)$ and Lemma \ref{lemm of expa}, we proceed with
\vskip0.15cm
{\bf{\underline{Step3:}}}\ $\widetilde{M}_{2}^{\pm}(\mu)$ is invertible on $\widetilde{S}_0\ell^{2}(\Z)\Leftrightarrow M^{\pm}_{3}(\mu):=\widetilde{S}_1-\widetilde{S}_1\big(\widetilde{M}_{2}^{\pm}(\mu)+\widetilde{S}_1\big)^{-1}\widetilde{S}_1$ is invertible on $\widetilde{S}_1\ell^{2}(\Z)$. In this case,
\begin{equation}\label{Relat of M2 tuta and M3 at 2}
\big(\widetilde{M}_{2}^{\pm}(\mu)\big)^{-1}=\big(\widetilde{M}_{2}^{\pm}(\mu)+\widetilde{S}_1\big)^{-1}\Big(I+\widetilde{S}_1\big(M^{\pm}_{3}(\mu)\big)^{-1}\widetilde{S}_1\big(\widetilde{M}_{2}^{\pm}(\mu)+\widetilde{S}_1\big)^{-1}\Big),
\end{equation}
where
\begin{align*}
\big(\widetilde{M}_{2}^{\pm}(\mu)+\widetilde{S}_1\big)^{-1}=E_1-\sum\limits_{k=1}^{5}\mu^{\frac{k}{2}}\tilde{\tilde{C}}^{\pm}_{k}+E_1\Gamma_3(\mu)E_1,\quad \mu\rightarrow0,
\end{align*}
with $E_1$ is the inverse of $\widetilde{T}_1+\widetilde{S}_1$ on $\widetilde{S}_0\ell^2(\Z)$~(then $E_1\widetilde{S}_0=E_1=\widetilde{S}_0E_1$) and $\tilde{\tilde{C}}^{\pm}_{1}=\frac{1}{d^{\pm}_1}E_1\widetilde{C}^{\pm}_2E_1$. Then
\begin{align*}
M^{\pm}_3(\mu)=\frac{1}{d^{\pm}_1}\mu^{\frac{1}{2}}\Big(\widetilde{T}_2+{d^{\pm}_1}\sum\limits_{k=2}^{5}\mu^{\frac{k-1}{2}}\widetilde{S}_1\tilde{\tilde{C}}^{\pm}_{k}\widetilde{S}_1+\widetilde{S}_1\Gamma_{\frac{5}{2}}(\mu)\widetilde{S}_1\Big):=\frac{\mu^{\frac{1}{2}}}{d^{\pm}_1}\widetilde{M}^{\pm}_3(\mu),
\end{align*}
where $\widetilde{T}_2$ is defined in Lemma \ref{lemma of kernel and operaor} and we used the facts that $E_1\widetilde{S}_1=\widetilde{S}_1=\widetilde{S}_1E_1$ and $\widetilde{T}_0\widetilde{S}_1=0=\widetilde{Q}C^{\pm}_2\widetilde{S}_1$. Since $\widetilde{T}_2$ is invertible on $\widetilde{S}_1\ell^2(\Z)$, then
\begin{align}\label{Relat of M3 and M3 tuta at 2}
\big(M^{\pm}_3(\mu)\big)^{-1}=d^{\pm}_1\mu^{-\frac{1}{2}}\big(\widetilde{M}^{\pm}_3(\mu)\big)^{-1}.
\end{align}
Using Von-Neumann expansion and combining \eqref{Relat of M3 and M3 tuta at 2}, \eqref{Relat of M2 tuta and M3 at 2}, \eqref{Relat of M2 and M2 tuta at 2}, \eqref{Relat of M1 tuta and M2 at 2}, \eqref{Relat of M1 and M1 tuta at 2}, \eqref{Relat of M tuta and M1 at 2} and \eqref{Relat of M and M tuta at 2}, the desired \eqref{asy expan eigenvalue 2} is obtained. This completes the whole proof.
\end{proof}

Finally, we present the proof of Lemma \ref{lemma of kernel and operaor}.
\begin{proof}[Proof of Lemma {\ref{lemma of kernel and operaor}}]
{\bf{\underline{(1)}}} For any $f\in Q\ell^{2}(\Z)$, by virtue of $\left<f,v\right>=0$ and the expression $$G_{-1}(n,m)=\frac{1}{8}-\frac{1}{2}|n-m|^2,$$
a direct calculation yields that
\begin{equation}\label{exp of QvGvQ}
QvG_{-1}vQf=\left<f,v_1\right>Q(v_1).
\end{equation}
Since $Q(v_1)\not\equiv0$, it implies that
$$g\in{\rm Ker}QvG_{-1}vQ\big|_{Q\ell^2(\Z)}\Leftrightarrow\left<g,v\right>=0\ {\rm and}\ \left<g,v_1\right>=0\Leftrightarrow g\in S_0\ell^{2}(\Z).$$
\vskip0.3cm
{\bf{\underline{(2)}}} It is key to calculate that for any $f\in S_1\ell^2(\Z)$,
\begin{align}\label{exp of T1f}
T_1f=4\big<f,v_2\big>S_1(v_2)+64\frac{\big<T_0f,v'\big>}{\|v'\|^4_{\ell^2}}S_1(T_0v'),\quad v'=Q(v_1)=v_1-\frac{\left<v_1,v\right>}{\|V\|_{\ell^1}}v.
\end{align}
Once this is established, then ``$\supseteq"$ is obviously and ``$\subseteq$" can be obtained by the following relation:
$$0=\big<T_1f,f\big>=4\left|\big<f,v_2\big>\right|^2+64\frac{\left|\big<T_0f,v'\big>\right|^2}{\|v'\|^4_{\ell^2}}.$$
To obtain \eqref{exp of T1f}, we divide several steps to calculate.
\vskip0.15cm
{\bf{i)}} By virtue of the property that $\big<S_1f,v_k\big>=0$, $S_1v_k=0$ for $k=0,1$ and the kernels:
 \begin{align*}
 G_{-1}(n,m)=\frac{1}{8}-\frac{1}{2}|n-m|^2,\quad G_{1}(n,m)=\frac{1}{3}|n-m|^4-\frac{5}{6}|n-m|^2+\frac{3}{16},
 \end{align*} it is easy to verify that
\begin{equation}\label{S1vG1vS1}
S_1vG_1vS_1f=2\big<f,v_2\big>S_1v_2,\quad S_1vG_{-1}vPvG_{-1}vS_1f=\frac{\|V\|_{\ell^1}}{4}\big<f,v_2\big>S_1v_2.
\end{equation}
\vskip0.15cm
{\bf{ii)}} Let $v'=Qv_1$. By $S_0v'=0$, \eqref{exp of QvGvQ} and $\big<v,v'\big>=0$, it yields that
$$(QvG_{-1}vQ+S_0)v'=\big<v',v_1\big>Q(v_1)=\big<v',v'\big>v',$$
which implies that
\begin{equation}\label{D0v}
D_0v'=\frac{v'}{\|v'\|^2_{\ell^2}}.
\end{equation}
Then basing on $D_0P=0$ and $S_0T_0f=0$, we have
\begin{align}\label{S1T0D0S1f}
S_1T_0D_0T_0S_1f=S_1T_0D_0(P+Q-S_0+S_0)T_0f=S_1T_0D_0(Q-S_0)T_0f=\frac{\big<T_0f,v'\big>}{\|v'\|^4_{\ell^2}}S_1(T_0v'),
\end{align}
where in the last equality we used the facts that $(Q-S_0)\ell^2(\Z)={\rm Span}\{v'\}$ and \eqref{D0v}. Therefore, combining \eqref{S1vG1vS1}, \eqref{S1T0D0S1f} with \eqref{T1}, \eqref{exp of T1f} is derived.
\vskip0.3cm
{\bf{\underline{(3)}}} Firstly, we can verify that for any $f\in S_2\ell^2(\Z)$,
\begin{align}\label{exp of T2f}
T_2f&=-\frac{1}{6!}\Big(40\big<f,v_3\big>S_2(v_3)+\frac{8\cdot6!}{\|V\|^2_{\ell^1}}\big<T_0f,v\big>S_2T_0v\Big)\notag\\
&\quad+\frac{64}{\|V\|^2_{\ell^1}}\left[\left(\frac{1}{4}\big<T_0f,v\big>\big<D_2S_1v_2,S_1v_2\big>-\frac{\|V\|_{\ell^1}}{12}\frac{\big<f,v_3\big>}{\|v'\|^2_{\ell^2}}\big<D_2S_1T_0v',S_1v_2\big>\right)S_2T_0v\right.\\
&\qquad\qquad\quad-\left.\left(\frac{\|V\|_{\ell^1}}{12}\frac{\big<T_0f,v\big>}{\|v'\|^2_{\ell^2}}\big<D_2S_1v_2,S_1T_0v'\big>-\frac{\|V\|^2_{\ell^1}}{36}\frac{\big<f,v_3\big>}{\|v'\|^4_{\ell^2}}\big<D_2S_1T_0v',S_1T_0v'\big>\right)S_2(v_3)\right].\notag
\end{align}
Assuming that \eqref{exp of T2f} holds~(prove later), then ``$\supseteq"$ is obviously. However, as for ``$\subseteq$", we need do more analysis. Let
\begin{align*}
X&:=\big<f,v_3\big>=X_1+iX_2,\quad Y:=\big<T_0f,v\big>=Y_1+iY_2, \quad X_1,X_2,Y_1,Y_2\in\R,\\
g_1&=S_1v_2,\quad h_1=D_2g_1,\quad g_2=S_1T_0v',\quad h_2=D_2g_2,
\end{align*}
and then
\begin{align}\label{T2ff}
0=\big<T_2f,f\big>=a(X^2_1+X^2_2)+b(Y^2_1+Y^2_2)-dc_1(X_1Y_1+X_2Y_2)+dc_2(X_2Y_1-X_1Y_2),
\end{align}
where
\begin{align}\label{a,b,c,d}
\begin{split}
a&=\frac{1}{9}\Big(\frac{16\big<h_2,g_2\big>}{\|v'\|^4_{\ell^2}}-\frac{1}{2}\Big),\quad b=\frac{8}{\|V\|^2_{\ell^1}}\big(2\big<h_1,g_1\big>-1\big),\\
d&=\frac{32}{3\|V\|_{\ell^1}\|v'\|^2_{\ell^2}},\quad c_1+ic_2=\big<h_2,g_1\big>,\quad c_1,c_2\in\R.
\end{split}
\end{align}
Next we consider introducing the following real quadratic form:
$$F(x_1,x_2,x_3,x_4):=a(x^2_1+x^2_2)+b(x^2_3+x^2_4)-dc_1(x_1x_3+x_2x_4)+dc_2(x_2x_3-x_1x_4),$$
with coefficients $a,b,d,c_1,c_2$ defined in \eqref{a,b,c,d}. Then we can verify that
\begin{equation}\label{4ab}
a<0,\quad 4ab-d^2(c^2_1+c^2_2)>0.
\end{equation}
which implies that $F$ is a negative definite quadratic form. Thus, it follows from \eqref{T2ff} that $X=Y=0$, which establishes ``$\subseteq$". To obtain \eqref{4ab}, noting that $h_i\in S_1\ell^2(\Z)$, then by \eqref{exp of T1f},
\begin{align*}
g_i=D^{-1}_2h_i=(T_1+S_2)h_i=4\big<h_i,g_1\big>g_1+64\frac{\big<h_i,g_2\big>}{\|v'\|^4_{\ell^2}}g_2+S_2h_i,\quad i=1,2
%g_2=D^{-1}_2h_2&=(T_1+S_2)h_2=4\big<h_2,g_1\big>g_1+64\frac{\big<h_2,g_2\big>}{\|v'\|^4_{\ell^2}}g_2+S_2h_2.
\end{align*}
Then it concludes that $$\big<g_1,h_1\big>\leq\frac{1}{4},\quad \big<g_2,h_2\big>\leq\frac{1}{64}\|v'\|^4_{\ell^2},$$
and both inequalities are strict if $\big<h_2,g_1\big>\neq0$.
Thus, $a<0,b<0$. Obviously, $d^2(c^2_1+c^2_2)-4ab<0$ when $\big<h_2,g_1\big>=0$. If $\big<h_2,g_1\big>\neq0$, one can calculate that
\begin{align*}
d^2(c^2_1+c^2_2)-4ab&=\frac{32^2}{9\|V\|^2_{\ell^1}}\Big(\frac{\big|\big<g_2,h_1\big>\big|^2}{\|v'\|^4_{\ell^2}}-\frac{\big<h_2,g_2\big>\big<h_1,g_1\big>}{\|v'\|^4_{\ell^2}}+\frac{\big<h_2,g_2\big>}{2\|v'\|^4_{\ell^2}}+\frac{\big<h_1,g_1\big>}{32}-\frac{1}{64}\Big)\\
&\leq \frac{32^2}{9\|V\|^2_{\ell^1}}\Big(\frac{\big<h_2,g_2\big>}{2\|v'\|^4_{\ell^2}}+\frac{\big<h_1,g_1\big>}{32}-\frac{1}{64}\Big)<0,
\end{align*}
where in the first inequality we used the fact
$$\big|\big<g_2,h_1\big>\big|^2=\big|\big<g_2,D_2g_1\big>\big|^2\leq\big<g_2,D_2g_2\big>\big<g_1,D_2g_1\big>=\big<h_2,g_2\big>\big<h_1,g_1\big>.$$
Now we come to prove \eqref{exp of T2f}.
\vskip0.15cm
{\bf{i)}} By virtue of $\big<S_2f,v_k\big>=0$, $S_2v_k=0$ for $k=0,1,2$ and the kernel
$$G_3(n,m)=|n-m|^6-\frac{35}{4}|n-m|^4+\frac{259}{16}|n-m|^2-\frac{225}{64},$$
it is not difficult to verify that
\begin{equation}\label{S2vG3}
S_2vG_3vS_2f=-20\big<f,v_3\big>S_2v_3.
\end{equation}
Noticing that $QT_0f=0$, $QD_0=D_0=D_0Q$ and basing on \eqref{D0v}, one has
\begin{align}\label{S2T0S2,S2VG1VD0}
\begin{split}
S_2{(T_0)}^2S_2f&=S_2T_0(P+Q)T_0f=S_2T_0PT_0f=\frac{\big<T_0f,v\big>}{\|V\|_{\ell^1}}S_2T_0v,\\
S_2vG_1vD_0vG_1vS_2f&=S_2vG_1vQD_0QvG_1vf=\frac{16}{9}\big<f,v_3\big>S_2v_3.
\end{split}
\end{align}

{\bf{ii)}} To calculate the second term in \eqref{T2}, noticing that $D_0D_2=D_2=D_0D_2$ and $D_2=S_1D_2S_1$, it reduces to the following four operators. By virtue of the cancelation of $S_1,S_2$ and \eqref{D0v}, it is not difficult to obtain that
\begin{align}\label{second term of T2}
\begin{split}
S_2T_0vG_{-1}vS_1D_2S_1vG_{-1}vT_0S_2f&=\frac{1}{4}\big<T_0f,v\big>\big<D_2S_1v_2,S_1v_2\big>S_2T_0v,\\
S_2T_0vG_{-1}vS_1D_2S_1T_0D_0vG_1vS_2f&=\frac{2\big<f,v_3\big>}{3\|v'\|^2_{\ell^2}}\big<D_2S_1T_0v',S_1v_2\big>S_2T_0v,\\
S_2vG_1vD_0T_0S_1D_2S_1vG_{-1}vT_0S_2f&=\frac{2\big<T_0f,v\big>}{3\|v'\|^2_{\ell^2}}\big<D_2S_1v_2,S_1T_0v'\big>S_2v_3,\\
S_2vG_1vD_0T_0S_1D_2S_1T_0D_0vG_1vS_2f&=\frac{16\big<f,v_3\big>}{9\|v'\|^4_{\ell^2}}\big<D_2S_1T_0v',S_1T_0v'\big>S_2v_3.
\end{split}
\end{align}
Thus, combining \eqref{S2vG3}, \eqref{S2T0S2,S2VG1VD0}, \eqref{second term of T2} with \eqref{T2}, \eqref{exp of T2f} is established.
\vskip0.3cm
{\bf{\underline{(4)}}} Similarly, we can calculate that for any $f\in \widetilde{S}_0\ell^2(\Z)$,
\begin{align*}
\widetilde{T}_1f=\frac{1}{8}\big<f,\tilde{v}_1\big>\widetilde{S}_0\tilde{v}_1+\frac{32}{\|V\|^2_{\ell^1}}\big<\widetilde{T}_0f,\tilde{v}\big>\widetilde{S}_0\widetilde{T}_0\tilde{v},
\end{align*}
and then the desired result is established.%and this completes the whole proof of Lemma \ref{lemma of kernel and operaor}.
\vskip0.3cm
{\bf{\underline{(5)}}} {\bf{(a)}} By Lemma \ref{charac of solution and space}, to verify $S_3\ell^2(\Z)=\{0\}$, it is equivalent to verify that 0 is not the eigenvalue of $H$, i.e., assuming that $H\phi=0$ with $\phi\in\ell^2(\Z)$, then $\phi=0$. To see this, we consider introducing $f=Uv\phi$, then by Lemma \ref{charac of solution and space}, $f\in S_3\ell^2(\Z)$ and
\begin{align*}
\phi(n)&=-(G_0vf)(n)=-\frac{1}{12}\sum\limits_{m\in\Z}^{}\left(|n-m|^3-|n-m|\right)v(m)f(m)\\
&=-\frac{1}{6}\sum\limits_{m<n}\big((n-m)^3-(n-m)\big)V(m)\phi(m),
\end{align*}
where in the third equality we used the facts that $\left<f,v_k\right>=0$ for $k=0,1,2,3$ and $v(m)f(m)=V(m)\phi(m)$. Given $n_0\in\Z$, for $n\geq n_0$, we derive
\begin{align*}
6|\phi(n)|&\leq\sum\limits_{n_0\leq m\leq n-1}\big((n-m)^3-(n-m)\big)|V(m)\phi(m)|\\
&\quad+\sup\limits_{m<n_0}|\phi(m)|\sum\limits_{m<n_0}\big((n-m)^3-(n-m)\big)|V(m)|,
\end{align*}
multiplying both sides by $\left<n\right>^{-3}$ yields
\begin{align}\label{nfi}
\left<n\right>^{-3}|\phi(n)|\leq\sum\limits_{n_0\leq m\leq n-1}C\left<m\right>^{3}|V(m)|\cdot|\phi(m)|+C\sup\limits_{m<n_0}|\phi(m)|\cdot\|{\left<\cdot\right>}^3V(\cdot)\|_{\ell^1},
\end{align}
where $C$ is a constant.
Denote $$u(n)=\left<n\right>^{-3}|\phi(n)|,\quad B(m)=C\left<m\right>^{6}|V(m)|,\quad A(n_0)=C\|{\left<\cdot\right>}^3V(\cdot)\|_{\ell^1}\sup\limits_{m<n_0}|\phi(m)|,$$
then \eqref{nfi} can be rewritten as follows:
\begin{align}\label{rela of u and omega}
u(n)\leq\sum\limits_{n_0\leq m\leq n-1}B(m)u(m)+A(n_0):=\omega(n).
\end{align}
Next we prove that $u=0$ and this gives that $\phi=0$. Indeed,
\vskip0.15cm
{\bf{i)}} If $A(n_0)>0$, then for any $k\geq n_0$, we have
$$\omega(k+1)=\omega(k)+B(k)u(k)\leq \omega(k)+B(k)\omega(k),$$
%then
%$$\frac{\omega(k+1)}{\omega(k)}\leq 1+B(k).$$
which implies that
$${\rm ln}\frac{\omega(n)}{\omega(n_0)}=\sum\limits_{k=n_0}^{n-1}{\rm ln} \frac{\omega(k+1)}{\omega(k)}\leq\sum\limits_{k=n_0}^{n-1}{\rm ln}(1+B(k)).$$
Using the inequality that $1+x\leq e^{x}$ for $x\geq0$, one has
\begin{equation}\label{Gron ineq ge0}
u(n)\leq\omega(n)\leq A(n_0)e^{\|B(\cdot)\|_{\ell^1}}.
\end{equation}
Noting that under the decay assumption on $V$, we have $\lim\limits_{n\rightarrow-\infty}\phi(n)=0$ and thus $u=0$ by taking limit $n_0\rightarrow-\infty$ in the \eqref{Gron ineq ge0}.
\vskip0.15cm
{\bf{ii)}} If $A(n_0)=0$, then $\sup\limits_{m<n_0}|\phi(m)|=0$, which indicates that for any $n<n_0$, $\phi(n)=0$. On the other hand, by \eqref{rela of u and omega}, for any $\varepsilon>0$,
\begin{align}\label{relatio of u and omega}
u(n)\leq\sum\limits_{n_0\leq m\leq n-1}B(m)u(m)+\varepsilon:=\tilde{\omega}(n).
\end{align}
Then it follows from {\bf{i)}} that for any $n\geq n_0$,
$$u(n)\leq \varepsilon e^{\|B(\cdot)\|_{\ell^1}},$$
which concludes that $u(n)=0$ by the arbitrary of $\varepsilon$. Therefore, we obtain that $\phi=0$.
\vskip0.3cm
{\bf{(b)}} For any $f\in \widetilde{S}_1\ell^2(\Z)$ satisfying $\widetilde{S}_1\tilde{v}\widetilde{G}_2\tilde{v}\widetilde{S}_1f=0$, then
\begin{align*}
0=\big<\widetilde{S}_1\tilde{v}\widetilde{G}_2\tilde{v}\widetilde{S}_1f,f\big>=\big<\widetilde{G}_2\tilde{v}f,\tilde{v}f\big>.
\end{align*}
On the other hand, %since
%\begin{align*}
%\widetilde{G}_2=\lim\limits_{\omega\rightarrow0}\frac{1}{\omega}\Big(JR_0((2-\omega)^4)J-\omega^{-\frac{1}{2}}\widetilde{G}_{-1}-\widetilde{G}_0-\omega^{\frac{1}{2}}\widetilde{G}_1\Big)
%\end{align*}
by \eqref{Puiseux expan of R0 2} and
$$\widetilde{G}_{-1}\tilde{v}f=0,\quad\big<\widetilde{G}_1\tilde{v}f,\tilde{v}f\big>=0,$$
we have
\begin{align*}
\big<\widetilde{G}_2\tilde{v}f,\tilde{v}f\big>&=\lim\limits_{\omega\rightarrow0}\frac{1}{\omega}\big<(R_0((2-\omega)^4)-J\widetilde{G}_0J)vf,vf\big>\\
&=\lim\limits_{\omega\rightarrow0}\frac{1}{\omega}\int_{-\pi}^{\pi}\Big(\frac{1}{(2-2{\rm cos}x)^2-(2-\omega)^4}-\frac{1}{(2-2{\rm cos}x)^2-16}\Big)|(\mcaF (vf))(x)|^2dx\\
&=\lim\limits_{\omega\rightarrow0}\int_{-\pi}^{\pi}\frac{((2-\omega)^2+4)(\omega-4)}{(16-(2-2{\rm cos}x)^2+(2-\omega)^4-16)(16-(2-2{\rm cos}x)^2)}|(\mcaF (vf))(x)|^2dx\\
&=-32\big<\widetilde{G}_0\tilde{v}f,\widetilde{G}_0\tilde{v}f\big>,
\end{align*}
where in the second equality we used the Plancherel's identity and the relation $vf=(\Delta^2-16)J\widetilde{G}_0Jvf$ and the fourth equality is obtained from Lebesgue's dominated convergence theorem by choosing $|\omega|\leq1$ and Re$((2-\omega)^4-16))>0$. Then $\widetilde{G}_0\tilde{v}f=0$, and by Lemma \ref{charac of solution and space}, $f=-UvJ\widetilde{G}_0\tilde{v}f=0$, thus ${\rm Ker}\widetilde{T}_2\big|_{\widetilde{S}_1\ell^2(\Z)}=\{0\}$. This completes the whole proof.
\end{proof}
%\subsection{Asymptotic expansion at threshold 16}
\begin{comment}
\begin{lemma}
{\rm Let $\widetilde{G}_1$ be an integral operator with kernel defined in \eqref{expan coeffie of Gtutapm} and define $$\widetilde{T}_1=\widetilde{S}\tilde{v}\widetilde{G}_1\tilde{v}\widetilde{S}+\frac{32}{\|V\|_{\ell^1}}\widetilde{S}\widetilde{T}_0\widetilde{S},$$ then
$${\rm Ker}\widetilde{T}_1\big|_{\widetilde{S}\ell^2(\Z)}:=\{f\in \widetilde{S}\ell^2(\Z): \widetilde{T}_1f=0\}=\widetilde{S}_1\ell^2(\Z).$$
}
\end{lemma}

\begin{proof}[{\rm{Proof of Theorem \ref{Asymptotic expansion theorem} (iii)}}]
\end{proof}
\end{comment}
\section{Proof of Theorem \ref{relation space and resonance types}}\label{proof of relation space and resonance types}
This section aims to provide the proof of Theorem \ref{relation space and resonance types}. To this end, we first recall some operators and spaces. Let
$$T_{0}=U+vG_{0}v,\quad\widetilde{T}_{0}=U+\tilde{v}\widetilde{G}_{0}\tilde{v},$$
where $U(n)={\rm sign} (V(n))$, $v(n)=\sqrt{|V(n)|}$ and $G_0$, $\widetilde{G}_0$ are integral operators with the following kernels, respectively:
\begin{align*}
G_0(n,m)&=\frac{1}{12}\left(|n-m|^3-|n-m|\right),\\
\widetilde{G}_0(n,m)&=\frac{1}{32\sqrt2}\left(2\sqrt2 |n-m|-\Big(2\sqrt2-3\right)^{|n-m|}\Big).
\end{align*}
Additionally, $Q,S_j(j=0,1,2,3),\widetilde{Q},\widetilde{S}_0,\widetilde{S}_1$ (defined as in Definitions \ref{definition of Sj} and \ref{definition of Sjtilde}) are orthogonal projections onto the following spaces:
\begin{itemize}
%\item[(i)]$Q\ell^2(\Z):=\left\{f\in\ell^2(\Z):\left<f,v\right>=0\right\}=\left({\rm span}\{v\}\right)^{\bot}$,
\item[(i)]$Q\ell^2(\Z)=\left({\rm span}\{v\}\right)^{\bot}$,\ $S_0\ell^2(\Z)=\left({\rm span}\{v,v_1\}\right)^{\bot}$,\  $S_1\ell^2(\Z):=\big\{f\in S_0\ell^2(\Z): S_0T_0f=0\big\}$,
    \vskip0.15cm
\item[(ii)]$S_2\ell^2(\Z):=\big\{f\in S_1\ell^2(\Z):\left<f,v_{2}\right>=0,\ QT_0f=0\big\}$,
\vskip0.15cm
\item[(iii)] $S_3\ell^2(\Z):=\big\{f\in S_2\ell^2(\Z):\left<f,v_{3}\right>=0,\ T_0f=0\big\}$,
\vskip0.15cm
\item[(iv)] $\widetilde{Q}\ell^2(\Z)=\left({\rm span}\{\tilde{v}\}\right)^{\bot}$,\quad $\widetilde{S}_0\ell^2(\Z):=\big\{f\in \widetilde{Q}\ell^2(\Z): \widetilde{Q}\widetilde{T}_0f=0\big\}$,
\vskip0.15cm
\item[(v)] $\widetilde{S}_1\ell^2(\Z):=\big\{f\in \widetilde{S}_0\ell^2(\Z):\left<f,\tilde{v}_1\right>=0,\ \widetilde{T}_0f=0\big\}$.
\end{itemize}
\vskip0.2cm

To establish Theorem \ref{relation space and resonance types}, it suffices to prove the following lemma.
\begin{lemma}\label{charac of solution and space}
Let $H=\Delta^2+V$ on $\Z$ and $|V(n)|\lesssim \left<n\right>^{-\beta}$ with $\beta>9$, then
\begin{itemize}
\item [{\rm(i)}]$f\in S_1\ell^2(\Z)\Longleftrightarrow \exists\ \phi\in W_{3/2}(\Z)$ such that $H\phi=0$. Moreover, $f=Uv\phi$ and $\phi(n)=-(G_0vf)(n)+c_1n+c_2$,
    where
    \begin{equation}\label{exp of c1,c2}
    c_1=\frac{\left<T_0f,v'\right>}{\|v'\|^2_{\ell^2}},\quad c_2=\frac{\left<T_0f,v\right>}{\|V\|_{\ell^1}}-\frac{\left<v_1,v\right>}{\|V\|_{\ell^1}}c_1,\quad v'=Q(v_1)=v_1-\frac{\left<v_1,v\right>}{\|V\|_{\ell^1}}v.
    \end{equation}
\item [{\rm(ii)}]$f\in S_2\ell^2(\Z)\Longleftrightarrow \exists\ \phi\in W_{1/2}(\Z)$ such that $H\phi=0$. Moreover, $f=Uv\phi$ and
    $$\phi=-G_0vf+\frac{\left<T_0f,v\right>}{\|V\|_{\ell^1}}.$$
\item [{\rm(iii)}] $f\in S_3\ell^2(\Z)\Longleftrightarrow \exists\ \phi\in \ell^2(\Z)$ such that $H\phi=0$. Moreover, $f=Uv\phi$ and $\phi=-G_0vf$.
\item [{\rm(iv)}] $f\in \widetilde{S}_0\ell^2(\Z)\Longleftrightarrow \exists\ \phi\in W_{1/2}(\Z)$ such that $H\phi=16\phi$. Moreover, $f=Uv\phi$ and
    \begin{equation}\label{exp of c}
    \phi=-J\widetilde{G}_{0}\tilde{v}f+J\frac{\big<\widetilde{T}_0f,\tilde{v}\big>}{\|V\|_{\ell^1}}.
    \end{equation}
\item [{\rm(v)}] $f\in \widetilde{S}_1\ell^2(\Z)\Longleftrightarrow \exists\ \phi\in \ell^2(\Z)$ such that $H\phi=16\phi$. Moreover, $f=Uv\phi$ and $\phi=-J\widetilde{G}_{0}\tilde{v}f$.
\end{itemize}
\end{lemma}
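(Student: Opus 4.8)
The plan is to trade the spectral equations $H\phi=0$ and $H\phi=16\phi$ for the constant–coefficient difference equations $\Delta^2\phi=-V\phi$ and $(\Delta^2+8\Delta)\tilde\phi=-V\tilde\phi$, the second one obtained by conjugating with $J$: since $J\Delta J=-\Delta-4$ we get $J\Delta^2J=\Delta^2+8\Delta+16$, hence $JHJ=\Delta^2+8\Delta+16+V$ and $H\phi=16\phi\Leftrightarrow(\Delta^2+8\Delta)\tilde\phi=-V\tilde\phi$ with $\tilde\phi=J\phi$. The free operator at the threshold is then ``inverted'' by the regular term of the resolvent expansion of Lemma \ref{Puiseux expan of R0}: comparing powers of $\mu$ in $(\Delta^2-\mu^4)R_0^\pm(\mu^4)=\mathrm{Id}$ gives $\Delta^2G_{-3}=\Delta^2G_{-1}=0$ and $\Delta^2G_0=\mathrm{Id}$ (kernelwise, the polynomial kernels of $G_{-3},G_{-1}$ being annihilated by $\Delta^2$ off the diagonal, a short diagonal computation giving $\delta_{nm}$ for $G_0$), and likewise $(\Delta^2+8\Delta)\widetilde G_{-1}=0$, $(\Delta^2+8\Delta)\widetilde G_0=\mathrm{Id}$. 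Throughout I use the bookkeeping identities $vf=V\phi$, $\tilde v\tilde\phi=v\phi$ and $U\tilde v\tilde\phi=Uv\phi=f$ valid for $f=Uv\phi$ (with $U^2=\mathrm{Id}$, $Uv^2=V$), and the fact that a sequence killed by $\Delta^2$ (resp. by $\Delta^2+8\Delta$) that lies in some polynomially weighted $\ell^2$ space is necessarily of the form $an^3+bn^2+cn+d$ (resp. $cn+d$), the exponentially growing modes $(-3\pm2\sqrt2)^n$ of $\Delta^2+8\Delta$ being excluded by the weight.

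\textbf{Direction ``$\Leftarrow$''.} Suppose $H\phi=0$ with $\phi\in W_{3/2}(\Z)$, and set $f:=Uv\phi$; since $\beta>9$ one checks $f\in\ell^2(\Z)$. Then $\Delta^2\phi=-vf$, so $\psi:=\phi+G_0vf$ satisfies $\Delta^2\psi=0$, whence $\psi=an^3+bn^2+cn+d$ and $\phi=-G_0vf+an^3+bn^2+cn+d$. Using the asymptotic expansion (see the obstacle below)
\[
(G_0vf)(n)=\frac{1}{12}\Big(\left<f,v\right>n^3-3\left<f,v_1\right>n^2+3\left<f,v_2\right>n-\left<f,v_3\right>-\left<f,v\right>n+\left<f,v_1\right>\Big)+O\big(\left<n\right>^{4-\beta/2}\big),\quad n\to+\infty,
\]
with the sign of the odd–power terms reversed as $n\to-\infty$, membership $\phi\in W_{3/2}$ forces the $n^3$ and $n^2$ coefficients at both ends to vanish, i.e. $\left<f,v\right>=\left<f,v_1\right>=0$ (so $f\in S_0\ell^2(\Z)$) and $a=b=0$, leaving $\phi=-G_0vf+cn+d$. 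Feeding this back into $f=Uv\phi$ gives $T_0f=Uf+vG_0vf=cv_1+dv\in\mathrm{span}\{v,v_1\}$, hence $S_0T_0f=0$, i.e. $f\in S_1\ell^2(\Z)$, and projecting $T_0f=cv_1+dv$ onto $v'=Qv_1$ and $v$ yields $c=c_1$, $d=c_2$ as in \eqref{exp of c1,c2}. Cases (ii) and (iii) are identical with one further moment each: $\phi\in W_{1/2}$ additionally forces $\left<f,v_2\right>=0$ and $c=0$ (so $T_0f=dv$, $QT_0f=0$, $f\in S_2\ell^2(\Z)$, $d=\left<T_0f,v\right>/\|V\|_{\ell^1}$), and $\phi\in\ell^2$ further forces $\left<f,v_3\right>=0$ and $d=0$ (so $T_0f=0$, $f\in S_3\ell^2(\Z)$). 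Cases (iv),(v) run verbatim for $\tilde\phi=J\phi$ with $G_0\rightsquigarrow\widetilde G_0$, $\Delta^2\rightsquigarrow\Delta^2+8\Delta$, $v\rightsquigarrow\tilde v$, $T_0\rightsquigarrow\widetilde T_0$; here $\widetilde G_0$ has only a linear polynomial part ($2\sqrt2|n-m|$), so $\tilde\psi=cn+d$ and $\phi\in W_{1/2}$ already forces $\left<f,\tilde v\right>=0$ and $c=0$, giving $\widetilde T_0f=d\tilde v$, $f\in\widetilde S_0\ell^2(\Z)$ and $d=\left<\widetilde T_0f,\tilde v\right>/\|V\|_{\ell^1}$, while $\phi\in\ell^2$ further forces $\left<f,\tilde v_1\right>=0$, $d=0$, $\widetilde T_0f=0$, $f\in\widetilde S_1\ell^2(\Z)$, with $\phi=-J\widetilde G_0\tilde vf$.

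\textbf{Direction ``$\Rightarrow$''.} Given $f\in S_1\ell^2(\Z)$, the conditions $\left<f,v\right>=\left<f,v_1\right>=0$ and $S_0T_0f=0$ mean $T_0f\in\mathrm{span}\{v,v_1\}$; write $T_0f=c_1v_1+c_2v$ with $c_1,c_2$ as in \eqref{exp of c1,c2} and define $\phi:=-G_0vf+c_1n+c_2$. By the expansion above with the two vanishing moments, $G_0vf$ grows at most linearly, so $\phi\in W_{3/2}(\Z)$. The identity $T_0f=c_1v_1+c_2v$ rewrites as $vG_0vf=T_0f-Uf=c_1v_1+c_2v-Uf$, i.e. $v\phi=Uf$, i.e. $Uv\phi=f$; and then $H\phi=\Delta^2\phi+V\phi=-vf+Uv^2\phi=-vf+vf=0$. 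This proves (i); (ii)--(v) follow in the same way with the corresponding number of moment conditions (which make $\phi$ bounded, resp. decaying, resp. $\ell^2$) and the corresponding threshold inverse $G_0$ or $J\widetilde G_0J$, the formulas for the remaining constants $d$ coming from projecting $T_0f$ (resp. $\widetilde T_0f$) onto $v$ (resp. $\tilde v$).

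\textbf{Main obstacle and conclusion.} The only genuinely analytic point is the asymptotic expansion of $(G_0vf)(n)$ and $(\widetilde G_0\tilde vf)(n)$ as $n\to\pm\infty$: one splits the defining sum at $m=n$, replaces $\sum_{m\le n}(n-m)^k v(m)f(m)$ by $\sum_{m\in\Z}(n-m)^k v(m)f(m)=\sum_{j=0}^k\binom{k}{j}(-1)^jn^{k-j}\left<f,v_j\right>$ up to the tail $\sum_{m>n}(m-n)^k v(m)f(m)$, and bounds the tail by $\lesssim\left<n\right>^{k+1-\beta/2}\|f\|_{\ell^2}$ via $|v(m)f(m)|\lesssim\left<m\right>^{-\beta/2}\|f\|_{\ell^2}$. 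For $k=3$ this tail is $o(1)$ exactly when $\beta>8$, and obtaining $\phi=-G_0vf\in\ell^2(\Z)$ in case (iii) (all four moments vanishing) needs $\left<n\right>^{4-\beta/2}\in\ell^2(\Z)$, i.e. $\beta>9$ — precisely the standing hypothesis, which also guarantees finiteness of all moments $\left<f,v_k\right>$, $k\le3$, for $f\in\ell^2$. Everything else ($\Delta^2G_0=\mathrm{Id}$, $(\Delta^2+8\Delta)\widetilde G_0=\mathrm{Id}$, the classification of polynomial solutions of the two recurrences, and the linear algebra fixing $c_1,c_2,d$) is elementary. Finally, Theorem \ref{relation space and resonance types} is immediate: combining Lemma \ref{charac of solution and space} with Definition \ref{defin of resonance types 0 and 16} and the inclusions $S_3\ell^2(\Z)=\{0\}=\widetilde S_2\ell^2(\Z)$ from Lemma \ref{lemma of kernel and operaor}, $0$ is a regular point/first kind/second kind resonance according as $S_1\ell^2(\Z)$ is $\{0\}$ / nonzero with $S_2\ell^2(\Z)=\{0\}$ / $S_2\ell^2(\Z)\neq\{0\}$, and $16$ is a regular point/resonance/eigenvalue according as $\widetilde S_0\ell^2(\Z)$ is $\{0\}$ / nonzero with $\widetilde S_1\ell^2(\Z)=\{0\}$ / $\widetilde S_1\ell^2(\Z)\neq\{0\}$.
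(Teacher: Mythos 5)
Your proposal is correct and establishes all five equivalences, but it executes the two technical pillars of the argument differently from the paper. The skeleton is the same: use $G_0$ (resp. $J\widetilde G_0J$) as a threshold inverse of $\Delta^2$ (resp. $\Delta^2-16$), pass between $f$ and $\phi$ via $f=Uv\phi$ and $\phi=-G_0vf+(\text{polynomial})$, and match the degree of the polynomial and the number of vanishing moments against the weighted space; the linear algebra fixing $c_1,c_2,d$ is identical. The differences are these. First, in the direction ``$\phi\Rightarrow f$'' the paper proves the moment conditions $\left<f,v_k\right>=0$ by a smooth-cutoff duality argument (the quantities $F(\delta)=\sum_n n^kV(n)\phi(n)\eta(\delta n)$ together with the estimate $\|\left<\cdot\right>^s(\Delta^2G_{\delta,k})\|_{\ell^2}\lesssim\delta^{7/2-k-s}$), and only afterwards reduces $\phi+G_0vf$ to a linear polynomial; you instead write $\phi+G_0vf$ as a full cubic from the start and read off both the moment conditions and the vanishing of the top polynomial coefficients from the two-sided asymptotics of $G_0vf$ (the cubic coefficient being $a\mp\left<f,v\right>/12$ at $\pm\infty$, etc.), which kills two birds with one stone and avoids the cutoff argument entirely. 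Second, for the weighted-space membership of $G_0vf$ the paper performs explicit algebraic decompositions of the kernels ($K_1,K_2,K_3,\widetilde K_1,\widetilde K_2$) with uniform bounds such as $|K_3(n,m)|\lesssim\left<m\right>^4/|n|$, whereas you subtract the binomial expansion and bound the one-sided tail $\sum_{m>n}(m-n)^kv(m)f(m)$ by $\left<n\right>^{k+1-\beta/2}$; both routes need exactly $\beta>9$ for case (iii). Two points you should make explicit in a write-up: the solution space of $(\Delta^2+8\Delta)\psi=0$ contains the exponential modes $(-3\pm2\sqrt2)^n$, which you correctly discard via the polynomial boundedness of $J\phi+\widetilde G_0\tilde vf$; and the exponential part of $\widetilde G_0$ is convolution with an $\ell^1$ kernel, hence maps $\ell^2\to\ell^2$ with no moment conditions at all — this quietly simplifies case (v) relative to the paper's $\widetilde K_2$ estimates. (The implicit use of $U^2f=f$ when passing from $v\phi=Uf$ to $f=Uv\phi$ is a convention issue about $\operatorname{sign}(0)$ shared with the paper, not a gap specific to your argument.)
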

\begin{remark}
{\rm As an consequence this lemma, it follows that
\begin{align*}
S_1\ell^2(\Z)&=\{0\}\Leftrightarrow H\phi=0\ {\rm has\  no\  nonzero\ solution\ in\ }W_{3/2}(\Z),\\
S_2\ell^2(\Z)&=\{0\}\Leftrightarrow H\phi=0\ {\rm has\  no\  nonzero\ solution\ in\ }W_{1/2}(\Z),\\
S_3\ell^2(\Z)&=\{0\}\Leftrightarrow H\phi=0\ {\rm has\  no\  nonzero\ solution\ in\ }\ell^2(\Z),\\
\widetilde{S}_0\ell^2(\Z)&=\{0\}\Leftrightarrow H\phi=16\phi\ {\rm has\  no\  nonzero\ solution\ in\ }W_{1/2}(\Z),\\
\widetilde{S}_1\ell^2(\Z)&=\{0\}\Leftrightarrow H\phi=16\phi\ {\rm has\  no\  nonzero\ solution\ in\ }\ell^2(\Z),
\end{align*}
which gives Theorem \ref{relation space and resonance types}. %by combining Lemma \ref{charac of solution and space}.
}
\end{remark}
\begin{proof}
{\textbf{\underline{(i)}}}~``$\bm{\Longrightarrow}$"  Let $f\in S_1\ell^2(\Z)$. Then $f\in S_0\ell^2(\Z)$ and $S_0T_0f=0$. Denote by $P_0$ the orthogonal projection onto span$\{v,v_1\}$. Then $S_0=I-P_0$, and it follows that
\begin{equation}\label{Uf of S}
Uf=-vG_0vf+P_0T_0f.
\end{equation}
Let
\begin{equation}\label{v'}
v'=Q(v_1)=v_1-\frac{\left<v_1,v\right>}{\|V\|_{\ell^1}}v,
\end{equation}
so that $\{v',v\}$ forms an orthogonal basis for span$\{v,v_1\}$. In this case, we have
\begin{equation}\label{P0T0f 1}
P_0T_0f=\frac{\left<P_0T_0f,v\right>}{\|V\|_{\ell^1}}v+\frac{\left<P_0T_0f,v'\right>}{\|v'\|^2_{\ell^2}}v'=\frac{\left<T_0f,v\right>}{\|V\|_{\ell^1}}v+\frac{\left<T_0f,v'\right>}{\|v'\|^2_{\ell^2}}v'.
\end{equation}
Substituting \eqref{v'} into the second equality of \eqref{P0T0f 1}, we further obtain that
\begin{align}\label{P0T0f2}
\begin{split}
P_0T_0f&=\frac{\left<T_0f,v\right>}{\|V\|_{\ell^1}}v+\frac{\left<T_0f,v'\right>}{\|v'\|^2_{\ell^2}}\left(v_1-\frac{\left<v_1,v\right>}{\|V\|_{\ell^1}}v\right)\\
&=\frac{\left<T_0f,v'\right>}{\|v'\|^2_{\ell^2}}v_1+\left(\frac{\left<T_0f,v\right>}{\|V\|_{\ell^1}}-\frac{\left<T_0f,v'\right>\left<v_1,v\right>}{\|v'\|^2_{\ell^2}\|V\|_{\ell^1}}\right)v\\
&:=c_1v_1+c_2v.
\end{split}
\end{align}
Multiplying both sides of \eqref{Uf of S} by $U$ and substituting $P_0T_0f$ with \eqref{P0T0f2}, then
 $$f=-UvG_0vf+U(c_1v_1+c_2v)=Uv(-G_0vf+c_1n+c_2):=Uv\phi.$$

 Firstly, $\phi(n)=-(G_0vf)(n)+c_1n+c_2\in W_{3/2}(\Z)$. Considering that $|c_1n+c_2|\lesssim1+|n|\in W_{3/2}(\Z)$, it suffices to verify that $G_0vf\in W_{3/2}(\Z)$. Indeed, since $\left<f,v\right>=0$ and $\left<f,v_1\right>=0$, it follows that
 \begin{align*}
 12(G_0vf)(n)&=\sum\limits_{m\in\Z}^{}\left(|n-m|^3-|n-m|\right)v(m)f(m)\\
 &=\sum\limits_{m\in\Z}^{}\left(|n-m|^3-|n-m|-n^2|n|+3|n|nm\right)v(m)f(m)\\
 &:=\sum\limits_{m\in\Z}^{}K_1(n,m)v(m)f(m).
 \end{align*}
 We decompose $K_1(n,m)$ into three parts:
 \begin{align*}
 K_1(n,m)&=|n-m|(n^2-2nm+m^2-1)-n^2|n|+3|n|nm\\
 &=\left(n^2(|n-m|-|n|)+|n|nm\right)-2n(|n-m|-|n|)m+(m^2-1)|n-m|\\
 &:=K_{11}(n,m)-K_{12}(n,m)+K_{13}(n,m).
 \end{align*}
 For $K_{11}(n,m)$, if $n\neq m$, then
 \begin{align*}
 |K_{11}(n,m)|&=\left|\frac{\left(n^2(|n-m|-|n|)+|n|nm\right)(|n-m|+|n|)}{|n-m|+|n|}\right|\\
 &=\left|\frac{n^2m^2+|n|nm(|n-m|-|n|)}{|n-m|+|n|}\right|\leq 2|n|m^2.
 \end{align*}
 Since $K_{11}(n,n)=0$, we always have $|K_{11}(n,m)|\leq 2|n|m^2$.
 As for $K_{12}(n,m),K_{13}(n,m)$, by the triangle inequality, it yields that
 $$|K_{12}(n,m)|\leq 2|n|m^2,\quad |K_{13}(n,m)|\leq (1+|n|)|m|^3.$$
 In summary, one obtains that $|K_1(n,m)|\lesssim (1+|n|)|m|^3 $. Thus, in view that $\beta>9$,
  $$|(G_0vf)(n)|\lesssim \sum\limits_{m\in\Z}^{}|K_1(n,m)||v(m)f(m)|\lesssim \left<n\right>\sum\limits_{m\in\Z}^{}\left<m\right>^3|v(m)||f(m)|\lesssim \left<n\right>\in W_{3/2}(\Z). $$
  Consequently, we conclude that $\phi\in W_{3/2}(\Z)$.

  Next, we show that $H\phi=0$. Notice that $\Delta^2G_0vf=vf$ and $vf=vUv\phi=V\phi$, it yields that
 $$H\phi=(\Delta^2+V)\phi=-\Delta^2G_0vf+V\phi=-vf+vf=0.$$
\vskip0.3cm
``$\bm{\Longleftarrow}$" Suppose that $\phi\in W_{3/2}(\Z)$ and satisfies $H\phi=0$. %, i.e., $(\Delta^2+V)\phi=0$.
 Let $f=Uv\phi$. We will show that $f\in S_1\ell^2(\Z)$ and that $\phi(n)=-(G_0vf)(n)+c_1n+c_2$ with $c_1,c_2$ defined in \eqref{exp of c1,c2}.

\textbf{On one hand}, $f\in S_0\ell^2(\Z)$, i.e., for $k=0,1$, it can be verified that
$$\left<f,v_k\right>=\sum_{n\in\Z}(Uv\phi)(n)n^kv(n)=\sum_{n\in\Z}n^kV(n)\phi(n)=0.$$
In fact, take $\eta(x)\in C^{\infty}_0(\R)$ such that $\eta(x)=1$ for $|x|\leq1$ and $\eta(x)=0$ for $|x|>2$. For $k=0,1$ and any $\delta>0$, define
%$$\left<f,v_k\right>=\sum_{n\in\Z}(Uv\phi)(n)n^kv(n)=\sum_{n\in\Z}n^kV(n)\phi(n).$$
%Note that
%$$(\Delta^2\phi)(n)=\phi(n+2)-4\phi(n+1)+6\phi(n)-4\phi(n-1)+\phi(n-2),$$
$$F(\delta)=\sum_{n\in\Z}n^kV(n)\phi(n)\eta(\delta n).$$
For one thing, under the assumptions on $V$ and $\phi\in W_{3/2}(\Z)$, it follows from Lebesgue's dominated convergence theorem that
 $$\left<f,v_k\right>=\lim_{\delta\rightarrow0}F(\delta).$$
 \begin{comment}
\begin{align*}
\left<f,v\right>&=\sum\limits_{n\in\Z}^{}V(n)\phi(n)=-\sum\limits_{n\in\Z}^{}(\Delta^2\phi)(n)=0,\\
\left<f,v_1\right>&=\sum\limits_{n\in\Z}^{}nV(n)\phi(n)=-\sum\limits_{n\in\Z}^{}n(\Delta^2\phi)(n)=0.
\end{align*}
\end{comment}
For another, for any $\delta>0$, using the relation $V(n)\phi(n)=-(\Delta^2\phi)(n)$ and $\eta\in C^{\infty}_{0}(\R)$, we have
$$F(\delta)=-\sum_{n\in\Z}(\Delta^2\phi)(n)n^k\eta(\delta n)=-\sum_{n\in\Z}\phi(n)(\Delta^2G_{\delta,k})(n),$$
where $G_{\delta,k}(x)=x^k\eta(\delta x)$. Next we prove that %Fix $\frac{3}{2}<s<\frac{5}{2}$,
 for any $0<\delta<\frac{1}{3}$, $s>0$,
 \begin{equation}\label{estimate of Delta2G}
 \|\left<\cdot\right>^s(\Delta^2G_{\delta,k})(\cdot)\|_{\ell^2(\Z)}\leq C(k,s,\eta)\delta^{\frac{7}{2}-k-s},\quad k=0,1,
 \end{equation}
 where $C(k,s,\eta)$ is a constant depending on $k,s$ and $\eta$. Once this estimate is established, taking $\frac{3}{2}<s_0<\frac{5}{2}$, utilizing $\phi\in W_{3/2}(\Z)$ and H\"{o}lder's inequality, we obtain
 $$|F(\delta)|\leq C(k,s,\eta)\delta^{\frac{7}{2}-k-s_0}\|\left<\cdot\right>^{-s_0}\phi(\cdot)\|_{\ell^2(\Z)},\quad k=0,1.$$
 This implies $\lim\limits_{\delta\rightarrow0}F(\delta)=0$, which proves that $f\in S_0\ell^2(\Z)$.
 To derive \eqref{estimate of Delta2G}, we first apply the differential mean value theorem to get
 $$(\Delta^2G_{\delta,k})(n)=G^{(4)}_{\delta,k}(n-1+\Theta),$$
for some $\Theta\in[0,4]$. By Leibniz's derivative rule and supp$(\eta^{(j)})\subseteq\{x:1\leq|x|\leq2\}$ for any $j\in\N^{+}$, one has
\begin{equation}\label{derive of G}
\left|(\Delta^2G_{\delta,k})(n)\right|=\left|G^{(4)}_{\delta,k}(n-1+\Theta)\right|\leq C_k\delta^{4-k}\sum\limits_{\ell=0}^{k}\left|\eta^{(4-\ell)}(\delta(n-1+\Theta))\right|.
\end{equation}
 For any $\ell\in\N^+$, any $s>0$ and $0<\delta<\frac{1}{3}$, the following estimate holds:
\begin{align*}
\|\left<\cdot\right>^s\eta^{(\ell)}(\delta(\cdot-1+\Theta))\|^2_{\ell^2(\Z)}\leq C(s,\eta,\ell)\sum\limits_{|n|\leq\frac{3}{\delta}}^{}|n|^{2s}\leq C'(s,\eta,\ell)\delta^{-2s-1},
\end{align*}
which gives the desired \eqref{estimate of Delta2G} by combining \eqref{derive of G} with the triangle inequality.

\textbf{On the other hand}, we first show that $\phi(n)=-(G_0vf)(n)+c_1n+c_2$, from which it follows that
$$S_0T_0f=S_0(U+vG_0v)f=S_0v\phi+S_0vG_0vf=S_0v\phi+S_0v(-\phi +c_1n+c_2)=S_0v\phi-S_0v\phi=0.$$
To see this, since $f\in S_0\ell^2(\Z)$ and according to ``$\bm{\Longrightarrow}$", $G_0vf\in W_{3/2}(\Z)$, then $\tilde{\phi}:=\phi+G_0vf\in W_{3/2}(\Z)$ and $\Delta^2\tilde{\phi}=H\phi=0$,
%$$\Delta^2\tilde{\phi}=\Delta^{2}(\phi+G_0vf)=\Delta^2\phi+vf=\Delta^2\phi+V\phi=H\phi=0,$$
 which indicates that $\tilde{\phi}=\tilde{c}_{1}n+\tilde{c}_{2}$ for some constants $\tilde{c}_1$ and $\tilde{c}_2$. Next we determine that $\tilde{c}_1=c_1,\tilde{c}_2=c_2$. Indeed, since
$$0=H\phi=(\Delta^2+V)\phi=-vf-VG_0vf+V(\tilde{c}_{1}n+\tilde{c}_{2})=U(-vT_0f+\tilde{c}_1nv^2+\tilde{c}_2v^2),$$
then $\tilde{c}_1v_1+\tilde{c}_2v=T_0f$. Based on this, we further have
\begin{align}
\tilde{c_1}\left<v_1,v\right>+\tilde{c}_2\left<v,v\right>&=\left<T_0f,v\right> \label{eq1},\\
\tilde{c_1}\left<v_1,v_1\right>+\tilde{c}_2\left<v,v_1\right>&=\left<T_0f,v_1\right>,\label{eq2}
\end{align}
and combine that $v_1=v^{'}+\frac{\left<v_1,v\right>}{\|V\|_{\ell^1}}v$ and $\big<v^{'},v\big>=0$, it follows that
$$ \tilde{c}_1=\frac{\left<T_0f,v'\right>}{\|v'\|^2_{\ell^2}},\quad \tilde{c}_2=\frac{\left<T_0f,v\right>}{\|V\|_{\ell^1}}-\frac{\left<v_1,v\right>}{\|V\|_{\ell^1}}\tilde{c}_1.$$
Therefore, $f\in S_1\ell^2(\Z)$ and (i) is derived.
\vskip0.3cm
{\bf{\underline{(ii)}}}~``$\bm{\Longrightarrow}$" Given any $f\in S_{2}\ell^2(\Z)\subseteq S_{1}\ell^2(\Z)$, then by {\bf{\underline{(i)}}}, there exists $\phi\in W_{3/2}(\Z)$ with the following form:
$$\phi(n)=-(G_0vf)(n)+c_1n+c_2,\ {\rm such\ that\ } H\phi=0\ {\rm and}\ f=Uv\phi,$$
where $c_1,c_2$ are defined as in \eqref{exp of c1,c2}. Next we show that $c_1=0$ and $G_0vf\in W_{1/2}(\Z)$, then the desired result is established. Indeed, since $f\in S_2\ell^2(\Z)$, it follows that $QT_0f=0$ and thus
$$c_1=\frac{\left<T_0f,v'\right>}{\|v'\|^2_{\ell^2}}=\frac{\left<QT_0f,v'\right>}{\|v'\|^2_{\ell^2}}=0.$$
On the other hand, by the definition of $G_0$, we have
\begin{align}\label{exp of G0vf 1/2}
 12(G_0vf)(n)&=\sum\limits_{m\in\Z}^{}\left(|n-m|^3-|n-m|\right)v(m)f(m)\notag\\
 &=\sum\limits_{m\in\Z}^{}\left(|n-m|^3-|n-m|-n^2|n|+|n|+3|n|nm-3|n|m^2\right)v(m)f(m)\notag\\
 &:=\sum\limits_{m\in\Z}^{}K_2(n,m)v(m)f(m),
 \end{align}
 where the second equality follows from the fact that $\left<f,v_k\right>=0$ for $k=0,1,2$. For $n\neq0$, we decompose $K_2(n,m)$ into four components:
 \begin{align}\label{exp of K1}
 K_2(n,m)&=(m^2-1)(|n-m|-|n|)+(-2)\big(mn(|n-m|-|n|)+|n|m^2\big)\notag\\
 &\quad+\Big(\frac{n^2m^2}{|n-m|+|n|}-\frac{1}{2}|n|m^2\Big)+\Big(\frac{|n|nm(|n-m|-|n|)}{|n-m|+|n|}+\frac{1}{2}|n|m^2\Big)\notag\\
 &:=\sum\limits_{j=1}^{4}K_{2j}(n,m).
 \end{align}
 Obviously, $|K_{21}(n,m)|\lesssim {\left<m\right>}^3$ uniformly in $n$, and in what follows, we prove that
 $$|K_{2j}(n,m)|\lesssim {\left<m\right>}^3,\quad j=2,3,4,\ {\rm  uniformly\ in }\ n.$$
 These estimates, combining \eqref{exp of G0vf 1/2}, \eqref{exp of K1} and the assumption of $V$ yield that $G_0vf\in W_{1/2}(\Z)$. To see this, by finding the common denominator and triangle inequality, we get
  \begin{align*}
  |K_{22}(n,m)|&=2\Big|\frac{nm^3+|n|m^2(|n-m|-|n|)}{|n-m|+|n|}\Big|\lesssim {\left<m\right>}^3 ,\\
  |K_{23}(n,m)|&=\Big|\frac{|n|m^2(|n-m|-|n|)}{2(|n-m|+|n|)}\Big|\lesssim{\left<m\right>}^3,\\
  |K_{24}(n,m)|&=\Big|\frac{n^2m^2(|n-m|-|n|)+\frac{1}{2}|n|m^4}{{(|n-m|+|n|)}^2}\Big|\lesssim{\left<m\right>}^3.
  \end{align*}
\vskip0.3cm
``$\bm{\Longleftarrow}$" Assume that $\phi\in W_{1/2}(\Z)$ satisfying $H\phi=0$. Define $f=Uv\phi$, next we prove that $f\in S_2\ell^2(\Z)$. Since $W_{1/2}(\Z)\subseteq W_{3/2}(\Z)$, {\bf{\underline{(i)}}} yields that $f\in S_1\ell^2(\Z)$ and
$$\phi(n)=-(G_0vf)(n)+c_1n+c_2,$$
where $c_1,c_2$ are as defined in \eqref{exp of c1,c2}.
Noting that $QT_0f=c_1v'$ by $(Q-S_0)\ell^2(\Z)=$Span$\{v'\}$, then it suffices to show that $\left<f,v_2\right>=0$ and $c_1=0$. For the former, it follows by the similar method used in ``$\bm{\Longleftarrow}$" of {\bf{\underline{(i)}}}. For the latter, observe that $G_0vf\in W_{1/2}(\Z)$ (from ``$\bm{\Longrightarrow}$" direction above). Thus,
$$c_1n=\phi(n)+(G_0vf)(n)-c_2\in W_{1/2}(\Z),$$
which implies that $c_1=0$.
\vskip0.3cm
{\bf{\underline{(iii)}}}``$\bm{\Longrightarrow}$" For any $f\in S_3\ell^2(\Z)\subseteq S_2\ell^2(\Z)$, part {\bf{\underline{(ii)}}} indicates that there exists $\phi\in W_{1/2}(\Z)$ with the following form:
$$\phi=-G_0vf+\frac{\left<T_0f,v\right>}{\|V\|_{\ell^1}},\ {\rm such\ that\ } H\phi=0\ {\rm and}\ f=Uv\phi.$$
Moreover, since $T_0f=0$, then $\phi=-G_0vf$. Therefore, it remains to prove that $G_0vf\in\ell^2(\Z)$. In fact, using the orthogonality conditions $\left<f,v_k\right>=0$ for $k=0,1,2,3$, we express
\begin{align}\label{exp of G0vf l2}
 12(G_0vf)(n)&=\sum\limits_{m\in\Z}^{}\left(|n-m|^3-|n-m|-|n|^3+|n|+3|n|nm-3|n|m^2+\frac{nm^3-nm}{|n|}\right)v(m)f(m)\notag\\
 &:=\sum\limits_{m\in\Z}^{}K_3(n,m)v(m)f(m).
 \end{align}
For $n\neq0$, we decompose $K_3(n,m)$ as five terms:
\begin{align}\label{exp of K2}
 K_3(n,m)&=\Big((m^2-1)(|n-m|-|n|)+\frac{nm(m^2-1)}{|n|}\Big)+\Big(\frac{nm^3}{|n|}-\frac{2nm^3}{|n-m|+|n|}\Big)\notag\\
 &\quad+\frac{5}{2}\Big(\frac{|n|m^2(|n|-|n-m|)}{|n-m|+|n|}-\frac{nm^3}{2|n|}\Big)+\Big(\frac{n^2m^2(|n-m|-|n|)}{{(|n-m|+|n|)}^2}+\frac{nm^3}{4|n|}\Big)\notag\\
 &\quad+\frac{|n|m^4}{2{(|n-m|+|n|)}^2}:=\sum\limits_{j=1}^{5}K_{3j}(n,m).
 \end{align}
Clearly, $|K_{35}(n,m)|\lesssim \frac{{\left<m\right>}^4}{|n|}$, next we show that
 $$|K_{3j}(n,m)|\lesssim \frac{{\left<m\right>}^4}{|n|},\quad j=1,2,3,4,$$
 then combining \eqref{exp of G0vf l2}, \eqref{exp of K2} with the assumption of $V$, we prove that $G_0vf\in\ell^2(\Z)$. Indeed, finding the common denominator and triangle inequality, we obtain
 \begin{align*}
  |K_{31}(n,m)|&=\Big|\frac{(m^2-1)m^2|n|+nm(m^2-1)(|n-m|-|n|)}{(|n-m|+|n|)|n|}\Big|\lesssim \frac{{\left<m\right>}^4}{|n|} ,\\
  |K_{32}(n,m)|&=\Big|\frac{nm^3(|n-m|-|n|)}{(|n-m|+|n|)|n|}\Big|\lesssim\frac{{\left<m\right>}^4}{|n|},\\
  |K_{33}(n,m)|&=\frac{5}{2}\Big|\frac{m^3n|n|(|n|-|n-m|)-\frac{1}{2}|n|m^5}{{(|n-m|+|n|)}^2|n|}\Big|\lesssim\frac{{\left<m\right>}^4}{|n|},\\
  |K_{34}(n,m)|&=\Big|\frac{n^3m^3(|n-m|-|n|)-\frac{1}{4}nm^4(2n-m)(|n|+|n-m|)+\frac{n|n|m^5}{2}}{{(|n-m|+|n|)}^3|n|}\Big|\lesssim\frac{{\left<m\right>}^4}{|n|}.
  %|K_{25}(n,m)|&=\frac{5}{2}\Big|\frac{m^3n|n|(|n|-|n-m|)-\frac{1}{2}|n|m^5}{{(|n-m|+|n|)}^2|n|}\Big|\lesssim{\left<m\right>}^3.
  \end{align*}
\vskip0.3cm
``$\bm{\Longleftarrow}$" Suppose that $\phi\in\ell^2(\Z)$ and satisfies $H\phi=0$. Let $f=Uv\phi$. Then it follows from {\bf{\underline{(ii)}}} that $f\in S_2\ell^2(\Z)$ and
$$\phi=-G_0vf+\frac{\left<T_0f,v\right>}{\|V\|_{\ell^1}}.$$
Thus, it suffices to show that $\left<f,v_3\right>=0$ and $PT_0f=0$. For the former, one can verify it by using the similar method in ``$\bm{\Longleftarrow}$" of {\bf{\underline{(i)}}}. For the latter, noticing that $G_0vf\in \ell^2(\Z)$ from ``$\bm{\Longrightarrow}$" above, then
$$\frac{\left<T_0f,v\right>}{\|V\|_{\ell^1}}=\phi+G_0vf\in \ell^2(\Z),$$
which implies that $PT_0f=0$.
\vskip0.3cm
{\textbf{\underline{(iv)}}}~``$\bm{\Longrightarrow}$" Assume that $f\in \widetilde{S}_0\ell^2(\Z)$. Then $f\in \widetilde{Q}\ell^2(\Z)$ and $\widetilde{Q}\widetilde{T}_{0}f=0$. Recall that $\widetilde{P}=\left\|V\right\|^{-1}_{\ell^1}\left<\cdot,\tilde{v}\right>\tilde{v}$ and thus $\widetilde{Q}=I-\widetilde{P}$. Then
\begin{equation}\label{Uftuta}
Uf=-\tilde{v}\widetilde{G}_0\tilde{v}f+\widetilde{P}\widetilde{T}_0f=-\tilde{v}\widetilde{G}_0\tilde{v}f+\left\|V\right\|^{-1}_{\ell^1}\left<\widetilde{T}_0f,\tilde{v}\right>\tilde{v}:=-\tilde{v}\widetilde{G}_0\tilde{v}f+c\tilde{v}.
\end{equation}
Multiplying $U$ from both sides of \eqref{Uftuta}, one obtains that
$$f=-U\tilde{v}\widetilde{G}_0\tilde{v}f+cU\tilde{v}=Uv(-J\widetilde{G}_0\tilde{v}f+Jc):=Uv\phi.$$
%where $(Jc)(n)=(-1)^{n}c$. %By virtue of \eqref{kernel of G0tuta}, we can directly calculate that $(\Delta^2-16)\widetilde{G}_0=\widetilde{G}_0(\Delta^2-16)=I$ and $(\Delta^2-16)(Jc)=0$. Therefore,
%$$(H-16)\phi=(\Delta^2-16+V)\phi=(\Delta^2-16)(-\widetilde{G}_0vf+Jc)+V\phi=-vf+vf=0.$$

Firstly, we prove that $\phi=-J\widetilde{G}_0\tilde{v}f+Jc\in W_{1/2}(\Z)$. It is enough to show that $\widetilde{G}_0\tilde{v}f\in W_{1/2}(\Z)$. Since
\begin{align*}
32\sqrt2(\widetilde{G}_0\tilde{v}f)(n)&=\sum\limits_{m\in\Z}^{}\Big(2\sqrt2 |n-m|-\left(2\sqrt2-3\right)^{|n-m|}\Big)\tilde{v}(m)f(m)\\
&=\sum\limits_{m\in\Z}^{}\Big(2\sqrt2 (|n-m|-|n|)-\left(2\sqrt2-3\right)^{|n-m|}\Big)\tilde{v}(m)f(m)
\end{align*}
where we used the fact that $\left<f,\tilde{v}\right>$=0 in the second equality. Since $0<3-2\sqrt2<1$ and by the triangle equality, we have
\begin{align*}
\left|(\widetilde{G}_0\tilde{v}f)(n)\right|\lesssim \sum\limits_{m\in\Z}(1+|m|)|\tilde{v}(m)f(m)|\lesssim1\in W_{1/2}(\Z).
\end{align*}
Hence, $\phi\in W_{1/2}(\Z)$. Moreover, note that $(\Delta^2-16)J\widetilde{G}_0Jvf=vf,(\Delta^2-16)(Jc)=0$ and $vf=V\phi$, then
$$(H-16)\phi=(\Delta^2-16+V)\phi=(\Delta^2-16)(-J\widetilde{G}_0\tilde{v}f+Jc)+V\phi=-vf+vf=0.$$
\vskip0.3cm
``$\bm{\Longleftarrow}$" Given that $\phi\in W_{1/2}(\Z)$ and satisfies $H\phi=16\phi$. %i.e., $(\Delta^2-16+V)\phi=0$.
Let $f=Uv\phi$, then $f\in \widetilde{S}_0\ell^2(\Z)$ and $\phi(n)=-(J\widetilde{G}_0\tilde{v}f)(n)+(-1)^nc$ with $c$ defined in \eqref{exp of c}. Indeed, let $\eta$ be as in \textbf{\underline{(i)}}. For any $\delta>0$, define
$$\widetilde{F}(\delta)=\sum\limits_{n\in\Z}(JV\phi)(n)\eta(\delta n).$$
%then $\left<f,\tilde{v}\right>=\lim\limits_{\delta\rightarrow0}\widetilde{F}(\delta)$.
Noting that $V(n)\phi(n)=-[(\Delta^2-16)\phi](n)$ and $J\Delta J=-\Delta-4$, %and $\eta\in C^{\infty}_0(\R)$, for any $\delta>0$, one has
we can apply the same method as in part \textbf{\underline{(i)}} to obtain that
$$\left<f,\tilde{v}\right>=\lim\limits_{\delta\rightarrow0}\widetilde{F}(\delta)=-\lim\limits_{\delta\rightarrow0}\sum\limits_{n\in\Z} (J\phi)(n)[(\Delta^2+8\Delta)(\eta(\delta\cdot))](n)=0.$$
%$$\widetilde{F}(\delta)=-\sum\limits_{n\in\Z}[(\Delta^2+8\Delta)J\phi]\eta(\delta n)=-\sum\limits\phi(n)[(\Delta^2+8\Delta)(\eta(\delta\cdot))](n).$$
%Applying the same method in (i) ``$\Longleftarrow$", similarly, we can obtain that $\lim\limits_{\delta\rightarrow0}\widetilde{F}(\delta)=0$ and this proves that $\left<f,\tilde{v}\right>=0$.
Finally, it is key to show that $\phi(n)=-(J\widetilde{G}_0\tilde{v}f)(n)+(-1)^nc$. Once this is established, then
\begin{comment}
$$\left<f,\tilde{v}\right>=\sum\limits_{n\in\Z}^{}(JV\phi)(n)=-\sum\limits_{n\in\Z}^{}(J(\Delta^2-16)\phi)(n)=-\sum\limits_{n\in\Z}^{}(\Delta^2+8\Delta)(J\phi)(n)=0,$$
where for the third equality, we used the fact that $J\Delta J=-\Delta-4$ and $J^2=1$.
\end{comment}
$$\widetilde{Q}\widetilde{T}_{0}f=\widetilde{Q}(U+\tilde{v}\widetilde{G}_{0}\tilde{v})f=\widetilde{Q}v\phi+\widetilde{Q}\tilde{v}\widetilde{G}_0\tilde{v}f=\widetilde{Q}v\phi+\widetilde{Q}\tilde{v}(-J\phi+c)=0.$$
Therefore, $f\in \widetilde{S}_0\ell^2(\Z)$ and (iv) is proved. To see this, let
$\tilde{\phi}=\phi+J\widetilde{G}_0\tilde{v}f$. By a similar argument as in \textbf{\underline{(i)}}, we have $\tilde{\phi}\in W_{
1/2}(\Z)$ and $(\Delta^2-16)\tilde{\phi}=0$, which is equivalent to $(\Delta^2+8\Delta)J\tilde{\phi}=0$. This implies that $J\tilde{\phi}=\tilde{c}$ for some constant $\tilde{c}$. Moreover, using the condition $H\phi=16\phi$ and following the similar method as in {\bf{\underline{(i)}}}, one can obtain that $\tilde{c}\tilde{v}=\widetilde{T}_0f$. Thus, $\tilde{c}=\frac{\left<\widetilde{T}_0f,\tilde{v}\right>}{\|V\|_{\ell^1}}$.
\vskip0.3cm
{\textbf{\underline{(v)}}}~``$\bm{\Longrightarrow}$" Given any $f\in \widetilde{S}_1\ell^2(\Z)\subseteq \widetilde{S}_0\ell^2(\Z)$, then $\widetilde{T}_0f=0$ and by (iv), there exists $\phi\in W_{1/2}(\Z)$ of the form:
$$\phi=-J\widetilde{G}_0\tilde{v}f,\ {\rm such\ that\ } H\phi=16\phi\ {\rm and}\ f=Uv\phi.$$
Thus, it suffices to show that $\widetilde{G}_0\tilde{v}f\in\ell^2(\Z)$ and the desired conclusion is obtained. To see this, we have
\begin{align}\label{exp of wideG0}
32\sqrt2\big(\widetilde{G}_0\tilde{v}f\big)(n)&=2\sqrt2\sum\limits_{m\in\Z}|n-m|\tilde{v}(m)f(m)-\sum\limits_{m\in\Z}{(2\sqrt2-3)}^{|n-m|}\tilde{v}(m)f(m)\notag\\
&:=2\sqrt2F_1(n)-F_2(n).
\end{align}
For $n\neq0$, in view that $\big<f,\tilde{v}_k\big>=0$ for $k=0,1$, we further express $F_1(n)$ and $F_2(n)$ as follows:
\begin{align}\label{exp of W1}
F_1(n)&=\sum\limits_{m\in\Z}|n-m|\tilde{v}(m)f(m)=\sum\limits_{m\in\Z}\Big(|n-m|-|n|+\frac{nm}{|n|}\Big)\tilde{v}(m)f(m)\notag\\
&:=\sum\limits_{m\in\Z}\widetilde{K}_1(n,m)\tilde{v}(m)f(m)
\end{align}
and
\begin{align}\label{exp of W2}
F_2(n)&=\sum\limits_{m\in\Z}{(-q)}^{|n-m|}\tilde{v}(m)f(m)=\sum\limits_{m\in\Z}\big({(-q)}^{|n-m|}-{(-q)}^{|n|}\big)\tilde{v}(m)f(m)\notag\\
&=\sum\limits_{m\in\Z}{(-1)}^{|n-m|}\big({q}^{|n-m|}-q^{|n|}\big)\tilde{v}(m)f(m)+\sum\limits_{m\in\Z}\big({(-1)}^{|n-m|}-(-1)^{|n|}\big)q^{|n|}\tilde{v}(m)f(m)\notag\\
&:=F_{21}(n)+F_{22}(n),
\end{align}
where $q=3-2\sqrt2\in(0,1)$.
Obviously, $F_{22}\in\ell^2(\Z)$, combining \eqref{exp of wideG0}$\sim$\eqref{exp of W2}, in what follows, it suffices to prove that $F_1,F_{21}\in\ell^2(\Z)$, which can be derived by establishing that
\begin{equation}\label{estimate of Ktuta1 and q}
|\widetilde{K}_1(n,m)|+\big|{q}^{|n-m|}-q^{|n|}\big|\lesssim \frac{{\left<m\right>}^2}{|n|}.
\end{equation}
Indeed, {\bf{on one hand}},
\begin{equation}\label{estimate of Ktuta1}
|\widetilde{K}_1(n,m)|=\left|\frac{nm(|n-m|-|n|)}{(|n-m|+|n|)|n|}+\frac{m^2}{|n-m|+|n|}\right|\lesssim \frac{{\left<m\right>}^2}{|n|}.
\end{equation}
{\bf{On the other hand}}, denote $a={\rm ln}q<0$, when $|n-m|\neq|n|$, then we have
\begin{align*}
{q}^{|n-m|}-q^{|n|}=\frac{e^{a|n-m|}-e^{a|n|}}{|n-m|-|n|}(|n-m|-|n|):=\widetilde{K}_2(n,m)(|n-m|-|n|).
\end{align*}
\vskip0.15cm
 If $|n-m|>|n|$, by differential mean value theorem, one has
$$\widetilde{K}_2(n,m)=ae^{a|n|}e^{a\theta_1(|n-m|-|n|)},\quad \theta_1\in[0,1],$$
which implies that for $k=0,1$,
$$|n|^{k}|\widetilde{K}_2(n,m)|\lesssim 1, \quad {\rm uniformly\ in}\ n,m,\theta_1.$$
Since $|n-m|-|n|=\widetilde{K}_{1}(n,m)-\frac{nm}{|n|}$, then
\begin{equation}\label{estimate of q 1}
\left|{q}^{|n-m|}-q^{|n|}\right|=\left|\widetilde{K}_2(n,m)\Big(\widetilde{K}_{1}(n,m)-\frac{nm}{|n|}\Big)\right|\lesssim\frac{{\left<m\right>}^2}{|n|}. \end{equation}
\vskip0.15cm
If $|n-m|<|n|$, then
$$\widetilde{K}_2(n,m)=ae^{a|n-m|}e^{a\theta_2(|n|-|n-m|)},\quad \theta_2\in[0,1].$$
Noticing that for $k=0,1$,
$$|n-m|^{k}|\widetilde{K}_2(n,m)|\lesssim 1, \quad {\rm uniformly\ in}\ n,m,\theta_2,$$
and $|n-m|-|n|=\widetilde{K}_{1}(n,m)-\frac{m^2}{|n|}-\frac{(n-m)m}{|n|}$, we obtain that
\begin{equation}\label{estimate of q 2}
\left|{q}^{|n-m|}-q^{|n|}\right|=\left|\widetilde{K}_2(n,m)\Big(\widetilde{K}_{1}(n,m)-\frac{m^2}{|n|}-\frac{(n-m)m}{|n|}\Big)\right|\lesssim\frac{{\left<m\right>}^2}{|n|}. \end{equation}
Therefore, combining \eqref{estimate of Ktuta1}$\sim$\eqref{estimate of q 2}, \eqref{estimate of Ktuta1 and q} is established and this proves that $\widetilde{G}_0\tilde{v}f\in\ell^2(\Z)$.
\vskip0.3cm
``$\bm{\Longleftarrow}$" Supposed that $\phi\in\ell^2(\Z)$ and $H\phi=16\phi$. Let $f=Uv\phi$, next we show that $f\in\widetilde{S}_1\ell^2(\Z)$. First, it follows from (iv) that
$f\in \widetilde{S}_0\ell^2(\Z)$ and
$$\phi=-J\widetilde{G}_0\tilde{v}f+J\frac{\big<\widetilde{T}_0f,\tilde{v}\big>}{\|V\|_{\ell^1}}.$$
In what follows, we prove that $\big<f,\tilde{v}_1\big>=0$ and $\widetilde{P}\widetilde{T}_0f=0$, then the desired result is established. The former can be obtained by using the similar method in ``$\bm{\Longleftarrow}$" of {\bf{\underline{(iv)}}}. For the latter, noticing that $J\widetilde{G}_0\tilde{v}f\in \ell^2(\Z)$ from ``$\bm{\Longrightarrow}$" above, then
$$J\frac{\left<\widetilde{T}_0f,\tilde{v}\right>}{\|V\|_{\ell^1}}=\phi+J\widetilde{G}_0\tilde{v}f\in \ell^2(\Z),$$
which implies that $\widetilde{P}\widetilde{T}_0f=0$.
\end{proof}
 %Therefore, combining Propositions \ref{pro of charac resonance 0} and \ref{pro of charac resonance 16}, we complete the proof of Theorem \ref{charac resonance theorem}.
 %\section{Proof of Theorem \ref{LAP-theorem}}\label{proof of LAP}
%In this section, we are devoted to  completing the proof of Theorem \ref{LAP-theorem}, i.e., the limiting absorption principle for $\Delta^2+V$.

\appendix
\section{Commutator estimates and Mourre Theory}\label{section of Appendix}
This appendix is divided into two parts. First, we review the main results of \cite{JMP84}, which focus on commutator estimates for a self-adjoint operator with respect to a suitable conjugate operator. These estimates establish the smoothness of the resolvent as a function of the energy between suitable spaces. Second, we collect some sufficient conditions related to the regularity of bounded self-adjoint operators with respect to conjugate operators.
%Firstly, we summarize the main result from \cite{JMP84}, which deals with multiple commutator estimates for a self-adjoint operator $T$ and a suitable conjugate operator $A$. This result plays a key role in the proof of Proposition \ref{LAP1}. Secondly, we focus on the case where $T$ is bounded and collect some relevant materials concerning the regularity of $T$ with respect to $A$~(as defined in Definition \ref{def of regularity} below) for application in our specific context.

For this purpose, we first introduce some notations and definitions. Let $(X,\left<\cdot,\cdot\right>)$ denote a separable complex Hilbert space and $T$ be a self-adjoint operator defined on $X$ with domain $\mcaD(T)$.
\begin{itemize}
\item
Define
$$X_{+2}:=\left(\mcaD(T),\left<\cdot,\cdot\right>_{+2}\right), \quad \left<\varphi,\phi\right>_{+2}:=\left<\varphi,\phi\right>+\left<T\varphi,T\phi\right>,\quad\forall\ \varphi,\phi\in\mcaD(T),$$
\end{itemize}
and let $X_{-2}$ be the dual space of $X_{+2}$.
\begin{itemize}
\item Let $A$ be a self-adjoint operator on $X$. The sesquilinear form $[T,A]$ on $\mcaD(T)\cap\mcaD(A)$ is
    \end{itemize}
     defined as
   \begin{equation}\label{Sesqui-form}
   [T,A](\varphi,\phi):=\left<(TA-AT)\varphi,\phi\right>,\quad \forall\ \varphi,\phi\in\mcaD(T)\cap\mcaD(A).
\end{equation}

\begin{definition}\label{condi-veri}
{\rm Let $T$ be as above and $n\geq1$ be an integer. A self-adjoint operator $A$ on $X$ is said to be conjugate to $T$ at the point $E\in\R$ and $T$ is said to be $n$-smooth with respect to $A$, if the following conditions (a)$\sim$(e) are satisfied:
\begin{itemize}
\item [(a)] $\mcaD(A)\cap\mcaD(T)$ is a core for $T$.
\item [(b)]$e^{i\theta A}$ maps $\mcaD(T)$ into $\mcaD(T)$ and for each $\phi\in\mcaD(T)$,
$$\sup_{|\theta|\leq1}\|Te^{i\theta A}\phi\|<\infty.$$
\item [({\rm$c_n$})] The form $i[T,A]$ defined on $\mcaD(T)\cap\mcaD(A)$ is bounded from below and closable. The self-adjoint operator associated with its closure is denoted by $iB_1$. Assume $\mcaD(T)\subseteq\mcaD(B_1)$. If $n>1$, assume for $j=2,\cdots,n$ that the form $i[iB_{j-1},A]$, defined on $\mcaD(T)\cap\mcaD(A)$, is bounded from below and closable. The associated self-adjoint operator is denoted by $iB_{j}$, and it is assumed that $\mcaD(T)\subseteq\mcaD(B_j)$.
\item [({\rm$d_n$})] The form $[B_n, A]$, defined on $\mcaD(T)\cap\mcaD(A)$, extends to a bounded operator from $X_{+2}$ to $X_{-2}$.
\item [(e)] There exist $\alpha>0, \delta>0$ and a compact operator $K$ on $X$ such that
\begin{equation}\label{Mourre}
E_{T}(\mcaJ)iB_1E_{T}(\mcaJ)\geq\alpha E_{T}(\mcaJ)+E_{T}(\mcaJ)KE_{T}(\mcaJ),
\end{equation}
where $\mcaJ=(E-\delta,E+\delta)$ is called the interval of conjugacy.
\end{itemize}
}
\end{definition}
\begin{theorem}\label{Resol-Smoo}
{\rm(\cite[Theorem 2.2]{JMP84})} Let $T$ be as above and $n\geq1$ an integer. Let $A$ be a conjugate operator to $T$ at $E\in\R$. Assume that $T$ is $n$-smooth with respect to $A$. Let $\mcaJ$ be the interval of conjugacy and $I\subseteq \mcaJ\cap\sigma_c(T)$~{\rm(}$\sigma_c(T)$ denotes the continuous spectrum of $T${\rm)} a relatively compact interval. Let $s>n-\frac{1}{2}$.
 \begin{itemize}
\item [{\rm (i)}]For ${\Re}z\in I,{\Im}z\neq0$, one has
$$\|\left<A\right>^{-s}(T-z)^{-n}\left<A\right>^{-s}\|\leq c.$$
\item [{\rm (ii)}] For ${\Re}z,{\Re}z'\in I,\ 0<|{\Im}z|\leq1,\ 0<|{\Im}z'|\leq1$, there exists a constant $C$ independent of $z,z'$, such that
    $$\|\left<A\right>^{-s}((T-z)^{-n}-(T-z')^{-n})\left<A\right>^{-s}\|\leq C|z-z'|^{\delta_1},$$
    where
    \begin{equation}\label{delta}
    \delta_1=\delta_1(s,n)=\frac{1}{1+\frac{sn}{s-n+\frac{1}{2}}}.
    \end{equation}
\item [{\rm (iii)}] Let $\lambda\in I$. The norm limits
$$\lim\limits_{\varepsilon\downarrow0}\left<A\right>^{-s}(T-\lambda\pm i\varepsilon)^{-n}\left<A\right>^{-s}$$
exist and equal
$$\left(\frac{d}{d\lambda}\right)^{n-1}(\left<A\right>^{-s}(T-\lambda\pm i0)^{-1}\left<A\right>^{-s}),$$
where
$$\left<A\right>^{-s}(T-\lambda\pm i0)^{-1}\left<A\right>^{-s}=\lim\limits_{\varepsilon\downarrow0}\left<A\right>^{-s}(T-\lambda\pm i\varepsilon)^{-1}\left<A\right>^{-s}.$$
The norm limits are H\"{o}lder continuous with exponent $\delta_1(s,n)$ given above.

\end{itemize}
\end{theorem}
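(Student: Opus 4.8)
The plan is to follow Mourre's commutator method in the refined form of Jensen--Mourre--Perry, proving the three assertions in increasing order of difficulty with the case $n=1$, $s>\tfrac12$, as the cornerstone. Fix $z=\lambda+i\mu$ with $\lambda\in I$ and $0<\mu\le1$, write $R(z)=(T-z)^{-1}$, and set $M=E_T(\mcaJ)$. Since $I\subseteq\mcaJ\cap\sigma_c(T)$, after shrinking $\mcaJ$ around each point of $I$ one may upgrade the Mourre estimate (e) to a strict one, $M\,iB_1\,M\ge\alpha' M$, by absorbing the compact term $K$; this is the step where the hypothesis $I\subseteq\sigma_c(T)$ — here realized as the assumed absence of embedded eigenvalues in $\mcaI$ — enters decisively. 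The core is then a differential inequality: introduce the regularized operator $T-i\varepsilon\,MB_1M$ with resolvent $G_\varepsilon(z)$, and estimate the quadratic form underlying $f(\varepsilon)=\big\|\langle A\rangle^{-s}G_\varepsilon(z)\langle A\rangle^{-s}\big\|$. Differentiating in $\varepsilon$, using $(c_1)$--$(d_1)$ to control $[iB_1,A]$ as a bounded map $X_{+2}\to X_{-2}$, and using the strict Mourre estimate to produce a positive leading term, yields a Gronwall-type bound $f'(\varepsilon)\lesssim f(\varepsilon)+f(\varepsilon)^{2}$ with starting value $f(1)=O(1)$; integrating as $\varepsilon\downarrow0$ and removing the regularization gives $\sup_{\Re z\in I,\,\Im z\ne0}\big\|\langle A\rangle^{-s}R(z)\langle A\rangle^{-s}\big\|<\infty$.

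Next I would bootstrap to general $n$. Conditions $(c_n)$ and $(d_n)$ supply the tower of iterated commutators $iB_1,\dots,iB_n$ with $\mcaD(T)\subseteq\mcaD(B_j)$ and $[B_n,A]\colon X_{+2}\to X_{-2}$ bounded; the mechanism is to commute the weights $\langle A\rangle^{-s}$ through the $n$ resolvent factors in $(T-z)^{-n}=R(z)^n$, each commutation producing a term of the schematic form $R(z)\cdot(\text{bounded}\times B_j)\cdot R(z)$ controlled inductively by the already-established bound for lower powers with smaller weights, together with the $n=1$ estimate applied to the two extreme factors. Splitting $\langle A\rangle^{-s}=\langle A\rangle^{-s_1}\cdots\langle A\rangle^{-s_n}$ with $\sum_j s_j=s$, $s_j>\tfrac12$, makes the bookkeeping precise and yields assertion (i).

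For the Hölder continuity (ii) I would apply the resolvent identity to write $(T-z)^{-n}-(T-z')^{-n}$ as a path integral of $(T-\zeta)^{-(n+1)}$ with a prefactor $z-z'$. The weighted norm of $\langle A\rangle^{-s}(T-\zeta)^{-(n+1)}\langle A\rangle^{-s}$ is not finite when $s\le n+\tfrac12$, but it is bounded by $C\,|\Im\zeta|^{-\gamma}$ for a suitable $\gamma\in(0,1)$, obtained by interpolating the finite bound of (i) for the $n$-th power against the trivial bound $\|R(\zeta)\|\le|\Im\zeta|^{-1}$ for the extra factor; optimizing the interpolation exponent against the weight split is exactly what produces $\gamma$, and hence the stated exponent $\delta_1(s,n)=\big(1+\tfrac{sn}{\,s-n+1/2\,}\big)^{-1}$, through $\|f(z)-f(z')\|\lesssim\int|\Im\zeta|^{-\gamma}\,d|\zeta|\lesssim|z-z'|^{1-\gamma}$. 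Assertion (iii) is then immediate: uniform Hölder continuity on $\{\Re z\in I,\ 0<|\Im z|\le1\}$ forces the limits $\varepsilon\downarrow0$ to exist in operator norm, and identifying them with $\big(\tfrac{d}{d\lambda}\big)^{n-1}\big(\langle A\rangle^{-s}(T-\lambda\pm i0)^{-1}\langle A\rangle^{-s}\big)$ follows by differentiating the norm-continuous family already obtained and using $\partial_z R(z)=R(z)^2$.

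I expect the main obstacle to be the $n=1$ a priori bound: making the regularized-resolvent differential inequality rigorous — domain issues for $iB_1$, justifying the form computations via $(c_1)$--$(d_1)$, and the truncation/limiting procedure — and, inside it, passing from the compact-perturbed Mourre estimate (e) to a strict one on a sub-interval, which is where the absence of embedded eigenvalues in $\mcaI$ is essential. By comparison, the higher-$n$ commutator bookkeeping and the interpolation producing $\delta_1(s,n)$ are routine once the $n=1$ case and the tower $B_1,\dots,B_n$ are in hand.
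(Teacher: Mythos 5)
This statement is quoted verbatim from \cite[Theorem 2.2]{JMP84} and the paper offers no proof of it — it is used as a black-box input to establish Lemma \ref{LAP lemma}. Your outline faithfully reconstructs the strategy of that reference: Mourre's regularized-resolvent differential inequality (after upgrading the Mourre estimate \eqref{Mourre} to a strict one on a subinterval free of point spectrum) for the case $n=1$, the multiple-commutator tower from conditions $(c_n)$--$(d_n)$ to bootstrap to $(T-z)^{-n}$, and an interpolation between the weighted bound and the trivial $|\Im z|^{-1}$ bound to produce the H\"older exponent $\delta_1(s,n)$; so there is no divergence of method to report, only the caveat that what you have written is (appropriately, given the scope) a roadmap of \cite{JMP84} rather than a self-contained proof.
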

%\begin{remark}
 %{\rm We remark that the interval $I$ in Theorem \ref{Resol-Smoo} can not only be restricted in $\mcaJ\cap\sigma_c(T)$ but can actually be taken as $\mcaJ\setminus\sigma_{p}(T).$ }
%\end{remark}

In verifying the conditions of this theorem, particularly conditions $(c_n)$ and $(d_n)$, it is often more convenient to examine the regularity of $T$ with respect to a suitable conjugate operator. To this end, we will revisit this concept for the case where $T$ is a bounded self-adjoint operator and present some sufficient conditions to judge this regularity. %collect some sufficient conditions under . This will facilitate the discussion for the context of this paper.
%To apply Theorem \ref{Resol-Smoo}, as outlined in Definition \ref{condi-veri}, the verification of conditions $(c_{n})\sim(d_n)$ can be quite lengthy. Therefore, for the purpose of this paper, we will assume that $T$ is bounded and proceed to gather some sufficient conditions related to this matter.

Let $T$ be a bounded operator. For each integer $k$, we denote $ad^{k}_{A}(T)$ as the sesquilinear form on $\mcaD(A^k)$ defined iteratively as follows:
%For convenience, we denote by $ad^{1}_{A}(T):=[T,A]=TA-AT$ be the sesquilinear form defined in \eqref{Sesqui-form} above. And for each integer $k\in\N$, let $ad^{0}_{A}(T)=T$, we define $ad^{k}_{A}(T)$ on $\mcaD(A^k)$ by the following iteration
\begin{align}\label{adk}
\begin{split}
ad^{0}_A(T)&=T,\\
ad^{1}_{A}(T)&=[T,A]=TA-AT,\\
ad^{k}_{A}(T)&=ad^{1}_{A}\left(ad^{k-1}_{A}(T)\right)=\sum\limits_{i,j\geq0,i+j=k}^{}\frac{k!}{i!j!}(-1)^{i}A^{i}TA ^{j}.
\end{split}
\end{align}
\begin{definition}\label{def of regularity}
{\rm
Given an integer $k\in\N^{+}$, we say that $T$ is of $C^{k}(A)$, denoted by $T\in C^{k}(A)$, if the sesquilinear form $ad^{k}_{A}(T)$ admits a continuous extension to $X$. We identify this extension with its associated bounded operator in $X$ and denote it by the same symbol.}
\end{definition}
\begin{remark}
{\rm This property is often referred to as the regularity of $T$ with respect to $A$ in many contexts. Specifically, $T\in C^{k}(A)$ holds if and only if the vector-valued function $f(t)\phi$ on $\R$ has the usual $C^{k}(\R)$ regularity for every $\phi\in X$, where $f$ is defined as follows:
\begin{align*}
f:~~\R\longrightarrow\B(X),\ t\longmapsto f(t)=e^{itA}Te^{itA}.
\end{align*}}
\end{remark}
Moreover, this property satisfies the following algebraic structure.
\begin{lemma}\label{Regularity lemma}
{  For any $k\in\N^{+}$, let $T_1,T_2$ be bounded self-adjoint operators on $X$ such that $T_1,T_2\in C^{k}(A)$. Then, $T_1+T_2\in C^{k}(A)$ and $ad^{k}_{A}(T_1+T_2)=ad^{k}_{A}(T_1)+ad^{k}_{A}(T_2)$.
}
\end{lemma}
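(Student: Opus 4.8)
\textbf{Proof plan for Lemma \ref{Regularity lemma}.}

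The plan is to reduce the statement to the characterization of $C^{k}(A)$ via the regularity of the conjugated function $t\mapsto e^{itA}Te^{-itA}$, and then exploit bilinearity. First I would recall, as noted in the remark preceding the lemma, that for a bounded self-adjoint operator $T$ one has $T\in C^{k}(A)$ if and only if for every $\phi\in X$ the map $t\mapsto f_{T}(t)\phi := e^{itA}Te^{-itA}\phi$ belongs to $C^{k}(\R;X)$, and in that case the $j$-th derivative at $t=0$ coincides (up to the usual factor $i^{j}$) with the bounded operator $ad^{j}_{A}(T)$. With this dictionary in hand the lemma becomes essentially a statement about sums of $C^{k}$ vector-valued functions.

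The key steps, in order, are as follows. \emph{Step 1:} observe the elementary identity $f_{T_{1}+T_{2}}(t)=e^{itA}(T_{1}+T_{2})e^{-itA}=f_{T_{1}}(t)+f_{T_{2}}(t)$, valid as an equality of bounded operators for each fixed $t$, since $e^{\pm itA}$ are bounded and the product is bilinear. \emph{Step 2:} apply the hypothesis $T_{1},T_{2}\in C^{k}(A)$, which by Step~0 (the remark) gives that $t\mapsto f_{T_{1}}(t)\phi$ and $t\mapsto f_{T_{2}}(t)\phi$ are in $C^{k}(\R;X)$ for every $\phi\in X$; hence their sum $t\mapsto f_{T_{1}+T_{2}}(t)\phi$ is in $C^{k}(\R;X)$, so $T_{1}+T_{2}\in C^{k}(A)$. \emph{Step 3:} differentiate $j$ times at $t=0$ for $1\le j\le k$ and use the linearity of differentiation of Banach-space valued maps to get $\frac{d^{j}}{dt^{j}}\big|_{t=0}f_{T_{1}+T_{2}}(t) = \frac{d^{j}}{dt^{j}}\big|_{t=0}f_{T_{1}}(t) + \frac{d^{j}}{dt^{j}}\big|_{t=0}f_{T_{2}}(t)$, which translates, via the identification of the remark, into $ad^{j}_{A}(T_{1}+T_{2}) = ad^{j}_{A}(T_{1}) + ad^{j}_{A}(T_{2})$ as bounded operators. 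Taking $j=k$ yields the displayed identity; the intermediate cases $j<k$ come for free and are what make the induction in Step~2 consistent.

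Alternatively, if one prefers to avoid invoking the characterization and argue directly from Definition \ref{def of regularity}, the proof goes by induction on $k$ using the recursion $ad^{k}_{A}(T)=ad^{1}_{A}(ad^{k-1}_{A}(T))$ from \eqref{adk}. For $k=1$ the sesquilinear form identity $ad^{1}_{A}(T_{1}+T_{2})(\phi,\psi)=\langle(T_{1}+T_{2})A\phi,\psi\rangle-\langle A(T_{1}+T_{2})\phi,\psi\rangle$ splits termwise over $T_{1}$ and $T_{2}$ on $\mcaD(A)$, and each piece extends continuously to $X$ by hypothesis, so their sum does and equals $ad^{1}_{A}(T_{1})+ad^{1}_{A}(T_{2})$. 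For the inductive step, assuming $ad^{k-1}_{A}(T_{1}),ad^{k-1}_{A}(T_{2})$ extend to bounded operators $B_{1},B_{2}$ with $ad^{k-1}_{A}(T_{1}+T_{2})=B_{1}+B_{2}$, apply the $k=1$ case to the bounded operators $B_{1}$ and $B_{2}$ (which are self-adjoint since each $ad^{j}_{A}$ of a self-adjoint operator is symmetric) to conclude $ad^{k}_{A}(T_{1}+T_{2})=ad^{1}_{A}(B_{1}+B_{2})=ad^{1}_{A}(B_{1})+ad^{1}_{A}(B_{2})=ad^{k}_{A}(T_{1})+ad^{k}_{A}(T_{2})$.

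I do not anticipate a serious obstacle here: the statement is a soft, structural fact and the only point requiring a modicum of care is the bookkeeping on domains—making sure that all the sesquilinear form identities are first established on the common core $\mcaD(A^{k})$ before passing to the bounded extensions, and checking that the iterated commutators of self-adjoint operators remain symmetric so that the induction on $k$ via the $k=1$ case is legitimate. Everything else is bilinearity of the commutator and linearity of derivatives.
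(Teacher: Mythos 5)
Your proposal is correct and your second (inductive) argument is essentially the paper's own proof, which simply invokes the $k=1$ case from \cite[Section 2]{GGM04} and then inducts via $ad^{k}_{A}=ad^{1}_{A}\circ ad^{k-1}_{A}$; your first route through the smoothness of $t\mapsto e^{itA}Te^{-itA}\phi$ is an equivalent reformulation of the same fact. One small inaccuracy: your parenthetical claim that $ad^{j}_{A}$ of a self-adjoint operator is symmetric is wrong (for self-adjoint $T$ and $A$ one has $(TA-AT)^{*}=-(TA-AT)$, so odd-order commutators are skew-adjoint), but this is harmless because the $k=1$ additivity is pure bilinearity of the form $\langle SA\phi,\psi\rangle-\langle AS\phi,\psi\rangle$ and needs no self-adjointness of $S$; alternatively, note that by the explicit formula \eqref{adk} the form $ad^{k}_{A}(T_1+T_2)$ on $\mcaD(A^{k})$ is manifestly the sum of the two forms, so the whole lemma follows at once from density without any induction.
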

\begin{proof}
The result follows from the case $k=1$ established in \cite[Section 2]{GGM04}, combined with an inductive argument.
\end{proof}
As an application, in particular, we consider $X=\ell^2(\Z)$, $T=H=\Delta^2+V$, where $|V(n)|\lesssim \left<n\right>^{-\beta}$ for some $\beta>0$, and let $A$ be defined as in \eqref{A}. We then establish the following regularity property of $H$ with respect to $A$.
\begin{lemma}\label{regularity of H}
{ Let $H=\Delta^2+V$, where $|V(n)|\lesssim \left<n\right>^{-\beta}$ with $\beta>1$ and let $A$ be defined as in \eqref{A}. Then, $H\in C^{[\beta]}(A)$, where $[\beta]$ denotes the biggest integral no more than $\beta$.}
\end{lemma}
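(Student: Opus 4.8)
\textbf{Proof proposal for Lemma \ref{regularity of H}.}

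The plan is to decompose $H = \Delta^2 + V$ and treat the two summands separately, invoking Lemma \ref{Regularity lemma} to combine them. For the potential part $V$, which acts as a multiplication operator by $\{V(n)\}_{n\in\Z}$, the key is that each commutator $ad^1_A(\cdot)$ with the operator $A$ defined in \eqref{A} essentially produces a new multiplication-type operator whose symbol is obtained by applying a first-order difference operator (roughly, $n$ times a discrete derivative) to the previous symbol. Concretely, since $iA = \mcaN\mcaP - \mcaP^*\mcaN$, a direct computation shows that for a multiplication operator $W$ with symbol $\{W(n)\}$, the form $ad^1_A(W)$ has an integral kernel supported within distance one of the diagonal, with entries controlled by $\left<n\right>\,|W(n\pm1) - W(n)|$ and similar first-difference expressions. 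Iterating $k$ times, the kernel of $ad^k_A(V)$ is a finite sum of terms of the form $\left<n\right>^{j}\big(\text{$j$-th order differences of }V\big)(n)$ for $j \le k$, each localized near the diagonal. Under the hypothesis $|V(n)| \lesssim \left<n\right>^{-\beta}$ — which, by the discrete mean value theorem (applied to a smooth interpolant, as already used in the proof of Lemma \ref{lemma of kernel and operaor}), also controls the finite differences of $V$ by the same decay rate — these terms are bounded by $\left<n\right>^{j-\beta}$, hence bounded on $\ell^2(\Z)$ precisely when $j \le [\beta]$. Therefore $V \in C^{[\beta]}(A)$.

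For the free part $\Delta^2$, I would work on the Fourier side. Under the transform $\mcaF$ of \eqref{fourier transform}, $\Delta^2$ becomes multiplication by $M(x) = (2-2\cos x)^2$ on $L^2(\T)$, while the operator $A$ — being built from the position operator $\mcaN$ (which becomes $i\tfrac{d}{dx}$) and the shift-type operators $\mcaP,\mcaP^*$ (which become bounded multiplication operators $e^{ix}-1$, $e^{-ix}-1$) — becomes a first-order differential operator with smooth, bounded coefficients on the torus. Since $M \in C^\infty(\T)$, every iterated commutator $ad^k_A(\Delta^2)$ is again a multiplication operator on $L^2(\T)$ by a smooth (hence bounded) function, so $\Delta^2 \in C^\infty(A) \subseteq C^{[\beta]}(A)$; in fact one obtains the cleaner identity $ad^1_A(\Delta^2) = 2\Delta^2(4 - \sqrt{\Delta^2})$ used elsewhere in the paper (in the proof of Lemma \ref{mourre esti lemma}), which makes all higher commutators manifestly bounded. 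Combining with the previous paragraph via Lemma \ref{Regularity lemma} yields $H = \Delta^2 + V \in C^{[\beta]}(A)$.

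The main obstacle — and the step deserving the most care — is the bookkeeping for the iterated commutators of $V$ with $A$: one must verify that after $k$ commutations, each resulting term carries at most $k$ powers of $\left<n\right>$ (not more), and simultaneously at least $k$ difference operations have been applied to the symbol $V$, so that the $\left<n\right>^k$ growth is exactly compensated by the extra decay $\left<n\right>^{-k}$ gained from the finite differences of a $\left<n\right>^{-\beta}$-decaying sequence. This requires tracking how $A = -i(\mcaN\mcaP - \mcaP^*\mcaN)$ distributes its factor of $\mcaN$ and its difference operator $\mcaP$ (or $\mcaP^*$) across a product at each step; the commutator $[\mcaN, W]$ vanishes for multiplication operators while $[\mcaP, W]$ produces a difference of $W$, and the residual factor $\mcaN$ landing outside is what contributes the single power of $\left<n\right>$ per commutation. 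A clean induction on $k$, combined with the elementary estimate $|\left<n\right>\,(W(n+1)-W(n))| \lesssim \left<n\right>^{1-\beta}$ when $|W(n)| \lesssim \left<n\right>^{-\beta}$ (valid uniformly since $\beta > 1$), closes the argument, and the near-diagonal support of all kernels involved guarantees $\ell^2$-boundedness once the pointwise symbol bounds are in place.
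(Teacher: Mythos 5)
Your proposal is correct and follows essentially the same route as the paper: decompose $H$, establish $\Delta^2\in C^{\infty}(A)$ from the explicit first-commutator identity for $-\Delta$, handle $V$ by tracking the powers of $n$ produced by iterated commutation against the decay of $V$ (the paper simply cites \cite[Proposition 5.1]{BS99} for this step), and combine via Lemma \ref{Regularity lemma}. One correction to your bookkeeping: finite differences of a sequence satisfying only $|V(n)|\lesssim\left<n\right>^{-\beta}$ gain no extra decay (consider $V(n)=(-1)^n\left<n\right>^{-\beta}$), so the phrase about ``the extra decay $\left<n\right>^{-k}$ gained from the finite differences'' is false as stated; what actually closes the argument is the trivial triangle-inequality bound $|(\mcaP^{k}V)(n)|\lesssim\left<n\right>^{-\beta}$, which makes every near-diagonal entry of $ad^{k}_{A}(V)$ of size $O(\left<n\right>^{k-\beta})$, bounded precisely because $k\le[\beta]\le\beta$ --- the same condition required by the paper's citation.
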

\begin{proof}
First, we note that \cite[Lemma 4.1]{BS99} establishes that $ad^{1}_{iA}(-\Delta)=-\Delta(4+\Delta)$. Based on this, we claim that $\Delta^2\in\C^{\infty}(A)$. To verify this, a direct calculation yields
$$ad^{1}_{iA}(\Delta^2)=(-\Delta)[ad^{1}_{iA}(-\Delta)]+[ad^{1}_{iA}(-\Delta)](-\Delta)=2\Delta^2(4+\Delta).$$
Thus, $\Delta^2\in C^1(A)$. Repeating a similar decomposition process, one can find that for any $k\in\N^{+}$, $ad^{k}_{iA}(\Delta^2)$ is a polynomial about $-\Delta$ of degree $2+k$. Consequently, $\Delta^2\in C^{k}(A)$ for all $k$, i.e., $\Delta^2\in C^{\infty}(A)$.

As for the potential $V$, \cite[Proposition 5.1]{BS99} proves that $V\in C^{k}(A)$ for some positive integer $k$ if $V(n)$ satisfies the following decay condition:
$$V(n)\rightarrow0\ {\rm and}\  |(\mcaP^{k}V)(n)|=O(|n|^{-k}),\quad|n|\rightarrow\infty.$$
Therefore, $V\in C^{[\beta]}(A)$ under our assumption on $V$. Combining this with $\Delta^2\in C^{\infty}(A)$ and Lemma \ref{Regularity lemma}, we finish the proof.
\end{proof}
\normalem


\begin{thebibliography}{FHHOHO82}

%\bibitem{BKS00}
%M.~Ben-Artzi, H.~Koch and J.~Saut,
%\newblock Dispersion estimates for fourth order Schr\"{o}dinger equations,
%\newblock {C. R. Acad. Sci. Paris S\'{e}c. I Math.}~{\bf{330}}~(2000), no. 2, 87--92.

%\bibitem{BS12}
%J.~Bellissard and H.~Schulz-Baldes,
%\newblock Scattering theory for lattice operators in dimension {$d\geq 3$},
%\newblock {\em Rev. Math. Phys.}~{\bf24}~(2012), no.8, 1250020, 51pp.

\bibitem{AW15}
M.~Aizenman and S.~Warzel.
\newblock {\em Random operators},
\newblock Disorder effects on quantum spectra and dynamics,
\newblock {\em Grad. Stud.
  Math.,}~{\bf168},
 \newblock American Mathematical Society, Providence, RI, 2015, xiv+326pp.

\bibitem{ABG96}
W.~O. Amrein, A.~Boutet~de Monvel and V.~Georgescu,
\newblock {\em {$C_0$}-groups, commutator methods and spectral theory of
  {$N$}-body {H}amiltonians},
\newblock Birkh\"auser, 1996.

\bibitem{Agm75}
S.~Agmon,
\newblock Spectral properties of {S}chr\"{o}dinger operators and scattering
  theory,
\newblock {\em Ann. Scuola Norm. Sup. Pisa Cl. Sci.} (4)~{\bf2}~(1975), no. 2, 151--218.

\bibitem{BS98}
A.~Boutet~de Monvel and J.~Sahbani,
\newblock On the spectral properties of discrete {S}chr\"odinger operators,
\newblock {\em C. R. Acad. Sci. Paris S\'er. I Math.}~{\bf326}~(1998), no. 9, 1145--1150.

\bibitem{BS99}
A.~Boutet~de Monvel and J.~Sahbani,
\newblock On the spectral properties of discrete {S}chr\"odinger operators: the
  multi-dimensional case,
\newblock {\em Rev. Math. Phys.}~{\bf11}~(1999), no. 9, 1061--1078.

\bibitem{BS12}
J.~Bellissard and H.~Schulz-Baldes,
\newblock Scattering theory for lattice operators in dimension {$d\geq 3$},
\newblock {\em Rev. Math. Phys.}~{\bf24}~(2012), no.8, 1250020, 51pp.

\bibitem{CHHZ24a}
H.~Cheng, S.~Huang, T.~Huang and Q.~Zheng,
\newblock Pointwise estimates for the fundamental solutions of higher order Schr\"{o}dinger equations in odd dimensions I: low dimensional case,
\newblock Revisited, https://arxiv.org/abs/2401.04969.

\bibitem{CHHZ24b}
H.~Cheng, S.~Huang, T.~Huang and Q.~Zheng,
\newblock Pointwise estimates for the fundamental solutions of higher order Schr\"{o}dinger equations in odd dimensions II: high dimensional case,
\newblock Revisited, https://arxiv.org/abs/2409.00117.

\bibitem{CLSY25}
M.~Chen, P.~Li, A.~Soffer, and X.~Yao,
\newblock Decay estimates for {B}eam equations with potential in dimension
  three,
\newblock {\em J. Funct. Anal.}~{\bf288}~(2025), no. 1, Paper No. 110671, 54pp.

\bibitem{CT09}
S.~Cuccagna and M.~Tarulli.
\newblock On asymptotic stability of standing waves of discrete {S}chr\"odinger
  equation in {$\Bbb Z$}.
\newblock {\em SIAM J. Math. Anal.}~{\bf41}~(2009), no. 3, 861--885.

\bibitem{Cu09}
S.~Cuccagna,
\newblock {$L^p$} continuity of wave operators in {$\Bbb Z$},
\newblock {\em J. Math. Anal. Appl.}~{\bf 354}~(2009), no. 2, 594--605.

\bibitem{Esk67}
M.~S. \`Eskina.
\newblock The scattering problem for partial-difference equations.
\newblock {\em Math. {P}hys.}~(1967), no. 3, 248--273.

\bibitem{EKT15}
I.~Egorova, E.~Kopylova and G.~Teschl,
\newblock {Dispersion estimates for one-dimensional discrete {S}chr\"{o}dinger and wave equations},
\newblock{\em J. Spectr. Theory.}~{\bf5}~(2015), no. 4, 663--696.

\bibitem{EGT21}
M.~Erdo\u{g}an, W.~Green and E.~Toprak,
\newblock On the fourth order {S}chr\"odinger equation in three dimensions: dispersive estimates and zero energy resonances,
\newblock {\em J. Differential Equations},~{\bf271}~(2021), 152--185.

\bibitem{ES04}
M.~Erdo\u{g}an and W.~Schlag,
\newblock Dispersive estimates for {S}chr\"odinger operators in the presence of
  a resonance and/or an eigenvalue at zero energy in dimension three. {I},
\newblock {\em Dyn. Partial Differ. Equ.}, {\bf1}~(2004), no. 4, 359--379.


\bibitem{FSWY20}
H. Feng, A. Soffer, Z. Wu and X. Yao,
\newblock Decay estimates for higher-order elliptic operators,
\newblock {\em Trans. Amer. Math. Soc.}, {\bf373}~(2020), no. 4, 2805--2859.

\bibitem{FSY18}
H. Feng, A. Soffer and  X. Yao,
\newblock Decay estimates and Strichartz estimates of fourth-order Schr\"{o}dinger operator, \newblock {\em J. Funct. Anal.}, {\bf274}~(2018), no. 2, 605--658.

\bibitem{GGM04}
V.~Georgescu, C.~G\'erard and J.~S. M{\o}ller,
\newblock Commutators, {$C_0$}-semigroups and resolvent estimates,
\newblock {\em J. Funct. Anal.}~{\bf216}~(2004), no. 2, 303--361.

\bibitem{GG15}
M.~Goldberg and W.R.~Green,
\newblock Dispersive estimates for higher dimensional {S}chr\"{o}odinger operators with threshold eigenvalues {I}: {T}he odd dimensional case,
\newblock {\em J. Funct. Anal.},~{\bf 269}~(2015), no. 3, 633--682.

\bibitem{GG17}
M.~Goldberg and W.R.~Green,
\newblock Dispersive estimates for higher dimensional {S}chr\"odinger operators with threshold eigenvalues {II}. {T}he even dimensional case,
\newblock {\em J. Spectr. Theory},~{\bf 7}~(2017), no. 1, 33--86.

%\bibitem{GS04}
%M.~Goldberg and W.~Schlag,
%\newblock Dispersive estimates for {S}chr\"odinger operators in dimensions one
 % and three,
%\newblock {\em Comm. Math. Phys.}~{\bf251}~(2004), no. 1, 157--178.

\bibitem{HHNO16}
Y.~Hayashi, Y.~Higuchi, Y.~Nomura and O.~Ogurisu,
\newblock On the number of discrete eigenvalues of a discrete {S}chr\"{o}dinger
  operator with a finitely supported potential,
\newblock {\em Lett. Math. Phys.}~{\bf106}~(2016), no. 11, 1465--1478.

\bibitem{HL14}
F.~Hiroshima and J.~L\H{o}rinczi,
\newblock The spectrum of non-local discrete {S}chr\"odinger operators with a
  {$\delta$}-potential,
\newblock {\em Pac. J. Math. Ind.}~{\bf6}~(2014), Art. 7, 6pp.

\bibitem{HMO11}
Y.~Higuchi, T.~Matsumoto and O.~Ogurisu,
\newblock On the spectrum of a discrete {L}aplacian on {$\Bbb Z$} with finitely
  supported potential,
\newblock {\em Linear Multilinear Algebra.}~{\bf59}~(2011), no. 8, 917--927.

\bibitem{HMK22}
F.~Hiroshima, Z.~Muminov and U.~Kuljanov,
\newblock Threshold of discrete {S}chr\"{o}dinger operators with delta potentials
  on {$n$}-dimensional lattice,
\newblock {\em Linear Multilinear Algebra}~{\bf70}~(2022), no. 5, 919--954.

\bibitem{HSSS12}
F.~Hiroshima, I.~Sasaki, T.~Shirai and A.~Suzuki,
\newblock Note on the spectrum of discrete {S}chr\"odinger operators,
\newblock {\em Pac. J. Math. Ind.}~{\bf4B}~(2012), 105--108.

\bibitem{IJ15}
K.~Ito and A.~Jensen,
\newblock A complete classification of threshold properties for one-dimensional
  discrete {S}chr\"odinger operators,
\newblock {\em Rev. Math. Phys.}~{\bf27}~(2015), no. 1, 1550002, 45pp.

\bibitem{IK12}
H.~Isozaki and E.~Korotyaev,
\newblock Inverse problems, trace formulae for discrete {S}chr\"odinger
  operators,
\newblock {\em Ann. Henri Poincar\'e}~{\bf13}~(2012), no. 4, 751--788.

\bibitem{IM14}
H.~Isozaki and H.~Morioka,
\newblock A {R}ellich type theorem for discrete {S}chr\"odinger operators.
\newblock {\em Inverse Probl. Imaging}~{\bf8}~(2014), no. 2, 475--489.

\bibitem{JK79}
A.~Jensen and T.~Kato,
\newblock Spectral properties of {S}chr\"{o}dinger operators and time-decay of
  the wave functions,
\newblock {\em Duke Math. J.}~{\bf46}~(1979), no. 3, 583--611.

\bibitem{JMP84}
A.~Jensen, E.~Mourre and P.~Perry,
\newblock Multiple commutator estimates and resolvent smoothness in quantum
  scattering theory,
\newblock {\em Ann. Inst. H. Poincar\'e{} Phys. Th\'eor.}~{\bf41}~(1984), no. 2, 207--225.

\bibitem{JN01}
A.~Jensen and G.~Nenciu,
\newblock A unified approach to resolvent expansions at thresholds,
\newblock {\em Rev. Math. Phys.}~{\bf13}~(2001), no. 6, 717--754.

\bibitem{JSS91}
J.-L.~Journ\'{e}, A.~Soffer and C.D.~Sogge,
\newblock Decay estimates for Schr\"{o}dinger operators,
\newblock {\em Comm. Pure Appl. Math.}~{\bf44}~(1991), no. 5, 573--604.

\bibitem{KKK06}
A.~I. Komech, E.~A. Kopylova and M.~Kunze,
\newblock Dispersive estimates for 1{D} discrete {S}chr\"odinger and
  {K}lein-{G}ordon equations,
\newblock {\em Appl. Anal.}~{\bf85}~(2006), no. 12, 1487--1508.

\bibitem{KKV08}
A.~I. Komech, E.~A. Kopylova and B.~R. Vainberg,
\newblock On dispersive properties of discrete 2{D} {S}chr\"odinger and
  {K}lein-{G}ordon equations,
\newblock {\em J. Funct. Anal.}~{\bf254}~(2008), no. 8, 2227--2254.

\bibitem{KM19}
E.L.~Korotyaev and J.S.~M{\o}ller,
\newblock Weighted estimates for the {L}aplacian on the cubic lattice,
\newblock {\em Ark. Mat.},~{\bf57}~(2019), no. 2, 397--428.

\bibitem{Kru12}
H.~Kr\"uger,
\newblock On the existence of embedded eigenvalues,
\newblock {\em J. Math. Anal. Appl.}~{\bf395}~(2012), no. 2, 776--787.

\bibitem{KT98}
M.~Keel and T.~Tao,
\newblock Endpoint {S}trichartz estimates,
\newblock {\em Amer. J. Math.}~{\bf120}~(1998), no. 5, 955--980.

\bibitem{Liu19}
W.~Liu,
\newblock Criteria for eigenvalues embedded into the absolutely continuous
  spectrum of perturbed {S}tark type operators,
\newblock {\em J. Funct. Anal.}~{\bf276}~(2019), no. 9, 2936--2967.

\bibitem{Liu22}
W.~Liu,
\newblock Irreducibility of the {F}ermi variety for discrete periodic
  {S}chr\"odinger operators and embedded eigenvalues,
\newblock {\em Geom. Funct. Anal.}~{\bf32}~(2022), no. 1, 1--30.

\bibitem{LMT24}
W.~{Liu}, R.~{Matos} and J.~N. {Treuer},
\newblock Sharp decay rate for eigenfunctions of perturbed periodic
  schr\"odinger operators,
\newblock {https://arxiv.org/abs/2409.10387}.


\bibitem{Mar06}
J.~Marklof,
\newblock Arithmetic Quantum Chaos,
Academic Press, Oxford, (2006), 212-221.

\bibitem{Man17}
M.-A. Mandich,
\newblock The limiting absorption principle for the discrete {W}igner--von
  {N}eumann operator,
\newblock {\em J. Funct. Anal.}~{\bf272}~(2017), no. 6, 2235--2272.

\bibitem{Man19}
M.-A. Mandich,
\newblock Sub-exponential decay of eigenfunctions for some discrete
  {S}chr\"odinger operators,
\newblock {\em J. Spectr. Theory}~{\bf9}~(2019), no. 1, 21--77.

\bibitem{Mou81}
E.~Mourre,
\newblock Absence of singular continuous spectrum for certain selfadjoint
  operators,
\newblock {\em Comm. Math. Phys.}~{\bf78}~(1980/81), no. 3, 391--408.

\bibitem{Mou83}
E.~Mourre,
\newblock Operateurs conjugu\'es et propri\'et\'es de propagation,
\newblock {\em Comm. Math. Phys.}~{\bf91}~(1983), no. 2, 279--300.


\bibitem{MZ22}
Y.~Mi and Y.~Zhao,
\newblock{Dispersive estimates for periodic discrete one-dimensional {S}chr\"{o}dinger operators},
\newblock{\em Proc. Amer. Math. Soc.}~{\bf150}~(2022), no. 1, 267--277.

\bibitem{Och21}
A.~\"{O}chsner,
\newblock Classical Beam Theories of Structural Mechanics,
\newblock Springer International Publishing, (2021), 7-66.

\bibitem{PS08}
D.~E. Pelinovsky and A.~Stefanov,
\newblock On the spectral theory and dispersive estimates for a discrete
  {S}chr\"odinger equation in one dimension,
\newblock {\em J. Math. Phys.}~{\bf49}~(2008), no. 11, 113501, 17pp.

\bibitem{RS78}
M.~Reed and B.~Simon,
\newblock {\em {\rm Methods of modern mathematical physics. {IV}. {A}nalysis of
  operators}},
\newblock Academic Press [Harcourt Brace Jovanovich, Publishers], New York-London, 1978, xv+396 pp.

\bibitem{RS04}
 \newblock I.~Rodnianski and W.~Schlag,
  \newblock Time decay for solutions of {S}chr\"odinger equations with
 rough and time-dependent potentials,
  \newblock {\em Invent. Math.},~{\bf155}~(2004), no. 3, 451--513.

\bibitem{SK05}
A.~Stefanov and P.~G. Kevrekidis,
\newblock Asymptotic behaviour of small solutions for the discrete nonlinear
  {S}chr\"odinger and {K}lein-{G}ordon equations,
\newblock {\em Nonlinearity}~{\bf18}~(2005), no. 4, 1841--1857.

\bibitem{SWY22}
A.~Soffer, Z.~Wu and X.~Yao,
\newblock Decay estimates for bi-{S}chr\"odinger operators in dimension one,
\newblock {\em Ann. Henri Poincar\'e}~{\bf23}~(2022), no. 8, 2683--2744.

\bibitem{Tao06}
T.~Tao,
Nonlinear dispersive equations.
Local and global analysis
CBMS Reg. Conf. Ser. Math., 106
Published for the Conference Board of the Mathematical Sciences, Washington, DC; by the American Mathematical Society, Providence, RI, 2006. xvi+373 pp.

\bibitem{Sch07}
W.~Schlag,
\newblock Dispersive estimates for {S}chr\"odinger operators: a survey Mathematical aspects of nonlinear dispersive equations,
\newblock Ann. of Math. Stud.~{\bf163}~(2007), 255-285.

\bibitem{Ste93}
E.~M. Stein,
\newblock {\em Harmonic analysis: real-variable methods, orthogonality, and
  oscillatory integrals}, volume~43 of {\em Princeton Mathematical Series},
\newblock Princeton University Press, Princeton, NJ, 1993,
\newblock With the assistance of Timothy S. Murphy, Monographs in Harmonic
  Analysis, III.

\bibitem{SV01}
W.~Shaban and B.~Vainberg,
\newblock Radiation conditions for the difference {S}chr\"odinger operators,
\newblock {\em Appl. Anal.}~{\bf80}~(2001), no. 3-4, 525--556.


\end{thebibliography}
\end{document}